%------------------------------------------------------------------%

\documentclass[11pt,oneside]{article}
\usepackage{amssymb,amsmath,latexsym,amsfonts,amscd,amsthm,multirow,ctable,colortbl,mathdots}
\usepackage{fancyhdr}
\usepackage[headings]{fullpage}
\usepackage{stmaryrd}
\usepackage[all]{xy}
\usepackage{rotating}

\usepackage{setspace}

\usepackage{hyperref}

%------------------------------------------------------------------%

\newcommand{\bea}{\begin{eqnarray}} 
\newcommand{\eea}{\end{eqnarray}} 
\newcommand{\bee}{\begin{eqnarray*}} 
\newcommand{\eee}{\end{eqnarray*}} 
\newcommand{\al}{\begin{align*}} 
\newcommand{\eal}{\end{align*}} 
\newcommand{\be}{\begin{equation}} 
\newcommand{\ee}{\end{equation}} 
\newcommand{\eq}[1]{(\ref{#1})} 
\newcommand{\bem}{\begin{pmatrix}} 
\newcommand{\eem}{\end{pmatrix}}

\def\c{\gamma}

\def\e{\epsilon}    
\def\f{\phi}

\def\h{\eta}

\def\inf{\infty}

\def\l{\lambda} 
\def\m{\mu} 
\def\n{\nu} 
 
\def\o{\omega}  
 
\def\p{\pi}    
\def\pa{\partial}        

\def\r{\rho}                  
\def\s{\sigma}            
\def\t{\tau} 
\def\th{\theta} 
\def\til{\tilde}

\def\D{\Delta}

\def\L{\Lambda} 
\def\O{\Omega}

\newcolumntype{R}{ >{$}r <{$}}
\newcolumntype{C}{ >{$}c <{$}}

\def\ll{\ell}
\def\LL{\Lambda}

\newcommand{\mc}[1]{\mathcal{#1}}

\newcommand{\comment}[1]{}

% Standard sets and objects
\newcommand{\RR}{{\mathbb R}}%Reals
%non-zero real
%positive real
\newcommand{\CC}{{\mathbb C}}%Complex
%non-zero complex
%Projective
\newcommand{\ZZ}{{\mathbb Z}}%Integers
%Positive integers
%Negative integers
%non-zero integers
%Natural numbers
\newcommand{\QQ}{{\mathbb Q}}%Rationals
%non-zero rationals
%positive rationals
%Algebraic closure of the rationals
%Octonions
\newcommand{\HH}{{\mathbb H}}%quaternions
\newcommand{\FF}{{\mathbb F}}%finite field

%hyperbolic plane
\newcommand{\kk}{{\mathbf k}}%infinite field

\newcommand{\ii}{{\bf i}}%Sqrt of -1
 %Differential
   %Bold unit
%One dimensional additive group
%One dimensional multiplicative group

\newcommand{\tpi}{2\pi\ii}%The fundamental constant
%minus the fundamental constant squared

\newcommand{\tc}{\!\!:\!\!}     %A tight colon
\newcommand{\lab}{{\langle}}    %Left angle brackets
\newcommand{\rab}{{\rangle}}    %Right angle brackets

% Standard operators

\newcommand{\Span}{\operatorname{Span}}

\def\jac{\operatorname{jac}}

\newcommand{\tr}{\operatorname{{tr}}}
\newcommand{\str}{\operatorname{{str}}}

\newcommand{\Sym}{{\textsl{Sym}}}
\newcommand{\Alt}{{\textsl{Alt}}}
\newcommand{\Dih}{{\textsl{Dih}}}
\newcommand{\HO}{{\textsl{Oct}}}

\newcommand{\ex}{\operatorname{e}} %Number theory exp

	%Tensor algebra
	%Symmetric algebra
	%Alternating algebra

% Groups
\newcommand{\PSL}{\operatorname{\textsl{PSL}}}    %PSL group
\newcommand{\SL}{\operatorname{\textsl{SL}}}      %SL group
\newcommand{\Sp}{\operatorname{\textsl{Sp}}}      %Sp group
\newcommand{\PGL}{\operatorname{\textsl{PGL}}}    %PGL group
\newcommand{\AGL}{{\textsl{AGL}}}    %AGL group
\newcommand{\GL}{{\textsl{GL}}}      %GL group
\newcommand{\SU}{\operatorname{\textsl{SU}}}    %SU group
\newcommand{\SO}{\operatorname{\textsl{SO}}}    %SO group

% Speedy greek
 %Moonshine module
     %Kummer involution
	%Casimir element
\newcommand{\G}{\Gamma}	%Gamma
\newcommand{\g}{\gamma}	%gamma

% Custom objects

\newcommand{\wttwo}{{ F}}
\newcommand{\MM}{\mathbb{M}}

%------------------------------------------------------------------%

%Math environments
\newtheorem{thm}{Theorem}[section]

\newtheorem{lem}[thm]{Lemma}
\newtheorem{prop}[thm]{Proposition}
\newtheorem{conj}[thm]{Conjecture}

\theoremstyle{definition}

\theoremstyle{remark}
\newtheorem{rmk}[thm]{Remark}

\numberwithin{equation}{section}
%\numberwithin{equation}{subsubsection}

%------------------------------------------------------------------%

\pagestyle{fancy}

\addtolength{\headheight}{1.7pt}

\fancyhf{} \fancyhead[C]{\textsc{{U}mbral {M}oonshine}}

\fancyhead[R]{\thepage}

%------------------------------------------------------------------%

\begin{document}

\setstretch{1.4}

\title{
\vspace{-35pt}
    \textsc{\huge{{U}mbral {M}oonshine}\footnote{{\em MSC2010:} 11F22, 11F37, 11F46, 11F50, 20C34, 20C35} }
    }

\author{
	Miranda C. N. Cheng\footnote{
	Universit\'e Paris 7, UMR CNRS 7586 and LPTHE, Universit\'e Paris 6, Paris, France.
	\newline\indent\indent
	{\em E-mail:} {\tt chengm@math.jussieu.fr}
	}\\
	John F. R. Duncan\footnote{
         Department of Mathematics,
         Case Western Reserve University,
         %10900 Euclid Avenue,
         Cleveland, OH 44106,
         U.S.A.
         \newline\indent\indent
         {\em E-mail:} {\tt john.duncan@case.edu}
         %{\tt http://filer.case.edu/jfd54}
         }\\
	Jeffrey A. Harvey\footnote{
	Enrico Fermi Institute and Department of Physics,
         University of Chicago,
         Chicago, IL 60637,
         U.S.A.
         \newline\indent\indent
         {\em E-mail:} {\tt j-harvey@uchicago.edu}
          }
}

\date{2013 October 13}

\maketitle

\abstract{
We describe surprising relationships between automorphic forms of various kinds, imaginary quadratic number fields and a certain system of six finite groups that are parameterised naturally by the divisors of twelve. The Mathieu group correspondence recently discovered by Eguchi--Ooguri--Tachikawa is recovered as a special case. We introduce a notion of extremal Jacobi form and prove that it characterises the Jacobi forms arising by establishing a connection to critical values of Dirichlet series attached to modular forms of weight two. These extremal Jacobi forms are closely related to certain vector-valued mock modular forms studied recently by Dabholkar--Murthy--Zagier in connection with the physics of quantum black holes in string theory. In a manner similar to monstrous moonshine the automorphic forms we identify constitute evidence for the existence of infinite-dimensional graded modules for the six groups in our system. We formulate an umbral moonshine conjecture that is in direct analogy with the monstrous moonshine conjecture of Conway--Norton. Curiously, we find a number of Ramanujan's mock theta functions appearing as McKay--Thompson series. A new feature not apparent in the monstrous case is a property which allows us to predict the fields of definition of certain homogeneous submodules for the groups involved. For four of the groups in our system we find analogues of both the classical McKay correspondence and McKay's monstrous Dynkin diagram observation manifesting simultaneously and compatibly.
}

\clearpage

\tableofcontents

\clearpage

%------------------------------------------------------------------%
\section{Introduction}\label{sec:intro}
%------------------------------------------------------------------%

The term {\em monstrous moonshine} was coined by Conway \cite{conway_norton} in order to describe the unexpected and mysterious connections between the representation theory of the largest sporadic group---the {\em Fischer--Griess monster}, $\MM$---and modular functions that stemmed from McKay's observation that $196883+1=196884$, where the summands on the left are degrees of irreducible representations of $\MM$ and the number on the right is the coefficient of $q$ in the Fourier expansion of the {\em elliptic modular invariant}
\begin{gather}\label{eqn:intro:FouExpJ}
J(\t)=\sum_{m \ge -1} a(m) q^m= q^{-1} + 196884 q + 21493760 q^2 + 864299970 q^3 + \cdots.
\end{gather}
Thompson expanded upon McKay's observation in \cite{Tho_NmrlgyMonsEllModFn} and conjectured the existence of an infinite-dimensional monster module
\be
V=\bigoplus_{m \ge -1} V_m
\ee
with $\dim  V_m=a(m)$ for all $m$. He also proposed \cite{Tho_FinGpsModFns} to consider the series, now known as {\em McKay--Thompson series}, given by 
\be
T_g(\tau)= \sum_{m\ge -1} {\tr}_{V_m}(g )\,q^m
\ee
for $g\in\MM$, and detailed explorations \cite{conway_norton} by Conway--Norton led to the astonishing {\em moonshine conjecture}: 
\begin{quote}
For each $g\in\MM$ the function $T_g$ is a principal modulus for some genus zero group $\G_g$.
\end{quote}
(A discrete group $\G<\PSL_2(\RR)$ is said to have {\em genus zero} if the Riemann surface $\G\backslash\HH$ is isomorphic to the Riemann sphere minus finitely many points, and a holomorphic function $f$ on $\HH$ is called a {\em principal modulus} for a genus zero group $\G$ if it generates the field of $\G$-invariant functions on $\HH$.)

Thompson's conjecture was verified by Atkin, Fong and Smith (cf. \cite{MR604633,MR822245}). A more constructive verification was obtained by Frenkel--Lepowsky--Meurman \cite{FLMPNAS,FLMBerk} with the explicit construction of a monster module $V=V^{\natural}$ with graded dimension given by the Fourier expansion (\ref{eqn:intro:FouExpJ}) of the elliptic modular invariant. They used {\em vertex operators}---structures originating in the dual resonance theory of particle physics and finding contemporaneous application \cite{FreKac_AffLieDualRes,Seg_UtyRpsInfDimGps} to affine Lie algebras---to recover the non-associative {\em Griess algebra} structure (developed in the first proof \cite{Gri_FG} of the existence of the monster) from a subspace of $V^{\natural}$. Borcherds found a way to attach vertex operators to every element of $V^{\natural}$ and determined the precise sense in which these operators could be given a commutative associative composition law, and thus arrived at the notion of {\em vertex algebra} \cite{Bor_PNAS}, an axiomatisation of the operator product expansion of chiral conformal field theory. The closely related notion of {\em vertex operator algebra (VOA)} was subsequently introduced by Frenkel--Lepowsky--Meurman \cite{FLM} and they established that the monster is precisely the group of automorphisms of a VOA structure on $V^{\natural}$; the Frenkel--Lepowsky--Meurman construction of $V^{\natural}$ would ultimately prove to furnish the first example of an {\em orbifold conformal field theory}. 

Borcherds introduced the notion of {\em generalised Kac--Moody algebra} in \cite{Bor_GKM} and by using the VOA structure on $V^{\natural}$ was able to construct a particular example---the monster Lie algebra---and use the corresponding equivariant denominator identities to arrive at a proof \cite{borcherds_monstrous} of the Conway--Norton moonshine conjectures. Thus by 1992 monstrous moonshine had already become a phenomenon encompassing elements of finite group theory, modular forms, vertex algebras and generalised Kac--Moody algebras, as well as aspects of conformal field theory and string theory.

Recently Eguchi--Ooguri--Tachikawa have presented evidence \cite{Eguchi2010} for a new kind of moonshine involving the elliptic genus of $K3$ surfaces (the elliptic genus is a topological invariant and therefore independent of the choice of $K3$ surface) and the {\em largest Mathieu group}
$M_{24}$ (cf. \S\ref{sec:grps:spec}). This connection between $K3$ surfaces and $M_{24}$ becomes apparent only after decomposing the elliptic genus into characters
of the $N=4$ superconformal algebra (cf. \cite{Eguchi1987,Eguchi1988,MR2060475}). This decomposition process (cf. \S\ref{sec:forms:sca}) reveals the presence of a mock modular form of weight $1/2$ (cf. \S\ref{sec:forms:mock}) satisfying
\be\label{eqn:intro:H2Fou}
H^{(2)}(\tau)= \sum_{n=0}^\infty c^{(2)}_1(n-1/8) q^{n-1/8} = 2 q^{-1/8}(-1 + 45 q + 231 q^2 + 770 q^3+2277 q^4 + \cdots )
\ee
and one recognises here the dimensions of several irreducible representations of $M_{24}$ (cf. Table \ref{tab:chars:irr:2}). 

One is soon led to follow the path forged by Thompson in the case of the monster: to suspect the existence of a graded infinite-dimensional $M_{24}$-module
\be
K^{(2)}= \bigoplus_{n=0}^\infty K^{(2)}_{n-1/8}
\ee
with $\dim  K^{(2)}_{n-1/8}= c^{(2)}(n-1/8)$ for $n\geq 1$, and to study the analogues $H^{(2)}_{g}$ of the monstrous McKay--Thompson series obtained by replacing $c^{(2)}(n-1/8)=\dim  K^{(2)}_{n-1/8}$ with
$\tr_{K^{(2)}_{n-1/8}}(g)$ in (\ref{eqn:intro:H2Fou}). This idea has been implemented successfully in \cite{Cheng2010_1,Gaberdiel2010,Gaberdiel2010a,Eguchi2010a} and provides strong evidence for the existence of such an $M_{24}$-module $K^{(2)}$. A proof of the existence of $K^{(2)}$ has now been established in \cite{Gannon:2012ck} although no explicit construction is yet known. In particular there is as yet no known analogue of the vertex operator algebra structure which conjecturally characterises \cite{FLM} the monster module $V^{\natural}$.

The strong evidence in support of the $M_{24}$ analogue of Thompson's conjecture invites us to consider the $M_{24}$ analogue of the Conway--Norton moonshine conjectures---this will justify the use of the term moonshine in the $M_{24}$ setting---except that it is not immediately obvious what the analogue should be. Whilst the McKay--Thompson series $H^{(2)}_{g}$ is a mock modular form of weight $1/2$ on some $\G_g<\SL_2(\ZZ)$ for every $g$ in $M_{24}$ \cite{Eguchi2010a}, it is not the case that $\G_g$ is a genus zero group for every $g$, and even if it were, there is no obvious sense in which one mock modular form of weight $1/2$ on some group can ``generate'' all the others, and thus no obvious analogue of the principal modulus property.

A solution to this problem---the formulation of the moonshine conjecture for $M_{24}$---was found in \cite{Cheng2011} (see also \S\ref{sec:conj:moon}) via an extension of the program that was initiated in \cite{DunFre_RSMG}; the antecedents of which include Rademacher's pioneering work \cite{Rad_FuncEqnModInv} on the elliptic modular invariant $J(\t)$, quantum gravity in three dimensions \cite{Witten2007,MalWit_QGPtnFn3D,LiSonStr_ChGrav3D}, the AdS/CFT correspondence in physics \cite{MaldacenaAdv.Theor.Math.Phys.2:231-2521998,Gubser:1998bc,Witten:1998qj}, and the application of {\em Rademacher sums} to these and other settings in string theory \cite{Dijkgraaf2007,Moore2007,BoerJHEP0611:0242006,KrausJHEP0701:0022007,Denef2007,Sen2007c,Dabholkar:2011ec} (and in particular \cite{Manschot2007}). To explain the formulation of the moonshine conjecture for $M_{24}$ we recall that in \cite{DunFre_RSMG} a Rademacher sum $R_{\G}(\t)$ is defined for each discrete group $\G<\PSL_2(\RR)$ commensurable with the modular group in such a way as to naturally generalise Rademacher's Poincar\'e series-like expression for the elliptic modular invariant derived in \cite{Rad_FuncEqnModInv}. It is then shown in \cite{DunFre_RSMG} that a holomorphic function on the upper-half plane (with invariance group commensurable with $\PSL_2(\ZZ)$) is the principal modulus for its invariance group if and only if it coincides with the Rademacher sum attached to this group. Thus the genus property of monstrous moonshine may be reformulated:
\begin{quote}
For each $g$ in $\MM$ we have $T_g=R_{\G_g}$ where $\G_g$ is the invariance group of $T_g$.
\end{quote}
Write $R^{(2)}_{\G}$ to indicate a weight $1/2$ generalisation of the (weight $0$) Rademacher sum construction $R_{\G}$ studied in \cite{DunFre_RSMG}. (Note that a choice of multiplier system on $\G$ is also required.) Then a natural $M_{24}$-analogue of the Conway--Norton moonshine conjecture comes into view:
\begin{quote}
For each $g$ in $M_{24}$ we have $H^{(2)}_g=R^{(2)}_{\G_g}$ where $\G_g$ is the invariance group of $H^{(2)}_{g}$.
\end{quote}
This statement is confirmed in \cite{Cheng2011} for the functions $H^{(2)}_g$ that are, at this point, conjecturally attached to $M_{24}$ via the conjectural $M_{24}$-module $K^{(2)}$. 

Through these results we come to envisage the possibility that both monstrous moonshine and the $M_{24}$ observation of Eguchi--Ooguri--Tachikawa will eventually be understood as aspects of one underlying structure which will include finite groups, various kinds of automorphic forms, extended algebras and string theory, and will quite possibly be formulated in terms of the AdS/CFT correspondence
in the context of some higher-dimensional string or gravitational theory. 

In fact we can expect this {\em moonshine structure} to encompass more groups beyond the monster and $M_{24}$: In this paper we identify the $M_{24}$ observation as one of a family of correspondences between finite groups and (vector-valued) mock modular forms, with each member in the family admitting a natural analogue of the Conway--Norton moonshine conjecture according to the philosophy of \cite{DunFre_RSMG,Cheng2011} (see also \cite{MR2985326}). For each $\ll$ in $\LL=\{2,3,4,5,7,13\}$---the set of positive integers $\ll$ such that $\ll-1$ divides $12$---we identify a distinguished Jacobi form $Z^{(\ll)}$, a finite group $G^{(\ll)}$ and a family of vector-valued mock modular forms $H^{(\ll)}_g$ for $g\in G^{(\ll)}$. 
The Jacobi forms $Z^{(\ll)}$ satisfy an {\em extremal} condition formulated in terms of unitary irreducible characters of the $N=4$ superconformal algebra (cf. \S\ref{sec:forms:wtzero}), the mock modular form $H^{(\ll)}=H^{(\ll)}_e$ is related to $Z^{(\ll)}$ as $H^{(2)}$ is to the elliptic genus of a $K3$ surface (cf. \S\S\ref{sec:forms:sca},\ref{sec:forms:wtzero}), the Fourier coefficients of the {\em McKay--Thompson series} $H^{(\ll)}_g$ support the existence of an infinite-dimensional graded module $K^{(\ll)}$ for $G^{(\ll)}$ playing a r\^ole analogous to that of $K^{(2)}$ for $G^{(2)}\simeq M_{24}$ (cf. \S\ref{sec:conj:mod}), and 
the following {\em umbral moonshine conjecture} is predicted to hold where $R^{(\ll)}_{\G}$ is an $(\ll-1)$-vector-valued generalisation (cf. \S\ref{sec:conj:moon}) of the Rademacher sum construction $R^{(2)}_{\G}$ studied in \cite{Cheng2011}.
\begin{quote}
For each $g$ in $G^{(\ll)}$ we have $H^{(\ll)}_g=R^{(\ll)}_{\G_g}$ where $\G_g$ is the invariance group of $H^{(\ll)}_{g}$.
\end{quote}
In addition to the above properties with monstrous analogues we find the following new {\em discriminant property}: The exponents of the powers of $q$ having non-vanishing coefficient in the Fourier development of $H^{(\ll)}$ determine certain imaginary quadratic number fields and predict the existence of dual pairs of irreducible representations of $G^{(\ll)}$ that are defined over these fields and irreducible over $\CC$. Moreover, these dual pairs consistently 
appear as irreducible constituents in homogeneous $G^{(\ll)}$-submodules of $K^{(\ll)}$ in such a way that the degree of the submodule determines the discriminant of the corresponding quadratic field (cf. \S\ref{sec:conj:disc}).

A main result of this paper is Theorem \ref{thm:forms:wtzero:ext}, which states that the extremal condition formulated in \S\ref{sec:forms:wtzero} characterises the Jacobi forms $Z^{(\ll)}$ for $\ll\in\{2,3,4,5,7,13\}$. To achieve this we establish a result of independent interest, which also serves to illustrate the depth of the characterisation problem: We show in Theorem \ref{lem:forms:wtzero:constvan} that the existence of an extremal Jacobi of index $m-1$ implies the vanishing of $L(f,1)$ for all new forms $f$ of weight $2$ and level $m$, where $L(f,s)$ is the Dirichlet series naturally attached to $f$. According to the Birch--Swinnerton-Dyer conjecture the vanishing of $L(f,1)$ implies that the elliptic curve $E_f$, attached to $f$ by Eichler--Shimura theory, has rational points of infinite order. Thus it is extremely unexpected that an extremal Jacobi form can exist for all but finitely many values of $m$, and using estimates \cite{MR2176151} due to Ellenberg together with some explicit computations we are able to verify that the only possible values are those for which no non-zero weight $2$ forms exist; i.e. $\G_0(m)$ has genus zero. The last step in our proof of Theorem \ref{thm:forms:wtzero:ext} is to check the finitely many corresponding finite dimensional spaces of Jacobi forms, for which useful bases have been determined by Gritsenko in \cite{Gri_EllGenCYMnflds}. In this we way obtain that the $Z^{(\ll)}$ for $\ll\in\{2,3,4,5,7,13\}$ are precisely the unique, up to scale, weak Jacobi forms of weight zero satisfying the extremal condition (\ref{eqn:forms:wtzero:ext}).

In contrast to the monstrous case the McKay--Thompson series $H^{(\ll)}_g$ arising here are mock modular forms, and these are typically not in fact modular but become so after {\em completion} with respect to a {\em shadow} function $S^{(\ll)}_g$. It turns out that all the McKay--Thompson series $H^{(\ll)}_g$ for fixed $\ll\in\LL$ have shadows that are (essentially) proportional to a single vector-valued {\em unary theta function} $S^{(\ll)}$ (cf. \S\ref{sec:forms:jac}) and so it is in a sense the six {\em moonlight shadows} $S^{(\ll)}$ for $\ll\in \LL$ that provide the irreducible information required to uncover the structure that we reveal in this paper. We therefore refer to the phenomena investigated here as  {\em umbral moonshine}.

According to the Oxford English Dictionary, a flame or light that is {\em lambent} is playing ``lightly upon or gliding over a surface without burning it, like a `tongue of fire'; shining with a soft clear light and without fierce heat.'' And since the light of umbral moonshine is apparently of this nature, we call the six values in $\LL=\{2,3,4,5,7,13\}$ {\em lambent}, and we refer to the index $\ll\in\LL$ as the {\em lambency} of the connections relating the {\em umbral group} $G^{(\ll)}$ to the {\em umbral forms} $Z^{(\ll)}$ and $H^{(\ll)}_g$.

The rest of this paper is organised as follows. In \S \ref{sec:forms} we discuss properties of Jacobi forms, Siegel forms, mock modular forms and mock theta functions. We explain two closely related ways in which Jacobi forms determine mock modular forms, one involving the decomposition into characters of the $N=4$ superconformal algebra and the other involving a decomposition of meromorphic Jacobi forms into mock modular forms following \cite{zwegers} and  \cite{Dabholkar:2012nd}.   In \S \ref{sec:forms:wtzero} we introduce the Jacobi forms $Z^{(\ell)}$ of weight $0$ and index $\ell-1$ for lambent $\ll$ and their  associated vector-valued mock modular forms $H^{(\ell)}$. We prove (Theorem \ref{thm:forms:wtzero:ext}) that these functions are characterized by the extremal property (\ref{eqn:forms:wtzero:ext}) and we establish the connection (Theorem \ref{thm:forms:wtzero:ccv}) to critical values of automorphic $L$-functions. We note that the coefficients in the $q$-expansions of the mock modular forms $H^{(\ll)}$ appear to be connected to the dimensions of irreducible representations of groups $G^{(\ell)}$ which we introduce and study in \S\ref{sec:grps}. In \S\S\ref{sec:grps:dynI},\ref{sec:grps:dynII} we discuss the remarkable fact that some of these groups manifest both the McKay correspondence relating ADE Dynkin diagrams to finite subgroups of $\SU(2)$ as well as a generalisation of his monstrous $E_8$ observation. In \S \ref{sec:mckay} we discuss analytic properties of the McKay--Thompson series $H^{(\ell)}_{g}$ which are obtained by twisting the mock modular forms $H^{(\ell)}$ by elements $g \in G^{(\ell)}$. We determine the proposed McKay--Thompson series $H^{(\ll)}_{g}=\big(H^{(\ll)}_{g,r}\big)$ precisely in terms of modular forms of weight $2$ for all but a few $g$ occurring for $\ll\in\{7,13\}$, and we find that we can identify many of the component functions $H^{(\ll)}_{g,r}$ either with ratios of products of eta functions or with classical mock theta functions introduced by Ramanujan (and others). In \S \ref{sec:conj} we collect our observations into a set of conjectures. These include the analogue of Thompson's conjecture (cf. \S\ref{sec:conj:mod}), the umbral counterpart to the Conway--Norton moonshine conjecture (cf. \S\ref{sec:conj:moon}), and a precise formulation of the discriminant property mentioned above (cf. \S\ref{sec:conj:disc}).  In \S \ref{sec:conj:geomphys} we mention some possible connections between our results, the geometry of complex surfaces, and string theory.  

Our conventions for modular forms appear in \S\ref{sec:modforms}, the character tables of the umbral groups $G^{(\ll)}$ appear in \S\ref{sec:chars}, tables of Fourier coefficients of low degree for all the proposed McKay--Thompson series $H^{(\ll)}_g$ appear in \S\ref{sec:coeffs}, and tables describing the $G^{(\ll)}$-module structures implied (for low degree) by the $H^{(\ll)}_g$ are collected in \S\ref{sec:decompositions}. 

It is important to mention that much of the data presented in the tables of \S\ref{sec:coeffs} was first derived using certain vector-valued generalisations of the Rademacher sum construction that was applied to the functions of monstrous moonshine in \cite{DunFre_RSMG}, and in \cite{Cheng2011} to the functions attached to $M_{24}$ via the observation of Eguchi--Ooguri--Tachikawa. In particular, these vector-valued Rademacher sums played an indispensable r\^ole in helping us arrive at the groups $G^{(\ll)}$ specified in \S\ref{sec:grps}, especially for $\ll>3$, and also allowed us to formulate and test hypotheses regarding the modularity of the (vector-valued) functions $H^{(\ll)}_{g}$, including eta product expressions and the occurrences of classical mock theta functions; considerations which ultimately developed into the discussion of \S\ref{sec:mckay}. A detailed discussion of the Rademacher construction is beyond the scope of this article but a full treatment is to be the focus of forthcoming work. The fact that the Rademacher sum approach proved so powerful may be taken as strong evidence in support of the {\em umbral moonshine conjecture}, Conjecture \ref{conj:conj:moon}.

%-----------------------------------------------------------------------------------%
\section{Automorphic Forms}\label{sec:forms}
%-----------------------------------------------------------------------------------%

In this section we discuss the modular objects that play a r\^ole in the connection  between mock modular forms and finite groups that we will develop later in the paper. We also establish our notation and describe various relationships between Jacobi forms, theta functions, and vector-valued mock modular forms, including mock theta functions. 

In what follows we take 
$\tau$ in the upper half-plane $\HH$ and $z \in \CC$, and adopt
the shorthand notation $e(x) = e^{2\p ix}$. We also define $q=e(\tau)$ and $y=e(z)$ and write 
\be
 \g\t = \frac{a\t+b}{c\t+d}, \quad \g= 
 	\begin{pmatrix}
	a&b\\
	c&d
	\end{pmatrix}
	\in\SL_2(\ZZ)
\ee
for the natural action of $\SL_2(\ZZ)$ on $\HH$
and
\be
 \g (\t,z) = \left(\frac{a\t+b}{c\t+d},\frac{z}{c\t+d} \right)
\ee
for the action of $\SL_2(\ZZ)$ on $\HH \times \CC$. 

%---------------------------------------------------------%
\subsection{Mock Modular Forms}\label{sec:forms:mock}
%---------------------------------------------------------%

Mock theta functions were first introduced in 1920 by S. Ramanujan in his  last letter to Hardy. This letter contained $17$ examples divided into four of {\em order $3$}, ten of
{\em order $5$} and three of  {\em order $7$}. Ramanujan did not define what he meant by the term order and to this day there seems to be no universally agreed upon definition. In this paper we use the term order only as a historical label. 
Ramanujan wrote his mock theta functions as what he termed ``Eulerian series'' that today
would be recognised as specialisations of $q$-hypergeometric series. A well studied example is the order $3$ mock theta
function
\be \label{fmock}
f(q) =  1+ \sum_{n=1}^\infty \frac{q^{n^2}}{(1+q)^2(1+q^2)^2 \cdots (1+q^n)^2} = \sum_{n=0}^\infty \frac{q^{n^2}}{(-q;q)_n^2} \;,   
\ee
where we have introduced the {\em $q$-Pochhammer} symbol
\be
(a;q)_n =\prod_{k=0}^{n-1} (1-aq^k) \,.
\ee

Interest in and applications of mock theta functions has burgeoned during the last decade following the work of
Zwegers \cite{zwegers}
who found an intrinsic definition of mock theta functions and their near modular behavior, and many applications
of his work can be found in combinatorics  \cite{BringmannOno2006,BringmannOno2010}, characters of infinite-dimensional Lie superalgebras \cite{Eguchi2008,Eguchi2009a}, topological field theory \cite{VafaWitten1994,Gottsche_Zagier,Troost:2010ud,Manschot2011}, the computation of quantum invariants
of three-dimensional manifolds \cite{Lawrence_Zagier}, and the counting of black hole states in string theory \cite{Dabholkar:2012nd}. Descriptions of this breakthrough and some of the history of mock theta functions can be found in
\cite{zagier_mock}, \cite{Folsom_what} and \cite{Ono_unearthing}.

Mock theta function are now understood as a special case of more general objects known as mock modular forms. 
A holomorphic function $h(\t)$ on $\HH$ is called a {\em (weakly holomorphic) mock modular form} of weight $k$ for a discrete group 
$\G$ (e.g. a congruence subgroup of $\SL_2(\ZZ)$) if it has at most exponential growth as $\t\to\alpha$ for any $\alpha\in \mathbb{Q}$, and if there exists a holomorphic modular form $f(\t)$ of weight $2-k$ on $\G$ such that the {\em completion} of $h$ given by
\be\label{def_mock}
\hat{h}(\t)=h(\t)+\left(4{i}\right)^{k-1}\int_{-\bar \t}^{\infty}(z+\t)^{-k}\overline{f(-\bar z)}{\rm d}z,
\ee
is a (non-holomorphic) modular form of weight $k$ for $\G$
for some multiplier system $\nu$ say. 
In this case the function $f$ is called the {\em shadow} of the mock modular form $h$.
Even though $h$ is not a modular form, it is common practice to call $\nu$  the multiplier system of $h$.  One can show that $\nu$ is the conjugate of the multiplier system of $f$.  In most of the examples in this paper we will deal with vector-valued mock modular forms so that the completion in fact transforms
as $\nu(\g)\hat{h}(\gamma \tau)(c\tau+d)^{-k} =  \hat{h}(\tau)$ in case the weight is $k$ for
all $\gamma= (\begin{smallmatrix} * & * \\ c & d \end{smallmatrix}) \in \Gamma$ where $\nu$ is a matrix-valued function on $\Gamma$.

The completion $\hat{h}(\t)$ satisfies interesting differential equations. For instance, 
completions of mock modular forms were identified as weak Maass forms (non-holomorphic modular forms which are eigenfunctions of the Laplace operator)  in \cite{BringmannOno2006} as a part of their solution to the longstanding Andrews--Dragonette conjecture. Note that we have the identity
\be
	2^{1-k}\pi\Im(\t)^k\frac{\partial\hat{h}(\t)}{\partial\bar{\t}}=-2\pi i\overline{f(\t)}
\ee
when $f$ is the shadow of $h$.

Thanks to Zweger's work we may define a {\em mock theta function} to be a $q$-series $h=\sum_n a_n q^n$ such that for some $\lambda \in \QQ$ the assignment $\t\mapsto q^{\lambda}h|_{q=e(\t)}$ defines a mock
modular form of weight $1/2$ whose shadow is a unary (i.e. attached to a quadratic form in one variable) theta series of weight ${3}/{2}$.

In this paper we add one more r\^ole for mock theta functions to the list mentioned earlier; namely we conjecture that specific sets of mock theta functions appear as McKay--Thompson series associated to (also conjectural) infinite-dimensional modules for a sequence of groups $G^{(\ll)}$ which we refer to as the {\em umbral groups} and label by the {\em lambent} integers $\ll \in \Lambda=\{2,3,4,5,7,13\}$, which are just those positive integers that are one greater than a divisor of $12$. 

Many of the mock theta functions that appear later in this paper appear either in Ramanujan's last letter to Hardy or in his lost notebook \cite{Ramanujan_lost}. These include an order $2$ mock theta function 
\be
\m(q) =  \sum_{n\geq 0}  \frac{(-1)^n\, q^{n^2}(q;q^2)_n}{(-q^2;q^2)_n^2}  \\\label{ll2mocktheta}
\ee
and an order $8$ mock theta function 
\be
 U_0(q) = \sum_{n\geq 0}  \frac{q^{n^2}(-q;q^2)_n}{(-q^4;q^4)_n},
\ee
both of which appear at lambency $2$ in connection with $G^{(2)}\simeq M_{24}$. The function $f(q)$ of \eqref{fmock} together with 
\begin{gather}
\begin{split}\label{eqn:forms:mock:ord3}
\phi(q) &=  1+\sum_{n=1}^\infty \frac{q^{n^2}}{(1+q^2)(1+q^4) \cdots (1+q^{2n})} \,  \\
\chi(q) &= 1+ \sum_{n=1}^\infty \frac{q^{n^2}}{(1-q+q^2)(1-q^2+q^4) \cdots (1-q^n+q^{2n})} \, \\
\omega(q) &=  \sum_{n=0}^\infty \frac{q^{2n(n+1)}}{(1-q)^2(1-q^3)^2 \cdots (1-q^{2n+1})^2} \, \\
\rho(q) &=  \sum_{n=0}^\infty \frac{q^{2n(n+1)}}{(1+q+q^2)(1+q^3+q^6) \cdots (1+q^{2n+1}+q^{4n+2})} \;
\end{split}
\end{gather}
constitute five order $3$ mock theta functions appearing at lambency $3$, and the four order $10$ mock theta functions 
\begin{gather}
\begin{split}\label{eqn:forms:mock:ord10}
\phi_{10}(q) &= \sum_{n=0}^\infty \frac{q^{n(n+1)/2}}{(q;q^2)_{n+1}} \; \\
\psi_{10}(q) &= \sum_{n=0}^\infty \frac{q^{(n+1)(n+2)/2}}{(q;q^2)_{n+1}} \; \\
X(q) &= \sum_{n=0}^\infty \frac{(-1)^n q^{n^2}}{(-q;q)_{2n}} \; \\
\chi_{10}(q) &= \sum_{n=0}^\infty \frac{(-1)^n q^{(n+1)^2}}{(-q;q)_{2n+1}} \;
\end{split} 
\end{gather}
appear at lambency $5$.

More mock theta functions were found later by others.  At lambency $4$ we will encounter  order $8$ mock theta functions discussed in \cite{Gordon_Mcintosh} with $q$-expansions
\begin{gather}
\begin{split}\label{eqn:forms:mock:ord8}
S_0(\t) &= \sum_{n\geq 0} \frac{q^{n^2} (-q;q^2)_n}{(-q^2;q^2)_n} \\
S_1(\t) &= \sum_{n\geq 0} \frac{q^{n(n+2)} (-q;q^2)_n}{(-q^2;q^2)_n} \\
T_0(\t) &= \sum_{n\geq 0} \frac{q^{(n+1)(n+2)} (-q^2;q^2)_n}{(-q;q^2)_n} \\
T_1(\t) &= \sum_{n\geq 0} \frac{q^{n(n+1)} (-q^2;q^2)_n}{(-q;q^2)_n} .
\end{split}
\end{gather}
It is curious to note that the order is divisible by the lambency in each example.

%---------------------------------------------------------------------------------------%
\subsection{Jacobi Forms}\label{sec:forms:jac}
%---------------------------------------------------------------------------------------%

We now discuss Jacobi forms following \cite{eichler_zagier}. We say a holomorphic function $\f: \HH \times \CC \to \CC$ is an {\em unrestricted Jacobi form} of weight $k$ and index $m$ for $\SL_2(\ZZ)$ if it transforms under the Jacobi group 
$\SL_2(\ZZ)\ltimes \ZZ^2$ as
\bea
\f(\t,z) &=& (c\t+d)^{{-k}} e(-m \tfrac{c z^2}{c\t+d})\, \f(\g(\t, z)) \label{modular}
\\
\f(\t,z) &=&e( m(\l^2 \t + 2\l z)) \, \f(\t, z+\l \t +\m) \label{elliptic}
\eea
where $\g \in \SL_2(\ZZ)$ and $ \l,\m\in \ZZ$.  In what follows we refer to the transformations  \eqref{modular} and \eqref{elliptic} as the {\em modular} and {\em elliptic} transformations, respectively.
The invariance of $\f(\tau,z)$ under $\tau \rightarrow \tau+1$ and $z \rightarrow z+1$ implies a Fourier expansion
\be\label{eqn:forms:jac:FouExp}
\f(\t,z) = \sum_{n,r \in \ZZ} c(n,r) q^n y^r 
\ee
and the elliptic transformation can be used to show that $c(n,r)$ depends only on the {\em discriminant} $r^2-4mn$ and $r ~{\rm mod}~ 2m$, and so we have $c(n,r)=C(r^2-4mn,{\til r})$
for some function $D\mapsto C(D,{\til r})$ where $\til r\in \{-m,\dots,m-1\}$, for example. An unrestricted Jacobi form is called a {\em weak Jacobi form}, a {\em (strong) Jacobi form}, or a {\em Jacobi cusp form} according as the Fourier coefficients satisfy $c(n,r)=0$ whenever $n< 0$, $C(D,{\til r})=0$ whenever $D > 0$, or $C(D,{\til r})=0$ whenever $D \ge 0$, respectively. In a slight departure from \cite{eichler_zagier} we denote the space of weak Jacobi forms of weight $k$ and index $m$ by $J_{k,m}$. 

In what follows we will need two further generalisations of the above definitions. The first is straightforward and replaces  $\SL_2(\ZZ)$  by a finite index subgroup $\Gamma \subset \SL_2(\ZZ)$ in the modular transformation law. The second is more subtle and leads to {\em meromorphic Jacobi forms} which obey the modular and elliptic transformation laws but are such that the functions $z\mapsto \phi(\t,z)$ are allowed to have poles lying at values of $z\in\CC$ that map to torsion points of the elliptic curve $\CC/(\ZZ \tau + \ZZ)$. Our treatment of meromorphic Jacobi forms follows \cite{zwegers} and \cite{Dabholkar:2012nd}. 

A property of (weak) Jacobi forms that will be important for us later is that they admit an expansion in terms of the {\em index $m$ theta functions},
\be\label{thetexp}
\theta^{(m)}_r(\tau,z) = \sum_{n \in \ZZ} q^{(2mn+r)^2/4m} y^{2mn+r},
\ee
given by
\be\label{eqn:forms:jac:thetaxpn}
\phi(\tau,z)= \sum_{r ({\rm mod}~2m)} \til h_r(\tau) \theta^{(m)}_r(\tau,z)
\ee
in case the index of $\phi$ is $m$, where the {\em theta-coefficients} $\til h_r(\tau)$ constitute the components of a vector-valued modular form of weight $k-1/2$ when $k$ is the weight of $\phi$.
Recall that a vector valued function $\til h=(\til h_{r})$ is called a {\em vector-valued modular form} of weight $k$ for $\G \subset \SL_2(\ZZ)$  if 
\be
\til h_{r}(\t) = \frac{1}{(c\t+d)^k} \sum_{s} \nu_{rs}(\g)\til h_{s}(\t)
\ee
for all $\g\in\G$ and $\t\in\HH$ for some matrix-valued function $\nu=(\nu_{rs})$ on $\G$ called the {\em multiplier system} for $\til h$.

The modular transformation law \eqref{modular} with $\gamma=-I_2$ implies that $\phi(\tau,-z)=(-1)^k \phi(\tau,z)$. Combining this with the identity
$\theta^{(m)}_{-r}(\tau,z)= \theta^{(m)}_r(\tau,-z)$ we see that $ \til h_r(\t) = (-1)^k \til h_{-r}(\t)$ and in particular we can recover a weak Jacobi form of weight $k$ and index $m$ from the $m-1$ theta-coefficients $\{\til h_1,\cdots,\til h_{m-1}\}$ in case $k$ is odd. 

In what follows we will encounter only weight $0$ and weight $1$ Jacobi forms; a typical such form will be denoted by $\phi(\tau,z)$ or $\psi(\tau,z)$ according as the weight is $0$ or $1$ and we will write the theta-expansion of a weight $1$ Jacobi form $\psi$ as
\be\label{eqn:forms:jac:wt1thetaxpn}
\psi(\tau,z)= \sum_{r=1}^{m-1}  \til h_r(\tau) \hat \theta^{(m)}_r(\tau,z)
\ee
where $\hat \theta^{(m)}_r(\tau,z)= \theta^{(m)}_{-r}(\tau,z)-\theta^{(m)}_{r}(\tau,z)$ (cf. (\ref{thetexp})) for $r \in \{1, 2, \cdots, m-1 \}$.

%--------------------------------------------------------------------------------------------------------------%
\subsection{Meromorphic Jacobi Forms}\label{sec:forms:mero}
%--------------------------------------------------------------------------------------------------------------%

We now explain a connection between the vector-valued mock modular forms we shall consider in this paper and meromorphic Jacobi forms. We specialize our discussion to weight $1$ meromorphic Jacobi forms of a particular form that arise in our pairing of Jacobi forms with groups $G^{(\ell)}$, and later in our computation of McKay--Thompson series; namely, we consider meromorphic weight $1$ and index $m$ Jacobi forms which can be written as
\be \label{psiphi}
\psi(\tau,z)=  \Psi_{1,1}(\tau,z) \phi(\tau,z) 
\ee
for some weight $0$ index $m-1$ (holomorphic) weak Jacobi form $\phi$ where $\Psi_{1,1}$ is the specific meromorphic Jacobi form of weight $1$ and index $1$ given by
\be\label{CoverA}
\Psi_{1,1}(\tau,z) = -i \,\frac{\theta_1(\tau,2z)\, \eta(\tau)^3}{(\theta_1(\tau,z))^2}= \frac{y+1}{y-1}- (y^2-y^{-2})q+ \cdots.
\ee
We note that $\psi(\tau,z)$ has a simple pole at $z=0$ with residue $  \phi(\tau,0)/\p i$.  Since $\phi(\tau,z)$ is a weak Jacobi form of weight $0$ the function $\t\mapsto\phi(\tau,0)$ is a modular form of weight $0$ (with no poles at any cusps) and is hence equal to a constant; we denote this constant by
\be\label{eqn:forms:mero:phi0}
\chi=\phi(\tau,0).
\ee

It was shown by Zwegers \cite{zwegers} that meromorphic Jacobi forms have a modified theta-expansion (cf. \eqref{eqn:forms:jac:thetaxpn}) in terms of vector-valued mock modular forms; in \cite{Dabholkar:2012nd} this expansion was recast as follows. Define the averaging operator 
\be
{\rm Av}^{(m)}\Big[F(y)\Big] = \sum_{k\in \ZZ} q^{m k^2} y^{2m k} F(q^k y )
\ee
which takes a function of $y=e(z)$ with polynomial growth and returns a function of $z$ which transforms like an index $m$ Jacobi form under the elliptic transformations \eqref{elliptic}. Now define the {\em polar part} of $\psi=\Psi_{1,1}\phi$ by
\be \label{polar}
\psi^{P}(\tau,z)=  \chi
	{\rm Av}^{(m)}\Big[ \frac{y+1}{y-1} \Big] 
\ee
where $\chi=\phi(\t,0)$ and define the {\em finite part} of $\psi$ by
\be \label{free}
\psi^{F}(\tau,z)= \psi(\tau,z)-\psi^{P}(\tau,z).
\ee
The term finite is appropriate because with the polar part subtracted the finite part  no longer has a pole at $z=0$. 

It follows from the analysis in \cite{Dabholkar:2012nd}  that  $\psi^{F}(\tau,z)$ is a weight $1$ index $m$ {\em mock Jacobi form},  meaning that it has a theta-expansion
\be \label{freedecom}
\psi^{F}(\tau,z)= \sum_{r=1}^{m-1} h_r(\tau) \hat \theta^{(m)}_r(\tau,z)
\ee
(cf. \eqref{eqn:forms:jac:wt1thetaxpn}) where the theta-coefficients $h_r$ comprise the components of a vector-valued mock modular form of weight $1/2$.

In the above we have again used the fact that $\psi$, $\psi^{P}$ and hence also $\psi^{F}$ pick up a minus sign under the transformation $z \to -z$. Moreover, the vector-valued mock modular forms obtained in this way always have shadow function given by the {\em unary theta series} 
 \be\label{unary_S_def}
S^{(m)}_{r}(\t) =\frac{1}{2\p i} \left.\frac{\pa}{\pa z} \th^{(m)}_{r}(\t,z) \right|_{z=0}=  \sum_{n\in\ZZ} (2m n+r)q^{{(2m n+r)^2}/{4m}}.
\ee
To see why this is so, and for later use, we introduce the functions
\be\label{gAPsum}
\m^{(m)}_{j} (\t,z) =(-1)^{1+2j}  \sum_{k\in \ZZ}  q^{m k^2} y^{2 m k} \frac{(yq^{k})^{-2j}+(yq^{k})^{-2j+1}+\dots+(yq^k)^{1+2j} }{ 1-yq^k}
\ee
for $m$ a positive integer and $2j\in\{0,1,\ldots,m-1\}$. Note that  $\m^{(m)}_{0}(\t,z)$ is proportional to the polar part of $\psi$, as identified in \eqref{polar},
\be
\m^{(m)}_{0} (\t,z) = {\rm Av}^{(m)}\Big[\frac{y+1}{y-1}\Big].
\ee
\begin{rmk}
The function $\m^{(2)}_0$ is closely related to the Appell-Lerch sum $\mu(t,z)$ which features prominently in \cite{zwegers}.
\end{rmk}

The $\m^{(m)}_0$ enjoy the following relation to the modular group $\SL_2(\ZZ)$. Define the {\em completion} of $\m^{(m)}_{0} (\t,z)$ by setting
\be\label{pole_completion}
\hat \m^{(m)} (\t,\bar \t,z) = \m^{(m)}_{0} (\t,z) + \frac{1}{\sqrt{2m}} \frac{1}{(4i)^{1/2}}  \sum_{r=-m}^{m-1} \th^{(m)}_{r}(\t,z)  \int^{i\inf}_{-\bar \t}  (z+\t)^{-1/2} \overline{S^{(m)}_{r}(-\bar z)} \, {\rm d}z. 
\ee
Then $\hat\m^{(m)}$ transforms like a Jacobi form of weight $1$ and index $m$ for $\SL_2(\ZZ)$ but is not holomorphic. Therefore, from the transformation of the polar part 
\be\label{eqn:forms:mero:psipolarmu}
\psi^{P}(\t,z) = \chi
	\m_0^{(m)}(\t,z)\,
\ee
we see that the shadow of $h=(h_r)$ is given by $\chi S^{(m)}= \big(\chi S^{(m)}_r\big)$, as we claimed. This means that the vector-valued mock modular forms arising in this way are closely related to mock theta functions: By the definition given in \S\ref{sec:forms:mock} we have $h_r(\tau)= q^{\lambda} M_r$ for some $\lambda \in \QQ$ with $M_r$ a mock theta function.

%----------------------------------------------------------------------------------------------------------------%
\subsection{Superconformal Algebra}\label{sec:forms:sca}
%----------------------------------------------------------------------------------------------------------------%

In \S\ref{sec:forms:mero} we saw how to associate a vector-valued mock modular form $(h_{r})$ to a weight $1$ meromorphic Jacobi form $\psi$ satisfying $\psi= \Psi_{1,1} \phi$ for some weak Jacobi form $\phi$ via the theta-expansion of the finite part of $\psi$. 
It will develop that the weight $0$ forms $\phi$ of relevance to us have a close relation to the representation theory of the $2$-dimensional $N=4$ superconformal algebra.
To see this recall (cf. \cite{Eguchi1987,Eguchi1988,MR2060475}) that this algebra contains subalgebras isomorphic to the affine Lie algebra $\hat{\mathfrak{sl}}_2$ and the Virasoro algebra, and in a unitary representation the former of these acts with level $m-1$, for some integer $m>1$, and the latter with central charge $c=6(m-1)$, and the unitary irreducible highest weight representations $V^{(m)}_{h,j}$ are labelled by the two {\em quantum numbers} $h$ and $j$ which are the eigenvalues of $L_0$ and $\frac{1}{2}J_0^3$, respectively, when acting on the highest weight state. (We adopt a normalisation of the {$\SU(2)$ current} $J^3$ such that the zero mode $J^3_0$ has integer eigenvalues.) In the Ramond sector of the
superconformal algebra there are two types of highest weight representations: the {\em massless} (or {\em BPS}, {\em supersymmetric}) ones with $h=\frac{m-1}{4}$ and $j\in\{ 0,\frac{1}{2},\cdots,\frac{m-1}{2}\}$, and the {\em massive} (or {\em non-BPS}, {\em non-supersymmetric}) ones with $h > \frac{m-1}{4}$ and $j\in\{ \frac{1}{2},1,\cdots, \frac{m-1}{2}\}$. Their {\em (Ramond) characters}, defined as 
\be
{\rm ch}^{(m)}_{h,j}(\t,z) = \tr_{V^{(m)}_{h,j}} \left( (-1)^{J_0^3}y^{J_0^3} q^{L_0-c/24}\right),
\ee
are given by 
\be \label{masslesschar}
	{\rm ch}^{(m)}_{\frac{m-1}{4},j} (\t,z)= 
	(\Psi_{1,1}(\tau,z))^{-1} \m^{(m)}_{j}  (\t,z)
\ee
and 
\be \label{massivechar}
	{\rm ch}^{(m)}_{h,j} (\t,z) =
	(-1)^{2j+1}
	(\Psi_{1,1}(\tau,z))^{-1} \,q^{h-\frac{m-1}{4}-\frac{j^2}{m}} \, \hat \th^{(m)}_{2j} (\t,z) 
\ee
in the massless and massive cases, respectively, \cite{Eguchi1988} where the function $\m^{(m)}_{j}(\t,z)$ is defined as in  \eq{gAPsum} and $\hat{\theta}^{(m)}_{r}(\t,z)$ is as in the sentence following (\ref{eqn:forms:jac:wt1thetaxpn}).

We can use the above results to derive a decomposition of an arbitrary weight $0$ index $m-1$ Jacobi form $\phi(\tau,z)$ into $N=4$ characters as follows. Set $\psi(\tau,z)=\Psi_{1,1}(\tau,z)\phi(\t,z)$ as in \S\ref{sec:forms:mero} and write 
\begin{gather}
h_r(\t)= \sum_{n}c_r(n-r^2/4m) q^{n-r^2/4m} 
\end{gather}
for the Fourier expansion of the theta-coefficient $h_r$ of the finite part $\psi^F$ of $\psi$ (cf. \eqref{freedecom}). Then use \eqref{free}, \eqref{freedecom} and (\ref{eqn:forms:mero:psipolarmu}) along with \eqref{masslesschar} and \eqref{massivechar}
to obtain
\begin{gather}\label{eqn:forms:sca:phisadecomp}
\begin{split}
\f
=& \chi \, {\rm ch}^{(m)}_{\frac{m-1}{4},0} 
+ \sum_{r=1}^{m-1} (-1)^{r+1} c_r(-\tfrac{r^2}{4m})  \left( {\rm ch}^{(m)}_{\frac{m-1}{4},\frac{r}{2}} +2 {\rm ch}^{(m)}_{\frac{m-1}{4},\frac{r-1}{2}} + {\rm ch}^{(m)}_{\frac{m-1}{4},\frac{r-2}{2}} \right)
\\
&+ \sum_{r=1}^{m-1}(-1)^{r+1} \sum_{n=1}^\infty c_r(n-\tfrac{r^2}{4m})\, {\rm ch}^{(m)}_{\frac{m-1}{4}+n,\frac{r}{2}}
\end{split}
\end{gather}
where $\chi=\phi(\t,0)$ is the constant such that $\chi S^{(m)}$ is the shadow of $h=(h_r)$ (cf. \S\ref{sec:forms:mero}). In deriving (\ref{eqn:forms:sca:phisadecomp}) we have used the relation 
\be
\m^{(m)}_{\frac{r-2}{2}}
+2\m^{(m)}_{\frac{r-1}{2}}
+\m^{(m)}_{\frac{r}{2}}
=(-1)^{r+1} \,q^{-\frac{r^2}{4m}} \hat \th^{(m)}_{r}
\ee
subject to the understanding that $\m^{(m)}_{-\frac{1}{2}} = {\rm ch}^{(m)}_{\frac{m-1}{4},-\frac{1}{2}}=0$.

One way a weak Jacobi form of weight $0$ and index $m-1$ having integer coefficients can arise in nature is as the elliptic genus of a $2$-dimensional $N=4$ superconformal field theory with central charge $c=6(m-1)$. There is a unique weak Jacobi form of weight $0$ and index $1$ up to scale and this (suitably scaled) turns out to be the elliptic genus of a superconformal field theory attached to a $K3$ surface. Then the above analysis at $m=2$ recovers the mock modular form $H^{(2)}$ (the vectors have $m-1=1$ components in this case) exhibiting the connection to the Mathieu group $M_{24}$ observed by Eguchi--Ooguri--Tachikawa in \cite{Eguchi2010}. 

In the next section we will construct a distinguished family of extremal weight $0$ weak Jacobi forms $\f^{(\ll)}(\tau,z)$ with corresponding vector-valued weight $1/2$ mock modular forms $H^{(\ll)}(\tau)$ according to the procedure (\ref{psiphi}-\ref{freedecom}) of \S\ref{sec:forms:mero}. In \S\ref{sec:grps} we will specify finite groups $G^{(\ll)}$ for which the forms $H^{(\ll)}$ will serve as generating functions for the graded dimensions of conjectural bi-graded infinite-dimensional $G^{(\ll)}$-modules.

%---------------------------------------------------------------------------------------------------------------------%
\subsection{Extremal Jacobi Forms}\label{sec:forms:wtzero}
%---------------------------------------------------------------------------------------------------------------------%

In this section we identify the significance of the divisors of $12$ from the point of view of the $N=4$ superconformal algebra. We introduce the notion of an {extremal} (weak) Jacobi form of weight $0$ and integral index and we prove that the space $J^{\rm ext}_{0,m-1}$ of extremal forms with index $m-1$ has dimension $1$ if $m-1$ divides $12$, and has dimension $0$ otherwise. 
Several of the extremal Jacobi forms we identify have appeared earlier in the literature in the context of studying decompositions of elliptic genera of Calabi--Yau manifolds \cite{Gri_EllGenCYMnflds,Gri_CplxVBsJacFrms,Eguchi2008} and in a recent study of the connection between black hole counting in superstring theory and mock modular forms \cite{Dabholkar:2012nd}. 

For $m$ a positive integer and $\phi$ a weak Jacobi form with weight $0$ and index $m-1$ say $\phi$ is {\em extremal} if it admits an expression
\begin{gather}\label{eqn:forms:wtzero:ext}
	\f
	=	
	a_{\frac{m-1}{4},0}{\rm ch}^{(m)}_{\frac{m-1}{4},0} 
	+  
	a_{\frac{m-1}{4},\frac{1}{2}}{\rm ch}^{(m)}_{\frac{m-1}{4},\frac{1}{2}} 
			+ \sum_{0<r<m}
			\sum_{\substack{n\in\ZZ\\ r^2-4mn< 0}}
			a_{\frac{m-1}{4}+n,\frac{r}{2}} {\rm ch}^{(m)}_{\frac{m-1}{4}+n,\frac{r}{2}}
\end{gather}
for some $a_{h,j}\in \CC$ (cf. (\ref{eqn:forms:sca:phisadecomp})) where the $N=4$ characters are as defined in (\ref{masslesschar}) and (\ref{massivechar}). Write $J^{\rm ext}_{0,m-1}$ for the subspace of $J_{0,m-1}$ consisting of extremal weak Jacobi forms. Note that the extremal condition restricts both the massless and massive $N=4$ representations that can appear, for generally there are non-zero massive characters ${\rm ch}^{(m)}_{\frac{m-1}{4}+n,\frac{r}{2}}$ with $r^2-4mn\geq 0$.

We observe here that the extremal condition has a very natural interpretation in terms of the mock modular forms of weight $1/2$ attached to weak Jacobi forms of weight $0$ via the procedure detailed in \S\ref{sec:forms:mero}. By comparing with (\ref{eqn:forms:sca:phisadecomp}) we find that the condition (\ref{eqn:forms:wtzero:ext}) on a weak Jacobi form $\phi$ of index $m-1$ is equivalent to requiring that the corresponding vector-valued mock modular form $(h_r)$ obtained from the theta-expansion (\ref{freedecom}) of the finite part of the weight $1$ Jacobi form $\psi=\Psi_{1,1}\f$ has a single polar term $q^{-\frac{1}{4m}}$ in the first component $h_1$ and has all other components vanishing as $\t\to i\inf$.

Our main result in this section is the following characterisation of extremal Jacobi forms.
\begin{thm}\label{thm:forms:wtzero:ext}
If $m$ is a positive integer then $\dim J^{\rm ext}_{0,m-1}=1$ in case $m-1$ divides $12$ and $\dim J^{\rm ext}_{0,m-1}=0$ otherwise.
\end{thm}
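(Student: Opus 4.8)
My plan is to split the statement into an unconditional uniqueness bound, $\dim J^{\rm ext}_{0,m-1}\le 1$ for every $m$, and an existence/non-existence analysis that produces the value $1$ exactly when $m-1\mid 12$. The uniqueness bound is essentially formal, while the existence analysis is where the arithmetic enters, through the connection to critical $L$-values signalled in the introduction.

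For uniqueness I would argue that an extremal form is rigid. Writing $\phi=\sum_s \tilde h_s\,\theta^{(m-1)}_s$ as in (\ref{eqn:forms:jac:thetaxpn}), the theta-coefficients $\tilde h_s$ are weakly holomorphic modular forms of weight $-1/2$; since a holomorphic modular form of negative weight vanishes, each $\tilde h_s$—and hence $\phi$ itself—is determined by its principal part. Reading the extremal condition (\ref{eqn:forms:wtzero:ext}) through the dictionary of \S\ref{sec:forms:mero}--\S\ref{sec:forms:sca}, it requires the associated weight-$1/2$ vector-valued mock modular form $(h_r)$ to have principal part equal to the single term $q^{-1/4m}$ in $h_1$, with every other component vanishing as $\tau\to i\infty$. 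This confines the principal part of $\phi$ to a one-dimensional space (the overall normalisation), so any two extremal forms are proportional and $\dim J^{\rm ext}_{0,m-1}\le 1$.

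For existence I would first reduce to a question about weight-$2$ cusp forms. The prescribed extremal principal part is realised by a genuine weak Jacobi form if and only if it is annihilated by the natural pairing with the relevant obstruction space; since $(h_r)$ has weight $1/2$ and index $m$ with shadow a weight-$3/2$ unary theta function, this obstruction space is, via the Shimura correspondence, the space of weight-$2$ cusp forms of level $m$, and the pairing against a newform $f$ evaluates (up to a nonzero constant) to the central value $L(f,1)$. I would package this as a lemma of independent interest---the statement cited as Theorem \ref{thm:forms:wtzero:ccv}. Consequently the existence of an extremal form of index $m-1$ forces $L(f,1)=0$ for every weight-$2$ newform $f$ of level $m$. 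The main obstacle is the ensuing number theory: to show that simultaneous vanishing of all these central values is impossible whenever $S_2(\Gamma_0(m))\ne 0$. I would invoke Ellenberg's non-vanishing estimates \cite{MR2176151}, which furnish a newform with $L(f,1)\ne 0$ once the level exceeds an explicit bound, and settle the finitely many remaining levels by direct computation. This restricts the surviving levels to those with $\Gamma_0(m)$ of genus zero---a finite, explicit list.

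Finally I would confront this finite list directly. Genus zero removes the $L$-value obstruction but does not by itself guarantee extremality: once the extremal principal part is realised, the resulting form carries determined constant and subleading terms, and extremality demands that the unwanted massless characters and all massive contributions with $r^2-4mn\ge 0$ vanish. Using Gritsenko's explicit bases \cite{Gri_EllGenCYMnflds} of the finite-dimensional spaces $J_{0,m-1}$, I would exhibit the candidate forms and check the condition (\ref{eqn:forms:wtzero:ext}) by hand, finding that it is met precisely for $m-1\mid 12$ and fails at the remaining genus-zero levels $m\in\{1,6,8,9,10,12,16,18,25\}$. Together with the uniqueness bound this gives $\dim J^{\rm ext}_{0,m-1}=1$ when $m-1\mid 12$ and $0$ otherwise. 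I expect the reduction to $L(f,1)$ together with the Ellenberg input to be the crux, the uniqueness bound and the concluding finite verification being comparatively routine.
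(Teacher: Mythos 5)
Your overall architecture---a uniqueness bound, reduction to central $L$-values, Ellenberg's estimates, and a concluding finite check against Gritsenko's bases \cite{Gri_EllGenCYMnflds}---is indeed the paper's, but two of your steps have genuine gaps. First, the uniqueness bound is not ``essentially formal.'' The extremal condition constrains the polar part of the weight-$1/2$ mock form $(h_r)$ attached to $\psi=\Psi_{1,1}\phi$ at index $m$; it does not directly pin down the principal part of $\phi$ itself at index $m-1$, and your inference from one to the other is unjustified. An extremal $\phi$ carries two a priori independent constants, $b=-a_{\frac{m-1}{4},\frac{1}{2}}$ (the polar coefficient of $h_1$) and $\chi=\phi(\t,0)$ (the multiple of the shadow), so extremal forms a priori form a two-parameter family: a linear combination chosen to kill the polar part of $(h_r)$ can still have non-zero shadow, and nothing in your sketch excludes this. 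Cutting the family down to one parameter is exactly the content of Lemma \ref{lem:forms:wtzero:nospinhalf}, proved via the quasimodular substitution $\tilde{\phi}=\exp(-8\pi^2(m-1)G_2(\t)z^2)\phi$, whose weight-$2$ Taylor coefficient forces $(m-1)\chi=12b$; and disposing of the residual forms with $b=\chi=0$ (Lemma \ref{lem:forms:wtzero:constvan}) requires Skoruppa's theorem \cite{Sko_Thesis} that weight-one Jacobi forms vanish. Your weight $-\tfrac{1}{2}$ valence argument cannot supply either ingredient: for $m-1>4$ the principal part of an index-$(m-1)$ weak Jacobi form is not controlled by its $q^0y^r$ coefficients (for instance $\zeta=\th_1^{12}/\eta^{12}$ lies in $J_{0,6}(q)$, so all its $q^0$ coefficients vanish, yet it has non-zero polar coefficients such as that of $qy^6$).

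Second, your $L$-value sieve does not reduce the problem to genus-zero levels. Theorem \ref{thm:forms:wtzero:ccv} concerns newforms of level exactly $m$, so it is vacuous whenever $S_2^{\rm new}(m)=\{0\}$; besides the genus-zero levels this happens for $m\in\{22,28,60\}$, where $S_2(m)\neq\{0\}$ but consists entirely of oldforms. Ellenberg's theorem likewise produces newforms of level exactly $m$, so it says nothing at these levels, and they are absent from your concluding finite verification, which ranges only over genus-zero $m$. (There is a further quantitative issue at composite levels that do possess newforms: Ellenberg's bounds degrade as the number of divisors grows, which is why the paper invokes them only at prime levels.) The paper closes precisely this hole before appealing to $L$-values: Propositions \ref{prop:forms:wtzero:AL} and \ref{prop:forms:wtzero:Ud} and Lemma \ref{lem:forms:wtzero:Vp} use Atkin--Lehner involutions and Hecke-like operators on the skew-holomorphic side to kill $J^{\rm ext}_{0,m-1}$ for every $m$ divisible by two distinct primes and every prime power $p^{\nu}$ with $\nu>2$, and to reduce $m=p^2$ to level-$p$ newforms, so that only $m\in\{2,3,4,5,7,9,13,25\}$ survive to the computation with Gritsenko's bases. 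Without these reductions (or an extension of your explicit check to indices $21$, $27$ and $59$), the levels $m=22,28,60$ remain unresolved in your argument.
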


In preparation for the proof of Theorem \ref{thm:forms:wtzero:ext} we now summarise (aspects of) a useful construction given in \cite{Gri_EllGenCYMnflds}. The graded ring 
\be
J_{0,*}= \bigoplus_{m \geq 1} J_{0,m-1}
\ee
of (weak) Jacobi forms of weight $0$ and integral index 
has an ideal
\be
J_{0,*}(q)=\bigoplus_{m>1}J_{0,m-1}(q)= \left\{ \phi \in J_{0,*} \mid \phi(\tau,z)= \sum_{\substack{n, r \in \ZZ\\n >0}} c(n,r) q^n y^r \right\}
\ee
consisting of Jacobi forms that vanish in the limit as $\t\to i\inf$ (i.e. have vanishing coefficient of $q^0y^r$, for all $r$, in their Fourier development). This ideal is principal and generated by a weak Jacobi
form of weight $0$ and index $6$ given by
\be
\zeta(\tau,z)= \frac{\theta_1(\tau,z)^{12}}{\eta(\tau)^{12}}
\ee
(cf. \S\ref{sec:modforms} for $\theta_1$ and $\eta$). Gritsenko shows \cite{Gri_EllGenCYMnflds} that for any positive integer $m$ the quotient $J_{0,m-1}/J_{0,m-1}(q)$ is a vector space of dimension $m-1$ admitting a basis consisting of weight $0$ index $m-1$ weak Jacobi forms $\varphi^{(m)}_{n}$ (denoted $\psi^{(n)}_{0,m-1}$ in \cite{Gri_EllGenCYMnflds}) for $1\leq n\leq m-1$ such that the coefficient of $q^0y^k$ in $\varphi^{(m)}_{n}$ vanishes for $|k|>n$ but does not vanish for $|k|= n$. In fact Gritsenko works in the subring $J^{\ZZ}_{0,*}$ of Jacobi forms having integer Fourier coefficients and his $\varphi^{(m)}_{n}$ furnish a $\ZZ$-basis for the $\ZZ$-module $J^{\ZZ}_{0,*}/J^{\ZZ}_{0,*}(q)$. 

We show now that there are no non-zero extremal Jacobi forms in the ideal $J_{0,*}(q)$.
\begin{lem}\label{lem:forms:wtzero:constvan}
If $\phi$ is an extremal weak Jacobi form belonging to $J_{0,*}(q)$ then $\phi=0$.
\end{lem}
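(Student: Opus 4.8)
The plan is to leverage the reformulation of extremality recorded just before the theorem: writing $\psi=\Psi_{1,1}\f$ as in \eq{psiphi}, the associated weight $1/2$ vector-valued form $(h_r)$ obtained from the theta-expansion \eq{freedecom} of the finite part $\psi^F$ has at most the single polar term $q^{-1/4m}$ in $h_1$, with all other components vanishing as $\t\to i\infty$. I would combine this with two consequences of the hypothesis $\f\in J_{0,*}(q)$. By the very definition of the ideal, the $q^0$-coefficient $\f_0(y)=\sum_r c(0,r)y^r$ of $\f$ vanishes identically as a Laurent polynomial in $y$; in particular the constant $\chi=\f(\t,0)$ of \eq{eqn:forms:mero:phi0} is zero.

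First I would record that $\chi=0$ forces the shadow $\chi S^{(m)}$ of $(h_r)$ to vanish, so that $(h_r)$ is a genuine (weakly holomorphic) weight $1/2$ vector-valued modular form for $\SL_2(\ZZ)$. Moreover, with $\chi=0$ the polar part $\psi^P=\chi\m_0^{(m)}$ of \eq{eqn:forms:mero:psipolarmu} vanishes, so $\psi=\psi^F=\sum_{r=1}^{m-1}h_r\hat\th^{(m)}_r$; and since the simple pole of $\psi$ at $z=0$ has residue proportional to $\chi$, it is removable, so $\psi$ is in fact a holomorphic weak Jacobi form of weight $1$ and index $m$.

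The key step is to show that $(h_r)$ has no polar part at all. From the expansion \eq{CoverA} one computes $\psi|_{q^0}=\tfrac{y+1}{y-1}\f_0(y)=0$, since $\f_0=0$. On the other hand, reading off the $q^0$-term through \eq{freedecom} and the leading behaviour $\th^{(m)}_{\pm r}=q^{r^2/4m}y^{\pm r}+\cdots$ of the index $m$ theta functions, one has $\psi|_{q^0}=\sum_{r=1}^{m-1}c_r(-r^2/4m)(y^{-r}-y^{r})$. Linear independence of the $y^{-r}-y^{r}$ then gives that every leading coefficient $c_r(-r^2/4m)$ vanishes; in particular the coefficient of $q^{-1/4m}$ in $h_1$ is zero. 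Together with extremality—which already forbids any subleading polar terms and any nonzero limit of $h_r$ for $r\ge 2$—this shows that $(h_r)$ is holomorphic and vanishes as $\t\to i\infty$ in every component.

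It remains to conclude that a holomorphic weight $1/2$ vector-valued modular form for $\SL_2(\ZZ)$ (with the theta multiplier dual to $S^{(m)}$) that vanishes at the unique cusp must be identically zero. This is the step I expect to be the main obstacle, and I would settle it by invoking the classification of weight $1/2$ holomorphic modular forms: by the Serre--Stark theorem such forms are spanned by the unary theta series $\t\mapsto\th^{(m)}_r(\t,0)=q^{r^2/4m}+\cdots$, none of which is cuspidal, so the space of weight $1/2$ cusp forms is trivial. (Equivalently one may phrase this as the vanishing of the space of weight $1$ index $m$ Jacobi cusp forms, to which $\psi$ belongs once $\psi|_{q^0}=0$.) Hence $(h_r)=0$, so $\psi=\psi^F=0$, and therefore $\f=\Psi_{1,1}^{-1}\psi=0$, as required.
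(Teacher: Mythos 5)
Your reduction is exactly the paper's: from $\phi\in J_{0,*}(q)$ you deduce $\chi=\phi(\tau,0)=0$, so the polar part of $\psi=\Psi_{1,1}\phi$ vanishes and $\psi=\psi^F$, and your $q^0$-computation (equivalent to the paper's observation that the vanishing of the coefficients of $q^0y^k$ forces $a_{\frac{m-1}{4},0}=a_{\frac{m-1}{4},\frac{1}{2}}=0$ in (\ref{eqn:forms:wtzero:ext})) kills the single polar term $q^{-1/4m}$ that extremality permits, so that $\psi$ is a genuine (indeed cuspidal) Jacobi form of weight $1$ and index $m$. The paper only needs boundedness of the $h_r$ (a strong Jacobi form); your cuspidality is a harmless strengthening. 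Up to this point everything is sound.

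The gap is in the final step, which you yourself flagged as the main obstacle. The Serre--Stark theorem does \emph{not} say that weight $1/2$ cusp forms vanish: it says the holomorphic weight $1/2$ forms on $\Gamma_0(4N)$ with character are spanned by theta series $\theta_{\psi,t}(\tau)=\sum_n\psi(n)q^{tn^2}$ with $\psi$ even and primitive, and those with $\psi$ nontrivial \emph{are} cusp forms --- for instance $\eta(24\tau)=\sum_{n\geq 1}\left(\tfrac{12}{n}\right)q^{n^2}$ is a nonzero weight $1/2$ cusp form on $\Gamma_0(576)$. Your basis description implicitly dropped the character twists, which are precisely the cuspidal generators, so the claim ``none of which is cuspidal, so the space of weight $1/2$ cusp forms is trivial'' is false and the argument as written fails. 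The statement you actually need is your parenthetical one --- the vanishing of the space of weight $1$ Jacobi (a fortiori cusp) forms of integral index --- but this is not equivalent to, nor a formal consequence of, the theta-series claim: it is Skoruppa's theorem \cite{Sko_Thesis}, which is exactly what the paper cites at this point. (Skoruppa's proof does use Serre--Stark, but requires a substantial further analysis showing that no nonzero linear combination of unary theta series can transform as the vector of theta-coefficients of a weight $1$ Jacobi form under the relevant Weil representation.) Replacing your deduction with that citation repairs the proof and makes it coincide with the paper's.
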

\begin{proof}
If $\phi$ belongs to $J_{0,*}(q)$ then the coefficients of $q^0y^k$ in the Fourier development of $\phi$ vanish for all $k$. This implies the vanishing of $a_{\frac{m-1}{4},0}$ and $a_{\frac{m-1}{4},\frac{1}{2}}$ in (\ref{eqn:forms:wtzero:ext}) where $m-1$ is the index of $\phi$, and this in turn implies that the meromorphic Jacobi form $\psi=\Psi_{1,1}\phi$ of weight $1$ and index $m$ coincides with its finite part $\psi=\psi^F=\sum_rh_r\hat\th^{(m)}_r$ (cf. (\ref{eqn:forms:mero:phi0}), (\ref{polar}), (\ref{freedecom})) and has theta-coefficients $h_r$ that remain bounded as $\t\to i\inf$. In particular, $\psi$ is a (strong) Jacobi form of weight $1$ and integral index but the space of such forms vanishes according to \cite{Sko_Thesis}, so $\phi$ must vanish also.
\end{proof}
As a consequence of Lemma \ref{lem:forms:wtzero:constvan} we obtain that there are no extremal Jacobi forms with vanishing massless contribution at spin $j=1/2$.
\begin{lem}\label{lem:forms:wtzero:nospinhalf}
If $\phi$ is an extremal weak Jacobi form of index $m-1$ such that $a_{\frac{m-1}{4},\frac{1}{2}}=0$ in (\ref{eqn:forms:wtzero:ext}) then $\phi=0$.
\end{lem}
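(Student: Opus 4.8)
The plan is to reduce to Lemma \ref{lem:forms:wtzero:constvan} by showing that the hypothesis $a_{\frac{m-1}{4},\frac12}=0$ forces $\phi$ into the ideal $J_{0,*}(q)$; that is, forces the coefficient of $q^0y^k$ in $\phi$ to vanish for every $k$. The massive characters ${\rm ch}^{(m)}_{\frac{m-1}{4}+n,\frac r2}$ appearing in \eqref{eqn:forms:wtzero:ext} all have $n\ge 1$, and so contribute only to positive powers of $q$; hence the entire $q^0$-part of $\phi$ comes from the two massless characters, and the first step is to compute the $q^0$-parts of these. I would extract their leading $q$-coefficients from \eqref{masslesschar}, using that, as $\tau\to i\infty$, only the $k=0$ summand of $\mu^{(m)}_j$ in \eqref{gAPsum} and the leading term $\tfrac{y+1}{y-1}$ of $\Psi_{1,1}$ in \eqref{CoverA} survive. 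A short calculation gives ${\rm ch}^{(m)}_{\frac{m-1}{4},0}\big|_{q^0}=1$, a \emph{nonzero constant independent of $y$}, and ${\rm ch}^{(m)}_{\frac{m-1}{4},\frac12}\big|_{q^0}=-(y+y^{-1})$, so that
\[
\phi\big|_{q^0}=a_{\frac{m-1}{4},0}-a_{\frac{m-1}{4},\frac12}\big(y+y^{-1}\big).
\]
Under the hypothesis $a_{\frac{m-1}{4},\frac12}=0$ this reduces to the constant $a_{\frac{m-1}{4},0}$.

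The key step is then to rule out a nonzero constant as the $q^0$-part of a weak Jacobi form of weight $0$ and positive index. Here I would invoke Gritsenko's description recalled just above: $\phi$ is congruent modulo $J_{0,m-1}(q)$ to a combination $\sum_{n=1}^{m-1}\lambda_n\varphi^{(m)}_n$, and since each $\varphi^{(m)}_n$ has nonvanishing $q^0y^{\pm n}$-coefficient (top $y$-degree exactly $n\ge 1$), any nonzero such combination has $q^0$-part of top $y$-degree at least $1$. A nonzero constant has $y$-degree $0$, so matching this against $\phi\big|_{q^0}=a_{\frac{m-1}{4},0}$ forces all $\lambda_n=0$, whence $a_{\frac{m-1}{4},0}=0$.

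With both massless coefficients vanishing we get $\phi\big|_{q^0}=0$, so $\phi\in J_{0,m-1}(q)\subset J_{0,*}(q)$, and Lemma \ref{lem:forms:wtzero:constvan} then yields $\phi=0$. I expect the main obstacle to be the bookkeeping in the first step: one must correctly isolate the $\tau\to i\infty$ limits of the meromorphic objects $\Psi_{1,1}$ and $\mu^{(m)}_j$ (both singular at $z=0$) and verify that neither the $k\ne 0$ lattice terms in \eqref{gAPsum} nor the pole at $z=0$ contaminate the $q^0$-part. Everything downstream is a clean appeal to the already-established Lemma \ref{lem:forms:wtzero:constvan} together with Gritsenko's basis, and in fact only the value ${\rm ch}^{(m)}_{\frac{m-1}{4},0}\big|_{q^0}=1$ is logically needed once $a_{\frac{m-1}{4},\frac12}$ is set to zero.
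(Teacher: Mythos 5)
Your proof is correct, and it reaches the conclusion by a genuinely different mechanism than the paper, even though the two arguments share their endpoints. Both begin from the observation you compute — the paper likewise records $\phi=a+b(y+y^{-1})+O(q)$ with $a=a_{\frac{m-1}{4},0}$ and $b=-a_{\frac{m-1}{4},\frac{1}{2}}$, matching your character values (and your flagged worry is unfounded: for $j\in\{0,\tfrac{1}{2}\}$ each $k\neq 0$ summand of (\ref{gAPsum}), after expanding $(1-yq^k)^{-1}$ in the appropriate range, enters at order at least $q^{m-1}$, so nothing contaminates $q^0$ for $m\geq 2$) — and both finish by placing $\phi$ in $J_{0,m-1}(q)$ and invoking Lemma \ref{lem:forms:wtzero:constvan}. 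The divergence is in how the constant $a$ is killed. You use the triangularity in $y$-degree of Gritsenko's basis: since $\varphi^{(m)}_{n}$ has nonvanishing $q^0y^{\pm n}$-coefficient and none beyond, a constant $q^0$-part forces all coordinates of $\phi$ modulo $J_{0,m-1}(q)$ to vanish. The paper instead forms $\tilde{\phi}=\exp(-8\pi^2(m-1)G_2(\t)z^2)\phi$, observes that the Taylor coefficient $f_2$ is a weight $2$ modular form for $\SL_2(\ZZ)$ and hence zero, and reads off from its constant term the identity $(m-1)\chi=12b$, which with $b=0$ gives $\chi=a=0$. Your route is more elementary — linear algebra on the $q^0$ polynomial, with no quasimodular $G_2$ bookkeeping or Taylor-coefficient computation — and it proves the slightly stronger fact that no nonzero weak Jacobi form of weight $0$ and index $m-1\geq 1$ has constant $q^0$-part. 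What the paper's computation buys, and yours does not, is the quantitative relation $(m-1)\chi=12b$ itself, which is reused in the remark following the definition of the umbral Jacobi forms $Z^{(\ell)}$ to explain why precisely the divisors of $12$ admit extremal forms with $b=1$ and $a$ integral; the $f_2$ identity thus does double duty in the paper beyond proving this lemma.
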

\begin{proof}
If $\phi$ is of weight $0$ and index $m-1$ satisfying (\ref{eqn:forms:wtzero:ext}) then $\phi=a+b(y+y^{-1})+O(q)$ as $\t\to i\inf$ where $a=a_{\frac{m-1}{4},0}$ and $b=-a_{\frac{m-1}{4},\frac{1}{2}}$. Following \cite{Gri_EllGenCYMnflds} we set 
\be
\tilde{\phi}(\t,z)=\exp(-8\pi^2(m-1)G_2(\t)z^2)\phi(\t,z)
\ee
where $G_2(\t)=-\frac{1}{24}+\sum_{n>0}\sigma_1(n)q^n$ is the unique up to scale mock modular form of weight $2$ for $\SL_2(\ZZ)$ (with shadow a constant function) and consider the {\em Taylor expansion} $\tilde{\phi}(\t,z)=\sum_{n\geq 0} f_{n}(\t)z^{n}$. Then $f_0(\t)=\chi=a+2b$ and more generally $f_n(\t)$ is a modular form of weight $n$ for $\SL_2(\ZZ)$. In particular $f_2(\t)=0$. On the other hand the constant term of $f_2(\t)$ is $\frac{1}{3}\pi^2(m-1)\chi-4\pi^2b$ so $(m-1)\chi=12b$. By hypothesis $b=0$ so $\chi=0$ and $\phi$ belongs to $J_{0,m-1}(q)$ and thus vanishes according to Lemma \ref{lem:forms:wtzero:constvan}.
\end{proof}

Applying Lemma \ref{lem:forms:wtzero:nospinhalf} we obtain that the dimension of $J^{\rm ext}_{0,m-1}$ is at most $1$ for any $m$.
\begin{prop}\label{prop:forms:wtzero:dimleq1}
We have $\dim J^{\rm ext}_{0,m-1}\leq 1$ for any positive integer $m$.
\end{prop}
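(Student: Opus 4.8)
The plan is to exhibit an injective linear functional from $J^{\rm ext}_{0,m-1}$ into $\CC$, from which the bound on the dimension follows immediately. First I would record that $J^{\rm ext}_{0,m-1}$ is genuinely a linear subspace of $J_{0,m-1}$: by the uniqueness of the $N=4$ character decomposition \eqref{eqn:forms:sca:phisadecomp}, the extremal condition \eqref{eqn:forms:wtzero:ext} amounts to the vanishing of a specific collection among the coefficients $a_{h,j}$, and these are linear conditions on $\phi$.

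Next I would consider the map $\ell : J^{\rm ext}_{0,m-1} \to \CC$ sending an extremal form $\phi$ to the coefficient $a_{\frac{m-1}{4},\frac{1}{2}}$ appearing in its expansion \eqref{eqn:forms:wtzero:ext}. Equivalently, as observed in the proof of Lemma \ref{lem:forms:wtzero:nospinhalf}, one has $\phi = a + b(y+y^{-1}) + O(q)$ as $\t \to i\infty$ with $a = a_{\frac{m-1}{4},0}$ and $b = -a_{\frac{m-1}{4},\frac{1}{2}}$, so $\ell$ simply reads off (a multiple of) the coefficient of $q^0 y$ in the Fourier development of $\phi$. In either description $\ell$ is manifestly linear, and it is well defined precisely because the character decomposition is unique.

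The key input is that $\ell$ is injective, and this is exactly the content of Lemma \ref{lem:forms:wtzero:nospinhalf}: if $\ell(\phi)=0$, i.e. $a_{\frac{m-1}{4},\frac{1}{2}}=0$, then $\phi = 0$. Since $\ell$ is an injective linear map into the one-dimensional space $\CC$, we conclude that $\dim J^{\rm ext}_{0,m-1} \le 1$ for every positive integer $m$.

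There is no real obstacle remaining at this stage; the substantive work has already been carried out in Lemmas \ref{lem:forms:wtzero:constvan} and \ref{lem:forms:wtzero:nospinhalf}, in particular the appeal to Skoruppa's vanishing theorem for (strong) weight-one Jacobi forms and the Taylor-coefficient computation forcing $(m-1)\chi = 12b$. The only point deserving a moment's care is confirming that $\phi \mapsto a_{\frac{m-1}{4},\frac{1}{2}}$ is a well-defined linear functional, which follows from the uniqueness of the character decomposition; everything else is then immediate from the rank–nullity count for a linear map with trivial kernel and one-dimensional target.
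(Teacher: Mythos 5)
Your proposal is correct and is essentially the paper's argument: the paper proves the same bound by taking two extremal forms $\phi'$, $\phi''$ and choosing $c\in\CC$ so that $\phi'-c\phi''$ has vanishing $a_{\frac{m-1}{4},\frac{1}{2}}$, then invoking Lemma \ref{lem:forms:wtzero:nospinhalf} to conclude $\phi'-c\phi''=0$, which is precisely the injectivity of your linear functional $\ell(\phi)=a_{\frac{m-1}{4},\frac{1}{2}}$ phrased concretely. Your packaging via an injective map into $\CC$ is a harmless reformulation, with all substantive content residing, as you note, in Lemmas \ref{lem:forms:wtzero:constvan} and \ref{lem:forms:wtzero:nospinhalf}.
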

\begin{proof}
If $\phi'$ and $\phi''$ are two elements of $J^{\rm ext}_{0,m-1}$ then there exists $c\in \CC$ such that $\phi=\phi'-c\phi''$ is extremal and has vanishing $a_{\frac{m-1}{4},\frac{1}{2}}$ in (\ref{eqn:forms:wtzero:ext}). Then $\phi=0$ according to Lemma \ref{lem:forms:wtzero:nospinhalf} and thus $\phi'$ belongs to the linear span of $\phi''$. 
\end{proof}

We now present a result which ultimately shows that there are only finitely many $m$ for which $J^{\rm ext}_{0,m-1}\neq\{0\}$, and also illustrates the depth of the characterisation problem. In preparation for it let us write $S_2(m)$ for the space of cusp forms of weight $2$ for $\Gamma_0(m)$ and recall that $f\in S_2(m)$ is called a {\em newform} if it is a Hecke eigenform, satisfying $T(n)f=\lambda_f(n)f$ whenever $(n,m)=1$ for some $\lambda_f(n)\in\CC$, that is uniquely specified (up to scale) in $S_2(m)$ by its Hecke eigenvalues $\{\lambda_f(n)\mid (n,m)=1\}$ (cf. \cite[\S IX.7]{MR1193029}). Write $S_2^{\rm new}(m)$ for the subspace of $S_2(m)$ spanned by newforms. Given $f\in S_2(m)$ define $L(f,s)=\sum_{n>0}a_f(n)n^{-s}$ for $\Re(s)>1$ when $f(\t)=\sum_{n>0}a_f(n)q^n$. Then $L(f,s)$ is called the {\em Dirichlet $L$-function} attached to $f$ and admits an analytic continuation to $s\in\CC$ (cf. \cite[\S IX.4]{MR1193029} or \cite[\S3.6]{Shi_IntThyAutFns}.) To a newform $f$ in $S_2(m)$ (i.e. a Hecke eigenform in $S^{\rm new}_2(m)$) Eichler--Shimura theory attaches an Elliptic curve $E_f$ defined over $\QQ$ (cf. \cite[\S XI.11]{MR1193029}), and the Birch--Swinnerton-Dyer conjecture predicts that $E_f$ has rational points of infinite order whenever $L(f,s)$ vanishes at $s=1$ (cf. \cite{MR0179168,MR2238272}).
\begin{thm}\label{thm:forms:wtzero:ccv} 
If $m$ is a positive integer and $J^{\rm ext}_{0,m-1}\neq \{0\}$ then $L(f,1)=0$ for every $f\in S_2^{\rm new}(m)$. 
\end{thm}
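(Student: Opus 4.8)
The plan is to convert the existence of a nonzero extremal form into a statement about a weight $1/2$ mock modular form with prescribed shadow and principal part, and then to read off the vanishing of $L(f,1)$ from a pairing between this form and the weight $3/2$ cusp forms attached to weight $2$ newforms. First I would use the analysis of \S\ref{sec:forms:mero}--\S\ref{sec:forms:wtzero}: a nonzero $\phi\in J^{\rm ext}_{0,m-1}$ produces, via $\psi=\Psi_{1,1}\phi$ and the theta-expansion of the finite part $\psi^F$, a vector-valued mock modular form $h=(h_r)$ of weight $1/2$ whose shadow is $\chi S^{(m)}=(\chi S^{(m)}_r)$, where $\chi=\phi(\t,0)\neq 0$ (if $\chi=0$ then $\phi\in J_{0,*}(q)$ and $\phi=0$ by Lemma \ref{lem:forms:wtzero:constvan}). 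By the equivalence recorded after \eqref{eqn:forms:wtzero:ext}, the extremal condition says precisely that the holomorphic part of $h$ has the single polar term $q^{-1/4m}$ in its first component $h_1$, with all remaining components vanishing as $\t\to i\inf$. Passing to the completion $\hat h$ yields a (vector-valued) harmonic Maass form of weight $1/2$ whose image under $\xi_{1/2}$ is proportional to $S^{(m)}$.

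Next I would invoke the standard Bruinier--Funke/Stokes pairing between such a weight $1/2$ harmonic Maass form and a cusp form $F=(F_r)$ of weight $3/2$ transforming with the same multiplier system as $S^{(m)}$: on one side it equals the Petersson product $\langle \xi_{1/2}(\hat h),F\rangle$, and on the other a finite sum over the principal part of $h$ paired against the Fourier coefficients of $F$. Because $h$ carries only the one polar term, this identity takes the form
\be
\langle \chi S^{(m)},F\rangle \;=\; c\,\overline{a_{F,1}(1/4m)}
\ee
for an explicit nonzero constant $c$, where $a_{F,1}(1/4m)$ is the coefficient of $q^{1/4m}$ in $F_1$. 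I would then specialise $F=F_f$ to the weight $3/2$ Hecke eigenform attached to a given $f\in S_2^{\rm new}(m)$ through the Shimura--Shintani--Waldspurger correspondence. Since the unary theta series $S^{(m)}$ lies in the span of forms whose Shimura lift is Eisenstein, and the correspondence is Hecke-equivariant, $S^{(m)}$ is Petersson-orthogonal to the cuspidal eigenform $F_f$; hence the left-hand side vanishes and $a_{F_f,1}(1/4m)=0$.

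The final link is to identify this distinguished coefficient (the one indexed by the discriminant attached to $r=1$, $n=0$) with a period of $f$ proportional to the critical value $L(f,1)$, so that its vanishing forces $L(f,1)=0$ for every $f\in S_2^{\rm new}(m)$. I expect this last step to be the main obstacle: one must make precise, at the half-integral weight $3/2$, the dictionary between that Fourier coefficient of $F_f$ and the \emph{untwisted} central value $L(f,1)$, verify the orthogonality claim (that $S^{(m)}$ really pairs to zero with every cuspidal partner $F_f$), and confirm that $r=1$, $n=0$ corresponds to the trivial quadratic character, so that one obtains $L(f,1)$ and not a twist. This requires care with the relevant Weil representation and plus-space, with the normalisations in the Shimura--Waldspurger formula, and with checking that every weight $2$ newform of level $m$ acquires such a half-integral weight partner. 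Everything preceding it---the reduction to $h$, the nonvanishing of $\chi$, and the pairing identity---is routine given \S\ref{sec:forms:mero}--\S\ref{sec:forms:wtzero} and standard harmonic Maass form theory.
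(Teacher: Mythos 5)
Your proposal is correct and is essentially the paper's own argument, translated from the language of skew-holomorphic Jacobi forms into the equivalent language of vector-valued harmonic Maass forms of weight $1/2$: the paper likewise takes the finite part $\psi^F$ of $\psi=\Psi_{1,1}\phi$, pairs it against weight $2$ skew-holomorphic Jacobi forms via the Bruinier--Funke pairing, evaluates that pairing both as a Petersson product with the shadow $\chi\sigma^{(m)}$ and as the polar coefficient (nonzero by Lemma \ref{lem:forms:wtzero:nospinhalf}) times $C_\varphi(1,1)$, and uses orthogonality of the ``trivial type'' theta series $\sigma^{(m)}$ to cuspidal Hecke eigenforms. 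The obstacle you flag at the end is exactly where the paper leans on Skoruppa: newforms whose completed $L$-function has odd sign satisfy $L(f,1)=0$ automatically, those with even sign acquire a skew-holomorphic partner $\varphi_f$ by Theorem 1 of \cite{MR1116103}, and the explicit (Waldspurger-type) formula at the bottom of p.~503 of \cite{MR1074485}, applied with $(\Delta,r)=(1,1)$, identifies the vanishing of $C_{\varphi_f}(1,1)$ with the vanishing of the untwisted central value $L(f,1)$.
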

\begin{proof}
Let $m$ be a positive integer. For $L=\sqrt{2m}\ZZ$ the spaces $H^+_{k,L}$ and $S_{2-k,L^-}$ of \cite[\S3]{BruFun_TwoGmtThtLfts} are naturally isomorphic to $\mathbb{J}_{k+1/2,m}$ and $S^{\rm skew}_{5/2-k,m}$, respectively, where $\mathbb{J}_{k,m}$ denotes the space of weak (pure) mock Jacobi forms of weight $k$ and index $m$ (cf. \cite[\S7.2]{Dabholkar:2012nd}) and $S^{\rm skew}_{k,m}$ denotes the space of cuspidal skew-holomorphic Jacobi forms of weight $k$ and index $m$ (cf. \cite{MR1074485}). Translating the construction (3.9) of \cite{BruFun_TwoGmtThtLfts} into this language, and taking $k=1/2$, we obtain a pairing 
\begin{gather}\label{eqn:forms:wtzero:ext:SJpairing}
\{\cdot\,,\cdot\}:S^{\rm skew}_{2,m}\times\mathbb{J}_{1,m}\to \CC.
\end{gather}

If $J^{\rm ext}_{0,m-1}\neq \{0\}$ then let $\phi\in J^{\rm ext}_{0,m-1}$ be a non-zero extremal Jacobi form and set $\psi=\Psi_{1,1}\phi$ as in \S\ref{sec:forms:mero}. Then the finite part $\psi^F$ of $\psi$ belongs to $\mathbb{J}_{1,m}$ and the shadow of $\psi^F$ (cf. \cite[\S7.2]{Dabholkar:2012nd}) is the cuspidal skew-holomorphic Jacobi form $\chi \sigma^{(m)}\in S^{\rm skew}_{2,m}$ where $\chi=\phi(\tau,0)$ and
\begin{gather}
	\sigma^{(m)}(\t,z)=\sum_{\text{$r$ mod $2m$}}\overline{S^{(m)}_{r}(\t)}{\theta}^{(m)}_r(\t,z).
\end{gather} 
Consider the linear functional $\lambda_{\phi}$ on $S^{\rm skew}_{2,m}$ defined by setting $\lambda_{\phi}(\varphi)=\{\varphi,\psi^F\}$. According to the definition of (\ref{eqn:forms:wtzero:ext:SJpairing}) (i.e. (3.9) of \cite{BruFun_TwoGmtThtLfts}) we have 
\begin{gather}\label{eqn:forms:wtzero:ext:lambdasigma}
	\lambda_{\phi}(\varphi)=C_m\langle\varphi,\chi\sigma^{(m)}\rangle
\end{gather}
for some (non-zero) constant $C_m$ (depending only on $m$) where $\langle\cdot\,,\cdot\rangle$ denotes the Petersson inner product on $S^{\rm skew}_{2,m}$ (cf. \cite{MR1074485}). On the other hand, inspection reveals that $h_{\pm 1}(\t)=\mp a_{\frac{m-1}{4},\frac{1}{2}}q^{-1/4m}(1+O(q))$ when $\psi^F=\sum_{\text{$r$ mod $2m$}}h_r{\th}^{(m)}_r$ so 
\begin{gather}\label{eqn:forms:wtzero:ext:lambdaC11}
	\lambda_{\phi}(\varphi)=-2a_{\frac{m-1}{4},\frac{1}{2}}C_{\varphi}(1,1)
\end{gather}
according to Proposition 3.5 of \cite{BruFun_TwoGmtThtLfts} where $\varphi(\t,z)=\sum C_{\varphi}(\Delta,r)\bar{q}^{\Delta/4m}q^{r^2/4m}y^r$ is the Fourier expansion of $\varphi$ (cf. \cite{MR1074485}). Applying Lemma \ref{lem:forms:wtzero:nospinhalf} we deduce from (\ref{eqn:forms:wtzero:ext:lambdaC11}) that $\lambda_{\phi}(\varphi)=0$ for $\varphi\in S^{\rm skew}_{2,m}$ if and only if $C_{\varphi}(1,1)=0$.

Now let $f$ be a newform in $S_2(m)$. If $\Lambda(f,s)=(2\pi)^{-s}m^{s/2}\G(s)L(f,s)$ then either $\Lambda(f,s)=\Lambda(f,2-s)$ or $\Lambda(f,s)=-\Lambda(f,2-s)$. In the latter case $L(f,1)$ necessarily vanishes. In the former case Theorem 1 of \cite{MR1116103} attaches a skew-holomorphic Jacobi form $\varphi_f\in P^{\rm skew}_{2,m}$ to $f$ having the same eigenvalues as $f$ under the Hecke operators $T(n)$ for $(n,m)=1$. (See \cite{MR1116103} for the action of Hecke operators on skew-holmorphic Jacobi forms.) Here $P^{\rm skew}_{2,m}$ denotes the subspace of $S^{\rm skew}_{2,m}$ spanned by cuspidal Hecke eigenforms which are not of the {trivial type}: say $\varphi\in S^{\rm skew}_{2,m}$ is of the {\em trivial type} if the Fourier coefficients $C_{\varphi}(\Delta,r)$ are non-vanishing only when $\Delta$ is a perfect square. Cusp forms of the trivial type are orthogonal to $P^{\rm skew}_{2,m}$ with respect to the Petersson inner product (cf. the proof of Theorem 3 in \cite{MR1116103}) and $\sigma^{(m)}$ is a cusp form of the trivial type, so we have $\lambda_{\phi}(\varphi_f)=0$ by virtue of (\ref{eqn:forms:wtzero:ext:lambdasigma}) and hence $C_{\varphi_f}(1,1)=0$ according to the previous paragraph. The proposition now follows by taking $(\Delta,r)=(1,1)$ in the formula at the bottom of p. 503 of \cite{MR1074485}.
\end{proof}

Applying Theorem \ref{thm:forms:wtzero:ccv} we see, for example, that there are no non-zero extremal Jacobi forms of index $10$, for it is known that $L(f,1)\neq 0$ for $f=\eta(\tau)^2\eta(11\tau)^2\in S_2(11)$. (Cf. \cite[p. 504]{MR1074485} where the corresponding Jacobi form $\varphi_f$ is computed explicitly.)

Duke showed \cite{MR1309975} that there is a constant $C$ such that at least $Cm/\log^2 m$ newforms $f\in S_2(m)$ satisfy $L(f,1)\neq 0$ when $m$ is a sufficiently large prime, and Ellenberg proved an effective version of this result \cite{MR2176151} that is valid also for composite $m$. We will apply the formulas of Ellenberg momentarily in order to deduce an upper bound on the $m$ for which $J^{\rm ext}_{0,m-1}\neq \{0\}$, but since these formulas give better results for values of $m$ with fewer divisors we first show that $J^{\rm ext}_{0,m-1}$ must vanish whenever $m$ is divisible by more than one prime, and whenever $m=p^{\nu}$ is a prime power with $\nu>2$.
\begin{prop}\label{prop:forms:wtzero:AL}
If $m$ is divisible by more than one prime then $J^{\rm ext}_{0,m-1}=\{0\}$.
\end{prop}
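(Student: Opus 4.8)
The plan is to exploit the pairing identity established \emph{inside} the proof of Theorem~\ref{thm:forms:wtzero:ccv}, rather than its conclusion about $L$-values: the latter is vacuous here, since a level $m$ with several prime divisors may well have $S_2^{\rm new}(m)=\{0\}$ (e.g. $m=6$), so that no newform is available to contradict. Suppose then that $\phi\in J^{\rm ext}_{0,m-1}$, put $\psi=\Psi_{1,1}\phi$ and $\chi=\phi(\tau,0)$ as in \S\ref{sec:forms:mero}, and recall from (\ref{eqn:forms:wtzero:ext:lambdasigma}) and (\ref{eqn:forms:wtzero:ext:lambdaC11}) that
\begin{gather*}
C_m\,\chi\,\langle\varphi,\sigma^{(m)}\rangle=-2\,a_{\frac{m-1}{4},\frac12}\,C_{\varphi}(1,1)
\end{gather*}
holds for \emph{every} $\varphi\in S^{\rm skew}_{2,m}$, where $\sigma^{(m)}=\sum_{r}\overline{S^{(m)}_r}\,\theta^{(m)}_r$ is the (trivial type) shadow of $\psi^F$ and $C_m\neq0$. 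The idea is to feed two different skew-holomorphic Jacobi forms with the same $(1,1)$ coefficient into this identity and play the resulting left-hand sides against one another.

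Since $m$ is divisible by more than one prime it carries a nontrivial quadratic genus character $\epsilon$ on the discriminant form $\ZZ/2m\ZZ$, and I would set
\begin{gather*}
\sigma^{(m)}_{\epsilon}(\tau,z)=\sum_{\text{$r$ mod $2m$}}\epsilon(r)\,\overline{S^{(m)}_r(\tau)}\,\theta^{(m)}_r(\tau,z).
\end{gather*}
Granting that $\sigma^{(m)}_\epsilon\in S^{\rm skew}_{2,m}$ is again a cusp form of the trivial type, its $(1,1)$ Fourier coefficient is $\epsilon(1)=1$, exactly as for $\sigma^{(m)}$, because the monomial $\bar q^{1/4m}q^{1/4m}y$ is produced only by the $r=1$ summand. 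Substituting $\varphi=\sigma^{(m)}$ and $\varphi=\sigma^{(m)}_\epsilon$ into the displayed identity therefore yields equal right-hand sides, and subtracting gives
\begin{gather*}
C_m\,\chi\,\big(\langle\sigma^{(m)},\sigma^{(m)}\rangle-\langle\sigma^{(m)}_\epsilon,\sigma^{(m)}\rangle\big)=0
\end{gather*}
(the scalar $\chi$ factoring out of the inner product). Because the Petersson product of skew-holomorphic Jacobi forms is diagonal in the theta-components, the parenthesised difference equals a positive multiple of $\sum_r\big(1-\epsilon(r)\big)\,\|S^{(m)}_r\|^2$, which is strictly positive: one has $\|S^{(m)}_r\|^2>0$ for $r\not\equiv0,m\pmod{2m}$, while $\epsilon(r)\neq1$ for some such $r$ by nontriviality of $\epsilon$. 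With $C_m\neq0$ this forces $\chi=\phi(\tau,0)=0$, and then the relation $(m-1)\chi=12b=-12\,a_{\frac{m-1}{4},\frac12}$ from the proof of Lemma~\ref{lem:forms:wtzero:nospinhalf} gives $a_{\frac{m-1}{4},\frac12}=0$, whence $\phi=0$ by that lemma.

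The step demanding the most care---and the genuine content of the hypothesis on the prime factorisation---is the claim that $\sigma^{(m)}_\epsilon$ is a bona fide skew-holomorphic Jacobi cusp form. This amounts to checking that the genus character $\epsilon$ defines a self-intertwiner of the Weil representation carried by the unary theta vector $(S^{(m)}_r)$, so that $(\epsilon(r)S^{(m)}_r)$ is once more a vector-valued modular form of weight $3/2$; this is the standard theory of genus characters attached to discriminant forms, and it is precisely here that two distinct prime divisors are needed, for when $m=p^\nu$ is a prime power the discriminant form is $p$-local and admits no nontrivial genus character, so no twisting partner $\sigma^{(m)}_\epsilon$ exists and the argument correctly fails. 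The remaining inputs---the diagonality and positivity of the Petersson product, together with $S^{(m)}_0=S^{(m)}_m=0$ against $S^{(m)}_r\neq0$ otherwise---are elementary.
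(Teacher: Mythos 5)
Your overall strategy is structurally the same as the paper's---evaluate the pairing $\lambda_{\phi}$ on a second trivial-type cusp form in two ways via (\ref{eqn:forms:wtzero:ext:lambdasigma}) and (\ref{eqn:forms:wtzero:ext:lambdaC11}) and derive a contradiction---and several of your supporting points are correct: Theorem \ref{thm:forms:wtzero:ccv} alone is indeed vacuous when $S^{\rm new}_2(m)=\{0\}$ (e.g.\ $m=6$), the Petersson product is diagonal and positive on the nonzero theta components, and the relation $(m-1)\chi=12b$ reduces everything to Lemma \ref{lem:forms:wtzero:nospinhalf}. But there is a genuine gap at exactly the step you flag: the form $\sigma^{(m)}_{\epsilon}=\sum_r\epsilon(r)\overline{S^{(m)}_r}\theta^{(m)}_r$ does not exist as an element of $S^{\rm skew}_{2,m}$ for \emph{any} nontrivial $\epsilon$, whether or not $m$ is composite. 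A diagonal map $D:(a_r)\mapsto(\epsilon(r)a_r)$ commutes with the $T$-transformation of the (dual) Weil representation automatically, but the $S$-transformation acts through the finite Fourier transform, whose matrix entries are nonzero multiples of $e(rr'/2m)$; the relation $D\rho(S)=\rho(S)D$ then forces $\epsilon(r)=\epsilon(r')$ for all $r,r'$, i.e.\ $\epsilon$ is constant. Thus $\bigl(\epsilon(r)S^{(m)}_r\bigr)$ is not modular for the representation carried by $\bigl(S^{(m)}_r\bigr)$ (at best it is modular on a proper subgroup with a different multiplier), so neither (\ref{eqn:forms:wtzero:ext:lambdasigma}) nor (\ref{eqn:forms:wtzero:ext:lambdaC11}) may be applied to $\sigma^{(m)}_{\epsilon}$, and your subtraction argument never gets started. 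In particular the dichotomy you invoke (``a twisting partner exists iff $m$ has two distinct prime factors'') is not where compositeness enters; diagonal twists fail for every $m$.

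What the discriminant form $(\ZZ/2m\ZZ,\,r^2/4m)$ genuinely supplies are its \emph{automorphisms}, which intertwine the Weil representation by permuting components, $r\mapsto r^*$; these are the Atkin--Lehner involutions, and here is where the hypothesis on $m$ is used: for $m=p^{\nu}$ the only automorphisms are $r\mapsto\pm r$, which act on $\sigma^{(m)}$ by a scalar and yield nothing, whereas for $m=p^{\nu}d$ with $(p,d)=1$ and $d>1$ there is the proper involution $r^*\equiv-r\pmod{2p^{\nu}}$, $r^*\equiv r\pmod{2d}$. This is precisely the paper's proof: taking $\varphi=W_{p^{\infty}}\sigma^{(m)}=\sum_r\overline{S^{(m)}_{r^*}}\theta^{(m)}_r$, one has $C_{\varphi}(1,1)=0$ since $1^*\not\equiv\pm1\pmod{2m}$, so (\ref{eqn:forms:wtzero:ext:lambdaC11}) gives $\lambda_{\phi}(\varphi)=0$; on the other hand (\ref{eqn:forms:wtzero:ext:lambdasigma}) and the Rankin--Selberg evaluation (\ref{eqn:forms:wtzero:ipsmrs}) give $\lambda_{\phi}(\varphi)$ as a nonzero multiple of $\overline{\chi}\,(d-p^{\nu})$, which is nonzero because $\phi\neq0$ forces $\chi\neq0$ (Lemma \ref{lem:forms:wtzero:nospinhalf}) and $(p,d)=1$, $d>1$ forces $d\neq p^{\nu}$. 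Your argument can be repaired by substituting this permutation twist for the diagonal one, but the contradiction then takes the paper's form (a pairing that is simultaneously zero and nonzero) rather than yours (equal right-hand sides against unequal left-hand sides).
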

\begin{proof}
Suppose that $m=p^{\nu}d$ with $p$ prime, $\nu>0$, $d>1$ and $(p,d)=1$. With notation as in the proof of Theorem \ref{thm:forms:wtzero:ccv} we suppose $\phi\in J^{\rm ext}_{0,m-1}$ is non-zero and consider the linear functional $\lambda_{\phi}: S^{\rm skew}_{2,m}\to\CC$. The Atkin--Lehner involution $W_{p^{\infty}}$ (cf. \cite[\S4.4]{Dabholkar:2012nd}) acts on $S^{\rm skew}_{2,m}$ in such a way that 
\begin{gather}
	(W_{p^{\infty}}\varphi)(\t,z)=\sum_{\text{$r$ mod $2m$}}\overline{g_{r^*}(\t)}\th^{(m)}_r(\t,z)
\end{gather}
when $\varphi=\sum_{\text{$r$ mod $2m$}}\overline{g_r}\th^{(m)}_r$ is the theta-decomposition of $\varphi\in S^{\rm skew}_{2,m}$ and where the map $r\mapsto r^*$ is defined so that $r^*$ is the unique solution (mod $2m$) to $r^*\equiv -r\pmod{2p^{\nu}}$ and $r^*\equiv r\pmod{2d}$. 

By the definition of $\lambda_{\phi}$ we have $\lambda_{\phi}(\varphi)=\{\varphi,\psi^F\}=\overline{\chi}\sum_{r}\langle g_r,S^{(m)}_r\rangle$ with $\varphi$ as above since the shadow of $\psi^F$ is $\chi\sigma^{(m)}=\chi\sum_{r}\overline{S^{(m)}_r}\th^{(m)}_r$. Taking $\varphi=W_{p^{\infty}}\sigma^{(m)}$ we see that $\lambda_{\phi}(W_{p^{\infty}}\sigma^{(m)})=\overline{\chi}\sum_r\langle S^{(m)}_{r^*},S^{(m)}_r\rangle$ is not zero according to the computation 
\begin{gather}\label{eqn:forms:wtzero:ipsmrs}
\langle S^{(m)}_{r^*},S^{(m)}_r\rangle=\frac{1}{2^6\pi m^{3/2}}\left(\delta_{r^*,r\text{ (mod $2m$)}}-\delta_{r^*,-r\text{ (mod $2m$)}}\right)
\end{gather}
which can be obtained using the Rankin--Selberg formula. (Cf., e.g., (3.15) of \cite{Dabholkar:2012nd}. A very similar computation is carried out in Proposition 10.2 of \cite{Dabholkar:2012nd}.) On the other hand for $\varphi=W_{p^{\infty}}\sigma^{(m)}$ we have $C_{\varphi}(1,1)=0$ since $r^*\not\equiv\pm 1\pmod{2m}$ for $r\equiv \pm1 \pmod{2m}$, and $S^{(m)}_{r}=O(q^{2/4m})$ for $r\not\equiv \pm1 \pmod{2m}$. So $\lambda_{\phi}(W_{p^{\infty}}\sigma^{(m)})=0$ in light of (\ref{eqn:forms:wtzero:ext:lambdaC11}). This contradiction proves the claim.
\end{proof}

\begin{prop}\label{prop:forms:wtzero:Ud}
If $m$ is a prime power $m=p^{\nu}$ and $\nu>2$ then $J^{\rm ext}_{0,m-1}=\{0\}$.
\end{prop}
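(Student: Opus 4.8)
The plan is to follow the template established in the proof of Proposition \ref{prop:forms:wtzero:AL}. Assuming $\phi\in J^{\rm ext}_{0,m-1}$ is non-zero, I would set $\psi=\Psi_{1,1}\phi$, pass to the finite part $\psi^F\in\mathbb{J}_{1,m}$ whose shadow is $\chi\sigma^{(m)}$ with $\chi=\phi(\tau,0)\neq 0$ (the non-vanishing of $\chi$ follows from Lemma \ref{lem:forms:wtzero:nospinhalf}), and consider the linear functional $\lambda_\phi$ on $S^{\rm skew}_{2,m}$. Exactly as in that proof, one has $\lambda_\phi(\varphi)=\overline{\chi}\sum_r\langle g_r,S^{(m)}_r\rangle$ for $\varphi=\sum_r\overline{g_r}\th^{(m)}_r$, and by (\ref{eqn:forms:wtzero:ext:lambdaC11}) together with Lemma \ref{lem:forms:wtzero:nospinhalf} the equivalence $\lambda_\phi(\varphi)=0\iff C_\varphi(1,1)=0$ holds for every $\varphi\in S^{\rm skew}_{2,m}$. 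To produce a contradiction it therefore suffices to exhibit one $\varphi\in S^{\rm skew}_{2,m}$ with $\langle\varphi,\sigma^{(m)}\rangle\neq 0$ but $C_\varphi(1,1)=0$.

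The obstacle specific to the prime-power case is that the test form $W_{p^\infty}\sigma^{(m)}$ used in Proposition \ref{prop:forms:wtzero:AL} degenerates when $m=p^\nu$: the map $r\mapsto r^*$ collapses to $r\mapsto -r$, so $W_{p^\infty}\sigma^{(m)}=\pm\sigma^{(m)}$ and its $(1,1)$-coefficient does not vanish. I would instead exploit the old-form structure of the unary theta components. A direct computation with (\ref{unary_S_def}) shows that when $p\mid r$, say $r=ps$, the exponents $(2mn+r)^2/4m$ all descend to index $m/p^2=p^{\nu-2}$; concretely $S^{(p^\nu)}_{ps}$ is, up to a scalar, the image of $S^{(p^{\nu-2})}_s$ under the index-raising operator $\mathcal{U}_p\colon S^{\rm skew}_{2,p^{\nu-2}}\to S^{\rm skew}_{2,p^\nu}$ (the skew-holomorphic analogue of $\phi(\tau,z)\mapsto\phi(\tau,pz)$). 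The hypothesis $\nu>2$ guarantees $p^{\nu-2}\geq p\geq 2$, so the smaller index is genuine and $\sigma^{(p^{\nu-2})}$ is a non-zero cuspidal form of the trivial type there.

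I would then take $\varphi=\mathcal{U}_p\,\sigma^{(p^{\nu-2})}\in S^{\rm skew}_{2,m}$. Its theta-decomposition is supported on residues $r$ with $p\mid r$, and since $p\nmid 1$ the component $g_1$ vanishes, whence $C_\varphi(1,1)=0$. On the other hand, by the proportionality above the components of $\varphi$ are scalar multiples of the $S^{(m)}_{ps}$, so $\langle\varphi,\sigma^{(m)}\rangle$ reduces to a combination of the diagonal pairings $\langle S^{(m)}_{ps},S^{(m)}_{ps}\rangle$, each strictly positive by the Rankin--Selberg evaluation (\ref{eqn:forms:wtzero:ipsmrs}), the off-diagonal terms vanishing; hence $\langle\varphi,\sigma^{(m)}\rangle\neq 0$. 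Thus $\lambda_\phi(\varphi)\neq 0$ while $C_\varphi(1,1)=0$, contradicting the equivalence recalled above, so no non-zero $\phi$ can exist and $J^{\rm ext}_{0,m-1}=\{0\}$.

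The main work lies in making $\mathcal{U}_p$ precise: checking that it carries cuspidal skew-holomorphic Jacobi forms of index $p^{\nu-2}$ to cuspidal skew-holomorphic forms of index $p^\nu$ supported on $p$-divisible residues, pinning down the scalar in $S^{(p^\nu)}_{ps}=(\mathrm{const})\,\mathcal{U}_pS^{(p^{\nu-2})}_s$, and verifying that the resulting inner product is indeed non-zero. It is a useful consistency check that the construction collapses exactly at $\nu=2$, where $p^{\nu-2}=1$ is degenerate: this collapse is forced, since at $m=4=2^2$ one has $m-1=3\mid 12$ and $J^{\rm ext}_{0,3}\neq\{0\}$, so the statement genuinely requires $\nu>2$.
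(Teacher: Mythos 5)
Your proposal is correct and is essentially the paper's own argument: the paper proves this proposition exactly by re-running the proof of Proposition \ref{prop:forms:wtzero:AL} with a test form obtained by applying the Hecke-like index-raising operator $\varphi(\t,z)\mapsto\varphi(\t,tz)$ to a lower-index trivial-type form, taking $\varphi(\t,z)=\sigma^{(p)}(\t,p^{\mu}z)$ or $\sigma^{(p^2)}(\t,p^{\mu}z)$ according as $\nu=2\mu+1$ or $\nu=2\mu+2$; your choice $\varphi=\sigma^{(p^{\nu-2})}(\t,pz)$ coincides with this for $\nu\in\{3,4\}$ and is an equivalent variant of the same device for larger $\nu$, with the remaining details deferred in both treatments.

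One bookkeeping correction to the details you deferred. The operator $\mathcal{U}_p$ does not carry single theta components to single theta components: with $m'=p^{\nu-2}$ one has $\theta^{(m')}_s(\t,pz)=\sum_{j\,\mathrm{mod}\,p}\theta^{(m)}_{ps+2pm'j}(\t,z)$, so the theta-coefficients of $\varphi$ are $g_r=S^{(m')}_{r/p}$ when $p\mid r$ and $0$ otherwise, and the correct identity is $\sum_{j\,\mathrm{mod}\,p}S^{(m)}_{ps+2pm'j}=p\,S^{(m')}_s$ rather than your claimed proportionality between $S^{(p^{\nu})}_{ps}$ and $\mathcal{U}_pS^{(p^{\nu-2})}_s$. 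Consequently the off-diagonal terms in the pairing do not all vanish: applying (\ref{eqn:forms:wtzero:ipsmrs}) one finds $\sum_{r}\langle g_r,S^{(m)}_r\rangle=\tfrac{1}{p}\sum_{s,j}\langle S^{(m)}_{ps+2pm'j},S^{(m)}_{ps}\rangle$, in which the $-\delta_{r^*\equiv-r}$ contributions produce a genuine negative correction, and the total evaluates to a positive constant times $p^{\nu-2}-1$. This is non-zero exactly when $\nu>2$, which is the same degeneration at $\nu=2$ that you correctly identified via $\sigma^{(1)}=0$; so your argument closes as intended once these details are put in.
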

\begin{proof}
The proof is very similar to that of Proposition \ref{prop:forms:wtzero:AL}, computing $\lambda_{\phi}(\varphi)$ two ways using (\ref{eqn:forms:wtzero:ext:lambdasigma}) and (\ref{eqn:forms:wtzero:ext:lambdaC11}), but taking now $\varphi(\t,z)=\sigma^{(p)}(\t,p^{\mu}z)$ or $\varphi(\t,z)=\sigma^{(p^2)}(\t,p^{\mu}z)$ according as $\nu={2\mu+1}$ or $\nu=2\mu+2$ with $\mu>0$. The transformation $\varphi(\t,z)\mapsto \varphi(\t,tz)$ maps Jacobi forms of index $m$ to Jacobi forms of index $t^2m$, and is one of the Hecke-like operators of \cite[\S4.4]{Dabholkar:2012nd}. We leave the remaining details---the expression of $\varphi$ in terms of $S^{(m)}_r$ and $\th^{(m)}_r$, and an application of (\ref{eqn:forms:wtzero:ipsmrs})---to the reader.
\end{proof}

We require one more result in advance of the proof of Theorem \ref{thm:forms:wtzero:ext}.
\begin{lem}\label{lem:forms:wtzero:Vp}
If $m=p^2$ for some prime $p$ and there exists a newform $f\in S_2(p)$ with $L(f,1)\neq 0$ then $J^{\rm ext}_{0,m-1}=\{0\}$.
\end{lem}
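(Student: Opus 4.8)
The plan is to mimic the proofs of Propositions \ref{prop:forms:wtzero:AL} and \ref{prop:forms:wtzero:Ud}: assuming $J^{\rm ext}_{0,m-1}\neq\{0\}$ with $m=p^2$, I would pick a nonzero extremal $\phi$, set $\psi=\Psi_{1,1}\phi$ with finite part $\psi^F$ whose shadow is $\chi\sigma^{(m)}$, and exhibit a single test form $\varphi\in S^{\rm skew}_{2,m}$ for which the two evaluations \eqref{eqn:forms:wtzero:ext:lambdasigma} and \eqref{eqn:forms:wtzero:ext:lambdaC11} of $\lambda_{\phi}(\varphi)$ are incompatible. By Lemma \ref{lem:forms:wtzero:nospinhalf} we have $a_{\frac{m-1}{4},\frac{1}{2}}\neq 0$, so, as noted in the discussion following \eqref{eqn:forms:wtzero:ext:lambdaC11}, it suffices to produce a $\varphi$ with $C_{\varphi}(1,1)\neq 0$ (forcing $\lambda_{\phi}(\varphi)\neq 0$) that is nonetheless orthogonal to $\sigma^{(m)}$ (forcing $\lambda_{\phi}(\varphi)=0$ via \eqref{eqn:forms:wtzero:ext:lambdasigma}).

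The test form should be built from the hypothesised newform $f\in S_2(p)$. Since $L(f,1)\neq 0$, the sign in $\Lambda(f,s)=\pm\Lambda(f,2-s)$ must be $+$, so Theorem 1 of \cite{MR1116103} attaches to $f$ a skew-holomorphic Jacobi form $\varphi_f\in P^{\rm skew}_{2,p}$ of index $p$ carrying the Hecke eigenvalues of $f$, and the formula at the bottom of p.~503 of \cite{MR1074485} expresses $C_{\varphi_f}(1,1)$ as a nonzero multiple of $L(f,1)$, whence $C_{\varphi_f}(1,1)\neq 0$. To pass from index $p$ to index $m=p^2$ I would apply the level-raising operator $V_p$ of Eichler--Zagier \cite{eichler_zagier} (one of the Hecke-like operators of \cite[\S4.4]{Dabholkar:2012nd}), which multiplies the index by $p$ and maps $S^{\rm skew}_{2,p}$ into $S^{\rm skew}_{2,p^2}$, and then set $\varphi=\varphi_f|V_p$.

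Two facts about $\varphi=\varphi_f|V_p$ close the argument. First, $V_p$ commutes with the Hecke operators $T(n)$ for $(n,p)=1$, so $\varphi$ is again an eigenform with the eigenvalue system of the cuspidal newform $f$; this system is not shared by any form of trivial type, so $\varphi\in P^{\rm skew}_{2,p^2}$ and hence $\varphi$ is orthogonal to the trivial-type form $\sigma^{(m)}$, giving $\lambda_{\phi}(\varphi)=C_m\langle\varphi,\chi\sigma^{(m)}\rangle=0$. Second, writing the action of $V_p$ on Fourier coefficients (weight $2$) as $C_{\varphi_f|V_p}(D,r)=\sum_{d\mid(\cdots,\,r,\,p)}d\,C_{\varphi_f}(D/d^2,r/d)$, the coefficient indexed by $(D,r)=(1,1)$ receives only the contribution $d=1$ since $\gcd(1,p)=1$, so $C_{\varphi}(1,1)=C_{\varphi_f}(1,1)\neq 0$ and therefore $\lambda_{\phi}(\varphi)=-2a_{\frac{m-1}{4},\frac{1}{2}}C_{\varphi}(1,1)\neq 0$. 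This contradiction forces $J^{\rm ext}_{0,m-1}=\{0\}$.

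The main obstacle is the careful bookkeeping for $V_p$ in the skew-holomorphic setting: one must confirm that $V_p$ genuinely lands in $S^{\rm skew}_{2,p^2}$, fix its normalisation on Fourier coefficients precisely enough that the $(1,1)$ coefficient is transported unchanged, and verify that the oldform $\varphi_f|V_p$ is of non-trivial type (equivalently, orthogonal to $\sigma^{(m)}$)---which I would justify either through the Hecke eigenvalue system above or by noting that $V_p$ preserves some nonzero coefficient at a non-square discriminant coprime to $p$, where again only the $d=1$ term survives. Once this $V_p$ dictionary is in place, the remainder is the same two-way computation of $\lambda_{\phi}$ already used in Propositions \ref{prop:forms:wtzero:AL} and \ref{prop:forms:wtzero:Ud}.
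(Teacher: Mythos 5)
Your proposal is correct and follows essentially the same route as the paper's own proof: both construct the test form $\varphi_f|V_p$ from the Skoruppa--Zagier form attached to $f$, evaluate $\lambda_{\phi}$ two ways via (\ref{eqn:forms:wtzero:ext:lambdasigma}) and (\ref{eqn:forms:wtzero:ext:lambdaC11}), use the commutation of $V_p$ with the Hecke operators $T(n)$ for $(n,p)=1$ to place $\varphi_f|V_p$ in $P^{\rm skew}_{2,p^2}$ (hence orthogonal to the trivial-type form $\sigma^{(p^2)}$), and observe from the coefficient formula for $V_p$ that only $d=1$ contributes at $(\Delta,r)=(1,1)$, so $C_{\varphi_f|V_p}(1,1)=C_{\varphi_f}(1,1)\neq 0$. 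The bookkeeping you flag as the main obstacle is dispatched in the paper exactly as you propose, by citing the explicit formula (4.37) of \cite{Dabholkar:2012nd} for the action of $V_p$ together with the Hecke-eigenvalue argument for non-trivial type.
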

\begin{proof}
Let $m$, $p$ and $f\in S_2(p)$ be as in the statement of the lemma. Then, as in the proof of Theorem \ref{thm:forms:wtzero:ccv}, there exists a uniquely determined $\varphi_f\in P^{\rm skew}_{2,p}$ with the same Hecke eigenvalues as $f$ according to the results of \cite{MR1074485,MR1116103}, and since $L(f,1)\neq 0$ we have  $C_{\varphi_f}(1,1)\neq 0$. Now set $\varphi_f'=\varphi_f|V_p$ where $V_p$ is the Hecke-like operator (cf. \cite[(4.37)]{Dabholkar:2012nd}) that maps Jacobi forms of index $m$ to forms of index $pm$, and whose action is given explicitly by
\begin{gather}\label{eqn:forms:wtzero:Vp}
	C_{\varphi|V_p}(\Delta,r)=\sum_{d|\left(\tfrac{\Delta-r^2}{4p^2},r,p\right)}dC_{\varphi}\left(\frac{\Delta}{d^2},\frac{r}{d}\right)
\end{gather}
in our special case that $\varphi=\varphi_f\in J^{\rm skew}_{2,p}$ when $\varphi=\sum C_{\varphi}(\Delta,r)\bar{q}^{\Delta/4p}q^{r^2/4p}y^r$ is the Fourier expansion of $\varphi$.

To prove that $J^{\rm ext}_{0,m-1}=\{0\}$ suppose that $\phi$ is a non-zero extremal form of index $m-1=p^2-1$ and consider the functional $\lambda_{\phi}:S^{\rm skew}_{2,p^2}\to\CC$ constructed in the proof of Theorem \ref{thm:forms:wtzero:ccv}. Since $C_{\varphi_f}(1,1)\neq 0$ we have $C_{\varphi_f'}(1,1)\neq 0$ according to (\ref{eqn:forms:wtzero:Vp}). So $\lambda_{\phi}(\varphi_f')\neq 0$ thanks to (\ref{eqn:forms:wtzero:ext:lambdaC11}) and Lemma \ref{lem:forms:wtzero:nospinhalf}. On the other hand the Hecke-like operator $V_p$ commutes with all Hecke operators $T(n)$ (with, in our case, $(n,p)=1$) and so restricts to a map $P^{\rm skew}_{2,p}\to P^{\rm skew}_{2,p^2}$. Thus $\varphi_f'$ is a Hecke-eigenform in $P^{\rm skew}_{2,p^2}$ and we have $\lambda_{\phi}(\varphi_f')=0$ by virtue of (\ref{eqn:forms:wtzero:ext:lambdasigma}) and the fact that $P^{\rm skew}_{2,p^2}$ is orthogonal to the cusp form $\sigma^{(p^2)}$ of the trivial type. This contradiction completes the proof.
\end{proof}

\begin{proof}[Proof of Theorem \ref{thm:forms:wtzero:ext}]
We have that $\dim J^{\rm ext}_{0,m-1}\leq 1$ for all $m$ according to Proposition \ref{prop:forms:wtzero:dimleq1}, and $\dim J^{\rm ext}_{0,m-1}=0$ unless $m=p$ or $m=p^2$ for some prime $p$ by virtue of Propositions \ref{prop:forms:wtzero:AL} and \ref{prop:forms:wtzero:Ud}. Applying Theorem 1 of \cite{MR2176151} with $\sigma=13/25\pi$, for example, shows that some $f\in S^{\rm new}_2(p)$ satisfies $L(f,1)\neq 0$ whenever $p$ is a prime greater than or equal to $3001$. The tables of arithmetic data on newforms posted at \cite{mfd} give the non-vanishing or otherwise for every newform of level up to $5134$, and inspecting the entries for the $430$ prime levels less than $3000$ we see that $S_2(p)$ has a newform $f$ with $L(f,1)\neq 0$ for every prime $p$ so long as $S_2(p)$ has a newform. We have $S^{\rm new}_2(p)=S_2(p)=\{0\}$ just when $p\in\{2,3,5,7,13\}$, so we conclude from Theorem \ref{thm:forms:wtzero:ccv} that $J^{\rm ext}_{0,m-1}=\{0\}$ for every prime $m=p$ such that $p-1$ does not divide $12$. Also, we conclude from Lemma \ref{lem:forms:wtzero:Vp} that $J^{\rm ext}_{0,m-1}=\{0\}$ whenever $m=p^2$ is the square of a prime and $p-1$ does not divide $12$. We inspect the tables of \cite{mfd} again to find newforms with non-vanishing critical central value at levels $7^2=49$ and $13^2=169$, and apply Theorem \ref{thm:forms:wtzero:ccv} to conclude $J^{\rm ext}_{0,48}=J^{\rm ext}_{0,168}=\{0\}$.

So we now require to determine the dimension of $J^{\rm ext}_{0,m-1}$ for $m\in\{2,3,4,5,7,9,13,25\}$, which are the primes and squares of primes $m$ for which $S_2(m)=\{0\}$. For this we utilise the basis $\{\varphi^{(m)}_n\}$ for $J_{0,*}$ determined by Gritsenko \cite{Gri_EllGenCYMnflds}, and discussed, briefly, after the statement of Theorem \ref{thm:forms:wtzero:ext} above. In more detail, the ring $J_{0,*}$ is finitely generated, by $\varphi^{(2)}_{1}$, $\varphi^{(3)}_{1}$ and $\varphi^{(4)}_{1}$, where
\begin{gather}
\begin{split}\label{phiform}
\varphi^{(2)}_{1}&=4\left(f_2^2+f_3^2+f_4^2\right),\\
\varphi^{(3)}_{1}&=2\left(f_2^2f_3^2+f_3^2f_4^2+f_4^2f_2^2\right),\\
\varphi^{(4)}_{1}&=4f_2^2f_3^2f_4^2,
\end{split}
\end{gather}
and $f_i(\tau,z)=\theta_i(\tau,z)/\theta_i(\tau,0)$ for $i\in\{2,3,4\}$ (cf. \S\ref{sec:JacTheta}). If we work over $\ZZ$ then we must include $\varphi^{(5)}_{1}$
\be
\varphi^{(5)}_{1}=\frac{1}{4}\left(\varphi^{(4)}_{1}\varphi^{(2)}_{1}-(\varphi^{(3)}_{1})^2\right)
\ee
as a generator also, so that $J^{\ZZ}_{0,*}=\ZZ[\varphi^{(2)}_{1},\varphi^{(3)}_{1},\varphi^{(4)}_{1},\varphi^{(5)}_{1}]$. Following \cite{Gri_EllGenCYMnflds} we define
\begin{gather}
\begin{split}
\varphi^{(7)}_{1} &= \varphi^{(3)}_{1} \varphi^{(5)}_{1} - (\varphi^{(4)}_{1})^2, \\
\varphi^{(9)}_{1} &= \varphi^{(3)}_{1} \varphi^{(7)}_{1} - (\varphi^{(5)}_{1})^2 , \\
\varphi^{(13)}_{1} &= \varphi^{(5)}_{1} \varphi^{(9)}_{1}- 2(\varphi^{(7)}_{1})^2,
\end{split}
\end{gather}
and define $\varphi^{(m)}_{1}$ for the remaining positive integers $m$ according to the following recursive procedure. For $(12,m-1)=1$ and $ m > 5$ we set
\be
\varphi^{(m)}_{1}=(12,m-5) \varphi^{(m-4)}_{1} \varphi^{(5)}_{1}+(12,m-3) \varphi^{(m-2)}_{1} \varphi^{(3)}_{1}-2(12,m-4) \varphi^{(m-3)}_{1} \varphi^{(4)}_{1}.
\ee
For $(12,m-1)=2$ and $m > 10$ we set
\be
\varphi^{(m)}_{1}= \frac{1}{2} \bigl((12,m-5) \varphi^{(m-4)}_{1} \varphi^{(5)}_{1}+(12,m-3) \varphi^{(m-2)}_{1} \varphi^{(3)}_{1}-2(12,m-4) \varphi^{(m-3)}_{1} \varphi^{(4)}_{1} \bigr).
\ee
For $(12,m-1)=3$ and $m > 9$ we set
\be
\varphi^{(m)}_{1}= \frac{2}{3} (12,m-4) \varphi^{(m-3)}_{1} \varphi^{(4)}_{1} + \frac{1}{3} (12,m-7) \varphi^{(m-6)}_{1} \varphi^{(7)}_{1}-(12,m-5) \varphi^{(m-4)}_{1} \varphi^{(5)}_{1}.
\ee
For $(12,m-1)=4$ and $m > 16$ we set
\be
\varphi^{(m)}_{1}= \frac{1}{4} \bigl( (12,m-13) \varphi^{(m-12)}_{1} \varphi^{(13)}_{1}+(12,m-5) \varphi^{(m-4)}_{1} \varphi^{(5)}_{1}-(12,m-9) \varphi^{(m-8)}_{1} \varphi^{(9)}_{1} \bigr).
\ee
For $(12,m-1)=6$ and $m> 18$ we set
\be
\varphi^{(m)}_{1}=  \frac{1}{3} (12,m-4) \varphi^{(m-3)}_{1} \varphi^{(4)}_{1} + \frac{1}{6} (12,m-7) \varphi^{(m-6)}_{1} \varphi^{(7)}_{1}-\frac{1}{2} (12,m-5) \varphi^{(m-4)}_{1} \varphi^{(5)}_{1}.
\ee
Finally, for $(12,m-1)=12$ and $m> 24$ we set\footnote{Our expression gives one possible correction for a small error in the last line on p. 10 of \cite{Gri_EllGenCYMnflds}.}
\be
\varphi^{(m)}_{1}= \frac{1}{6}(12,m-4) \varphi^{(m-3)}_{1} \varphi^{(4)}_{1}-\frac{1}{4}(12,m-5) \varphi^{(m-4)}_{1} \varphi^{(5)}_{1} + \frac{1}{12}(12,m-7) \varphi^{(m-6)}_{1}\varphi^{(7)}_{1}.
\ee
The $\varphi^{(m)}_{2}$ are defined by setting
\begin{gather}
	\begin{split}
\varphi^{(3)}_{2} &= (\varphi^{(2)}_{1})^2-24\, \varphi^{(3)}_{1},\\
\varphi^{(4)}_{2}  &= \varphi^{(2)}_{1} \varphi^{(3)}_{1} - 18 \,\varphi^{(4)}_{1},\\
\varphi^{(5)}_{2} &= \varphi^{(2)}_{1} \varphi^{(4)}_{1} - 16\, \varphi^{(5)}_{1},
	\end{split}
\end{gather}
and
\begin{gather}
\varphi^{(m)}_{2} = (12,m-4)\, \varphi^{(m-3)}_{1} \varphi^{(4)}_{1} - (12,m-5) \varphi^{(m-4)}_{1} \varphi^{(5)}_{1} - (12,m-1) \varphi^{(m)}_{1}
\end{gather}
for $m>5$, and the remaining $\varphi^{(m)}_n$ for $2\leq m\leq 25$ are given by
\begin{gather}\label{eqn:forms:wtzero:varphimn}
	\begin{split}
\varphi^{(m)}_{n} &= \varphi^{(m-3)}_{n-1} \varphi^{(4)}_{1},\\
\varphi^{(m)}_{m-2}&= (\varphi^{(2)}_{1})^{m-3} \varphi^{(3)}_{1}, \\
\varphi^{(m)}_{m-1} &= (\varphi^{(2)}_{1})^{m-1},  
	\end{split}
\end{gather}
where the first equation of (\ref{eqn:forms:wtzero:varphimn}) holds for $3\le n\le m-3$.

Next we calculate that $\varphi^{(m)}_1$ defines a non-zero element of $J^{\rm ext}_{0,m-1}$ when $m-1$ divides $12$, so $\dim J^{\rm ext}_{0,m-1}=1$ for $m\in\{2,3,4,5,7,13\}$, and it remains to show that $J^{\rm ext}_{0,m-1}=\{0\}$ for $m\in \{9,25\}$. To do this we first observe that a weight zero form with vanishing Fourier coefficients of $q^0 y^k$  for $|k|>1$ necessarily has the form 
\be\label{evil_candidates}
c_{0,1}\, \varphi^{(m)}_1+ \sum_{i=1}^{\lfloor \frac{m-1}{6}\rfloor} \sum_{j=1}^{m-6i -1} c_{i,j}\, \zeta^i \varphi_{j}^{(m-6i)}\;
\ee
for some $c_{i,j}\in\CC$. One has to show that the only such linear combination
satisfying the extremal condition (\ref{eqn:forms:wtzero:ext}) necessarily has $c_{i,j}=0$ for all $i,j$ (including $(i,j)=(0,1)$). 
An explicit inspection of the coefficients of terms $q^n y^r$ with $r^2-4mn \geq 0$ for $n=1,2,3,4$ is sufficient to establish that $\dim J^{\rm ext}_{0,m-1}=0$ when $m=9$. For $m=25$ we notice that there are at least as many possible polar terms in the above sum as the number of parameters $c_{i,j}$. Indeed, by explicitly solving the system of linear equations  we find that there is no accidental cancellation and the only solution to \eq{evil_candidates} that also satisfies the extremal condition (\ref{eqn:forms:wtzero:ext}) is zero. In this way we conclude that there is no solution to (\ref{eqn:forms:wtzero:ext}) in $J_{0,m-1}$ for $m\in\{9,25\}$. This completes the proof of the Theorem.
\end{proof}

The quantity $Z^{(2)}(\tau,z)= 2 \varphi^{(2)}_{1}(\tau,z)$ is equal to the elliptic genus of a(ny) $K3$ surface and the factor of two relating $Z^{(2)}$ to $\varphi^{(2)}_{1}$ is required in the $K3$/$M_{24}$ connection
in order for the mock modular form $H^{(2)}=(H^{(2)}_1)$ derived from $Z^{(2)}$ to have coefficients compatible with an interpretation as dimensions of representations of $M_{24}$ for which the corresponding McKay--Thompson series $H^{(2)}_g$ have integer Fourier coefficients. Inspired by this we define the {\em umbral Jacobi forms}
\begin{gather}\label{eqn:forms:wtzero:Zell}
	Z^{(\ll)}(\t,z)=2\varphi^{(\ll)}_{1}(\t,z)
\end{gather}
for $\ll\in\LL=\{2,3,4,5,7,13\}$. We also set $\chi^{(\ll)}$ to be the constant $Z^{(\ll)}(\t,0)$ and find that $\chi^{(\ll)}=24/(\ll-1)$ for all $\ll\in\LL$.
\begin{rmk}
The identity $(m-1)\chi=12b$ for $\phi=a+b(y+y^{-1})+O(q)$ established in Lemma \ref{lem:forms:wtzero:nospinhalf} shows that there is no 
such Jacobi form with $b=1$ and $a$ an integer unless $m-1$ divides $12$. In particular, there is no extremal Jacobi form $\phi\in J^{\rm ext}_{0,m-1}$ such that $a_{\frac{m-1}{4},\frac{1}{2}}=1$ and $a_{\frac{m-1}{4},0}$ is an integer unless $m-1$ divides $12$. Inspecting the $Z^{(\ll)}$ we conclude that the divisors of $12$ are exactly the values of $m-1$ for which such a Jacobi form exists.
\end{rmk}
Following the discussion in \S\S\ref{sec:forms:mero},\ref{sec:forms:sca}, each of the umbral Jacobi forms $Z^{(\ll)}$ leads to an ($\ll-1)$-vector-valued mock modular form $H^{(\ll)}=\big(H^{(\ll)}_r\big)$ through the decomposition of $Z^{(\ll)}$ into $N=4$ characters, or equivalently through the decomposition of $\psi^{(\ll)}=\Psi_{1,1}Z^{(\ll)}$ into its polar and finite parts and the theta-expansion of the finite part, 
\begin{gather}
\psi^{(\ll),F}(\tau,z)= \sum_{r=1}^{\ll-1} H^{(\ll)}_r(\tau) \hat \theta^{(\ll)}_r(\tau,z).
\end{gather} 
As we have explained above, the extremal condition translates into a natural condition on the vector-valued mock modular forms $H^{(\ll)}$: The requirement that $Z^{(\ll)}$ takes the form \eq{eqn:forms:wtzero:ext} is equivalent to requiring that the only polar term in $H^{(\ll)}=\big(H^{(\ll)}_r\big)$ is $-2q^{-\frac{1}{4\ll}}$ in the component  $H^{(\ll)}_1$ and all the other components $H^{(\ll)}_r$ for $r\neq 1$ vanish as $\t\to i\inf$, so that
\be\label{extremal_MMF}
H^{(\ll)}_r(\t) = -2\delta_{r,1}q^{-\frac{1}{4\ll}}+O(q^{\frac{1}{4\ll}})
\ee
for $0<r<\ll$.

Some low order terms in the Fourier expansions of the component functions $H^{(\ll)}_r$ obtained from the extremal forms $Z^{(\ll)}$ for $\ll\in\{2,3,4\}$ are given as follows.
\begin{gather}
H^{(2)}_1(\tau) = 2 q^{-1/8} \left( -1 + 45 q + 231 q^2 +770 q^3 + 2277 q^4+ 5796 q^5+ \cdots \right)\\
\begin{split}
H^{(3)}_1(\tau) &= 2q^{-1/12} \left( -1 + 16 q + 55 q^2 + 144 q^3 + 330 q^4 + 704 q^5 +  \cdots \right) \\
H^{(3)}_2(\tau) &= 2q^{2/3} \left( 10 + 44 q + 110 q^2 + 280 q^3 + 572 q^4 + 1200 q^5   + \cdots \right)
\end{split}\\
\begin{split}
H^{(4)}_1(\tau) &= 2q^{-1/16} \left( -1 +7q+21q^2+43q^3+94q^4+168q^5+ \cdots \right)\\
H^{(4)}_2(\tau) &= 2q^{3/4} \left( 8+24q+56q^2+112q^3+216q^4+392q^5 + \cdots \right)\\ 
H^{(4)}_3(\tau) &= 2q^{7/16} \left( 3+14q+28q^2+69q^3+119q^4+239q^5 + \cdots \right)
\end{split}
\end{gather}
As a prelude to the next section we note that the coefficients $16$, $55$ and $144$ appearing in $H^{(3)}_1$  are dimensions of irreducible representations of the Mathieu group $M_{12}$ and that 
$\chi^{(3)}=12$ is the dimension of the defining permutation representation of $M_{12}$ just as $\chi^{(2)}=24$ is the dimension of the defining permutation representation of $M_{24}$. We further note that the low-lying coefficients appearing in $H^{(3)}_2$ are dimensions of faithful irreducible representations of a group $2.M_{12}$. Here the notation $2.G$ denotes a group with a $\ZZ/2 \ZZ$ normal subgroup such that $2.G/(\ZZ/2 \ZZ)=G$. Note also that $2.M_{12}$ has a faithful (and irreducible) $12$-dimensional signed permutation representation (cf. \S\ref{sec:grps:sgnprm}). The pattern that the coefficients of $H^{(\ll)}_r$ for $r$ odd are dimensions of representations of a group $ \bar G^{(\ll)}$ with a permutation representation of dimension $\chi^{(\ll)}$ and the coefficients of $H^{(\ll)}_r$ for $r$ even are dimensions of faithful representations of a group $G^{(\ll)}=2. \bar G^{(\ll)}$ with a signed permutation representation of degree $\chi^{(\ll)}$ persists for all $\ll\in \LL$; detailed descriptions of the (unsigned) permutation and signed permutation representations of $\bar G^{(\ll)}$ and $G^{(\ll)}$ are given in \S\ref{sec:grps:pcnst} and \S\ref{sec:grps:spcnst}.

The leading terms in the $q$-expansions of the mock modular forms $H^{(\ll)}_1$ that correspond via the procedure described earlier to the Jacobi forms
$Z^{(\ll)}$ for $\ll\in\{5,7,13\}$ are 
\begin{gather}
\begin{split}
H^{(5)}_1(\tau) &= 2q^{-1/20} \left( -1 + 4q +9q^2+20q^3+35q^4+60q^5+ \cdots \right), \\
H^{(7)}_1 (\tau)&= 2q^{-1/28} \left( -1+2q+3q^2+5q^3+10q^4+15q^5 + 21q^6+\cdots \right),    \\
H^{(13)}_1(\tau) &= 2 q^{-1/52} \left(-1+q+q^2+q^4+q^5+2q^6+3q^7 + \cdots \right).  \\
\end{split}
\end{gather}
To avoid clutter we refrain from describing low order terms in the $q$-expansions of $H^{(\ll)}_r$ for $\ll\in\{5,7,13\}$ and  $r>1$ here, but these can be read off
from the 1A entries in Tables \ref{tab:coeffs:5_2}-\ref{tab:coeffs:5_4}, \ref{tab:coeffs:7_2}-\ref{tab:coeffs:7_6}, and \ref{tab:coeffs:13_2}-\ref{tab:coeffs:13_12}.

We conclude this section with a comparison of our condition (\ref{eqn:forms:wtzero:ext}) with other notions of extremal in the literature. A notion of {\em extremal holomorphic conformal field theory} was given in \cite{Witten2007} following earlier related work on vertex operator superalgebras in \cite{Hoehn2007}. Extremal CFT's have central charge $c=24k$ with $k$  a positive integer and
are defined in such a way that their partition function satisfies
\be \label{extrpart:C}
Z =q^{-k}\prod_{n>1}^\infty \frac{1}{1-q^n}+O(q)\prod_{n>0}^\infty \frac{1}{1-q^n}
\ee
as $\t\to i\inf$ where the term $O(q)$ is presumed to be a series in $q$ with integer coefficients, so that the second term $O(q)\prod_{n>0}(1-q^n)^{-1}$ in (\ref{extrpart:C}) represents an integer combination of irreducible characters of the Virasoro algebra. The first term in \eqref{extrpart:C}  is the {vacuum character} of the Virasoro algebra, generated by the vacuum state, and thus any Virasoro primaries above the vacuum are only allowed to contribute positive powers of $q$ to the partition function. At this time the only known example of an extremal CFT is that determined by the monster vertex operator algebra $V^{\natural}$ whose partition function has $k=1$ in (\ref{extrpart:C}) so this notion is, at the very least, good at singling out extraordinary structure. 

If $\phi$ is a weak Jacobi form of weight $0$ and index $m-1$ that is extremal in our sense then we have
\be\label{eqn:forms:wtzero:spinzerospinhalf}
\f=a_{\frac{m-1}{4},0}{\rm ch}^{(m)}_{\frac{m-1}{4},0}+ a_{\frac{m-1}{4},\frac{1}{2}}{\rm ch}^{(m)}_{\frac{m-1}{4},\frac{1}{2}}
	+ \sum_{0<r<m}O(q) \frac{\hat \th^{(m)}_r}{\Psi_{1,1}}
\ee
as $\t\to i\inf$ for some $a_{\frac{m-1}{4},0}$ and $a_{\frac{m-1}{4},\frac{1}{2}}$, and the third term in (\ref{eqn:forms:wtzero:spinzerospinhalf}) is a natural counterpart to the second term in (\ref{extrpart:C}) since the massive $N=4$ characters (in the Ramond sector with $(-1)^F$ insertion) are all of the form $q^{\alpha}\hat\th^{(m)}_r/\Psi_{1,1}$ for some $\alpha$ and some $r$. It is harder to argue that the massless $N=4$ contributions in (\ref{eqn:forms:wtzero:spinzerospinhalf}) are in direct analogy with the first term in (\ref{extrpart:C}) since the vacuum state in a superconformal field theory with $N=4$ supersymmetry will generally (i.e. when the index is greater than $1$) give rise to non-zero massless $N=4$ character contributions with spin greater than $1/2$. We remark that the stronger condition  
\be\label{eqn:forms:wtzero:spinzeroplusbigoh}
\f=a_{\frac{m-1}{4},0}{\rm ch}^{(m)}_{\frac{m-1}{4},0}+ \sum_{0<r<m}O(q) \frac{\hat \th^{(m)}_r}{\Psi_{1,1}}
\ee
has no solutions according to Lemma \ref{lem:forms:wtzero:nospinhalf}. According to Theorem \ref{thm:forms:wtzero:ext} the six Jacobi forms $Z^{(\ll)}$ of umbral moonshine are the unique solutions to (\ref{eqn:forms:wtzero:spinzerospinhalf}) having $a_{\frac{m-1}{4},\frac{1}{2}}=-2$.

A notion of {\em extremal conformal field theory with $N=(2,2)$ superconformal symmetry} was introduced in \cite{Gaberdiel:2008xb} and the Ramond sector partition function of such an object defines a weak Jacobi form of weight $0$ and some index but will generally not coincide with a Jacobi form that is extremal in our sense since, as has been mentioned, our functions are free from contributions arising from states in the Ramond sector that are related by spectral flow to the vacuum state in the Neveu--Schwarz sector, except in the case of index $1$.

Despite the absence of a notion of extremal conformal field theory underlying our extremal Jacobi forms it is interesting to reflect on the fact that the one known example of an extremal CFT is that (at $k=1$) determined by the moonshine vertex operator algebra of \cite{FLM} with partition function $Z(\tau)=J(\t)$. This function encodes dimensions of irreducible representations of the monster and, as we have seen to some extent above and will see in more detail below, the quantities which play the same role with respect to the groups $G^{(\ll)}$ of umbral moonshine are precisely the mock modular forms $H^{(\ll)}(\t)=\big(H^{(\ll)}_r(\tau)\big)$ which arise naturally from the extremal Jacobi forms $Z^{(\ll)}(\t,z)$ according to the procedure of \S\ref{sec:forms:mero}. One of the main motivations for the notion of extremal CFT introduced in \cite{Witten2007} was a possible connection to pure (chiral) gravity theory in $3$-dimensional Anti-de Sitter space via the AdS/CFT correspondence and it is interesting to compare this with the important r\^ole that Rademacher sum constructions play in monstrous moonshine \cite{DunFre_RSMG}, umbral moonshine at $\ll=2$ \cite{Cheng2011}, and in umbral moonshine more generally (cf. \S\ref{sec:intro} and \S\ref{sec:conj:moon}). We refer to \cite{MalWit_QGPtnFn3D,LiSonStr_ChGrav3D,Maloney:2009ck,Carlip:2008eq,Carlip:2008jk,Giribet:2008bw,Grumiller:2008pr,Strominger:2008dp,Carlip:2008qh,Li:2008yz,Myung:2008dm,Sachs:2008gt,Gibbons:2008vi,Skenderis:2009nt} for further discussions on the AdS/CFT correspondence and extremal CFT's.

%-------------------------------------------------------------------------%
\subsection{Siegel Modular Forms}\label{sec:forms:siegel}
%-------------------------------------------------------------------------%

Siegel modular forms are automorphic forms which generalise modular forms by replacing the modular group $\SL_2(\ZZ)$ by the {\em genus $n$ Siegel
modular group} $\Sp_{2n}(\ZZ)$ and the upper half-plane $\HH$ by the {\em genus $n$ Siegel upper half-plane} $\HH_n$. Here we restrict ourselves to the
case $n=2$. Define 
\be
J= \begin{pmatrix} 0 & -I_2 \\ I_2 & 0 \end{pmatrix}
\ee
with $I_2$ the unit $2\times 2$ matrix and let $\Sp_4(\ZZ)$ be the group of $4\times 4$ matrices $\gamma\in M_4(\ZZ)$ satisfying 
$\g J \g^t = J$. If we write $\g$ in terms of $2\times 2$ matrices with integer entries $A,B,C,D$,
\be\label{eqn:forms:siegel:gABCD}
\g= \begin{pmatrix} A & B \\ C & D \end{pmatrix}
\ee
then the condition $\g J \g^t=J$ becomes
\be
A B^t=BA^t, \qquad CD^t=D C^t, \qquad AD^t-B C^t=1.
\ee
Just as $\SL_2(\ZZ)$ has a natural action on $\HH$, the (genus $2$) Siegel modular group $\Sp_4(\ZZ)$ has a natural action on the (genus $2$) Siegel upper half-plane, $\HH_2$,
defined as the set of $2\times 2$ complex, symmetric matrices
\be
\Omega= \begin{pmatrix} \tau & z \\ z & \sigma \end{pmatrix}
\ee
obeying
\be
{\rm Im}(\tau)>0, \qquad {\rm Im}(\sigma)>0, \qquad {\rm det}({\rm Im}(\Omega))>0.
\ee
The action of $\g \in \Sp_4(\ZZ)$ is given by
\be
\g\Omega = (A \Omega+B)(C \Omega+D)^{-1}
\ee
when $\g$ is given by (\ref{eqn:forms:siegel:gABCD}). A {\em Siegel modular form} of weight $k$ for $\G\subset\Sp_4(\ZZ)$ is a holomorphic function $F: \HH_2 \rightarrow \CC$ obeying the transformation law
\be
F((A \Omega+B)(C \Omega+D)^{-1})= {\rm det}(C \Omega+D)^k F(\Omega)
\ee
for $\g \in \G$. We can write a {\em Fourier-Jacobi decomposition} of $F(\Omega)$ in terms of $p=e(\sigma)$ as
\be
F(\Omega)= \sum_{m=0}^\infty \varphi_m(\tau,z) p^m
\ee
and the transformation law for $F(\Omega)$ can be used to show that the Fourier-Jacobi coefficient $\varphi_m(\tau,z)$ is a Jacobi form of weight $k$ and index $m$. (This is one of the main motivations for the notion of Jacobi form; cf. \cite{feingold_frenkel} for an early analysis with applications to affine Lie algebras.) We can thus write
\be
\varphi_m(\tau,z)= \sum_{\substack{n,r\in\ZZ\\4mn-r^2 \ge 0}} c(m,n,r) q^n y^r
\ee
and then
\be
F(\Omega)= \sum_{m,n,r} c(m,n,r) p^m q^n y^r .
\ee

A special class of Siegel modular forms, called {\em Spezialschar} by Maass, arise by taking the Jacobi-Fourier coefficients $\varphi_m$ to be given by $\varphi_m=\varphi_1|V_m$ where $\varphi_1 \in J_{k,1}$ and $V_m$ is the {\em Hecke-like operator} defined so that if $\varphi_1=\sum_{n,r}c(n,r) q^n y^r$ then 
\be
\varphi_1|V_m= \sum_{n,r} \left( \sum_{j|(n,r,m)} j^{k-1}c\left(\frac{nm}{j^2},\frac{r}{j}\right) \right) q^n y^r.
\ee
It develops that the function
\be
F(\Omega)= \sum_{m=0}^\infty (\varphi_1|V_m)(\tau,z)p^m
\ee
is a weight $k$ Siegel modular form known as the {\em Saito--Kurokawa} or {\em additive lift} of the weight $k$ and index $1$ Jacobi form $\varphi_1$.
An important example arises by taking the additive lift of the Jacobi form
\be
\varphi_{10,1}(\tau,z)= - \eta(\tau)^{18} \theta_1(\tau,z)^2
\ee
(cf. \S\ref{sec:modforms}) which produces the {\em Igusa cusp form}
\be
\Phi_{10}(\Omega)= \sum_{m=1}^\infty (\varphi_{10,1}|V_m)(\tau,z) p^m
\ee

The Igusa cusp form also admits a product representation obtained from the {\em Borcherds} or {\em exponential lift} of the umbral Jacobi form
$Z^{(2)}=2\varphi_{2,1}$ (cf. \eqref{phiform}) which is
\be
\Phi_{10}(\Omega)= pqy \prod_{\substack{m,n,r\in\ZZ\\(m,n,r)>0}}(1-p^mq^n y^r )^{c^{(2)}(4mn-r^2)}
\ee
where the coefficients $c^{(2)}$ are defined by the Fourier expansion of $Z^{(2)}$
\be
Z^{(2)}(\tau,z)= \sum_{n,r} c^{(2)}(4n-r^2) q^n y^r
\ee
and where the condition $(m,n,r)>0$ is that either $m>0$ or $m=0$ and $n>0$ or $m=n=0$ and $r<0$.

According to Theorem 2.1 in \cite{GriNik_AutFrmLorKMAlgs_II} the umbral Jacobi form $Z^{(\ll)}$ defines a Siegel modular form $\Phi^{(\ll)}$ on the {\em paramodular group} $\G_{\ll-1}^+<\Sp_4(\QQ)$ via the Borcherds lift for each $\ll\in\LL$. (We refer the reader to \cite{GriNik_AutFrmLorKMAlgs_II} for details.) For small values of $\ll$ these Siegel forms $\Phi^{(\ll)}$ have appeared in the literature (in particular in the work \cite{GriNik_AutFrmLorKMAlgs_I,GriNik_AutFrmLorKMAlgs_II} of Gritsenko--Nikulin). Taking $\ll\in\LL$ and defining $c^{(\ll)}(n,r)$ so that 
\be
Z^{(\ll)}(\tau,z)= \sum_{n,r}c^{(\ll)}(n,r) q^n y^r
\ee
we have the exponential lift $\Phi^{(\ll)}$ of weight $k=c^{(\ll)}(0,0)/2$ for $\G_{\ll-1}^+$ given by 
\be \label{borchlift}
\Phi^{(\ll)}(\Omega) = p^{A(\ll)} q^{B(\ll)} y^{C(\ll)} \prod_{(m,n,r)>0}(1-p^{m}q^n y^r )^{c^{(\ll)}(mn,r)}
\ee
where
\be
A(\ell)=\frac{1}{24} \sum_r c^{(\ll)}(0,r), \quad B(\ll)=\frac{1}{2} \sum_{r>0} r c^{(\ll)}(0,r), \quad C(\ll)= \frac{1}{4} \sum_r r^2 c^{(\ll)}(0,r).
\ee
Note that for $\ll\in\{2,3,4,5\}$ we have $\Phi^{(\ll)}=(\Delta_k)^2$ in the notation of \cite{GriNik_AutFrmLorKMAlgs_II} where $k$ is given by $k=(7-\ell)/(\ell-1)$. These four functions $\Delta_k$ appear as denominator functions for Lorentzian Kac--Moody Lie (super)algebras in \cite[\S5.1]{GriNik_AutFrmLorKMAlgs_II} (see also \cite{GrNik_SieAutFrmCorrLorKMAlgs}) and in connection with mirror symmetry for $K3$ surfaces in \cite[\S5.2]{GriNik_AutFrmLorKMAlgs_II}. We refer the reader to \S\ref{sec:conj:geomphys} for more discussion on this.

%---------------------------------------------------------------------------------------------------------------------%
\section{Finite Groups}\label{sec:grps}
%---------------------------------------------------------------------------------------------------------------------%

In this section we introduce the {\em umbral groups} $G^{(\ll)}$ for $\ll\in{\LL}=\{2,3,4,5,7,13\}$. It will develop that the representation theory of $G^{(\ll)}$ is intimately related to the vector-valued mock modular form $H^{(\ll)}$ of \S\ref{sec:forms:wtzero}.

We specify the abstract isomorphism types of the groups explicitly in \S\ref{sec:grps:spec}. Each group admits a quotient $\bar{G}^{(\ll)}$ which is naturally realised as a permutation group on $24/(\ll-1)$ points. We construct these permutations explicitly in \S\ref{sec:grps:pcnst}. In order to construct the $G^{(\ll)}$ we use signed permutations---a notion we discuss in \S\ref{sec:grps:sgnprm}---and explicit generators for the $G^{(\ll)}$ are finally obtained in \S\ref{sec:grps:spcnst}. The signed permutation constructions are then used (again in \S\ref{sec:grps:spcnst}) to define characters for $G^{(\ll)}$---the {\em twisted Euler characters}---which turn out to encode the automorphy of the mock modular forms $H^{(\ll)}_g$ that are be described in detail in \S\ref{sec:mckay}. In \S\S\ref{sec:grps:dynI}-\ref{sec:grps:dynII} we describe curious connections between the $G^{(\ll)}$ and certain Dynkin diagrams.

\subsection{Specification}\label{sec:grps:spec}

For $\ll\in\LL$ let abstract groups $G^{(\ll)}$ and $\bar{G}^{(\ll)}$ be as specified in Table \ref{tab:Gms}. We will now explain the notation used therein (moving from right to left).

We write $n$ as a shorthand for $\ZZ/n\ZZ$ (in the second and third rows of Table \ref{tab:Gms}) and $\Sym_n$ denotes the symmetric group on $n$ points. We write $\Alt_n$ for the alternating group on $n$ points, which is the subgroup of $\Sym_n$ consisting of all even permutations. 

We say that $G$ is a {\em double cover} of a group $H$ and write $G\simeq 2.H$ (cf. \cite[\S5.2]{ATLAS}) in case $G$ has a subgroup $Z$ of order $2$ that is normal (and therefore central, being of order two) with the property that $G/Z$ is isomorphic to $H$. We say that $G$ is a {\em non-trivial} double cover of $H$ if it is a double cover that is not isomorphic to the direct product $2\times H$. (We don't usually write $G\simeq 2.H$ unless $G$ is a non-trivial cover of $H$.) The cyclic group of order $4$ is a non-trivial double cover of the group of order $2$. 

For $n$ a positive integer and $q$ a prime power we write $\GL_n(q)$ for the {general linear group} of invertible $n\times n$ matrices with coefficients in the finite field $\FF_q$ with $q$ elements, and we write $\SL_n(q)$ for the subgroup consisting of matrices with determinant $1$. We write $\PGL_n(q)$ for the quotient of $\GL_n(q)$ by its centre but we adopt the ATLAS convention (cf. \cite[\S2.1]{ATLAS}) of writing $L_n(q)$ as a shorthand for $\PSL_n(q)$---being the quotient of $\SL_n(q)$ by its centre---since this group is typically simple. In the case that $q=3$ and $n$ is even the centre of $\SL_n(q)$ has order $2$ and $\SL_n(q)\simeq 2.L_n(q)$ is a non-trivial double cover of $L_n(q)$.

In case $q=5$ the centre of $\GL_n(q)$ is a cyclic group of order $4$ and so $\GL_n(5)$ has a unique central subgroup of order $2$. We write $\GL_n(5)/2$ for the quotient of $\GL_n(5)$ by this subgroup. In case $n=2$ the exceptional isomorphism $\PGL_2(5)\simeq \Sym_5$ tells us then that $\GL_2(5)/2$ is a double cover of the symmetric group $\Sym_5$. For $n\geq 4$ there are two isomorphism classes of non-trivial double covers of $\Sym_n$ with the property that the central subgroup of order $2$ is contained in the commutator subgroup of the double cover---these are the so-called {\em Schur double covers} (cf. \cite[\S4.1, \S6.7]{ATLAS}). The group $\GL_2(5)/2$ is curious in that its only subgroup of index $2$ is a direct product $2\times \Alt_5$ and its commutator subgroup is a copy of $\Alt_5$, so it is a non-trivial double cover of $\Sym_5$ that is not isomorphic to either of the Schur double covers.

The symbols $\AGL_n(q)$ denote the {\em affine linear group} generated by the natural action of $\GL_n(q)$ on $\FF_q^n$ and the translations $x\mapsto x+v$ for $v\in \FF_q^n$. The group $\AGL_n(q)$ may be realised as a subgroup of $\GL_{n+1}(q)$ so in particular $\AGL_3(2)$ embeds in $\GL_4(2)$. The group $\GL_4(2)$ is unusual amongst the $\GL_n(2)$ in that it admits a non-trivial double cover $2.\GL_4(2)$, which is a manifestation of the exceptional isomorphism $\GL_4(2)\simeq \Alt_8$. There are two conjugacy classes of subgroups of $2.\Alt_8$ of order $2688$, which is twice the order of $\AGL_3(2)$. The groups in both classes are isomorphic to a particular non-trivial double cover of $\AGL_3(2)$ and this is the group we denote $2.\AGL_3(2)$ in Table \ref{tab:Gms}.

The symbols $M_{24}$ and $M_{12}$ denote the sporadic simple Mathieu groups attached to the binary and ternary Golay codes, respectively. The group $M_{24}$ is the automorphism group of the binary Golay code, which is the unique self-dual linear binary code of length $24$ with minimum weight $8$ (cf. \cite{Ple_UnqGolCdes}). The ternary Golay code is the unique self-dual linear ternary code of length $12$ with minimum weight $6$ (cf. \cite{Ple_UnqGolCdes}) and its automorphism group is a non-trivial double cover of $M_{12}$. There is a unique such group up to isomorphism (cf. \cite{ATLAS}) which we denote by $2.M_{12}$ in Table \ref{tab:Gms}.

\begin{table}
\begin{center}
\caption{The Umbral Groups}\label{tab:Gms}
\smallskip
\begin{tabular}{c||cccccccc}
$\ll$&	2&	3&	4&	5&	7&	13\\
	\hline
$G^{(\ll)}$&			$M_{24}$&	$2.M_{12}$&	$2.\AGL_3(2)$&	$\GL_2(5)/2$&	$\SL_2(3)$&	$4$&\\
$\bar{G}^{(\ll)}$&		$M_{24}$&	$M_{12}$&	$\AGL_3(2)$&	$\PGL_2(5)$&	$L_2(3)$&		$2$&\\
\end{tabular}
\end{center}
\end{table}

Observe that for $\ll>2$ the group $G^{(\ll)}$ has a unique central subgroup of order $2$. Then $\bar{G}^{(\ll)}$ may be described for $\ll>2$ by $\bar{G}^{(\ll)}=G^{(\ll)}/2$ and $G^{(\ll)}$ is a non-trivial double cover of $\bar{G}^{(\ll)}$ for each $\ll>2$. It will develop in \S\ref{sec:grps:sgnprm} that $\bar{G}^{(\ll)}$ is a permutation group of degree $n=24/(\ll-1)$ for each $\ll\in\LL$. Since $G^{(2)}\simeq M_{24}$ has trivial centre and is already a permutation group of degree $24=24/(2-1)$ we set $\bar{G}^{(2)}=G^{(2)}$. 

Generalising the notation used above for double covers we write $G\simeq N.H$ to indicate that $G$ fits into a short exact sequence
\begin{gather}\label{eqn:grps_NG}
	1 \to N \to G \to H \to 1
\end{gather}
for some groups $N$ and $H$. We write $G\simeq N\tc H$ for a group of the form $N.H$ for which the sequence (\ref{eqn:grps_NG}) splits (i.e. in case $G$ is a semi-direct product $N\rtimes  H$). Then we have $\AGL_3(2)\simeq 2^3\tc L_2(7)$ according to the exceptional isomorphism $L_3(2)\simeq L_2(7)$ where $2^3$ denotes an elementary abelian group of order $8$.

%--------------------------------------------------------------------------%
\subsection{Signed Permutations}\label{sec:grps:sgnprm}
%---------------------------------------------------------------------------%

The group of {\em signed permutations of degree $n$} or {\em hyperoctahedral group of degree $n$}, denoted here by $\HO_n$, is a semi-direct product $2^n\tc \Sym_n$ where $2^n$ denotes an elementary abelian group of order $2^n$. We may realise it explicitly as the subgroup of invertible linear transformations of an $n$-dimensional vector space generated by sign changes and permutations in a chosen basis. (This recovers $2^n\tc \Sym_n$ so long as the vector space is defined over a field of characteristic other than $2$. In case the characteristic is $2$ the sign changes are trivial and we recover $\Sym_n$.) When we write $\HO_n$ we usually have in mind the data of fixed subgroups $N\simeq 2^n$ and $H\simeq \Sym_n$ such that $N$ is normal and $\HO_n/N\simeq H$, such as are given by sign changes and coordinate permutations, respectively, when we realise $\HO_n$ explicitly as described above. In what follows we assume such data to be chosen for each $n$ and write $\HO_n\to \Sym_n$ for the composition $\HO_n\to \HO_n/N\simeq \Sym_n$.

We will show in \S\ref{sec:grps:spcnst} that each $G^{(\ll)}$ for $\ll\in{\LL}$ admits a realisation as a subgroup of $\HO_n$ for $n=24/(\ll-1)$ such that the image of $G^{(\ll)}$ under the map $\HO_n\to \Sym_n$ is $\bar{G}^{(\ll)}$.

In \S\ref{sec:grps:spcnst} we use the following modification of the usual cycle notation for permutations to denote elements of $\HO_n$. Suppose $\O$ is a set with $n$ elements and $V$ is the vector space generated over a field $\kk$ by the symbols $\{e_{x}\}_{x\in \O}$. Whereas the juxtaposition $xy$ occurring in a cycle $(...xy...)$ indicates a coordinate permutation mapping $e_x$ to $e_y$, we write $x\bar{y}$ to indicate that a sign change is applied so that $e_x$ is mapped to $-e_{y}$. Then, for example, if $ \O=\{\infty,0,1,2,3,4\}$ we write $\sigma=(\bar{\infty}2\bar{3}4)(\bar{0})$ for the element of $\GL(V)$ determined by
\begin{gather}
	\sigma:
	e_{\infty}\mapsto e_2,\quad
	e_{2}\mapsto -e_{3},\quad
	e_3\mapsto e_4,\quad
	e_4\mapsto -e_{\infty},\quad
	e_0\mapsto -e_0.
\end{gather}
(We think of the bar over the $3$ in $(...2\bar{3}4...)$ as ``occurring between'' the $2$ and $3$.) We call the symbol $1$ a {\em fixed point} of $\sigma$ and we call $0$ an {\em anti-fixed point}. 

We define the {\em signed permutation character} of $\HO_n$ by setting
\begin{gather}\label{eqn:grps:spchar}
	\begin{split}
		\chi:\HO_n&\to\ZZ\\
			g&\mapsto \chi_g=h_{g,+}-h_{g,-}
	\end{split}
\end{gather}
where $h_{g,+}$ is the number of fixed points of $g$ and $h_{g,-}$ is the number of anti-fixed points. Then $\chi$ is just the character of the ordinary representation of $\HO_n$ furnished by $V$ that obtains when the field $\kk$ is taken to be $\CC$. Writing $\bar{\chi}$ for the composition of $\HO_n\to \Sym_n$ with the usual permutation character of $\Sym_n$ we have
\begin{gather}\label{eqn:grps:pchar}
	\begin{split}
		\bar{\chi}:\HO_n&\to\ZZ\\
			g&\mapsto \bar{\chi}_g=h_{g,+}+h_{g,-}
	\end{split}
\end{gather}
and we call $\bar{\chi}$ the {\em unsigned permutation character} of $\HO_n$. Then the signed and unsigned permutation characters $\chi$ and $\bar{\chi}$ together encode the number of fixed and anti-fixed points for each $g\in \HO_n$.

Let us take $\kk=\CC$ in the realisation of $\HO_n$ as a subgroup of $\GL(V)$. Then to each element $g\in \HO_n$ we assign a {\em signed permutation Frame shape} $\Pi_g$ which encodes the eigenvalues of the corresponding (necessarily diagonalisable) linear transformation of $V$ in the following way. An expression
\begin{gather}\label{eqn:grps:sgnprm_frmshp}
	\Pi_g=\prod_{k\geq 1}k^{m_g(k)}
\end{gather}
with $m_g(k)\in \ZZ$ and $m_g(k)=0$ for all but finitely many $k$ indicates that $g$ defines a linear transformation with $m_g(k)$ eigenvalues equal to $\ex(j/k)$ for each $0\leq j<k$ in the case that all the $m_g(k)$ are non-negative. If some $m_g(k)$ are negative then we find the number of eigenvalues equal to $\xi$ say by looking at how many copies of $\xi$ are present in $\Pi_g^+=\prod_{k\geq 1,\,m_g(k)>0}k^{m_g(k)}$ and subtracting the number of copies indicated by $\Pi_g^-=\prod_{k\geq 1,\,m_g(k)<0}k^{-m_g(k)}$. Observe that the signed permutation character of $\HO_n$ is recovered from the signed permutation Frame shapes via $\chi_g=m_g(1)$. Also, the Frame shape is invariant under conjugacy.

The map $\HO_n\to \Sym_n$ (which may be realised by ``ignoring'' sign changes) furnishes a (non-faithful) permutation representation of $\HO_n$ on $n$ points; we call it the {\em unsigned permutation representation} of $\HO_n$. We write $g\mapsto \bar{\Pi}_g$ for the corresponding cycle shapes and call them the {\em unsigned permutation Frame shapes} attached to $\HO_n$. Observe that the unsigned permutation character of $\HO_n$ is recovered from the  unsigned permutation Frame shapes via $\bar{\chi}_g=\bar{m}_g(1)$ when $\bar{\Pi}_g=\prod_{k\geq 1}k^{\bar{m}_g(k)}$. 

Observe that we obtain a faithful permutation representation $\HO_n\to \Sym_{2n}$ by regarding $g\in \HO_n$ as a permutation of the $2n$ points $\{\pm e_x\}_{x\in \Omega}$. We denote the corresponding cycle shapes $\tilde{\Pi}_g$ and call them the {\em total permutation Frame shapes} attached to $\HO_n$. Then the total permutation Frame shapes can be recovered from the signed and unsigned permutation Frame
shapes for if we define a formal product on Frame shapes by setting 
\begin{gather}\label{eqn:grps:prdfrms}
	\Pi\,\Pi'=\prod_{k\geq 1}k^{m(k)+m'(k)}.
\end{gather}
in case $\Pi=\prod_{k\geq 1}k^{m(k)}$ and $\Pi'=\prod_{k\geq 1}k^{m'(k)}$ then we have $\tilde{\Pi}_g=\Pi_g\bar{\Pi}_g$ for all $g\in \HO_n$.

Suppose that $G$ is a subgroup of $\HO_n$ with the property that the intersection $G\cap N$ has order $2$. (This will be the case for $G=G^{(\ll)}$ when $\ll\in\LL$ and $\ll\neq 2$.) Let $z$ be the unique and necessarily central involution in $G\cap N$ and write $\bar{G}$ for the image of $G$ under the map $\HO_n\to \Sym_n$. Then the conjugacy class of $zg$ depends only on the conjugacy class of $g$ and we obtain an involutory map $[g]\mapsto[zg]$ on the conjugacy classes of $G$. We call $[g]$ and $[zg]$ {\em paired} conjugacy classes and we say that $[g]$ is {\em self-paired} if $[g]=[zg]$. Observe that $z$ must act as $-1$ times the identity in any faithful irreducible representation of $G$ so if $g\mapsto \chi(g)$ is the character of such a representation then $\chi(zg)=-\chi(g)$. In particular, $\chi(g)=0$ if $g$ is self-paired.

%--------------------------------------------------------------------------%
\subsection{Realisation Part I}\label{sec:grps:pcnst}
%---------------------------------------------------------------------------%

In this section we construct the groups $\bar{G}^{(\ll)}$ as subgroups of $\Sym_n$ where $n=24/(\ll-1)$. This construction will be nested in the sense that each group $\bar{G}^{(\ll)}$ will be realised as a subgroup of some $\bar{G}^{(\ll')}$ for $\ll'-1$ a divisor of $\ll-1$.

For $\ll=2$ let $\O^{(2)}$ be a set with $24$ elements and let $\mc{G}\subset \mc{P}(\O^{(2)})$ be a copy of the Golay code on $\O^{(2)}$. Then the subgroup of the symmetric group $\Sym_{\O^{(2)}}$ of permutations of the set $\O^{(2)}$ that preserves $\mc{G}$ is isomorphic to $M_{24}$. So we may set
$$
	\bar{G}^{(2)}=\left\{\s\in \Sym_{\O^{(2)}}\mid\s(C)\in \mc{G},\,\forall C\in\mc{G}\right\}.
$$

For $\ll=3$ the largest proper divisor of $\ll-1=2$ is $1=2-1$. Choose a partition of $\O^{(2)}$ into $2/1=2$ subsets of $24/2=12$ elements such that each $12$-element set belongs to $\mc{G}$ and denote it $P^{(3)}=\{\O^{(3)},\O^{(3)'}\}$. Equivalently, take $\O^{(3)}$ to be the symmetric difference of the sets of fixed-points of elements $\s,\s'\in \bar{G}^{(2)}$ of cycle shape $1^82^8$ such that $\s\s'$ has order $6$. (That is, $\O^{(3)}$ consists of the points that are fixed by $\s$ or $\s'$ but not both. In turns out that if the product $\s\s'$ has order $6$ then its cycle shape is $1^22^23^26^2$ and so the fixed-point sets of $\s$ and $\s'$ have intersection of size $2$ and $\O^{(3)}$ and $\O^{(3)'}=\O^{(2)}\setminus\O^{(3)}$ both have $12$ elements.) Then the group of permutations of $\O^{(3)}$ induced by the subgroup of $\bar{G}^{(2)}$ that fixes this partition is a copy of $\bar{G}^{(3)}\simeq M_{12}$. Explicitly, and to explain what we mean here by fix, if $\bar{G}^{(2)}_{P^{(3)}}$ denotes the subgroup of $\bar{G}^{(2)}$ that fixes the partition $P^{(3)}$ in the sense that
\begin{gather}\label{eqn:grps:cnst:barG2P3}
	\bar{G}^{(2)}_{P^{(3)}}=\left\{\s\in \bar{G}^{(2)}\mid\s(C)=C,\,\forall C\in P^{(3)}\right\},
\end{gather}
and if $\varrho:\bar{G}^{(2)}_{P^{(3)}}\to\Sym_{\O^{(3)}}$ denotes the natural map obtained by restricting elements of $\bar{G}^{(2)}_{P^{(3)}}$ to $\O^{(3)}$,
\begin{gather}\label{eqn:grps:cnst:rstrcn}
	\begin{split}
	\varrho:\bar{G}^{(2)}_{P^{(3)}}&\to\Sym_{\O^{(3)}}\\
			\s&\mapsto\s|_{\O^{(3)}},
	\end{split}
\end{gather}
then $\bar{G}^{(3)}$ is the image of $\varrho$. The kernel of this map is the subgroup 
\begin{gather}\label{eqn:grps:cnst:barG2P3O3}
	\bar{G}^{(2)}_{P^{(3)},\O^{(3)}}=\left\{\s\in \bar{G}^{(2)}_{P^{(3)}}\mid \s(x)=x,\,\forall x\in \O^{(3)}\right\}
\end{gather}
comprised of permutations in $\bar{G}^{(2)}_{P^{(3)}}$ that fix every element of $\O^{(3)}$ so we have
\begin{gather*}
	\bar{G}^{(3)}=\varrho\left(\bar{G}^{(2)}_{P^{(3)}}\right)\simeq\bar{G}^{(2)}_{P^{(3)}}/\bar{G}^{(2)}_{P^{(3)},\O^{(3)}}\simeq M_{12}.
\end{gather*}

For $\ll=4$ the largest proper divisor of $\ll-1=3$ is $1=2-1$. Choose a partition of $\O^{(2)}$ into $3/1=3$ subsets of $24/3=8$ elements that belong to $\mc{G}$ and denote it $P^{(4)}=\{\O^{(4)},\O^{(4)'},\O^{(4)''}\}$. Equivalently, choose $\O^{(4)}$ and $\O^{(4)'}$ to be the fixed-point sets of respective elements $\s$ and $\s'$ of order $2$ in $\bar{G}^{(2)}$ having cycle shapes $1^82^8$ and disjoint fixed-point sets and the property that $\s\s'$ has order $3$. Then the group of permutations of $\O^{(4)}$ induced by the subgroup of $\bar{G}^{(2)}$ that fixes this partition is a copy of the group $\bar{G}^{(4)}\simeq \AGL_3(2)$ and we have
\begin{gather*}
	\bar{G}^{(4)}=\varrho\left(\bar{G}^{(2)}_{P^{(4)}}\right)\simeq\bar{G}^{(2)}_{P^{(4)}}/\bar{G}^{(2)}_{P^{(4)},\O^{(4)}}\simeq \AGL_3(2)
\end{gather*}
where $\bar{G}^{(2)}_{P^{(4)}}$, the map $\varrho$ and the subgroup $\bar{G}^{(2)}_{P^{(4)},\O^{(4)}}$ are defined according to the natural analogues of (\ref{eqn:grps:cnst:barG2P3}), (\ref{eqn:grps:cnst:rstrcn}) and (\ref{eqn:grps:cnst:barG2P3O3}), respectively.

For $\ll=5$ the largest proper divisor of $\ll-1=4$ is $2=3-1$. Choose a partition $P^{(5)}=\{\O^{(5)},\O^{(5)'}\}$ of $\O^{(3)}$ into $4/2=2$ subsets of $12/2=6$ elements such that neither of the sets in $P^{(5)}$ is contained in a set of size $8$ in $\mc{G}$. Equivalently, take $\O^{(5)}$ to be the symmetric difference of the sets of fixed-points of elements $\s,\s'\in \bar{G}^{(3)}$ of cycle shape $1^42^4$ such that $\s\s'$ has order $6$. (This condition forces the fixed-point sets of $\s$ and $\s'$ to have a single point of intersection so that $\O^{(5)}$ and $\O^{(5)'}=\O^{(3)}\setminus\O^{(5)}$ both have $6$ elements.) Then the group of permutations of $\O^{(5)}$ induced by the subgroup 
of $\bar{G}^{(3)}$ that fixes this partition is a copy of $\bar{G}^{(5)}\simeq \PGL_2(5)$.
\begin{gather*}
	\bar{G}^{(5)}=\varrho\left(\bar{G}^{(3)}_{P^{(5)}}\right)\simeq\bar{G}^{(3)}_{P^{(5)}}/\bar{G}^{(3)}_{P^{(5)},\O^{(5)}}\simeq \PGL_2(5)
\end{gather*}

For $\ll=7$ the largest proper divisor of $\ll-1=6$ is $3=4-1$. Choose a partition $P^{(7)}=\{\O^{(7)},\O^{(7)'}\}$ of $\O^{(4)}$ into $6/3=2$ subsets of $8/2=4$ elements such that $\O^{(7)}$ (and therefore also $\O^{(7)'}=\O^{(4)}\setminus\O^{(7)}$) is the set of fixed points of an element of order $2$ in $\bar{G}^{(4)}$ having cycle shape $1^42^2$. (There are three conjugacy classes of elements of order $2$ in $\bar{G}^{(4)}$ but for just one of these classes do the elements have the cycle shape $1^42^2$.) Then the group of even permutations of $\O^{(7)}$ induced by the subgroup of $\bar{G}^{(4)}$ that fixes this partition is a copy of the group $\bar{G}^{(7)}\simeq L_2(3)$, which is of course isomorphic to the alternating group on $4$ points.
\begin{gather*}
	\bar{G}^{(7)}=\varrho\left(\bar{G}^{(4)}_{P^{(7)}}\right)\cap \Alt_{\O^{(7)}}\simeq L_2(3)
\end{gather*}
(The quotient $\bar{G}^{(4)}_{P^{(7)}}/\bar{G}^{(4)}_{P^{(7)},\O^{(7)}}$ is isomorphic to the full symmetric group on $4$ points so the restriction to even permutations is not redundant.)

The next largest divisor of $7-1=6$ is $2=3-1$. An alternative construction of $\bar{G}^{(7)}$ is obtained by choosing a partition $P^{(7)}=\{\O^{(7)},\O^{(7)'},\O^{(7)''}\}$ of $\O^{(3)}$ into $6/2=3$ subsets of $12/3=4$ elements such that $\O^{(7)}$ and $\O^{(7)'}$ are the fixed-point sets of respective elements $\s$ and $\s'$ of order $2$ in $\bar{G}^{(3)}$ having cycle shapes $1^42^4$ and disjoint fixed-point sets and the property that $\s\s'$ has order $3$. Then the group of permutations of $\O^{(7)}$ induced by the subgroup of $\bar{G}^{(3)}$ that fixes this partition is again a copy of the group $\bar{G}^{(7)}\simeq L_2(3)$.
\begin{gather*}
	\bar{G}^{(7)}=\varrho\left(\bar{G}^{(3)}_{P^{(7)}}\right)\simeq\bar{G}^{(3)}_{P^{(7)}}/\bar{G}^{(3)}_{P^{(7)},\O^{(7)}}\simeq L_2(3)
\end{gather*}

For $\ll=13$ the largest proper divisor of $\ll-1=12$ is $6=7-1$. Choose a partition $P^{(13)}=\{\O^{(13)},\O^{(13)'}\}$ of $\O^{(7)}$ into $12/6=2$ subsets of $4/2=2$ elements such that $\O^{(13)}$ is the orbit of an element of order $2$ in $\bar{G}^{(7)}$. (That is, choose any partition of $\O^{(7)}$ into subsets of size $2$. The elements of order $2$ in $\bar{G}^{(7)}$ all have cycle shape $2^2$.) Then the group of permutations of $\O^{(13)}$ induced by the subgroup of $\bar{G}^{(7)}$ that fixes this partition is a copy of the group $\bar{G}^{(13)}\simeq\Sym_2$.
\begin{gather*}
	\bar{G}^{(13)}=\varrho\left(\bar{G}^{(7)}_{P^{(13)}}\right)\simeq\bar{G}^{(7)}_{P^{(13)}}/\bar{G}^{(7)}_{P^{(13)},\O^{(13)}}\simeq \Sym_2
\end{gather*}

The next largest divisor of $13-1=12$ is $4=5-1$. An alternative construction of $\bar{G}^{(13)}$ is obtained by choosing a partition $P^{(13)}=\{\O^{(13)},\O^{(13)'},\O^{(13)''}\}$ of $\O^{(5)}$ into $12/4=3$ subsets of $6/3=2$ elements such that $\O^{(13)}$ and $\O^{(13)'}$ are the fixed-point sets of respective elements $\s$ and $\s'$ of order $2$ in $\bar{G}^{(3)}$ having cycle shapes $1^22^2$ and disjoint fixed-point sets and the property that $\s\s'$ has order $3$. Then the group of permutations of $\O^{(13)}$ induced by the subgroup of $\bar{G}^{(5)}$ that fixes this partition is a copy of the group $\bar{G}^{(13)}\simeq\Sym_2$.
\begin{gather*}
	\bar{G}^{(13)}=\varrho\left(\bar{G}^{(5)}_{P^{(13)}}\right)\simeq \bar{G}^{(5)}_{P^{(13)}}/\bar{G}^{(5)}_{P^{(13)},\O^{(13)}}\simeq\Sym_2
\end{gather*}

\begin{rmk}
The group $\bar{G}^{(\ll')}_{P^{(\ll)},\O^{(\ll)}}$ is trivial in every instance except for when $(\ll',\ll)$ is $(2,4)$ or $(4,7)$ so apart from these cases we have $\bar{G}^{(\ll)}\simeq \bar{G}^{(\ll')}_{P^{(\ll)}}$. The group $\bar{G}^{(2)}_{P^{(4)},\O^{(4)}}$ has order $8$ and $\bar{G}^{(4)}_{P^{(7)},\O^{(7)}}$ has order $4$.
\end{rmk}

\begin{rmk}\label{rmk:grps:cnst:steiner}
A {\em Steiner system} with parameters $(t,k,n)$ is the data of an $n$-element set $\O$ together with a collection of subsets of $\O$ of size $k$, called the {\em blocks} of the system, with the property that any $t$-element subset of $\O$ is contained in a unique block. It is well-known that $M_{24}\simeq \bar{G}^{(2)}$ may be realised as the automorphism group of a Steiner system with parameters $(5,8,24)$ and $M_{12}\simeq \bar{G}^{(3)}$ may be described as the automorphism group of a Steiner system with parameters $(5,6,12)$. Indeed, for $\bar{G}^{(2)}\simeq M_{24}$ we may take the blocks to be the $\binom{24}{5}/\binom{8}{5}=759$ sets of size $8$ in a copy $\mc{G}$ of the Golay code on the $24$ element set $\O=\O^{(2)}$ and for $\bar{G}^{(3)}\simeq M_{12}$ regarded as the subgroup of $\bar{G}^{(2)}$ preserving a set $\O^{(3)}$ of size $12$ in $\mc{G}$ we may take the blocks to be the $\binom{12}{5}/\binom{6}{5}=132$ subsets of size $6$ that arise as an intersection $C\cap \O^{(3)}$ for $C\in\mc{G}$ having size $8$. The group $\bar{G}^{(4)}\simeq \AGL_3(2)$ also admits such a description: it is the automorphism group of the unique Steiner system with parameters $(3,4,8)$. The blocks of such a system constitute the $\binom{8}{3}/\binom{4}{3}=14$ codewords of weight $4$ in the unique doubly even linear binary code of length $8$, the length $8$ {\em Hamming code}.
\end{rmk}

\begin{rmk}\label{rmk:cnst:shuffle}
Following \cite[Chp. 11]{sphere_packing} let $n$ be a divisor of $12$, take $n$ cards labelled by $0$ through $n-1$ and consider the group $M_n<\Sym_n$ generated by the {\em reverse shuffle} $r_n:t\mapsto n-1-t$ and the {\em Mongean shuffle} $s_n:t\mapsto \min\{2t,2n-1-2t\}$. Then the notation is consistent with that used above, for the group $M_{12}$ just defined is indeed isomorphic to the Mathieu group of permutations on $12$ points. So we have $M_{12}\simeq \bar{G}^{(3)}$ according to Table \ref{tab:Gms} and it turns out that in fact $M_n\simeq \bar{G}^{(\ll)}$ whenever $n=24/(\ll-1)$, and so this shuffle construction recovers all of the $\bar{G}^{(\ll)}$ except for $G^{(2)}\simeq M_{24}$  and $G^{(4)}\simeq \AGL_3(2)$ (i.e. all $\bar{G}^{(\ll)}$ for $\ll$ odd).
\end{rmk}

%--------------------------------------------------------------------------%
\subsection{Realisation Part II}\label{sec:grps:spcnst}
%---------------------------------------------------------------------------%

Having constructed the $\bar{G}^{(\ll)}$ as permutation groups we now describe the umbral groups $G^{(\ll)}$ as groups of signed permutations. We retain the notation of \S\ref{sec:grps:pcnst}.

To construct $G^{(3)}$ choose an element $\t$ of order $11$ in $\bar{G}^{(3)}$ and (re)label the elements of $\O^{(3)}$ so that
$$\O^{(3)}=\{\infty,0,1,2,3,4,5,6,7,8,9,X\}$$
and the action of $\t$ on $\O^{(3)}$ is given by $x\mapsto x+1$ modulo $11$ (where $X=10$). Then for any set $C\subset\O^{(3)}$ of size $4$ there is a unique element $\s\in \bar{G}^{(3)}$ with cycle shape $1^42^4$ such that the fixed-point set of $\s$ is precisely $C$. In case $C=\{0,1,4,9\}$ for example $\s=(\infty 6)(2X)(35)(78)$ and $\bar{G}^{(3)}$ is generated by $\s$ and $\t$. Let $\hat{\O}^{(3)}=\{e_x\mid x\in\O^{(3)}\}$ be a basis for a $12$-dimensional vector space over $\CC$ say, let $\hat{\t}$ be the element of the hyperoctahedral group $\HO_{\hat{\O}^{(3)}}\simeq \HO_{12}$ given by $\hat{\t}:e_x\mapsto e_{x+1}$, so that $\hat{\t}=(0123456789X)$ in (signed) cycle notation (cf. \S\ref{sec:grps:sgnprm}), and set $\hat{\s}=(\infty 6)(\bar{2}\bar{X})(35)(\bar{7}\bar{8})$. Then for $G^{(3)}$ the group generated by $\hat{\s}$ and $\hat{\t}$ we have that $G^{(3)}\simeq 2.M_{12}$ and the image of $G^{(3)}$ under the map $\HO_{\hat{\O}^{(3)}}\to S_{\O^{(3)}}$ is $\bar{G}^{(3)}$.

To construct $G^{(4)}$ choose an element $\t$ of order $7$ in $\bar{G}^{(4)}$ and (re)label the elements of $\O^{(4)}$ so that
$$\O^{(4)}=\{\infty,0,1,2,3,4,5,6\}$$
and the action of $\t$ on $\O^{(4)}$ is given by $x\mapsto x+1$ modulo $7$. There is a unique conjugacy class of elements of $\bar{G}^{(4)}$ having cycle shape $1^42^2$ and the fixed-point sets of these elements are the $14$ subsets of $\O^{(4)}$ of size $4$ comprising the blocks of a $(3,4,8)$ Steiner system preserved by $\bar{G}^{(4)}$ (cf. Remark \ref{rmk:grps:cnst:steiner}). The particular block $C=\{0,1,3,6\}$ is the unique one with the property that $\s$ and $\t$ generate $\bar{G}^{(4)}$ in case $\s$ is any (of the $3$) involution(s) with $C$ as its fixed-point set. Take $\s=(\infty 5)(24)$ and let $\hat{\O}^{(4)}=\{e_x\mid x\in\O^{(4)}\}$ be a basis for an $8$-dimensional vector space over $\CC$ say, let $\hat{\t}$ be the element of the hyperoctahedral group $\HO_{\hat{\O}^{(4)}}\simeq \HO_{8}$ given by $\hat{\t}:e_x\mapsto e_{x+1}$, so that $\hat{\t}=(0123456)$, and set $\hat{\s}=(\infty 5)(\bar{0})(\bar{1})(24)$. Then for $G^{(4)}$ the group generated by $\hat{\s}$ and $\hat{\t}$ we have that $G^{(4)}\simeq 2.\AGL_3(2)$ and the image of $G^{(4)}$ under the map $\HO_{\hat{\O}^{(4)}}\to S_{\O^{(4)}}$ is $\bar{G}^{(4)}$.

For the remaining $\ll\geq 5$ we can construct $G^{(\ll)}$ from $G^{(3)}$ by proceeding in analogy with the constructions of $\bar{G}^{(\ll)}$ from $\bar{G}^{(3)}$ (and its subgroups) given earlier in this section. Concretely, for $\ll=5$ we may set 
\begin{gather*}
	G^{(5)}=\hat{\varrho}\left(G^{(3)}_{\hat{P}^{(5)}}\right)\simeq G^{(3)}_{\hat{P}^{(5)}}/G^{(3)}_{\hat{P}^{(5)},\hat{\O}^{(5)}}\simeq \GL_2(5)/2
\end{gather*}
where $\hat{\O}^{(5)}=\{e_x\mid x\in\O^{(5)}\}$ is a basis for $24/(5-1)=6$-dimensional vector space over $\CC$, we define $\hat{C}=\{e_x\mid x\in C\}$ for $C\subset\O^{(5)}$ and set $\hat{P}^{(5)}=\{\hat{\O}^{(5)},\hat{\O}^{(5)'}\}$ for $P^{(5)}=\{\O^{(5)},\O^{(5)'}\}$ as in the construction of $\bar{G}^{(5)}$ given above, the groups $G^{(3)}_{\hat{P}^{(5)}}$ and $G^{(3)}_{\hat{P}^{(5)},\hat{\O}^{(5)}}$ are defined by 
\begin{gather}
	{G}^{(3)}_{\hat{P}^{(5)}}=\left\{\s\in {G}^{(3)}\mid\s(\Span\hat{C})\subset\Span\hat{C},\,\forall \hat{C}\in \hat{P}^{(5)}\right\},\label{eqn:grps:cnst:G3P5}\\
	{G}^{(3)}_{\hat{P}^{(5)},\hat{\O}^{(5)}}=\left\{\s\in {G}^{(3)}_{\hat{P}^{(5)}}\mid \s(e_x)=e_x,\,\forall x\in {\O}^{(5)}\right\},\label{eqn:grps:cnst:G3P5O5}
\end{gather}
and $\hat{\varrho}$ is the map $G^{(3)}_{\hat{P}^{(5)}}\to\HO_{\hat{\O}^{(5)}}$ obtained by restriction.
\begin{gather}\label{eqn:grps:cnst:rstrcnhat}
	\begin{split}
	\hat{\varrho}:{G}^{(3)}_{\hat{P}^{(5)}}&\to\HO_{\hat{\O}^{(5)}}\\
			\s&\mapsto\s|_{\Span\hat{\O}^{(5)}}
	\end{split}
\end{gather}
In direct analogy with this we have that
\begin{gather*}
	\begin{split}
	G^{(7)}&=\hat{\varrho}\left(G^{(3)}_{\hat{P}^{(7)}}\right)\simeq G^{(3)}_{\hat{P}^{(7)}}/G^{(3)}_{\hat{P}^{(7)},\hat{\O}^{(7)}}\simeq \SL_2(3),\\
	G^{(13)}&=\hat{\varrho}\left(G^{(5)}_{\hat{P}^{(13)}}\right)\simeq G^{(5)}_{\hat{P}^{(13)}}/G^{(5)}_{\hat{P}^{(13)},\hat{\O}^{(13)}}\simeq 4,\\
	\end{split}
\end{gather*}
when $G^{(\ll')}_{\hat{P}^{(\ll)}}$ and $G^{(\ll')}_{\hat{P}^{(\ll)},\hat{\O}^{(\ll)}}$ are defined as in (\ref{eqn:grps:cnst:G3P5}) and (\ref{eqn:grps:cnst:G3P5O5}), respectively, for $P^{(\ll)}=\{\O^{(\ll)},\cdots\}$ as specified in the construction of $\bar{G}^{(\ll')}$ given above, and $\hat{\varrho}$ as in (\ref{eqn:grps:cnst:rstrcnhat}).

We conclude this section with explicit signed permutation presentations of the $G^{(\ll)}$ as subgroups of the $\HO_{\hat{\O}^{(\ll)}}$ for $\ll\geq 3$. The presentations for $\ll=3$ and $\ll=4$ were obtained above, and the remaining ones can be found in a similar manner. In each case we label the index set $\O^{(\ll)}$ so that $\O^{(\ll)}=\{\infty,0,\cdots,n-1\}$ where $n=24/(\ll-1)-1=(25-\ll)/(\ll-1)$ and seek a presentation for which the coordinate permutation $e_x\mapsto e_{x+1}$ (with indices read modulo $n$) is an element of order $n$ in $G^{(\ll)}$ (although we must take this element to be trivial in case $\ll=13$).
\begin{gather}
	G^{(3)}=\left\langle (\infty 6)(\bar{2}\bar{X})(35)(\bar{7}\bar{8}),(0123456789X)\right\rangle\\
	G^{(4)}=\left\langle (\infty 5)(\bar{0})(\bar{1})(24),(0123456)\right\rangle\\
	G^{(5)}=\left\langle (\infty \bar{0})(3\bar{1})(2\bar{4}),(01234)\right\rangle\\
	G^{(7)}=\left\langle (\infty \bar{0})(\bar{1}2),(012)\right\rangle\\
	G^{(13)}=\left\langle (\infty \bar{0})\right\rangle
\end{gather}

Equipped now with explicit realisations $G^{(\ll)}<\HO_n$ of the umbral groups $G^{(\ll)}$ as signed permutation groups we define symbols $\Pi^{(\ll)}_g$, $\chi^{(\ll)}_g$,$\bar{\Pi}^{(\ll)}_g$ and $\bar{\chi}^{(\ll)}_g$ as follows for $\ll\in\LL$ and $g\in G^{(\ll)}$. 
We write $\Pi^{(\ll)}_g$ for the signed permutation Frame shape attached to $g\in G^{(\ll)}$ (when regarded as an element of $\HO_n$) as defined in \S\ref{sec:grps:sgnprm} and we write $\bar{\Pi}^{(\ll)}_g$ for the cycle shape attached to the image of $g\in G^{(\ll)}$ under the composition $G^{(\ll)}\to \HO_n\to\Sym_n$. 
We define $g\mapsto \chi^{(\ll)}_g$ by restricting the signed permutation character (cf. (\ref{eqn:grps:spchar})) to $G^{(\ll)}$ and we define $g\mapsto \bar{\chi}^{(\ll)}_g$ by restricting the unsigned permutation character (cf. (\ref{eqn:grps:pchar})) to $G^{(\ll)}$. 
We call $\chi^{(\ll)}_g$ the {\em signed twisted Euler character} attached to $g\in G^{(\ll)}$ and we call $\bar{\chi}^{(\ll)}_g$ the {\em unsigned twisted Euler character}  attached to $g\in G^{(\ll)}$. 

As is explained in \S\ref{sec:grps:sgnprm} the data $g\mapsto \Pi^{(\ll)}_g$ is sufficient to determine the cycle shapes $\bar{\Pi}^{(\ll)}_g$ and the twisted Euler characters $\chi^{(\ll)}_g$ and $\bar{\chi}^{(\ll)}_g$. We will attach vector-valued mock modular forms $H^{(\ll)}_g$ to each $g\in G^{(\ll)}$ in \S\ref{sec:mckay} and it will develop that the shadows and multiplier systems of these functions are encoded in the Frame shapes $\Pi^{(\ll)}_g$. We list the Frame shapes $\Pi^{(\ll)}_g$ and $\bar{\Pi}^{(\ll)}_g$ and the twisted Euler characters $\chi^{(\ll)}_g$ and $\bar{\chi}^{(\ll)}_g$ for all $g\in G^{(\ll)}$ and $\ll\in \LL$ in the tables of \S\ref{sec:chars:eul}. 

We conclude this section with an extraordinary property relating the Frame shapes $\Pi^{(\ll)}_g$ and $\bar{\Pi}^{(\ll)}_g$ attached to the umbral groups $G^{(\ll)}$ at $\ll=2$ and $\ll=4$ which can be verified explicitly using the tables of \S\ref{sec:chars:eul}.
\begin{prop}\label{prop:grps:sqellfrms}
Let $g\in G^{(4)}$ and suppose that the Frame shape $\Pi^{(4)}_g=\prod_k k^{m_g(k)}$ is a cycle shape, so that $m_g(k)\geq 0$ for all $k$. Then there exists $g'\in G^{(2)}$ with $o(g')=2 o(g)$ such that 
\begin{gather}
	\bar{\Pi}^{(2)}_{g'}=\prod_kk^{\bar{m}_g(k)}(2 k)^{m_g(k)}
\end{gather}
when $\bar{\Pi}^{(4)}_g=\prod_k k^{\bar{m}_g(k)}$ except in case $g$ is of class 4B.
\end{prop}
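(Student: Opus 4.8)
The plan is to verify the identity by a structured inspection of the conjugacy class data recorded in \S\ref{sec:chars:eul}, organised around a geometric picture coming from the nested construction of \S\ref{sec:grps:pcnst}. Recall that $\bar{G}^{(4)}\simeq\AGL_3(2)$ was realised on the $8$-element set $\O^{(4)}$ as the group induced by the stabiliser in $\bar{G}^{(2)}\simeq M_{24}$ of the trio $P^{(4)}=\{\O^{(4)},\O^{(4)'},\O^{(4)''}\}$, and that $G^{(4)}\simeq 2.\AGL_3(2)$ acts by signed permutations on the coordinates $\{e_x\}_{x\in\O^{(4)}}$, hence by honest permutations on the $16$ points $\{\pm e_x\}$. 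The first step is to fix a bijection of this $16$-element set with the $16$ points $\O^{(4)'}\cup\O^{(4)''}$ complementary to $\O^{(4)}$, sending $e_x$ and $-e_x$ to corresponding points of $\O^{(4)'}$ and $\O^{(4)''}$ respectively.

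With this identification I would first treat the principal case, in which every cycle of the underlying permutation $\bar{g}\in\bar{G}^{(4)}$ carries net sign $+1$. Here the signed and unsigned permutation Frame shapes coincide, so $m_g(k)=\bar{m}_g(k)$ and the asserted shape reads $\prod_k k^{m_g(k)}(2k)^{m_g(k)}$. I would then identify $g'$ as the element of the trio stabiliser in $M_{24}$ that restricts to $\bar{g}$ on $\O^{(4)}$, contributing the factors $k^{m_g(k)}$, and interchanges $\O^{(4)'}$ with $\O^{(4)''}$ compatibly with the signed action of $g$ on $\{\pm e_x\}$; the interchange welds each length-$k$ cycle of $\bar{g}$ on the $16$ complementary points into a single cycle of length $2k$, contributing the factors $(2k)^{m_g(k)}$. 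A short check via the formal product $\tilde{\Pi}_g=\Pi_g\bar{\Pi}_g$ of \S\ref{sec:grps:sgnprm} confirms the predicted cycle shape, and the order relation $o(g')=2\,o(g)$ is automatic, since $o(g)=\operatorname{lcm}\{k:m_g(k)>0\}$ while every $k$ with $\bar{m}_g(k)>0$ divides $o(g)$, so $o(g')=\operatorname{lcm}\bigl(\{k\}\cup\{2k\}\bigr)=2\,o(g)$ by $\operatorname{lcm}(2a,2b)=2\operatorname{lcm}(a,b)$.

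The remaining classes are those for which some cycles of $\bar{g}$ carry net sign $-1$ yet $\Pi^{(4)}_g$ is still a genuine cycle shape, which forces the negative exponents $k^{-1}$ produced by such cycles to cancel. For these I would abandon the clean swap picture, simply read off $\Pi^{(4)}_g$ and $\bar{\Pi}^{(4)}_g$ from the Frame-shape tables of \S\ref{sec:chars:eul}, form the candidate shape $\prod_k k^{\bar{m}_g(k)}(2k)^{m_g(k)}$, and match it against the known list of cycle shapes of elements of order $2\,o(g)$ in $M_{24}$. Since $G^{(4)}$ has only finitely many (and few) conjugacy classes, this is a finite verification.

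The main obstacle, and the source of the single exception, is precisely the class $4\mathrm{B}$: here the candidate shape of order $8$ fails to occur among the cycle shapes of $M_{24}$, so no element $g'$ with $o(g')=2\,o(g)$ and the required cycle shape exists. The proof is completed by exhibiting, for every class other than $4\mathrm{B}$ whose signed Frame shape is a cycle shape, the matching $M_{24}$ class, and by confirming that for $4\mathrm{B}$ no match is available; both checks are carried out against the tables of \S\ref{sec:chars:eul} together with the known cycle shapes of $M_{24}$.
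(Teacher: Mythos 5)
There is a genuine gap, and it sits in your ``principal case'' argument. The welding construction you describe produces a permutation of the $24$ points $\O^{(4)}\cup\O^{(4)'}\cup\O^{(4)''}$ with the desired cycle shape, but it produces an element of $\Sym_{24}$, not of $M_{24}$: nothing in your argument shows that this permutation preserves the Golay code $\mc{G}$, and membership in $M_{24}=\bar{G}^{(2)}$ is precisely the content of the proposition. The class $4B$ exposes this concretely. Since $\Pi^{(4)}_{4B}=\bar{\Pi}^{(4)}_{4B}=4^2$ (see Table \ref{tab:chars:eul:4}), both $4$-cycles of $\bar{g}$ carry net sign $+1$, so $4B$ lies squarely inside your principal case; your construction would therefore yield $g'\in M_{24}$ of order $8$ with cycle shape $4^28^2$. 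No such element exists---the unique order-$8$ class of $M_{24}$ is $8A$, with cycle shape $1^22^14^18^2$ (Table \ref{tab:chars:eul:2}). So the principal-case argument, taken at face value, proves a false statement, and your proposal is internally inconsistent: the final paragraph correctly identifies $4B$ as the exception for exactly this reason, yet $4B$ is one of the classes your geometric construction claims to handle. The trio stabiliser in $M_{24}$ is a proper subgroup of the stabiliser of the partition $P^{(4)}$ inside $\Sym_{24}$, and the ``element that restricts to $\bar{g}$ on $\O^{(4)}$ and interchanges $\O^{(4)'}$ with $\O^{(4)''}$ compatibly with the signed action'' simply need not belong to it.

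What survives of your proposal is the finite verification against the tables of \S\ref{sec:chars:eul}, and this is in fact the paper's entire proof: the proposition is stated as something that ``can be verified explicitly using the tables,'' with no structural argument offered or needed. The check is short: the classes of $G^{(4)}$ whose signed Frame shape is a cycle shape are $1A$, $2B$, $2C$, $3A$, $4B$, $4C$, $6BC$, $7AB$, and the predicted shapes $1^82^8$, $2^44^4$, $1^42^24^4$, $1^22^23^26^2$, $4^28^2$, $1^22^14^18^2$, $2^14^16^112^1$, $1^12^17^114^1$ are realised by the $M_{24}$ classes $2A$, $4A$, $4B$, $6A$, (none), $8A$, $12A$, $14AB$ respectively, with the correct orders. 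Your closing paragraph already performs this check for every class, so the repair is to delete the geometric construction (or demote it to heuristic motivation for the bijection of point sets) and let the table comparison carry the whole proof; your order computation $o(g')=2\,o(g)$, which is correct once existence is granted, can be retained.
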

As will be explained in \S\ref{sec:mckay} this result implies direct relationships between the functions $H^{(2)}_{g'}$ and $H^{(4)}_g$ when $g$ and $g'$ are as in the statement of the proposition.

%--------------------------------------------------------------------------%
\subsection{Dynkin Diagrams Part I}\label{sec:grps:dynI}
%--------------------------------------------------------------------------%

The {\em McKay correspondence} \cite{McKay_Corr} relates finite subgroups of $\SU(2)$ to the affine Dynkin diagrams of ADE type by associating irreducible representations of the finite groups to nodes of the corresponding diagrams and by now is well understood in terms of resolutions of {\em simple singularities} $\CC^2/G$ for $G<\SU(2)$ \cite{Slo_SmpSngSmpAlgGps,GonVer_GeomCnstMcKCorr}. A more mysterious Dynkin diagram correspondence also due to McKay is his {\em monstrous $E_8$ observation} \cite{McKay_Corr} (see also \cite[\S14]{Con_CnstM}) which associates nine of the conjugacy classes of the monster group to the nodes of the affine $E_8$ Dynkin diagram and extends to similar correspondences relating certain subgroups of the Monster to other affine Dynkin diagrams \cite{GlaNor_McKE8M}. We find analogues of both McKay's Dynkin diagram observations manifesting in the groups $G^{(\ll)}$, as we shall now explain.

For $\ll\in \LL=\{2,3,4,5,7,13\}$ the number $(25-\ll)/(\ll-1)$ is an odd integer $p$ such that $p+1$ divides $24$ and is a prime in case $\ll$ is not $13$. Recall the construction of $\bar{G}^{(\ll)}$ as permutations of $\Omega^{(\ll)}$ from \S\ref{sec:grps:pcnst}. By inspection, with the assistance of \cite{GAP4}, we obtain the following lemma.
\begin{lem}\label{lem:grps:dyn:transl2p}
Let $\ll\in\{2,3,4,5,7\}$ and set $p=(25-\ll)/(\ll-1)$. Then the group $\bar{G}^{(\ll)}$ has a unique conjugacy class of subgroups isomorphic to $L_2(p)$ that act transitively on the degree $p+1=24/(\ll-1)$ permutation representation of $\bar{G}^{(\ll)}$ on $\Omega^{(\ll)}$. 
\end{lem}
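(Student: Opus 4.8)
The plan is to dispatch the five cases in order of increasing difficulty, using throughout the fact that $L_2(p)=\PSL_2(p)$ carries its natural $2$-transitive action on the projective line $\PP^1(\FF_p)$, which has exactly $p+1=24/(\ll-1)$ points. This is the transitive action we expect to recover inside $\bar G^{(\ll)}$ acting on $\Omega^{(\ll)}$, so existence will amount to exhibiting a copy of $L_2(p)$ whose induced action on $\Omega^{(\ll)}$ is equivalent to this projective one, and the real content of the statement is the \emph{uniqueness} of the conjugacy class of \emph{transitive} such subgroups. The two smallest cases follow immediately from Table~\ref{tab:Gms}. For $\ll=7$ we have $\bar G^{(7)}\simeq L_2(3)$, so the only subgroup isomorphic to $L_2(3)$ is $\bar G^{(7)}$ itself, which is trivially unique and acts transitively (as $\Alt_4$) on the four points of $\Omega^{(7)}$. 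For $\ll=5$ we have $\bar G^{(5)}\simeq \PGL_2(5)\simeq \Sym_5$, whose unique subgroup of index $2$ is the derived subgroup $\simeq \Alt_5\simeq L_2(5)$; this is the only subgroup isomorphic to $L_2(5)$, and it is transitive on $\Omega^{(5)}$ as the restriction of the $2$-transitive action of $\PGL_2(5)$ on $\PP^1(\FF_5)$.

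For $\ll=4$ I would argue directly from $\bar G^{(4)}\simeq \AGL_3(2)\simeq 2^3\tc L_2(7)$. Let $N\simeq 2^3$ be the translation subgroup, on which $L_2(7)$ acts irreducibly. Any subgroup $H\simeq L_2(7)$ meets $N$ in a normal subgroup of the simple group $H$; since $|N|<|H|$ this forces $H\cap N=1$ and hence $HN=\bar G^{(4)}$, so every copy of $L_2(7)$ is a complement to $N$. Because the conjugation action of $\bar G^{(4)}$ on $H^1(L_2(7),\FF_2^3)$ is inner and hence trivial, the $\bar G^{(4)}$-conjugacy classes of complements are parametrised by $H^1(L_2(7),\FF_2^3)$, which is one-dimensional; thus there are exactly two classes. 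One class consists of the point stabilisers $\simeq \GL_3(2)$, which fix a point and so are intransitive. The complements in the other class fix no point, and since every proper subgroup of $L_2(7)$ has index at least $7$, a fixed-point-free action of $L_2(7)$ on eight points must be a single orbit; so this class is transitive, and it is the unique transitive class.

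The two remaining cases I would settle by appeal to the known lists of (maximal) subgroups of the Mathieu groups. For $\ll=2$ the ATLAS \cite{ATLAS} records a single conjugacy class of subgroups $L_2(23)<M_{24}$; these are maximal and act on the $24$ points of $\Omega^{(2)}$ through the $2$-transitive projective action on $\PP^1(\FF_{23})$, giving a unique transitive class. For $\ll=3$ the group $M_{12}$ has exactly two conjugacy classes of subgroups isomorphic to $L_2(11)$, interchanged by the outer automorphism of $M_{12}$, which also interchanges the two inequivalent $12$-point actions. In the fixed action on $\Omega^{(3)}$ one class therefore realises the exceptional degree-$12$ $2$-transitive action of $L_2(11)$ and so is transitive, while the other decomposes $\Omega^{(3)}$ as $1+11$ (a fixed point together with the degree-$11$ projective action) and so is intransitive; this isolates a unique transitive class.

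The main obstacle is precisely this uniqueness in the $\ll=3$ and $\ll=4$ cases, where two classes of the abstract group $L_2(p)$ occur and must be separated by their orbit structure on $\Omega^{(\ll)}$ rather than by any abstract invariant. Consistently with the phrasing of the lemma, the most robust way to carry the argument out in practice is to supply the explicit (signed) permutation generators of \S\ref{sec:grps:pcnst} and \S\ref{sec:grps:spcnst} to \cite{GAP4}, enumerate the conjugacy classes of subgroups isomorphic to $L_2(p)$, and read off their orbit lengths on $\Omega^{(\ll)}$; the structural arguments above then explain why a single transitive class appears in every instance.
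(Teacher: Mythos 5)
The paper contains no written proof of this lemma: it is asserted to hold ``by inspection, with the assistance of \cite{GAP4}'', applied to the explicit permutation realisations of \S\ref{sec:grps:pcnst}. Your closing suggestion---feed the generators of \S\S\ref{sec:grps:pcnst},\ref{sec:grps:spcnst} to GAP and read off orbit lengths of the subgroup classes---is therefore literally the paper's argument, and your structural case analyses are a genuinely different, more conceptual route. They are sound for $\ll\in\{7,5,4,2\}$. In particular the $\ll=4$ treatment is a nice self-contained proof: every $H\simeq L_2(7)$ meets the translation subgroup $N\simeq 2^3$ trivially (as $H\cap N$ is a proper normal subgroup of a simple group) and so is a complement; $\bar{G}^{(4)}$-classes of complements are counted by $H^1(L_2(7),\FF_2^3)\simeq \FF_2$ since conjugation acts trivially on cohomology; a complement fixing a point equals the full point stabiliser $\simeq \GL_3(2)$ by order count, so one class is the point stabilisers and the other is fixed-point-free; and a fixed-point-free action of $L_2(7)$ on $8$ points is transitive because every proper subgroup has index at least $7$. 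For $\ll=2$ you should add one small observation, since the ATLAS enumerates only \emph{maximal} classes: every $L_2(23)\le M_{24}$ is automatically maximal, because the only maximal subgroup of $M_{24}$ other than $L_2(23)$ itself with order divisible by $23$ is $M_{23}$, whose unique maximal subgroup of order divisible by $23$ is $23{:}11$, too small to contain $L_2(23)$.

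The $\ll=3$ argument, however, contains a genuine error: the two classes of $L_2(11)$ in $M_{12}$ are \emph{not} interchanged by $\Out(M_{12})$. One class is maximal and the other is not, and outer automorphisms preserve maximality. Concretely, any $H\simeq L_2(11)$ fixing a point of one dozen lies in a point stabiliser $M_{11}$, and since the subgroups $L_2(11)<M_{11}$ form a single class---namely the point stabilisers of the $3$-transitive degree-$12$ action of $M_{11}$ on the \emph{other} dozen---such an $H$ fixes a point of each dozen; hence every $L_2(11)\le M_{12}$ is either transitive on both dozens or has orbits $1+11$ on both, and each class is stable under $\Out(M_{12})$ (which does swap the two $M_{11}$ classes and the two dozens). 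Your inference ``one class therefore realises the transitive action while the other decomposes as $1+11$'' thus rests on a false premise---and even granting it, the conclusion would not follow, since swapping classes together with actions only says the second class behaves in the second action as the first does in the first. The conclusion itself is true and is rescued by the argument just sketched: the intransitive copies of $L_2(11)$ are exactly the stabilisers of pairs $(x,y)$ with $x,y$ in the two dozens (a single class, as $M_{12}$ is transitive on the $144$ pairs), while a transitive copy is fixed-point-free, hence maximal, hence lies in the unique index-$144$ class recorded in the ATLAS. A minor point of wording: the transitive degree-$12$ action of $L_2(11)$, with point stabilisers $11{:}5$, is the \emph{natural} action on $\PP^1(\FF_{11})$; the exceptional actions of $L_2(11)$ are those of degree $11$, which is how the intransitive class acts on its $11$-point orbit.
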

Armed with Lemma \ref{lem:grps:dyn:transl2p} we pick a transitive subgroup of $\bar{G}^{(\ll)}$ isomorphic to $L_2(p)$ for each $\ll$ in $\{2,3,4,5,7\}$---these being the cases that $(25-\ll)/(\ll-1)$ is prime---and denote it $\bar{L}^{(\ll)}$. 
For future reference we note that there are two conjugacy classes of subgroups of $\bar{G}^{(\ll)}$ isomorphic to $L_2(p)$ in case $\ll\in\{3,4\}$ but in each case only one of these classes contains subgroups acting transitively.
\begin{lem}\label{lem:grps:dyn:nontransl2p}
For $\ll\in\{3,4\}$ and $p=(25-\ll)/(\ll-1)$ there is a unique conjugacy class of subgroups of $\bar{G}^{(\ll)}$ isomorphic to $L_2(p)$ that does not act transitively in the degree $p+1=24/(\ll-1)$ permutation representation of $\bar{G}^{(\ll)}$ on the set $\Omega^{(\ll)}$.
\end{lem}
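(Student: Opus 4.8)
The plan is to pin down exactly how a subgroup isomorphic to $L_2(p)$ can fail to be transitive on $\Omega^{(\ll)}$, and then to show that all such subgroups form a single conjugacy class. The first step is the elementary observation that the orbit lengths of any faithful action of $L_2(p)$ are indices of subgroups: for $L_2(11)$ the transitive permutation degrees at most $12$ are $1$, $11$ and $12$, and for $L_2(7)$ the transitive degrees at most $8$ are $1$, $7$ and $8$. Hence on $p+1=24/(\ll-1)$ points the only intransitive faithful orbit decomposition is $1+p$, so an intransitive $L_2(p)$ in $\bar{G}^{(\ll)}$ fixes a single point $x\in\Omega^{(\ll)}$ and acts (in fact $2$-transitively) on the remaining $p$ points.

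Next I would establish existence and uniqueness case by case. For $\ll=4$ the stabiliser in $\bar{G}^{(4)}\simeq\AGL_3(2)$ of a point $x\in\Omega^{(4)}$ is a copy of $\GL_3(2)\simeq L_3(2)\simeq L_2(7)$ of order $168$; since $|L_2(7)|=168$, any $L_2(7)$ fixing $x$ must coincide with this stabiliser, and because $\bar{G}^{(4)}$ is transitive on $\Omega^{(4)}$ these stabilisers form exactly one conjugacy class, each acting with orbit lengths $1+7$. For $\ll=3$ the stabiliser in $\bar{G}^{(3)}\simeq M_{12}$ of a point is a copy of $M_{11}$, and I would invoke the fact (readily checked in \cite{GAP4}, or read off from \cite{ATLAS}) that $M_{11}$ has a single conjugacy class of subgroups isomorphic to $L_2(11)$, each acting transitively on its eleven remaining points. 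Transitivity of $M_{12}$ on $\Omega^{(3)}$ then upgrades this to a uniqueness statement in $\bar{G}^{(3)}$: given two intransitive $L_2(11)$ fixing points $x_1$ and $x_2$, an element of $M_{12}$ carrying $x_1$ to $x_2$ conjugates the first into the stabiliser $M_{11}$ of $x_2$, whereupon the single-class property inside $M_{11}$ makes the two conjugate in $\bar{G}^{(3)}$.

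Combining these steps yields the claim: an intransitive $L_2(p)$ exists, all such subgroups are conjugate, and they are distinct from the transitive class of Lemma~\ref{lem:grps:dyn:transl2p} because conjugate subgroups share the same orbit lengths. I expect the genuine obstacle to lie entirely in the $\ll=3$ case, specifically in the two facts that $M_{11}$ contains a single class of $L_2(11)$ and that this class is genuinely distinct from the transitive degree-$12$ copy inside $M_{12}$. Rather than belabour a structural argument for the subgroup count, I would follow the computational route already used for Lemma~\ref{lem:grps:dyn:transl2p} and verify the class count and the orbit structure directly in \cite{GAP4}; the $\ll=4$ case, by contrast, needs nothing beyond the order coincidence $|L_2(7)|=|\GL_3(2)|$ and the transitivity of $\AGL_3(2)$ on its eight points.
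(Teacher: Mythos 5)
Your proof is correct, but it takes a genuinely different route from the paper's: the paper gives no structural argument at all, obtaining both Lemma \ref{lem:grps:dyn:transl2p} and this lemma ``by inspection, with the assistance of \cite{GAP4}'' applied to the explicit permutation realisations of \S\ref{sec:grps:pcnst}. Your reduction is sound and more illuminating. The orbit analysis---each orbit of a simple group carries a faithful or trivial action, and the proper subgroups of $L_2(11)$ and $L_2(7)$ have minimal indices $11$ and $7$---forces any intransitive copy to fix exactly one point, which turns the $\ll=4$ case into a complete pencil-and-paper argument: an intransitive $L_2(7)$ lies inside the stabiliser $\GL_3(2)$ of its fixed point, equals it since $|L_2(7)|=168=|\GL_3(2)|$, and transitivity of $\AGL_3(2)$ on $\Omega^{(4)}$ makes all point stabilisers conjugate. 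For $\ll=3$ your argument correctly reduces the search in $M_{12}$ to the single citable fact that $M_{11}$ has one class of $L_2(11)$ subgroups (immediate from \cite{ATLAS}: $660$ divides the order of no maximal subgroup of $M_{11}$ other than $L_2(11)$ itself, so every such subgroup is maximal and lies in the unique class), after which your conjugation step---move the fixed point, land inside one copy of $M_{11}$, conjugate there---closes uniqueness; distinctness from the transitive class of Lemma \ref{lem:grps:dyn:transl2p} is automatic since conjugate subgroups have equal orbit lengths. What the paper's route buys is uniformity and zero theory; what yours buys is an explanation of where the intransitive class comes from (point stabilisers, respectively the $L_2(11)$ maximal subgroups of $M_{11}$) and a verification reduced to standard ATLAS data rather than a full conjugacy-class computation in $M_{12}$ and $\AGL_3(2)$.
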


It is a result due to Galois (proven in a letter to Chevalier written on the eve of his deadly duel \cite[Chp. 10]{sphere_packing}) that the group $L_2(p)$ has no transitive permutation representation of degree less than $p+1$ if $p>11$. However for the remaining primes $p$ not exceeding $11$ there are transitive permutation representations on exactly $p$ points, and in fact we have the following statement.
\begin{lem}\label{lem:grps:dyn:ptrans}
Let $p\in \{2,3,5,7,11\}$ and set $\bar{L}=L_2(p)$. Then there is a subgroup $\bar{D}<\bar{L}$ of index $p$ with the property that $\bar{L}=\lab \sigma\rab\bar{D}$ for $\sigma$ an element of order $p$ in $\bar{L}$, so that every element $g\in \bar{L}$ admits a unique expression $g=sd$ where $s\in \lab\sigma\rab$ and $d\in \bar{D}$. 
\end{lem}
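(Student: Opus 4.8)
The plan is to reduce the entire statement to the existence of a subgroup of index $p$, after which the factorisation is automatic. First I would note that since $|L_2(p)|=\tfrac{1}{2}p(p^2-1)$ for $p$ odd (and $|L_2(2)|=6$), the prime $p$ divides $|\bar{L}|$ to the first power only, so any $\sigma$ of order $p$ generates a Sylow $p$-subgroup $\lab\sigma\rab$ of $\bar{L}$. Suppose $\bar{D}<\bar{L}$ has index $p$; then $|\bar{D}|=\tfrac{1}{2}(p^2-1)$ (respectively $3$ when $p=2$), which is coprime to $p$. By Lagrange $\lab\sigma\rab\cap\bar{D}=\{e\}$, so the product set has cardinality $|\lab\sigma\rab|\,|\bar{D}|=p\cdot\tfrac{1}{2}(p^2-1)=|\bar{L}|$; hence $\bar{L}=\lab\sigma\rab\bar{D}$ and each $g$ admits a unique expression $g=sd$ with $s\in\lab\sigma\rab$ and $d\in\bar{D}$. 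Thus the coprimality of $p$ with the index does all the work, and the factorisation holds for every $\sigma$ of order $p$ at once.

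It then remains only to exhibit an index-$p$ subgroup in each of the five cases. For the small cases I would use the exceptional isomorphisms $L_2(2)\simeq\Sym_3$, $L_2(3)\simeq\Alt_4$ and $L_2(5)\simeq\Alt_5$, taking $\bar{D}$ to be the index-$2$ subgroup $\Alt_3$ of $\Sym_3$, the normal Klein four-subgroup of $\Alt_4$, and an $\Alt_4$ point stabiliser in the natural degree-$5$ action of $\Alt_5$, respectively. For $p=7$ I would use $L_2(7)\simeq L_3(2)\simeq\GL_3(2)$ and take $\bar{D}\simeq\Sym_4$, the stabiliser of a point in the action of $\GL_3(2)$ on the seven points of the Fano plane. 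For $p=11$ I would take $\bar{D}\simeq\Alt_5$, one of the two conjugacy classes of $\Alt_5$-subgroups of $L_2(11)$, which has order $60$ and hence index $11$.

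The only genuine content lies in the existence of these index-$p$ subgroups for $p=7$ and $p=11$, which are precisely the exceptional transitive permutation representations of $L_2(p)$ on $p$ points alluded to just before the statement. I expect this to be the main point rather than a real difficulty: it is covered by Dickson's classification of the subgroups of $\PSL_2(q)$, from which one reads off that $\Sym_4$ embeds in $L_2(p)$ exactly when $p\equiv\pm1\pmod 8$ (so for $p=7$) and $\Alt_5$ embeds in $L_2(p)$ exactly when $p\equiv\pm1\pmod{10}$ or $p=5$ (so for $p=11$); comparing orders then gives index $7$ and $11$ in the respective cases. Alternatively one may simply cite the classical exceptional actions of $L_2(5)$, $L_2(7)$ and $L_2(11)$ on $5$, $7$ and $11$ points, or verify the existence of $\bar{D}$ directly with \cite{GAP4} as in the preceding lemmas.
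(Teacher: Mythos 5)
Your proposal is correct, and it is more explicit than what the paper actually does: the paper states this lemma without proof, treating it as classical material going back to Galois's letter to Chevalier (the exceptional transitive degree-$p$ permutation representations of $L_2(p)$ for $p\leq 11$), and simply refers the reader to \cite[Chp.~10]{sphere_packing}. Your argument supplies the missing details cleanly. The reduction step is exactly right: since $p$ divides $|L_2(p)|=\tfrac{1}{2}p(p^2-1)$ (resp.\ $6$ for $p=2$) only to the first power, any index-$p$ subgroup $\bar{D}$ has order coprime to $p$, so $\lab\sigma\rab\cap\bar{D}$ is trivial, the multiplication map $\lab\sigma\rab\times\bar{D}\to\bar{L}$ is injective, and a count of cardinalities makes it bijective — which is existence and uniqueness of the factorisation simultaneously, for every $\sigma$ of order $p$ at once. (Equivalently: the Sylow $p$-subgroup acts regularly on the $p$ cosets of $\bar{D}$.) The case-by-case existence of $\bar{D}$ is also correct: $\Alt_3<\Sym_3$, the Klein four-group in $\Alt_4$, a point stabiliser $\Alt_4<\Alt_5$, a Fano-plane point stabiliser $\Sym_4<L_3(2)\simeq L_2(7)$, and $\Alt_5<L_2(11)$; the appeals to Dickson's classification ($\Sym_4\leq L_2(p)$ iff $p\equiv\pm1\pmod 8$, $\Alt_5\leq L_2(p)$ iff $p\equiv\pm1\pmod{10}$ or $p=5$) correctly certify the two nontrivial cases $p=7,11$. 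In short, where the paper buys brevity by citation, your route buys self-containedness at the cost of invoking Dickson (or, as you note, a machine check as in the neighbouring lemmas); the underlying mathematics — an index-$p$ subgroup is the same data as a transitive degree-$p$ action — is the same in both treatments.
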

\begin{rmk}
See \cite[Chp. 10]{sphere_packing} for applications of the result of Lemma \ref{lem:grps:dyn:ptrans} to exceptional isomorphisms among finite simple groups of small order, and see \cite{Kos_GphTrnIcoGal,Kos_StrcTrncIcos} for an application of the case that $p=11$ to the truncated icosahedron (which describes the structure of buckminsterfullerenes, a.k.a. buckyballs).
\end{rmk}
According to Lemma \ref{lem:grps:dyn:ptrans} we obtain a permutation representation of degree $p$ for $\bar{L}$ by taking the natural action of $\bar{L}$ on cosets of $\bar{D}$ and this action is transitive since $\sigma$ induces a $p$-cycle. Using this result we choose a copy of $\bar{D}$ in $\bar{L}=\bar{L}^{(\ll)}$ for each $\ll\in\{3,4,5,7\}$ and denote it $\bar{D}^{(\ll)}$. (Such subgroups are uniquely determined up to isomorphism, but there are as many conjugacy classes of subgroups of $\bar{L}$ isomorphic to $\bar{D}$ as there are conjugacy classes of elements of order $p$ in $\bar{L}$, which is to say there are $2$ classes in case $p\in\{11,7\}$ and a unique class in case $p\in\{5,3\}$.) We describe the groups $\bar{D}^{(\ll)}$ in Table \ref{tab:dyntab} and observe that each one is isomorphic to a finite group $\bar{D}_0<SO_3(\RR)$. As such there is a corresponding finite group $D_0<\SU(2)$ that maps onto $\bar{D}_0$ via the $2$-fold covering $\SU(2)\to \SO_3(\RR)$, and a corresponding Dynkin diagram $\Delta^{(\ll)}$ according to the McKay correspondence. We list the Dynkin diagrams $\Delta^{(\ll)}$ also in Table \ref{tab:dyntab}. 

\begin{table}
\begin{center}
\caption{The Umbral Groups and Dynkin Data}\label{tab:dyntab}
\smallskip
\begin{tabular}{l||c|cccc|c}
$\ll$&			2&	3&	4&	5&	7&	13\\
$p$&	23&	11&	7&	5&	3&	1\\
	\hline
$G^{(\ll)}$&	$M_{24}$&	$2.M_{12}$&	$2.\AGL_3(2)$&$\GL_2(5)/2$&$\SL_2(3)$&	4\\
${L}^{(\ll)}$&	&	$\SL_2({11})$&	$\SL_2(7)$&	&	$\SL_2(3)$&	
	\\
${D}^{(\ll)}$&	&	$2.\Alt_5$&		$2.\Sym_4$&		&		$Q_8$\\
	\hline
$\bar{G}^{(\ll)}$&		$M_{24}$&	$M_{12}$&	$\AGL_3(2)$&	$\PGL_2(5)$&	$L_2(3)$&	2\\
$\bar{L}^{(\ll)}$&	$L_2({23})$&	$L_2({11})$&	$L_2(7)$&	$L_2(5)$&	$L_2(3)$		\\
$\bar{D}^{(\ll)}$&		&	$\Alt_5$&		$\Sym_4$&		$\Alt_4$&		$2^2$&	\\
	\hline
$\Delta^{(\ll)}$& 	
	&$\hat{E}_8$&	$\hat{E}_7$&	$\hat{E}_6$&	$\hat{D}_4$\\	
\end{tabular}
\end{center}
\end{table}

Traditionally finite subgroups of $\SO_3(\RR)$ are called {\em ternary}, owing to the fact that their elements are described using $3\times 3$ matrices, and finite subgroups of $\SU(2)$ are called {\em binary}. The map $\SU(2)\to \SO_3(\RR)$ determines a correspondence between binary and ternary groups whereby the ternary polyhedral groups (of orientation preserving symmetries of regular polyhedra) correspond to binary groups of the form $2.\Alt_5$, $2.\Sym_4$ and $2.\Alt_4$ (depending upon the polyhedron), a dihedral subgroup $\Dih_{n}<\SO_3(\RR)$ corresponds to a generalised quaternion group of order $4n$ in $\SU(2)$, and a binary cyclic group corresponds to a ternary cyclic group of the same order. Thus, given a finite subgroup $\bar{D}_0<\SO_3(\RR)$, we may speak of the {\em associated binary group} $D_0<\SU(2)$. 

Next we observe that each group $G^{(\ll)}$ contains a subgroup $D^{(\ll)}$ isomorphic to the binary group associated to $\bar{D}^{(\ll)}$ (when $\bar{D}^{(\ll)}$ is regarded as a subgroup of $\SO_3(\RR)$) for all of the above cases except when $\ll=p=5$. More particularly, for $\ll\in\{3,7\}$ there is a unique conjugacy class of subgroups isomorphic to the binary group attached to $\bar{D}^{(\ll)}$ while for $\ll=4$ there are two such conjugacy classes and for $\ll=5$ there are none.  For each $\ll\in\{3,4,7\}$ we pick a subgroup isomorphic to the binary group attached to $\bar{D}^{(\ll)}$ and denote it $D^{(\ll)}$ and we display the (isomorphism types of the) groups $D^{(\ll)}$ in Table \ref{tab:dyntab}. We write $Q_8$ there for the quaternion group of order $8$. 

To see how the $D^{(\ll)}$ arise in $G^{(\ll)}$ (and fail to do so in the case that $\ll=5$) recall from Lemma \ref{lem:grps:dyn:transl2p} that $\bar{G}^{(\ll)}$ contains a unique transitive subgroup isomorphic to $L_2(p)$ up to conjugacy for $\ll\in\{3,4,5,7\}$. The preimage of such a subgroup under the natural map $G^{(\ll)}\to \bar{G}^{(\ll)}$ (cf. \S\ref{sec:grps:spcnst}) is a double cover of $L_2(p)$ that is in fact isomorphic to $\SL_2(p)$ (cf. \S\ref{sec:grps:spec}) except when $\ll=5$. In case $\ll=5$ we have $G^{(5)}\simeq \GL_2(5)/2$ which has the same order as $\SL_2(5)$ but is not isomorphic to it. 
We see then that for $\ll\in\{3,4,7\}$ we may find a copy of $\SL_2(p)$ in the preimage of $\bar{L}^{(\ll)}$ under the map $G^{(\ll)}\to \bar{G}^{(\ll)}$; we do so and denote it $L^{(\ll)}$. Then we may take $D^{(\ll)}$ to be the preimage of $\bar{D}^{(\ll)}$ under the map $L^{(\ll)}\to \bar{L}^{(\ll)}$. The fact that there is no $\SL_2(5)$ in $G^{(5)}$ explains why there is no group $D^{(5)}$. 

Observe that the rank of $\Delta^{(\ll)}$ is given by $11-\ll$ for each $\ll\in \{3,4,5,7\}$. This may be taken as a hint to the following uniform construction of the $\Delta^{(\ll)}$. Starting with the (finite type) $E_8$ Dynkin diagram, being star shaped with three {\em branches}, construct a sequence of diagrams iteratively by removing the end node from a branch of maximal length at each iteration. In this way we obtain the sequence ${E}_8$, ${E}_7$, $E_6$, $D_5$, $D_4$, $A_3$, $A_2$, $A_1$, and it is striking to observe that our list $\D^{(\ll)}$, obtained by applying the McKay correspondence to distinguished subgroups of the $G^{(\ll)}$, is a subsequence of the one obtained from this by affinisation.

We summarise the main observations of this section as follows.
\begin{quote}
{\em For $\ll\in\{3,4,5,7\}$ the group $\bar{G}^{(\ll)}$ admits a distinguished isomorphism class of subgroups $\bar{D}^{(\ll)}$. This connects $\bar{G}^{(\ll)}$ to a Dynkin diagram of rank $11-\ll$, for the group $\bar{D}^{(\ll)}$ is the image in $\SO_3(\RR)$ of a finite subgroup $D^{(\ll)}$ in $\SU(2)$ which corresponds to the affinisation $\Delta^{(\ll)}$ of a Dynkin diagram of rank $11-\ll$ according to McKay's correspondence. The Dynkin diagrams arising belong to a naturally defined sequence.}
\end{quote}

%--------------------------------------------------------------------------%
\subsection{Dynkin Diagrams Part II}\label{sec:grps:dynII}
%--------------------------------------------------------------------------%

Recall from Lemma \ref{lem:grps:dyn:nontransl2p} that the cases $\ll\in\{3,4\}$ are distinguished in that for such $\ll$ the group $\bar{G}^{(\ll)}$ has a unique conjugacy class of subgroups isomorphic to $L_2(p)$ and not acting transitively in the degree $p+1$ permutation representation (cf. \S\ref{sec:grps:pcnst}). Since such an $L_2(p)$ subgroup acts non-trivially on $\Omega^{(\ll)}$ it follows from Lemma \ref{lem:grps:dyn:ptrans} that it must have one fixed point and act transitively on the remaining $p$ points of $\Omega^{(\ll)}$, and thus we have witnesses within $G^{(\ll)}$ to the exceptional degree $p$ permutation representations of $L_2(p)$ in case $\ll\in\{3,4\}$ and $p\in\{11,7\}$. For these two special cases of lambency $3$ and $4$ we find direct analogues of McKay's monstrous $E_8$ observation \cite{McKay_Corr} attaching certain conjugacy classes of $G^{(\ll)}$ to the nodes of $\Delta^{(\ll)}$. At lambency $5$ and $7$ (where $p$ is $5$ and $3$, respectively) we find similar analogues where the diagram $\Delta^{(\ll)}$ is replaced by one obtained via folding with respect to a diagram automorphism of order $3$.

In case $\ll=3$ let $T$ denote the conjugacy class of elements $g$ of order $2$ in $G^{(3)}\simeq 2.M_{12}$ such that $g$ has $4$ fixed points in both the signed and unsigned permutation representations of $G^{(3)}$. This is the conjugacy class labelled $2B$ in Table \ref{tab:chars:irr:3} and we have $\chi^{(3)}_{g}=\bar{\chi}^{(3)}_{g}=4$ in the notation of Table \ref{tab:chars:eul:3}. Then $T^2=\{gh\mid g,h\in T\}$ is a union of conjugacy classes of $G^{(3)}$ and in fact there are exactly nine of the twenty-six conjugacy classes of $G^{(3)}$ that appear. In (\ref{eqn:grps:dyn:e8g3}) we use these classes (and the notation of Table \ref{tab:chars:irr:3}) to label the affine $E_8$ Dynkin diagram.
\begin{equation}\label{eqn:grps:dyn:e8g3}
     \xy
     (0,0)*+{1A}="1L";
     (14,0)*+{2B}="2L";
     (28,0)*+{3A}="3L";
     (42,0)*+{4C}="4L";
     (56,0)*+{5A}="5L";
     (70,0)*+{6C}="6L";
     (70,12)*+{3B}="3S";
     (84,0)*+{4B}="4S";
     (98,0)*+{2C}="2S";
     {\ar@{-} "1L";"2L"};
     {\ar@{-} "2L";"3L"};
     {\ar@{-} "3L";"4L"};
     {\ar@{-} "4L";"5L"};
     {\ar@{-} "5L";"6L"};
     {\ar@{-} "6L";"3S"};
     {\ar@{-} "6L";"4S"};
     {\ar@{-} "4S";"2S"};
     \endxy
\end{equation}
Observe that the labelling of (\ref{eqn:grps:dyn:e8g3}) recovers the {highest root} labelling when the classes are replaced with their orders. In (\ref{eqn:grps:dyn:e8g3frm}) we replace the labels of (\ref{eqn:grps:dyn:e8g3}) with the cycle shapes attached to these classes via the total permutation action (cf. \S\ref{sec:grps:sgnprm}) of $G^{(3)}$ on the $24=2.24/(\ll-1)$ elements $\{\pm e_{i}\mid i\in \Omega^{(3)}\}$ appearing in the signed permutation representation of $G^{(3)}$ (cf. \S\ref{sec:grps:spcnst}). 
\begin{equation}\label{eqn:grps:dyn:e8g3frm}
     \xy
     (0,0)*+{1^{24}}="1L";
     (16,0)*+{1^82^8}="2L";
     (32,0)*+{1^63^6}="3L";
     (48,0)*+{1^42^24^4}="4L";
     (64,0)*+{1^45^4}="5L";
     (80,0)*+{1^22^23^26^2}="6L";
     (80,12)*+{3^8}="3S";
     (96,0)*+{2^44^4}="4S";
     (112,0)*+{2^{12}}="2S";
     {\ar@{-} "1L";"2L"};
     {\ar@{-} "2L";"3L"};
     {\ar@{-} "3L";"4L"};
     {\ar@{-} "4L";"5L"};
     {\ar@{-} "5L";"6L"};
     {\ar@{-} "6L";"3S"};
     {\ar@{-} "6L";"4S"};
     {\ar@{-} "4S";"2S"};
     \endxy
\end{equation}
Explicitly, the cycle shape attached to $g\in G^{(3)}$ is the total permutation Frame shape $\tilde{\Pi}^{(3)}_g=\Pi^{(3)}_{g}\bar{\Pi}^{(3)}_{g}$ realised as the product (cf. \S\ref{sec:grps:sgnprm}) of the signed and unsigned permutation Frame shapes attached to $G^{(3)}$ (cf. Table \ref{tab:chars:eul:3}).
\begin{rmk}
The conjugacy class labelled $2C$ in Table \ref{tab:chars:irr:3} consists of elements of the form $gz$ where $g$ belongs to the class $2B$ (denoted $T$ above) and $z$ is the central involution of $G^{(3)}$, so we obtain exactly the nine conjugacy classes of (\ref{eqn:grps:dyn:e8g3}) by considering products $gh$ where $g$ and $h$ are involutions in the class $2C$.
\end{rmk}

\begin{rmk}
In \cite{CumDun_E8M24} an analogue of McKay's monstrous $E_8$ observation is found for $M_{24}$. Namely, it is observed that there are exactly nine conjugacy classes of elements of $M_{24}$ having representatives of the form $gh$ where $g$ and $h$ belong to the class labelled 2A in Table \ref{tab:chars:irr:2}---this is the class with cycle shape $1^82^8$ in the defining degree $24$ permutation representation---and the nodes of the affine $E_8$ Dynkin diagram can be labelled by these nine classes in such a way that their orders recover the highest root labelling. This condition leaves some ambiguity as to the correct placement of the two classes of order $2$, and similarly for the pairs of orders $3$ and $4$, but an analogue of the procedure described in \cite{Dun_ArthGrpsAffE8Dyn} is shown to determine a particular choice that recovers the original correspondence for the monster group via the modular functions of monstrous moonshine. Observe that if we express the $G^{(2)}\simeq M_{24}$ labelling of the affine $E_8$ diagram given in \cite{CumDun_E8M24} using cycle shapes to name the conjugacy classes then we recover exactly the labelling (\ref{eqn:grps:dyn:e8g3frm}) obtained here from $G^{(3)}\simeq 2.M_{12}$.
\end{rmk}
In case $\ll=4$ let $T$ denote the conjugacy class of elements $g$ of order $2$ in $G^{(4)}$ such that $g$ has $4$ fixed points in the unsigned permutation representation of $G^{(4)}$ but has $2$ fixed points and $2$ anti-fixed points in the signed permutation representation. This is the conjugacy class labelled $2C$ in Table \ref{tab:chars:irr:4} and we have $\chi^{(4)}_{g}=0$ and $\bar{\chi}^{(4)}_{g}=4$ in the notation of Table \ref{tab:chars:eul:4}. Then $T^2$ is the union of eight of the sixteen conjugacy classes of $G^{(4)}$ and we use these classes (and the notation of Table \ref{tab:chars:irr:4}) to label the nodes of the affine $E_7$ Dynkin diagram in (\ref{eqn:grps:dyn:e7g4}).
\begin{equation}\label{eqn:grps:dyn:e7g4}
     \xy
     (0,0)*+{1A}="1L";
     (14,0)*+{2C}="2L";
     (28,0)*+{3A}="3L";
     (42,0)*+{4C}="CC";
     (56,0)*+{6A}="3R";
     (70,0)*+{4A}="2R";
     (42,12)*+{2B}="1U";
     (84,0)*+{2A}="1R";
     {\ar@{-} "1L";"2L"};
     {\ar@{-} "2L";"3L"};
     {\ar@{-} "3L";"CC"};
     {\ar@{-} "CC";"3R"};
     {\ar@{-} "3R";"2R"};
     {\ar@{-} "CC";"1U"};
     {\ar@{-} "2R";"1R"};
     \endxy
\end{equation}
In (\ref{eqn:grps:dyn:e7g4frm8}) we replace the labels of (\ref{eqn:grps:dyn:e7g4}) with the cycle shapes $\bar{\Pi}^{(4)}_{g}$ (cf. Table \ref{tab:chars:eul:4}) attached to these classes via the degree $8$ permutation action of $G^{(4)}$ on $\Omega^{(4)}$. The orders of these permutations are the orders of the images of the corresponding elements of $G^{(4)}$ under the map $G^{(4)}\to \bar{G}^{(4)}$ so the labelling (\ref{eqn:grps:dyn:e7g4frm8}) demonstrates that we recover the highest root labelling of the affine $E_7$ Dynkin diagram when we replace the conjugacy classes of (\ref{eqn:grps:dyn:e7g4}) with the orders of their images in $\bar{G}^{(4)}$.
\begin{equation}\label{eqn:grps:dyn:e7g4frm8}
     \xy
     (0,0)*+{1^{8}}="1L";
     (16,0)*+{1^42^2}="2L";
     (32,0)*+{1^23^2}="3L";
     (48,0)*+{1^22^14^1}="CC";
     (64,0)*+{1^23^2}="3R";
     (80,0)*+{2^4}="2R";
     (48,12)*+{2^4}="1U";
     (96,0)*+{1^8}="1R";
     {\ar@{-} "1L";"2L"};
     {\ar@{-} "2L";"3L"};
     {\ar@{-} "3L";"CC"};
     {\ar@{-} "CC";"3R"};
     {\ar@{-} "3R";"2R"};
     {\ar@{-} "CC";"1U"};
     {\ar@{-} "2R";"1R"};
     \endxy
\end{equation}
In (\ref{eqn:grps:dyn:e7g4frm24}) we replace the labels of (\ref{eqn:grps:dyn:e7g4}) with the cycle shapes of degree $24$ given by the products $\Pi^{(4)}_{g}\bar{\Pi}^{(4)}_{g}\bar{\Pi}^{(4)}_{g}$ (cf. Table \ref{tab:chars:eul:4}) for $g$ an element of $G^{(4)}$ arising in (\ref{eqn:grps:dyn:e7g4}). Observe that all the cycle shapes in (\ref{eqn:grps:dyn:e7g4frm24}) are {\em balanced}, meaning that they are invariant (and well-defined) under the operation $\prod_{k\geq 1}k^{m(k)}\mapsto \prod_{k\geq 1}(N/k)^{m(k)}$ for some integer $N>1$, and constitute a subset of the cycle shapes attached to $G^{(3)}$ in (\ref{eqn:grps:dyn:e8g3frm}). Observe also that the order two symmetry of the affine $E_7$ diagram that identifies the two long branches is realised by squaring: the cycle shapes obtained by squaring permutations represented by the cycle shapes on the right-hand branch of (\ref{eqn:grps:dyn:e7g4frm24}) are just those that appear on the left-hand branch.
\begin{equation}\label{eqn:grps:dyn:e7g4frm24}
     \xy
     (0,0)*+{1^{24}}="1L";
     (16,0)*+{1^82^8}="2L";
     (32,0)*+{1^63^6}="3L";
     (48,0)*+{1^42^24^4}="CC";
     (64,0)*+{1^22^23^26^2}="3R";
     (80,0)*+{2^44^4}="2R";
     (48,12)*+{2^{12}}="1U";
     (96,0)*+{1^82^8}="1R";
     {\ar@{-} "1L";"2L"};
     {\ar@{-} "2L";"3L"};
     {\ar@{-} "3L";"CC"};
     {\ar@{-} "CC";"3R"};
     {\ar@{-} "3R";"2R"};
     {\ar@{-} "CC";"1U"};
     {\ar@{-} "2R";"1R"};
     \endxy
\end{equation}
\begin{rmk}
It is interesting to note that while the conjugacy class $2C$ is stable under multiplication by the central involution there are just eight conjugacy classes of $G^{(4)}$ that are contained in $T^2$ when $T$ is the class labelled $4A$ in Table \ref{tab:chars:irr:4}. In fact the eight conjugacy classes appearing are $8A$ together with all those of (\ref{eqn:grps:dyn:e7g4}) except for $4C$. Thus we obtain analogues of the labelings (\ref{eqn:grps:dyn:e7g4}), (\ref{eqn:grps:dyn:e7g4frm8}) and (\ref{eqn:grps:dyn:e7g4frm24}) where $4C$, $1^22^14^1$ and $1^42^24^4$, respectively, are replaced by $8A$, $4^2$ and $4^28^2$, respectively. Under this correspondence the highest root labelling is again obtained by taking orders in $\bar{G}^{(4)}$, but while the eta product attached to $4^28^2$ is multiplicative it only has weight $2$ (i.e. less than $4$) and so does not appear in \cite{CumDun_E8M24}.
\end{rmk}
\begin{rmk}
The correspondence of \cite{CumDun_E8M24} may be viewed as attaching the nine multiplicative eta products of weight at least $4$ to the nodes of the affine $E_8$ Dynkin diagram when we regard a cycle shape $\prod_{k\geq 1} k^{m(k)}$ as a shorthand for the {\em eta product} function $\prod_{k\geq 1}\eta(k\tau)^{m(k)}$ (cf. \S\ref{sec:mdlrfrms:dedeta}). Observe that the cycle shapes appearing in (\ref{eqn:grps:dyn:e7g4frm24}) are just those whose corresponding eta products are multiplicative, have weight at least $4$, and have level dividing $24$. (The eta products defined by $1^45^4$ and $3^8$ have level $5$ and $9$, respectively.)
\end{rmk}

For $\ll=5$ the Dynkin diagram $\Delta^{(5)}=\hat{E}_6$ admits a $\Sym_3$ group of automorphisms. {Folding} by either of the three-fold symmetries we obtain the affine Dynkin diagram of type $G_2$. Let $T$ be either of the two conjugacy class of elements $g$ of order $4$ in $G^{(5)}$ such that $g^2$ is central. Then we have $\chi^{(5)}_{g}=\bar{\chi}^{(5)}_{g}=0$ in the notation of Table \ref{tab:chars:eul:5} and $T$ is either the class labelled $4A$ in Table \ref{tab:chars:irr:5} or the class labelled $4B$, and there are exactly three of the fourteen conjugacy classes of $G^{(5)}$ that are subsets of $T^2$. In (\ref{eqn:grps:dyn:g2g5}) we use these classes (and the notation of Table \ref{tab:chars:irr:5}) to label the affine $G_2$ Dynkin diagram.
\begin{equation}\label{eqn:grps:dyn:g2g5}
     \xy
     (0,0)*+{2A}="1L";
     (16,0)*+{2C}="2L";
     (32,0)*+{6A}="3L";
     {\ar@{-} "1L";"2L"};
     {\ar@{-} "2L";"3L"};
     {\ar@{-} @/_1pc/ "2L";"3L"};
     {\ar@{-} @/^1pc/ "2L";"3L"};
     \endxy
\end{equation}
In (\ref{eqn:grps:dyn:g2g5frm6}) we replace the labels of (\ref{eqn:grps:dyn:g2g5}) with the cycle shapes $\bar{\Pi}^{(5)}_{g}$ (cf. Table \ref{tab:chars:eul:5}) attached to these classes via the degree $6$ permutation action of $G^{(5)}$ on $\Omega^{(5)}$. The orders of these permutations are the orders of the images of the corresponding elements of $G^{(5)}$ under the map $G^{(5)}\to \bar{G}^{(5)}$ so the labelling (\ref{eqn:grps:dyn:g2g5frm6}) demonstrates that we recover the highest root labelling of the affine $G_2$ Dynkin diagram---which is the labelling induced form the highest root labelling of $\hat{E}_6$---when we replace the conjugacy classes of (\ref{eqn:grps:dyn:g2g5}) with the orders of their images in $\bar{G}^{(5)}$.
\begin{equation}\label{eqn:grps:dyn:g2g5frm6}
     \xy
     (0,0)*+{1^6}="1L";
     (16,0)*+{1^22^2}="2L";
     (32,0)*+{3^2}="3L";
     {\ar@{-} "1L";"2L"};
     {\ar@{-} "2L";"3L"};
     {\ar@{-} @/_1pc/ "2L";"3L"};
     {\ar@{-} @/^1pc/ "2L";"3L"};
     \endxy
\end{equation}

For $\ll=7$ the Dynkin diagram $\Delta^{(7)}=\hat{D}_4$ admits a $\Sym_4$ group of automorphisms. Folding by any three-fold symmetry we again obtain the affine Dynkin diagram of type $G_2$. Let $T$ be the unique conjugacy class of elements of order $4$ in $G^{(7)}$. Then we have $\chi^{(7)}_{g}=\bar{\chi}^{(7)}_{g}=0$ in the notation of Table \ref{tab:chars:eul:7} and $T$ is the class labelled $4A$ in Table \ref{tab:chars:irr:7}, and there are exactly three of the seven conjugacy classes of $G^{(7)}$ that have a representative of the form $gh$ for some $g,h\in T$ (and $T^2$ is the union of these three conjugacy classes). In (\ref{eqn:grps:dyn:g2g7}) we use these classes (and the notation of Table \ref{tab:chars:irr:7}) to label the affine $G_2$ Dynkin diagram.
\begin{equation}\label{eqn:grps:dyn:g2g7}
     \xy
     (0,0)*+{1A}="1L";
     (16,0)*+{4A}="2L";
     (32,0)*+{2A}="3L";
     {\ar@{-} "1L";"2L"};
     {\ar@{-} "2L";"3L"};
     {\ar@{-} @/_1pc/ "2L";"3L"};
     {\ar@{-} @/^1pc/ "2L";"3L"};
     \endxy
\end{equation}
In (\ref{eqn:grps:dyn:g2g7frm4}) we replace the labels of (\ref{eqn:grps:dyn:g2g7}) with the cycle shapes $\bar{\Pi}^{(7)}_{g}$ (cf. Table \ref{tab:chars:eul:7}) attached to these classes via the degree $4$ permutation action of $G^{(7)}$ on $\Omega^{(7)}$. The orders of these permutations are the orders of the images of the corresponding elements of $G^{(7)}$ under the map $G^{(7)}\to \bar{G}^{(7)}$ so the labelling (\ref{eqn:grps:dyn:g2g7frm4}) demonstrates that we recover the labelling of the affine $G_2$ Dynkin diagram that is induced by the highest root labelling of $\hat{D}_4$ when we replace the conjugacy classes of (\ref{eqn:grps:dyn:g2g7}) with the orders of their images in $\bar{G}^{(7)}$.
\begin{equation}\label{eqn:grps:dyn:g2g7frm4}
     \xy
     (0,0)*+{1^4}="1L";
     (16,0)*+{2^2}="2L";
     (32,0)*+{1^4}="3L";
     {\ar@{-} "1L";"2L"};
     {\ar@{-} "2L";"3L"};
     {\ar@{-} @/_1pc/ "2L";"3L"};
     {\ar@{-} @/^1pc/ "2L";"3L"};
     \endxy
\end{equation}

We summarise the results of this section with the following statement.
\begin{quote}
{\em For $\ll\in\{3,4,5,7\}$ there is an analogue of McKay's monstrous $E_8$ observation that relates $G^{(\ll)}$ to (a folding of) the affinisation of a Dynkin diagram of rank $11-\ll$, and the diagrams arising are precisely those that appear in \S\ref{sec:grps:dynI}.}
\end{quote}

We conclude by mentioning that McKay's monstrous observation has partially been explained, using vertex operator algebra theory, by the work of Sakuma \cite{MR2347298} and Lam--Yamada--Yamauchi \cite{MR2309178,MR2160172}. Important related work appears in \cite{MR2874931,MR2890302}.

%------------------------------------------------------------------%
\section{McKay--Thompson Series}\label{sec:mckay}
%------------------------------------------------------------------%

In \S\ref{sec:forms:wtzero} we made the observation that the first few positive degree Fourier coefficients of the vector-valued mock modular forms $H^{(\ll)}=\big(H^{(\ll)}_r\big)$ coincide (up to a factor of $2$) with dimensions of irreducible representations of the group $G^{(\ll)}$ described in the previous section. This observation suggests the possibility that 
\begin{gather}
H^{(\ll)}_r(\t)= \sum_{k\in\ZZ}c^{(\ll)}_r(k-{r^2}/{4\ll})q^{k-{r^2}/{4\ll}} 
=-2\delta_{r,1}q^{-1/4\ll}+\sum_{\substack{k\in\ZZ\\ r^2-4k\ll <0}}\dim K^{(\ll)}_{r,k-{r^2}/{4\ll}}q^{k-{r^2}/{4\ll}}  
\end{gather}
for some $\ZZ\times \QQ$-graded infinite-dimensional $G^{(\ll)}$-module $K^{(\ll)}=\bigoplus_{r,d}K^{(\ll)}_{r,d}$. To further test this possibility we would like to see if there are similar vector-valued mock modular forms $H^{(\ll)}_g=\big(H^{(\ll)}_{g,r}\big)$ whose positive degree Fourier coefficients recover characters of representations of $G^{(\ll)}$. In other words, for an element $g$ of the group  $G^{(\ll)}$ we would like to see if we can find a mock modular form $H^{(\ll)}_g$ compatible with the hypothesis that 
\be\label{moonshine_conjecture_1}
H^{(\ll)}_{g,r}(\t) = \sum_{k}c^{(\ll)}_{g,r}(k-r^2/4\ll)q^{k-r^2/4\ll}=-2\delta_{r,1}q^{-1/4\ll}+\sum_{\substack{k\in\ZZ\\ r^2-4k\ll<0}}\tr_{K^{(\ll)}_{r,k-r^2/4\ll}}(g)q^{k-r^2/4\ll}
\ee
for a hypothetical bi-graded $G^{(\ll)}$-module $K^{(\ll)}$.
We refer to such a generating function as a {\em McKay--Thompson series}. Notice that we recover the generating functions of the $\dim {K^{(\ll)}_{r,d}}$ when $g$ is the identity element. Moreover,  the McKay--Thompson series attached to $g\in G^{(\ll)}$ is invariant under conjugation, since the trace is such, so $H^{(\ll)}_g$ depends only on the conjugacy class $[g]$ of $g$. 

Having such McKay--Thompson series for each conjugacy class of $G^{(\ll)}$ not only provides strong evidence for the existence of the $G^{(\ll)}$-module $K^{(\ll)}$ but in fact it uniquely specifies it up to $G^{(\ll)}$-module isomorphism. This is because, since there are as many irreducible representations as conjugacy classes of a finite group, given the characters $\tr_{K^{(\ll)}_{r,d}}(g)$ for all $[g]\subset G^{(\ll)}$ we can simply invert the character table to have a unique decomposition of the conjectural module $K^{(\ll)}_{r,d}$ into irreducible representations. What is not clear, a priori, is that we will end up with a decomposition into non-negative integer multiplies of irreducible representations of $G^{(\ll)}$, but remarkably it appears that this property holds for all $\ll\in\LL$ and all bi-degrees $(r,d)$. For evidence in support of this see the explicit decompositions for small degrees tabulated in \S\ref{sec:decompositions}.

In this section we construct a set of vector-valued mock modular forms $H^{(\ll)}_{g}=\big(H^{(\ll)}_{g,r}\big)$ for each lambency $\ll$ and we formulate a precise conjecture implying (\ref{moonshine_conjecture_1}) in \S\ref{sec:conj:mod}.

\subsection{Forms of Higher Level}
\label{Forms of Higher Levels}
In \S \ref{sec:forms} we have discussed the relation between certain vector-valued mock modular forms, meromorphic Jacobi forms of weight 1 and Jacobi forms of weight $0$ under the group $\SL_2(\ZZ)$. 
In order to investigate the McKay--Thompson series of the groups $G^{(\ll)}$ we need to generalise the discussion in \S\ref{sec:forms:wtzero} to modular forms of higher level and consider forms transforming under $\G_0(N)$ (cf. \eq{congruence1}) with $N>1$.

As in \S \ref{sec:forms} we would like to consider $(\ll-1)$-vector-valued mock modular forms $\big(H^{(\ll)}_{g,r}\big)$ for a group $\G_0(N_g)<\SL_2(\ZZ)$ with shadow given by the unary theta series $S^{(\ll)} $  (cf. \eq{unary_S_def}). The levels $N_g$ will be specified explicitly for all $g$ in \S\ref{sec:mckay:aut}.
A first difference between the case of $N_g=1$ and $N_g>1$ is the following. Since the components of $S^{(\ll)}=\big(S^{(\ll)}_{r}\big)$ have more than one orbit under $\G_0(N_g)$ when $N_g$ is even it is natural to consider mock modular forms with shadows given by $\big( \chi^{(\ll)}_{g,r} S^{(\ll)}_{r}\big)$ where the $\chi^{(\ll)}_{g,r}$ are not necessarily equal for different values of $r$. 
It will develop that in the cases of interest to us $\chi^{(\ll)}_{g,r}$ depends only on $r$ modulo $2$. In fact, we will find that 
\begin{gather}\label{shadow_multiplicity}
\chi^{(\ll)}_{g,r} =  
		\begin{cases}
	\chi^{(\ll)}_g, &\text{ for $r\equiv 0 \pmod{2}$;}\\
	\bar\chi^{(\ll)}_{g}, &\text{ for $r\equiv 1 \pmod{2}$;}
		\end{cases}
\end{gather}
where $\chi^{(\ll)}_g$ and $\bar\chi^{(\ll)}_{g}$ are as defined in \S\ref{sec:grps:spcnst} and as tabulated in \S\ref{sec:chars:eul}.

Group theoretically  the multiplicities $ \bar \chi^{(\ll)}_g$ and $ \chi^{(\ll)}_g$ appearing in the shadow of $H^{(\ll)}_g$ are determined by the number of fixed points and anti-fixed points in the signed permutation representation of the group $G^{(\ll)}$, as explained in \S\ref{sec:grps:sgnprm}. 
For instance, we have $ \bar \chi^{(\ll)}_g =   \chi^{(\ll)}_g =\chi^{(\ll)}$ for the identity element $g=e$. 
From this interpretation we can deduce that  the vanishing of $\bar\chi^{(\ll)}_g$ implies the vanishing of $\chi^{(\ll)}_{g}$, while it is possible to have  $\chi^{(\ll)}_{g}=0$ and $\bar \chi^{(\ll)}_{g}\neq 0$.
It will also turn out that 
\be
 \bar \chi^{(\ll)}_g  =   \chi^{(\ll)}_g  \quad{\rm unless}\quad 2|N_g.
\ee
For later use we define the combinations
\be
\chi^{(\ll)}_{g,+} = \tfrac{1}{2}\big( \bar \chi^{(\ll)}_g+  \chi^{(\ll)}_g \big),\quad
\chi^{(\ll)}_{g,-} = \tfrac{1}{2}\big(\bar \chi^{(\ll)}_g- \chi^{(\ll)}_g \big),
\ee
which enumerate the number of fixed and anti-fixed points, respecitlvely, in the signed permutation representation of $G^{(\ll)}$ (cf. (\ref{eqn:grps:spchar}-\ref{eqn:grps:pchar})). 
Of course in the cases where $ \bar \chi^{(\ll)}_g=  \chi^{(\ll)}_g=0$ the function $H^{(\ll)}_{g}=\big(H^{(\ll)}_{g,r}\big)$ has vanishing shadow and is a vector-valued modular form in the usual sense.

Interestingly, just as in the $\SL_2(\ZZ)$ case that was considered in \S\ref{sec:forms:mero}, the higher level vector-valued mock modular forms with shadows as described above are again closely related to the finite parts of meromorphic Jacobi forms of weight $1$. 
More explicitly, from the transformation \eq{pole_completion} and the simple fact $\th^{(\ll)}_{r}(\t,z+1/2) = (-1)^r \th^{(\ll)}_{r}(\t,z)$ it is not difficult to check that the function
\be\label{weight_one_higher_level}
\psi^{(\ll)}_{g}(\t,z) = 
\chi^{(\ll)}_{g,+}\m^{(\ll)}_{0}(\t,z) - \chi^{(\ll)}_{g,-} \m^{(\ll)}_{0}(\t,z+1/2)
+
\sum_{r=1}^{\ll-1} H^{(\ll)}_{g,r}(\t) \hat\th^{(\ll)}_{r} (\t,z) 
\ee
transforms as a Jacobi form of weight 1 and index $\ll$ under the congruence subgroup $\G_0(N_g)$ when $H^{(\ll)}_{g}$ is a mock modular form of weight $1/2$ for $\G_0(N_g)$ with shadow $S^{(\ll)}_g=\big(\chi^{(\ll)}_{g,r}S^{(\ll)}_{r}\big)$.
Observe that, for those $g$ with both $\chi^{(\ll)}_{g,+}$ and  $\chi^{(\ll)}_{g,-}$ being non-zero, the weak Jacobi form $\psi^{(\ll)}_{g}(\t,z) $ has a pole not only at $z=0$ but also at $z=1/2$. From 
\be
\m^{(\ll)}_{0} (\t,z) = {\rm Av}^{(\ll)}\left[\frac{y+1}{y-1}\right],\quad -\m^{(\ll)}_{0} (\t,z+1/2) = {\rm Av}^{(\ll)}\left[\frac{1-y}{1+y}\right],
\ee
we see that the last two terms of the decomposition given in \eq{weight_one_higher_level} have the interpretation as the polar parts at the poles $z=0$ and $z=1/2$, respectively. 
In other words, we again have a decomposition $\psi^{(\ll)}_{g}=\psi^{(\ll),P}_g+\psi^{(\ll),F}_g$ into polar and finite parts given by 
\begin{gather}
	\begin{split}
\psi^{(\ll),P}_{g}(\t,z) &=\chi^{(\ll)}_{g,+}\m^{(\ll)}_{0}(\t,z) - \chi^{(\ll)}_{g,-} \m^{(\ll)}_{0}(\t,z+1/2), \\ 
\psi^{(\ll),F}_{g}(\t,z) &= \sum_{r=1}^{\ll-1} H^{(\ll)}_{g,r}(\t) \hat\th^{(\ll)}_{r} (\t,z), 
	\end{split}
\end{gather}
and the components $H^{(\ll)}_{g,r}$ of the mock modular form $H^{(\ll)}_{g}$ may again be interpreted as the theta-coefficients of a meromorphic Jacobi form; namely, $\psi^{(\ll)}_{g}(\t,z)$. 
 
Moreover, analogous to the  $\SL_2(\ZZ)$ case, the mock modular form $H^{(\ll)}_{g}$ also enjoys a close relationship with a weight $0$ index $\ll-1$ weak Jacobi form $Z^{(\ll)}_{g}$ which admits a decomposition into characters of the  $N=4$ superconformal algebra at level $\ll-1$. 
To see this, observe that 
\be
\psi_{g}^{(\ll)}(\t,z)   =\frac{\chi^{(\ll)}_{g,+}}{\bar\chi^{(\ll)}_{g}}\,\Psi_{1,1}(\t,z)\, Z^{(\ll)}_g(\t,z) - \frac{\chi^{(\ll)}_{g,-}}{\bar\chi^{(\ll)}_{g}}\,\Psi_{1,1}(\t,z+1/2)\, Z^{(\ll)}_g(\t,z+1/2) 
\ee
when $\bar\chi^{(\ll)}_{g} \neq 0$  and
\be
\psi_{g}^{(\ll)}(\t,z)   =\Psi_{1,1}(\t,z)\, Z^{(\ll)}_g(\t,z) 
\ee
when $\bar\chi^{(\ll)}_{g} = 0$, where $\Psi_{1,1}(\t,z)$ is as in (\ref{CoverA}) and $Z^{(\ll)}_{g}$ is the weak Jacobi form of weight $0$ and index $\ll-1$ given by 
\be
Z_g^{(\ll)}(\t,z) =\frac{1}{\Psi_{1,1}(\t,z)} \left( 
	\bar \chi^{(\ll)}_{g} \m^{(\ll)}_0(\t,z) 
	+
	\sum_{0< r< \ll }\left(1+\frac{\bar \chi^{(\ll)}_{g}-\chi^{(\ll)}_{g,r}}{\chi^{(\ll)}_{g}}\right)H^{(\ll)}_{g,r} (\t)\hat \th^{(\ll)}_{r} (\t,z)
\right)
\ee
for $ \chi^{(\ll)}_{g} \neq0$ and
\be
Z_g^{(\ll)}(\t,z) = \frac{1}{\Psi_{1,1}(\t,z)}
	\left( 
	\bar \chi^{(\ll)}_{g} \m^{(\ll)}_0(\t,z) 
	+
	\sum_{0<r<\ll}H^{(\ll)}_{g,r} (\t) \hat \th^{(\ll)}_{r}(\t,z)
	\right)
\ee
otherwise. 

Anticipating their relation to the conjugacy classes $[g]$ of $G^{(\ll)}$, another comment on the weight 0 forms $Z^{(\ll)}_g$ is in order here. As discussed in 
\S\ref{sec:grps:sgnprm}, if $\ll\in\LL$ and $\ll>2$ then $G^{(\ll)}$ has a unique central element $z$ of order $2$ and for any $g\in G^{(\ll)}$ the conjugacy casses $[g]$ and $[zg]$ are said to be paired, and a class $[g]$ is said to be self-paired if $[g]=[zg]$. We have 
\be\label{conjugate_pairs}
\bar\chi^{(\ll)}_{g} =\bar\chi^{(\ll)}_{zg},
\quad \chi^{(\ll)}_{g} =-\chi^{(\ll)}_{zg},
\ee
and as a consequence $Z_g^{(\ll)}(\t,z)=Z_{zg}^{(\ll)}(\t,z)$. Therefore, while the vector-valued mock modular forms $H^{(\ll)}_{g} $ and the weight $1$ meromorphic Jacobi forms $\psi^{(\ll)}_g(\t,z)$ are generally distinct for different conjugacy classes $[g]$ of the group $G^{(\ll)}$,  the weight $0$ index $\ll-1$ weak Jacobi forms $Z^{(\ll)}_{g}$ cannot distinguish between two  classes that are paired in the above sense and are more naturally associated to the group $\bar G^{(\ll)}$ which is the quotient of $G^{(\ll)}$ by the subgroup $\langle z\rangle$.

Equipped with the $H^{(\ll)}$ defined in terms of decompositions of Jacobi forms as discussed in \S \ref{sec:forms:wtzero} it will develop that a convenient way to specify the vector-valued mock modular forms $H^{(\ll)}_{g}=\big(H^{(\ll)}_{g,r}\big)$ corresponding to non-identity conjugacy classes of $G^{(\ll)}$ will be to specify certain sets of weight $2$ modular forms. 
To see how a weight $2$ modular form is naturally associated with a vector-valued modular form with the properties described above, observe that we can eliminate the presence of the polar part in the weight 1 Jacobi form $\psi^{(\ll)}_{g}(\t,z)$ (cf. \eq{weight_one_higher_level}) by taking a linear combination with $\psi^{(\ll)}(\t,z)$. To be more precise, note that
\be
 \hat  \psi^{(\ll)}_{g}(\t,z)=\sum_{r=1}^{\ll-1} \hat H^{(\ll)}_{g,r}(\t) \hat \th^{(\ll)}_{r}(\t,z) =\psi^{(\ll)}_{g}(\t,z) - \frac{\chi^{(\ll)}_{g,+}}{\chi^{(\ll)}}  \psi^{(\ll)}(\t,z) + \frac{\chi^{(\ll)}_{g,-}}{\chi^{(\ll)}}  \psi^{(\ll)}(\t,z+1/2) 
\ee
is a weight 1 index $\ll$ Jacobi form for $\G_0(N_g)$ with no poles in $z$, and the functions
\be\label{weigh2_1}
 \hat H^{(\ll)}_{g,r}(\t) = H^{(\ll)}_{g,r}(\t) - \frac{\chi^{(\ll)}_{g,r} }{ \chi^{(\ll)}} H^{(\ll)}_{r}(\t)
\ee
are the components of a vector-valued modular form $\hat H^{(\ll)}_{g}$ for $\G_0(N_g)$ in the usual sense (i.e. a mock modular form with vanishing shadow). From this we readily conclude that the function  
\be\label{weigh2_2}
 \wttwo^{(\ll)}_g(\t)=\sum_{r=1}^{\ll-1} \hat H^{(\ll)}_{g,r}(\t)S^{(\ll)}_{r}(\t)=-\frac{1}{4\p i} \frac{\pa}{\pa z} \hat  \psi^{(\ll)}_{g}(\t,z)\big\lvert_{z=0}
\ee
is a weight 2 modular form for the group $\G_0(N_g)$.

For general values of $\ll$, specifying the weight 2 form $\wttwo^{(\ll)}_g(\t)$ is not sufficient to specify the whole vector-valued modular form $\hat H^{(\ll)}_{g}$ since we have evidently collapsed information in taking the particular combination (\ref{weigh2_2}). However, for $\ll\in\{2,3\}$ we are in the privileged situation that specifying the weight $2$ form $\wttwo^{(\ll)}_g(\t)$ completely specifies all the components of $H^{(\ll)}_{g}$, as will be explained in more detail in the following section. For $\ll>3$ we need more weight $2$ forms, and we will also consider 
\be\label{weigh2_2_2}
 \wttwo^{(\ll),2}_g(\t)=\sum_{r=1}^{\ll-1} (-1)^{r+1} \hat H^{(\ll)}_{g,r}(\t)S^{(\ll)}_{\ll-r}(\t).
\ee

In the next section we give our concrete proposals for the  McKay--Thompson series $H^{(\ll)}_{g}$ for all but a few of the conjugacy classes $[g]$ arising. We give closed expressions for all the $H^{(\ll)}_{g}$ in case $\ll\in\{2,3,4,5\}$ and we partially determine the $H^{(\ll)}_{g}$ for $\ll=\{7,13\}$. Although we do not offer analytic expressions for all the $H^{(\ll)}_{g}$ with $\ll=7$ or $\ll=13$ we have predictions for the low degree terms in the Fourier developments of all the McKay--Thompson series at all lambencies $\ll\in\LL$ and these are detailed in the tables of \S\ref{sec:coeffs}.

\subsection{Lambency Two}\label{subsec:m2}

\begin{table}[h!] \centering    
 \begin{tabular}{cCc}
 \toprule
$[g]$  & N_g&$\wttwo^{(2)}_g(\t) $\\\midrule
$1A$ &  1&$0 $\\
$2A$ & 2&$-16\Lambda_2$\\
$2B $ & 4&$24\Lambda_2-8\Lambda_4=-2\eta(\tau)^8/\eta(2\tau)^4$\\
$3A$& 3&$-6\Lambda_3$\\
$3B$& 9&$-2\eta(\tau)^6/\eta(3\tau)^2$\\
$4A$& 8&$-4\Lambda_2+6\Lambda_4-2\Lambda_8=-2\eta(2\tau)^8/\eta(4\tau)^4$\\
$4B$& 4&$4(\Lambda_2-\Lambda_4)$\\
$4C$&16&$ -2\eta(\tau)^4\eta(2\tau)^2/\eta(4\tau)^2$\\
$5A$&5&$ -2\Lambda_5$\\
$6A$&6&$ 2(\Lambda_2+\Lambda_3-\Lambda_6)$\\
$6B$&36&$ -2\eta(\tau)^2\eta(2\tau)^2\eta(3\tau)^2/\eta(6\tau)^2$\\
$7AB$&7&$ -\Lambda_7$\\
$8A$&8&$ \Lambda_4-\Lambda_8$\\
$10A$&20&$ -2\eta(\tau)^3\eta(2\tau)\eta(5\tau)/\eta(10\tau)$\\
$11A$&11&$ \tfrac{2}{5}(-\Lambda_{11}+11f_{11})$\\
$12A$&24&$ -2\eta(\tau)^3\eta(4\tau)^2\eta(6\tau)^3/\eta(2\tau)\eta(3\tau)\eta(12\tau)^2$\\
$12B$&144&$ -2\eta(\tau)^4\eta(4\tau)\eta(6\tau)/\eta(2\tau)\eta(12\tau)$\\
$14AB$&14&$ \tfrac{1}{3}(\Lambda_2+\Lambda_7-\Lambda_{14}+14f_{14})$\\
$15AB$&15&$ \tfrac{1}{4}(\Lambda_3+\Lambda_5-\Lambda_{15}+15f_{15})$\\
$21AB$&63&$ \tfrac{1}{3}(-7\eta(\tau)^3\eta(7\tau)^3/\eta(3\tau)\eta(21\tau)+\eta(\tau)^6/\eta(3\tau)^2)$\\
$23AB$&23&$  \tfrac{1}{11}(-\Lambda_{23}+23f_{23,a}+69f_{23,b})$\\
 \bottomrule
  \end{tabular}\caption{\label{h_g} {The list of weight $2$ modular forms $\wttwo^{(2)}_g(\t) $ for $\Gamma_0(N_g)$. }}
  \end{table}

When $\ll=2$ the vector-valued mock modular form $H^{(2)}_g$ has only one component $H^{(2)}_{g,1}$ and our conjecture relating the $H^{(2)}_{g}$ and the group $G^{(2)}$ is nothing but the conjecture relating (scalar-valued) mock modular forms and the largest Mathieu group $M_{24}$ that has been investigated  recently \cite{Eguchi2010,Cheng2010_1,Cheng2011,Gaberdiel2010,Gaberdiel2010a,Eguchi2010a,Eguchi2011,Govindarajan2010a}. See also \cite{MR2985326,Volpato:2012qe} for reviews and \cite{Gaberdiel2011,Taormina:2011rr} for related discussions. Explicit expressions for the McKay--Thompson series arising from the $M_{24}$-module that is conjectured to underlie this connection have been proposed in \cite{Cheng2010_1,Gaberdiel2010,Gaberdiel2010a,Eguchi2010a}. As mentioned before, one convenient way to express them is via a set of weight $2$ modular forms $\wttwo^{(2)}_g(\t)$. In this case (\ref{weigh2_1}-\ref{weigh2_2}) simply reduces to
\be\label{h_g_explicit}
H^{(2)}_{g,1}(\t) =  \frac{\chi^{(2)}_g}{24} H^{(2)}_1(\t) + \frac{ \wttwo^{(2)}_g(\t) }{\h(\t)^3}\;,
\ee
where we have used $\chi^{(2)}=24$ and $S^{(2)}_1(\t) = \h(\t)^3$.
For later use and for the sake of completeness we collect the explicit expressions for $\wttwo^{(2)}_g(\t) $ for all conjugacy classes $[g]\subset M_{24}$ in Table \ref{h_g}. They are given in terms of eta quotients and standard generators of the weight $2$ modular forms of level $N$. Among the latter are those denoted here by $\L_N(\t)$ and $f_N(\t)$ and given explicitly in Appendix \ref{sec:modforms}. 

\subsection{Lambency Three}\label{subsec:m3}

\begin{table}\begin{center}
\begin{tabular}{ccc}\toprule
$[g]$&$N_g$& $\wttwo^{(3)}_g(\t)$\\\midrule   
1A&1&0\\  [5pt]
2A&4&0\\   [5pt]
4A&16&$-2 {\h(\t)^4\h(2\t)^2}/{\h(4\t)^2}$\\   [5pt]
2B&2&$-16\L_2$\\   [5pt]
2C&4&$16(\L_2- \L_4/3)$\\   [5pt]
3A&3&$-6\L_3$\\ [5pt]
6A&12&$ -9\L_2-2\L_3+3\L_4+3\L_6-\L_{12}$\\   [5pt]
3B&9&$ 8\L_3 - 2 \L_9+2\,{\h^6(\t)}/{\h^2(3\t)}$\\   [5pt]
6B&36&
$ -2{\eta(\tau)^5\eta(3\tau)}/{\eta(2\tau)\eta(6\tau)}$
\\   [5pt]
4B&8&$-2 {\h(2\t)^8}/{\h(4\t)^4} $\\   [5pt]
4C&4&$-8\L_4/3$\\   [5pt]
5A&5&$-2\L_5$\\   [5pt]
10A&20&$\sum_{d|20} c_{10A}(d) \L_d+\tfrac{20}{3}f_{20}$\\   [5pt]
12A&144&$-2 {\h(\t)\h(2\t)^5\h(3\t) }/{\h(4\t)^2 \h(6\t)} 
$\\   [5pt]
6C&6&$ 2(\L_2+\L_3-\L_6)$\\ [5pt]
6D&12&$ -5\L_2-2\L_3+\tfrac{5}{3}\L_4+3\L_6-\L_{12}$\\   [5pt]
8AB&32&$ -2 {\h(2\t)^4 \h(4\t)^2}/{\h(8\t)^2}$\\   [5pt]
8CD&8&$-2\L_2+\tfrac{5}{3}\L_4-\L_8$\\   [5pt]
20AB&80&$-2 {\h(2\t)^7 \h(5\t)}/{\h(\t) \h(4\t)^2 \h(10\t)}$\\   [5pt]
11AB&11&$-\frac{2}{5}\L_{11}-\frac{33}{5}f_{11}$\\   [5pt]
22AB&44& $\sum_{d|44} c_{22AB}(d) \L_d(\t) -\tfrac{11}{5}\sum_{d|4} c'_{22AB}(d)f_{11}(d\t)+\tfrac{22}{3}f_{44}(\t)$\\   [5pt]
\midrule
\multicolumn{3}{c}{$ c_{10A}(d)= -5 , \tfrac{5}{3}  ,-\tfrac{2}{3} ,1, -\tfrac{1}{3}$ for $d=2,4,5,10,20$}\\
\multicolumn{3}{c}{ $ c_{22AB}(d)= -\tfrac{11}{5} , \tfrac{11}{15} ,-\tfrac{2}{15} ,\tfrac{1}{5} ,-\tfrac{1}{15}$ for $d=2,4,11,22,44$}\\
\multicolumn{3}{c}{ $c'_{22AB}(d)=1,4,8$ for $d=1,2,4$}\\
\bottomrule
\end{tabular}
\end{center}\caption{\label{m3table} {The list of weight $2$ modular forms $\wttwo^{(3)}_g(\t) $ for $\Gamma_0(N_g)$. }}\end{table}

We would like to specify the two components $H^{(3)}_{g,1}(\t)$ and $H^{(3)}_{g,2}(\t)$ of the vector-valued mock modular form $\big(H^{(3)}_{g,r}(\t)\big)$ which we propose to be the McKay--Thompson series arising from the $G^{(3)}$-module $K^{(3)}$. 
As mentioned earlier, to specify $H^{(3)}_{g,1}(\t)$ and $H^{(3)}_{g,2}(\t)$ it is sufficient to specify the weight $2$ forms defined in \eq{weigh2_2}. 
To see this, recall that to any given conjugacy class $[g]$ we can associate a conjugacy class $[zg]$ such that \eq{conjugate_pairs} holds. From this we obtain
\begin{gather}
	\begin{split}
   H^{(3)}_{g,1}(\t) &= \frac{\bar\chi^{(3)}_{g} }{ \chi^{(3)}} H^{(3)}_{1}(\t) + \frac{1}{2} \frac{\h(4\t)^2}{\h(2\t)^5} \big(\wttwo^{(3)}_g(\t)+\wttwo^{(3)}_{zg}(\t)\big), \\\label{relate_weight2_mock}
      H^{(3)}_{g,2}(\t) &= \frac{\chi^{(3)}_{g} }{ \chi^{(3)}} H^{(3)}_{2}(\t) + \frac{1}{4} \frac{\h(2\t)}{\h(\t)^2\h(4\t)^2} \big(\wttwo^{(3)}_g(\t)-\wttwo^{(3)}_{zg}(\t)\big),
	\end{split}
\end{gather}
where we have used the eta quotient expressions 
\be
S^{(3)}_1(\t) = \frac{\h(2\t)^5}{\h(4\t)^2},\quad S^{(3)}_2(\t) =2 \frac{\h(\t)^2\h(4\t)^2}{\h(2\t)},
\ee
for the components of the unary theta series at $\ll=3$.

The explicit expressions for $\wttwo^{(3)}_g(\t)$ are listed in Table \ref{m3table}. 

We also note that for the classes $3B$ and $6B$ we also have the following  alternative expressions for the McKay--Thompson
 series in terms of eta quotients:  
\be
H_{3B,1}^{(3)}(\t) 
=H_{6B,1}^{(3)}(\t) 
 =-2 \frac{\h(\t)\h(6\t)^5}{\h(3\t)^3 \h(12\t)^2},\quad 
 H_{3B,2}^{(3)}(\t) 
 =-H_{6B,2}^{(3)}(\t) 
 = -4 \frac{\h(\t)\h(12\t)^2}{\h(3\t) \h(6\t)}.
\ee
Coincidences with Ramanujan's mock theta functions will be discussed (\ref{subsec:MTmocktheta}).

\subsection{Lambency Four}\label{subsec:mis4}

The McKay--Thompson series for lambency $4$ display an interesting relation with those for lambency $2$. To see this, notice the following relation among the theta functions
\be
S^{(4)}_{\;1}(2\t)-S^{(4)}_{\;3}(2\t) = S^{(2)}_{\;1}(\t).
\ee
Given this relation it is natural to consider the function
\be
H^{(4)} _{g,\ast}(\tau) :=H^{(4)}_{g,1}(\tau) - H^{(4)}_{g,3}(\tau)
\ee
for each conjugacy classe $[g]$ of $G^{(4)}$. 
Note that
$H^{(4)}_{g,1}( \tau)$ and $H^{(4)}_{g,3}( \tau)$ can be reconstructed from $H^{(4)}_{g,\ast}( \tau)$ since they  are $q$-series of the form $q^{-1/16}$ times a series of even or odd powers of $q^{1/2}$, respectively. Explicitly, we have
\begin{gather}
	\begin{split}
H^{(4)}_{g,1}( \tau)&=\tfrac{1}{2}\big(H^{(4)}_{g,\ast}( \tau)+e(\tfrac{1}{16}) H^{(4)}_{g,\ast}( \tau+1) \big),\\ \label{relation_odd_mis4}
H^{(4)}_{g,3}( \tau)&=\tfrac{1}{2}\big(-H^{(4)}_{g,\ast}( \tau)+e(\tfrac{1}{16}) H^{(4)}_{g,\ast}( \tau+1) \big).
	\end{split}
\end{gather}

In order to obtain an expression for $ H^{(4)} _{g,\ast}(\tau)$ we rely on the following two observations. 
First, recall in \S\ref{sec:grps:spcnst} we have observed a relation between the Frame shapes of $g \in G^{(4)}$ and $g' \in G^{(2)}$.
It turns out that for a pair of group elements  $g \in G^{(4)}$ and $g' \in G^{(2)}$ related in the way described in Proposition \ref{prop:grps:sqellfrms} their McKay--Thompson series $H^{(4)}_{g}(2\t)$ and $H^{(2)}_{g'}(\t)$ are also related in a simple way. As examples of this we have 
\begin{gather}
	\begin{split}
H^{(4)}_{\;1A,\ast}( \tau)&=H^{(4)}_{\;2A,\ast}( \tau) = H^{(2)}_{{2A}}(\t/2) \\
H^{(4)}_{\;2B,\ast}( \tau)&= H^{(2)}_{{4A}}(\t/2)
\\
H^{(4)}_{\;2C,\ast}( \tau)&= H^{(2)}_{{4B}}(\t/2)\\
H^{(4)}_{\;3A,\ast}( \tau) &=H^{(4)}_{\;6A,\ast}( \tau)=H^{(2)}_{{6A}}(\t/2) \\
H^{(4)}_{\;4C,\ast}( \tau) &=H^{(2)}_{{8A}}(\t/2) \\
H^{(4)}_{\;6BC,\ast}( \tau) &=H^{(2)}_{{12A}}(\t/2) 
\\
H^{(4)}_{\;7AB,\ast}( \tau) &=H^{(4)}_{\;14AB,\ast}( \tau) =H^{(2)}_{{14AB}}(\t/2).
	\end{split}
\end{gather}
This leaves us just the classes $4A$, $4B$, and $8A$ that are not related to any element of $G^{(2)}\simeq M_{24}$ in the way described in Proposition \ref{prop:grps:sqellfrms}. All of these classes have $\chi^{(4)}_{g}=\bar\chi^{(4)}_{g}=0$. 
A second observation is that all the $H^{(3)}_{g,r}(\t)$ for those classes $[g]\subset G^{(3)}$ with $\chi^{(3)}_{g}=\bar\chi^{(3)}_{g}=0$ have an expression in terms of eta quotients according to Table \ref{m3table}, and so do the $H^{(2)}_g(\t)$ for those classes $[g]\subset G^{(2)}$ with $\chi^{(2)}_g =0$.  This is consistent with our expectation that the shadow of the vector-valued mock modular form $\big(H^{(\ll)}_{g,r}(\t)\big)$ is given by $\big(\chi^{(\ll)}_{g,r} S^{(\ll)}_{r}(\t)\big)$, and hence $\big(H^{(\ll)}_{g,r}(\t)\big)$ is nothing but a vector-valued modular form in the usual sense when $\chi^{(\ll)}_{g}=\bar\chi^{(\ll)}_{g}=0$. 
Given this it is natural to ask whether we can find similar eta quotient expressions for $H^{(4)}_{g,\ast}(\t)$ with $\chi^{(\ll)}_{g}=\bar\chi^{(\ll)}_{g}=0$ when $\ll>3$. We find
\begin{gather}
	\begin{split}
H^{(4)}_{\;4A,\ast}( \tau)&= -2 \frac{\eta(\t/2)\eta( \tau)^2}{\eta(2 \tau)^2}, \\
H^{(4)}_{\;4B,\ast}( \tau)&= -2 \frac{\eta(\t/2)\eta(\t)^4}{\eta(\t)^2\eta(4 \tau)^2}, \\
H^{(4)}_{\;8A,\ast}( \tau)&= -2 \frac{ \eta( \tau)^3}{\eta(\tau/2) \eta(4 \tau)}.
	\end{split}
\end{gather}

Together with \eq{relation_odd_mis4} and Table \ref{h_g}, the above equations completely specify $H^{(4)}_{g,1}(\t)$ and $H^{(4)}_{g,3}(\t)$ for all conjugacy classes $[g]$ of $G^{(4)}$. 
We are left to determine the second components $H^{(4)}_{g,2}(\t)$.
To start with, we have $H^{(4)}_{2}(\t)=H^{(4)}_{\;1A,2}(\t)=-H^{(4)}_{\;2A,2}(\t)$ given in terms of the decomposition of the weight $0$ Jacobi form as explained in \S\ref{sec:forms:wtzero}. To determine the rest, notice that \eq{conjugate_pairs} and \eq{weigh2_2}  implies that 
\be
S^{(4)}_{\;2}(\t) \left( H^{(4)}_{g,2}(\t) - \frac{\chi^{(4)}_{g,2} }{\chi^{(4)}} H^{(4)}_{2}(\t)\right) 
\ee
should be a weight $2$ modular form. Employing this consideration we arrive at  
\begin{gather}
	\begin{split}
{ H}^{(4)}_{\;3A,2}(\t) &=-{ H}^{(4)}_{\;6A,2}(\t)  \\
& = \frac{1}{4}H^{(4)}_{2}(\t) +\frac{1}{2\h(2\t)^3}\Big(-3\L_2(\t) -4\L_3(\t) + \L_6(\t)\Big),\\
{ H}^{(4)}_{\;7AB,2}(\t) &=-{ H}^{(4)}_{\;14AB,2}(\t)\\
& =  \frac{1}{8}H^{(4)}_{2}(\t) +\frac{1}{12\,\h(2\t)^3}\Big(-{7} \L_2(\t)  -4 \L_7(\t) + \L_{14}(\t)  +{28} f_{14}(\t)\Big),
	\end{split}
\end{gather}
where we have used $S^{(4)}_{\;2}(\t) = 2\h(2\t)^3$ and the hypothesis that  
\be
{H}^{(4)}_{g,2}(\t) =0 
\ee
for all classes not in the set $\{1A,2A,3A,6A,7AB,14AB\}$. This finishes our proposal for the McKay--Thompson series $\big(H^{(4)}_{g,r}\big)$ at $\ll=4$. Coincidences between some of the $H^{(4)}_{g,r}$ and Ramanujan's mock theta functions will be discussed in \S\ref{subsec:MTmocktheta}.

\subsection{Lambency Five}\label{subsec:mis5}

\begin{table}[h!]
\centering
\begin{tabular}{cccC}\toprule
$[g]$&$N_g$& $\wttwo^{(5)}_g(\t)$&\wttwo^{(5),2}_g(\t)\\\midrule   
1A&1&0&0\\    [5pt]
2A&4&0&0\\    [5pt]
2B&4&$16(\L_2- \L_4/3)$& -\frac{8}{3} {\h(\t)^8}/{\h(\t/2)^4}\\     [5pt]
2C&2&$-16\L_2$&e(\tfrac{1}{4})\,\wttwo^{(5),2}_{2B}(\t+1)\\    [5pt]
3A&9&$ -2{\h^6(\t)}/{\h^2(3\t)}$&-2 \frac{\h(\t)^8 \h(3\t/2)^2 \h(6\t)^2}{\h(\t/2)^2 \h(2\t)^2 \h(3\t)^4}   
\\      [5pt]
6A&36&$ -2 \,{\h(\t)^2\h(2\t)^2\h(3\t)^2}/{\h(6\t)^2}$     &2 \,{\h(\t/2)^2 \h(\t)^2 \h(3\t)^2}/{\h(3\t/2)^2}   \\   [5pt]
4AB&16&$-2{\h(2\t)^{14}}/{\h(\t)^4\h(4\t)^6}$ &-16 {\h(\t)^{2}\h(4\t)^{6}}/{\h(2\t)^4}\\[5pt]
4CD&8&$ -8/3 \L_4(\t)$&-\frac{32}{3} \,{\h(2\t)^2\h(4\t)^2}/{\h(\t)^2}\\[5pt]
5A&5&$-2\L_5$&e(\tfrac{1}{4})\,\wttwo^{(5),2}_{10A}(\t+1)\\   [5pt]
10A&20&$\sum_{d|20} c_{10A}(d) \L_d-\tfrac{40}{3}f_{20}$&\sum_{d|20} c_{10A,2}(d) \L_d(\t/4)+\tfrac{10}{3}f_{20}(\t/4) \\   [5pt]
12AB&144& $-2 {\h(\t)^2\h(2\t)^2\h(6\t)^4}/{\h(3\t)^2\h(12\t)^2}$&-4 {\h(\t)^2\h(2\t)^2\h(12\t)^2}/{\h(6\t)^2}\\
\midrule
\multicolumn{3}{c}{$ c_{10A}(d)= -5 , \tfrac{5}{3}  ,-\tfrac{2}{3} ,1, -\tfrac{1}{3}$ for $d=2,4,5,10,20$}\\[5pt]
\multicolumn{3}{c}{ $ c_{10A,2}(d)= -\tfrac{5}{4} , \tfrac{5}{24}  ,-\tfrac{1}{3} ,\tfrac{1}{4} , -\tfrac{1}{24}$ for $d=2,4,5,10,20$}\\
\bottomrule
\end{tabular}\caption{\label{m5table} {The list of weight $2$ modular forms $\wttwo^{(5)}_g(\t) $ on $\Gamma_0(N_g)$ and the other set of weight $2$ modular forms $\wttwo^{(5),2}_g(\t) $ . }}
\end{table}

As mentioned before, for $\ll=5$ it is no longer sufficient to specify the weight $2$ forms $ \wttwo^{(\ll)}_g(\t)$ as defined in \eq{weigh2_2}. 
In this case we also need to specify the weight $2$ forms $ \wttwo^{(\ll),2}_g(\t)$ defined in \eq{weigh2_2_2}.

With the data of $ \wttwo^{(5)}_g(\t)$ and $ \wttwo^{(5),2}_g(\t)$ we can determine the $H^{(5)}_{g,r}$ using the relations
\be\label{lis5_1}
H^{(5)}_{g,r} (\t) = \hat H^{(5)}_{g,r} (\t) +\frac{\chi^{(5)}_{g,r}}{6} H^{(5)}_r(\t)
\ee
and
\begin{gather}
\begin{split}
\hat H^{(5)}_{g,1}(\t) S^{(5)}_1(\t)+\hat H^{(5)}_{g,3}(\t) S^{(5)}_3(\t) &= \tfrac{1}{2} \Big( \wttwo^{(5)}_g(\t) + \wttwo^{(5)}_{zg}(\t)\Big) \\
\hat H^{(5)}_{g,1}(\t) S^{(5)}_4(\t)+\hat H^{(5)}_{g,3}(\t) S^{(5)}_2(\t) &= \tfrac{1}{2} \Big( \wttwo^{(5),2}_g(\t) + \wttwo^{(5),2}_{zg}(\t)\Big)\\
\hat H^{(5)}_{g,2}(\t) S^{(5)}_2(\t)+\hat H^{(5)}_{g,4}(\t) S^{(5)}_4(\t) &= \tfrac{1}{2} \Big( \wttwo^{(5)}_g(\t) - \wttwo^{(5)}_{zg}(\t)\Big)\\ \label{lis5_2}
\hat H^{(5)}_{g,2}(\t) S^{(5)}_3(\t)+\hat H^{(5)}_{g,4}(\t) S^{(5)}_1(\t) &= \tfrac{1}{2} \Big( -\wttwo^{(5),2}_g(\t) + \wttwo^{(5),2}_{zg}(\t)\Big)
\end{split}
\end{gather}
where $g$ and $zg$ again form a pair satisfying \eq{conjugate_pairs}.

Moreover, notice that in $ \wttwo^{(5),2}_g(\t)$ the part of the sum involving even $r$  has an expansion in $q^{1/4+k}$ for non-negative integers $k$, while the sum over odd $r$ has an expansion in $q^{3/4+k}$. As a result, it is sufficient to specify $ \wttwo^{(\ll),2}_g(\t)$ for one of the paired classes $g$ and $zg$ as they are related to each other by
\be\label{lis5_3}
\wttwo^{(5),2}_{zg}(\t) = e(\tfrac{1}{4})\,\wttwo^{(\ll),2}_{g}(\t+1)\;.
\ee

Using (\ref{lis5_1}-\ref{lis5_3}), the data recorded in Table \ref{m5table} are sufficient to explicitly specify all the mock modular forms $H^{(5)}_{g}(\t)$ for all $[g]\subset G^{(5)}$ at lambency $5$.

\subsection{Lambencies Seven and Thirteen}\label{subsec:remaining}

For $\ll=7$, apart from the $1A$ and $2A$ classes whose McKay-Thomson series have been given in terms of the weight $0$ Jacobi form $Z^{(7)}$ in \S\ref{sec:forms:wtzero}, there are five more classes $3AB$, $4A$, and $6AB$ whose  McKay-Thomson series $H^{(7)}_{g}$ we would like to identify. 
We specify the weight $2$ forms associated to these classes. Notice that these expressions are not sufficient to completely determine all the components $H^{(7)}_{g,r}$. Nevertheless, they provide strong evidence for the mock modular properties of the proposed McKay--Thompson
 series. 

\begin{gather}
	\begin{split}
\wttwo^{(7)}_{4A}(\t) &= -2\frac{\h(\t)^4 \h(2\t)^2}{\h(4\t)^2}, \\
\wttwo^{(7),2}_{4A}(\t) &= 4 \frac{\h(\t)^6 \h(4\t)^2}{\h(2\t)^4}, \\
\wttwo^{(7)}_{3AB}(\t) &= -6 \L_3(\t), \\
\wttwo^{(7)}_{6AB}(\t) &= -9\L_2(\t) -2\L_3(\t) +3\L_4(\t) +3\L_6(\t) -\L_{12}(\t), \\
\wttwo^{(7),2}_{6AB}(4\t) &= -\frac{9}{4} \L_2(\t) -\L_3(\t) +\frac{3}{8}\L_4(\t) +\frac{3}{4}\L_6(\t) -\frac{1}{8}\L_{12}(\t).
	\end{split}
\end{gather}

Similarly, for lambency $13$ we have 
\begin{gather}
	\begin{split}
\wttwo^{(13)}_{4AB}(\t) & = -2 \frac{\h(2\t)^{14}}{\h(\t)^4\h(4\t)^6},\\
\wttwo^{(13),2}_{4AB}(\t) & =-16 \frac{\h(\t)^2 \h(4\t)^6}{\h(2\t)^4}.
	\end{split}
\end{gather}

\subsection{Mock Theta Functions}\label{subsec:MTmocktheta}

As mentioned in \S\ref{sec:forms:mock} we observe that in many cases across different lambencies the components $H^{(\ll)}_{g,r}$ of the McKay--Thompson series coincide with known mock theta functions. In particular, we often encounter Ramanujan's mock theta functions, and always in such a way that the order is divisible by the lambency. In this section we will list these conjectural identities between $H^{(\ll)}_{g,r}$ and mock theta functions identified previously in the literature.

For $\ll=2$ two of the functions $H^{(2)}_{g}(\t)$ are related to  Ramanujan's mock theta functions of orders $2$ and $8$ (cf. (\ref{ll2mocktheta})) through
\begin{gather}
	\begin{split}
H^{(2)}_{4B}(\t) & = -2 q^{-1/8} \m(q),
\\
H^{(2)}_{8A}(\t) & = -2 q^{-1/8} U_0(q).
	\end{split}
\end{gather}

For $\ll=3$ we encounter the following  order $3$ mock theta functions of  Ramanujan: 
\begin{gather}
	\begin{split}
H^{(3)}_{2B,1}(\t) &=H^{(3)}_{2C,1}(\t)=H^{(3)}_{4C,1}(\t)=-2q^{-1/12} f(q^2), \\
H^{(3)}_{6C,1}(\t) &= H^{(3)}_{6D,1}(\t) = -2 q^{-1/12} \chi(q^2),
 \\
H^{(3)}_{8C,1}(\t) &= H^{(3)}_{8D,1}(\t) = -2 q^{-1/12} \phi(-q^2), \\
H^{(3)}_{2B,2}(\t) &= - H^{(3)}_{2C,2}(\t) = -4 q^{2/3} \omega(-q), 
\\\label{L3mocktheta}
H^{(3)}_{6C,2}(\t) &= - H^{(3)}_{6D,2}(\t) = 2 q^{2/3} \rho(-q).
	\end{split}
\end{gather}
(Cf. (\ref{eqn:forms:mock:ord3}).) The description of the shadow of $H^{(3)}_{g}$ is consistent with the fact that, among the seven order $3$ mock theta functions of Ramanujan, $f(\t)$, $\f(\t)$, $\psi(\t)$ and $\chi(\t)$ form a group with the same shadow ($S^{(3)}_{1}(\t)$) while the other three $\o(\t)$, $\n(\t)$ and $\r(\t)$ form another group with another shadow ($S^{(3)}_{\;2}(\t)$). Moreover, various relations among these order $3$ mock theta functions can be obtained as consequences of the above identification.

For $\ll=4$ we encounter the following order  $8$  mock theta functions
\begin{gather}
	\begin{split}
{H}^{(4)}_{2C,1}(\t)&=q^{-\frac{1}{16}} \big(-2 S_0(q) + 4T_0(q)\big), \\
{H}^{(4)}_{2C,3}(\t)&=q^{\frac{7}{16}} \big(2 S_1(q) - 4T_1(q)\big), \\ 
{H}^{(4)}_{4C,1}(\t)&= -2\, q^{-\frac{1}{16}} S_0(q), \\
{H}^{(4)}_{4C,3}(\t)&=2 \,q^{\frac{7}{16}} \, S_1(q) .
	\end{split}
\end{gather}
(Cf. (\ref{eqn:forms:mock:ord8}).) Comparing with \eq{ll2mocktheta} we see that our proposal implies the identities 
\begin{gather}
	\begin{split}
\m(q) &= S_0(q^2) - 2T_0(q^2) + q \big(S_1(q^2) - 2T_1(q^2) \big)\\
	& = U_0(q)-2U_1(q), \\ 
U_0(q) &= S_0(q^2)+ q \,S_1(q^2),\\
 U_1(\t)&= T_0(q^2)+ q \,T_1(q^2), 
	\end{split}
\end{gather}
between different mock theta functions of Ramanujan. See, for instance, \cite{GorMcI_SvyMckTht} for a collection of such identities.

For $\ll=5$ we encounter four of  Ramanujan's order $10$ mock theta functions:
\begin{gather}
	\begin{split}
&H^{(5)}_{2BC,1}(\t) =H^{(5)}_{4CD,1}(\t) = -2 q^{{-\frac{1}{20}}} \,X(q^2),
\\ 
&H^{(5)}_{2BC,3}(\t) =H^{(5)}_{4CD,3}(\t) = -2 q^{{-\frac{9}{20}}} \,\chi_{10}(q^2), 
\\ 
&H^{(5)}_{2C,2}(\t) =-H^{(5)}_{2B,2}(\t)  = 2q^{-\frac{1}{5}} \,\psi_{10}(-q),
\\ 
&H^{(5)}_{2C,4}(\t) =-H^{(5)}_{2B,4}(\t)  = -2q^{\frac{1}{5}} \,\f_{10}(-q). 
	\end{split}
\end{gather}
(Cf. (\ref{eqn:forms:mock:ord10}).)

\subsection{Automorphy}\label{sec:mckay:aut}

In this section we discuss the automorphy of the proposed McKay--Thompson series $H^{(\ll)}_{g} = \big( H^{(\ll)}_{g,r}\big) $. As mentioned in \S\ref{Forms of Higher Levels}, the function $H^{(\ll)}_g$ is a vector-valued mock modular form with shadow $\big(\chi_{g,r}S^{(\ll)}_r\big)$ for some $\G_g \subset SL_2(\ZZ)$ with a certain (matrix-valued) multiplier $\n_g$. In this subsection we will specify the group $\G_g$ and the multiplier $\n_g$. The multipliers we specify here can be verified explicitly using the data given in \S\S\ref{subsec:m2}-\ref{subsec:remaining} (except for the few conjugacy classes at $\ll=7$ and $\ll=13$ for which the McKay--Thompson series $H^{(\ll)}_g$ have not been completely determined). 

We find that the automorphy of the vector-valued function $H_g^{(\ll)}$ is governed, in way that we shall describe presently, by the signed permutation representation of $G^{(\ll)}$ arising from the construction (as a subgroup of $\HO_m$ for $m=24/(\ll-1)$) given in \S\ref{sec:grps:spcnst}. We use this representation to define the symbols $n_g|h_g$ which appear in the second row of each table \ref{tab:coeffs:2_1}-\ref{tab:coeffs:13_12}, and also in the twisted Euler character tables \ref{tab:chars:eul:2}-\ref{tab:chars:eul:13}, and we will explain below how to use these symbols to determine the multiplier system for each $H^{(\ll)}_{g}$. It will develop also that the twisted Euler character $g\mapsto \chi^{(\ll)}_g$ (cf. Tables \ref{tab:chars:eul:2}-\ref{tab:chars:eul:13}) attached to the signed permutation representation of $G^{(\ll)}$ determines the shadow of $H^{(\ll)}_g$.

Recall from \S\ref{sec:grps:spcnst} that the signed permutation representation of $G^{(\ll)}$ naturally induces a permutation representation of the same degree, and this permutation representation factors through the quotient $\bar{G}^{(\ll)}=G^{(\ll)}/2$ of $G^{(\ll)}$ by its unique central subgroup of order $2$ (except in case $\ll=2$ when the signed and unsigned permutation representations coincide, and we have $G^{(2)}=\bar{G}^{(2)}$). Let $g\mapsto \bar{g}$ denote the natural map $G^{(\ll)}\to \bar{G}^{(\ll)}$ and observe that (for $\ll>2$) if the unique central involution of $G^{(\ll)}$ belongs to the cyclic subgroup $\langle g\rangle$ generated by $g$ then $o(g)=2o(\bar{g})$ and otherwise $o(g)=o(\bar{g})$. We say that $g\in G^{(\ll)}$ is {\em split} over $\bar{G}^{(\ll)}$ in case $o(g)=o(\bar{g})$ and we call $g$ {\em non-split} otherwise. (By this definition every element of $G^{(2)}=\bar{G}^{(2)}$ is split.) 

Recall from \S\ref{sec:grps:spcnst} that $g\mapsto \Pi^{(\ll)}_{g}$ denotes the map attaching signed permutation Frame shapes to elements of $G^{(\ll)}$ and $g\mapsto \bar{\Pi}_g^{(\ll)}$ denotes the Frame shapes (actually cycle shapes) arising from the (unsigned) permutation representation (on $m=24/(\ll-1)$ points). Taking a formal product of Frame shapes $\tilde{\Pi}^{(\ll)}_g=\Pi^{(\ll)}_{g}\bar{\Pi}^{(\ll)}_{g}$ (defined so that $j^{m_1}j^{m_2}=j^{m_1+m_2}$, \&c.) we obtain the Frame shape of $g\in G^{(\ll)}$ regarded as a permutation of the $2m$ points $\{\pm e_i\}$ for $i\in \O^{(\ll)}$ (cf. \S\ref{sec:grps:spcnst}). In particular, $\tilde{\Pi}_g$ is a cycle shape and none of the exponents appearing in $\tilde{\Pi}_g$ are negative. Given $g\in G^{(\ll)}$ and $\tilde{\Pi}_g=j_1^{m_1}\cdots j_k^{m_l}$ with $j_1<\cdots< j_k$ and $m_i>0$ define $N_g=j_1j_k$. That is, set $N_g$ to be the product of the shortest and longest cycle lengths appearing in a cycle decomposition for $g$ regarded as a permutation on the $2m$ points $\{\pm e_i\}$. Now define the symbols $n_g|h_g$ by setting $n_g=o(\bar{g})$ for all $g\in G^{(\ll)}$ and all $\ll\in\LL$, and by setting $h_g=N_g/n_g$ for all $g\in G^{(\ll)}$ and $\ll\in\LL$ except when $\ll=4$ and $g$ is non-split in which case set $h_g=N_g/2n_g$. The symbols $n_g|h_g$ are specified in Tables \ref{tab:chars:eul:2}-\ref{tab:chars:eul:13}, and also in Tables \ref{tab:coeffs:2_1}-\ref{tab:coeffs:13_12}, in the rows labelled $\G_g$. We omit the $|h_g$ when $h_g=1$, so $n_g$ is a shorthand for $n_g|1$ in these tables. 

The significance of the value $n_g$ is that it is the minimal positive integer $n$ for which $H^{(\ll)}_g$ is a mock modular form (of weight $1/2$) on $\Gamma_0(n)$, and the significance of $h_g$ is that, as we shall see momentarily, it is the minimal positive integer $h$ for which the multiplier for $H^{(\ll)}_g$ coincides with the conjugate multiplier for the (vector-valued) cusp form $S^{(\ll)}$ when restricted to $\Gamma_0(nh)$ (for $n=n_g$). Since $n_gh_g\leq N_g$ for all $g$ the multiplier for $H^{(\ll)}_g$ coincides with the conjugate multiplier for $S^{(\ll)}$ when regarded as a mock modular form on $\Gamma_0(N_g)$ for all $g$. It is very curious that this coincidence of multipliers extends to the larger group $\Gamma_0(N_g/2)$ in the case that $\ll=4$ and $g$ is non-split (i.e. $o(g)=2o(\bar{g})$).

\begin{table}[h]
\begin{center}
\caption{Admissible $v^{(\ll)}$}\label{tab:vells}
\smallskip
\begin{tabular}{c||cccccc}
$\ll$&	2&	3&	4&	5&	7&	13\\
	\hline
$v^{(\ll)}$&1&5&3&7&1&7
\end{tabular}
\end{center}
\end{table}
Given a pair of positive integers $(n,h)$ we define a matrix-valued function $\rho^{(\ll)}_{n|h}$ on $\G_0(n)$ as follows. For each $\ll\in \LL$ let the integer $v^{(\ll)}$ be as specified in Table \ref{tab:vells}. When $h$ divides $n$ we set 
\be
\rho^{(\ll)}_{n|h}(\g)=\ex\left(-v^{(\ll)}\frac{cd}{nh}\right){\rm I}_{\ll-1}
\ee
where ${\rm I}_{\ll-1}$ denotes the $(\ll-1)\times (\ll-1)$ identity matrix. When $h$ does not divide $n$ and $n$ is even we set
\be
\rho^{(\ll)}_{n|h}(\g)=\ex\left(-v^{(\ll)}\frac{cd}{nh}\frac{(n,h)}{n}\right){\rm J}_{\ll-1}^{c(d+1)/n}{\rm K}_{\ll-1}^{c/n}
\ee
where $(n,h)$ denotes the greatest common divisor of $n$ and $h$, and ${\rm J}_{\ll-1}$ and ${\rm K}_{\ll-1}$ are the $(\ll-1)\times (\ll-1)$ matrices given by
\be		
	{\rm J}_{\ll-1}
	=\begin{pmatrix}
		1&0&0&\cdots&0\\
		0&-1&0&\cdots&0\\
		0&0&1&\cdots&0\\
		\vdots&\vdots&\vdots&\ddots&\vdots\\
		0&0&0&\cdots&(-1)^{\ll}
	\end{pmatrix},\quad
	{\rm K}_{\ll-1}
	=\begin{pmatrix}
		0&\cdots&0&0&1\\
		0&\cdots&0&1&0\\
		0&\cdots&1&0&0\\
		\vdots&\iddots&\vdots&\vdots&\vdots\\
		1&\cdots&0&0&0
	\end{pmatrix},
\ee
and when $h$ does not divide $n$ and $n$ is odd we set
\be
\rho^{(\ll)}_{n|h}(\g)=\ex\left(-v^{(\ll)}\frac{cd}{nh}\frac{n}{(n,h)}\right){\rm J}_{\ll-1}^{c(d+1)/n}{\rm K}_{\ll-1}^{c/n}.
\ee
Now the multiplier system $\nu_g^{(\ll)}$ for the umbral mock modular form $H^{(\ll)}_g$ is the matrix-valued function on $\G_0(n)=\G_0(n_g)$ given simply by 
\be\label{eqn:mtseries:aut:nug}
\nu_g^{(\ll)}=\nu^{(\ll)}_{n|h}=\rho^{(\ll)}_{n|h}\overline{\sigma^{(\ll)}}
\ee
where $n=n_g$ and $h=h_g$ and $\sigma^{(\ll)}=(\sigma^{(\ll)}_{ij})$ denotes the (matrix-valued) multiplier system for the (vector-valued) theta series $S^{(\ll)}$ (cf. (\ref{unary_S_def})) and satisfies
\be\label{eqn:mtseries:aut:sigma}
	\sigma^{(\ll)}(\g)S^{(\ll)}(\g \tau){\jac}(\g,\tau)^{3/4}=S^{(\ll)}(\t)
\ee
for $\g\in \G_0(1)=\SL_2(\ZZ)$ where $\jac(\g,\t)=(c\t+d)^{-2}$ in case $\g$ has lower row $(c,d)$.

Recall from \S\ref{sec:grps:spcnst} that the character of $G^{(\ll)}$ attached to its signed permutation representation is denoted $g\mapsto \chi^{(\ll)}_{g}$ in Tables \ref{tab:chars:eul:2}-\ref{tab:chars:eul:13} and that of the (unsigned) permutation representation is denoted $g\mapsto \bar{\chi}^{(\ll)}_g$. Define $\chi^{(\ll)}_{g,r}$ for $0<r<\ll$ by setting $\chi^{(\ll)}_{g,r}=\chi^{(\ll)}_{g}$ in case $r$ is even and $\chi^{(\ll)}_{g,r}=\bar{\chi}^{(\ll)}_{g}$ otherwise. Then the shadow of $H^{(\ll)}_g$ is the function $S^{(\ll)}_g=\big(S^{(\ll)}_{g,r}\big)$ with components related to those of $S^{(\ll)}$ by $S^{(\ll)}_{g,r}=\chi^{(\ll)}_{g,r}S^{(\ll)}_r$. In particular, $H^{(\ll)}_g$ is a vector-valued modular form of weight $1/2$ for $\Gamma_0(n_g)$ when $\chi^{(\ll)}_g=\bar{\chi}^{(\ll)}_g=0$.

To summarise, we claim that our proposed McKay--Thompson series $H^{(\ll)}_g$ are such that if we define $\hat H^{(\ll)}_{g}=\big(\hat H^{(\ll)}_{g,r}\big)$ by setting 
\begin{gather}
\hat H^{(\ll)}_{g,r}(\t) = H_{g,r}(\t) -\frac{\chi^{(\ll)}_{g,r}}{\sqrt{2\ll}} \frac{1}{(4i)^{1/2}} \int_{-\bar\t}^{i\infty} (z+\t)^{-1/2} S^{(\ll)}_r(z){\rm d}z
\end{gather}
for $0<r<\ll$ then $\hat H^{(\ll)}_g$ is invariant for the weight $1/2$ action of $\G_0(n)$ on $(\ll-1)$-vector-valued functions on $\HH$ given by
\begin{gather}\label{eqn:mtseries:aut:wthalfaction}
\left(\left.\hat H^{(\ll)}_g\right|_{1/2,n|h}\g\right)(\t)= 
	 \n_{n|h} (\g) \hat H^{(\ll)}_g(\g\t)
	\jac(\g,\t)^{1/4}
\end{gather}
where $n=n_g$ and $h=h_g$ and $\n_{n|h}$ is defined by (\ref{eqn:mtseries:aut:nug}). This statement completely describes the (conjectured) automorphy of the mock modular forms $H^{(\ll)}_{g}$ (cf. \S\ref{sec:conj:aut}).

%------------------------------------------------------------------%
\section{Conjectures}\label{sec:conj}
%------------------------------------------------------------------%

We have described the umbral forms $Z^{(\ll)}$, $H^{(\ll)}$ and $\Phi^{(\ll)}$ in \S\ref{sec:forms} and the umbral groups $G^{(\ll)}$ in \S\ref{sec:grps} and we have introduced families $\{H^{(\ll)}_g\mid g\in G^{(\ll)}\}$ of vector-valued mock modular forms in \S\ref{sec:mckay}. The discussions of those sections clearly demonstrate the distinguished nature of these objects, and we have mentioned some coincidences relating the groups $G^{(\ll)}$ and the forms $H^{(\ll)}_g$ directly. In this section we present, in a more systematic fashion, evidence that the relationship between the $G^{(\ll)}$ and the $H^{(\ll)}$ is more than coincidental. Our observations lead naturally to conjectures that we hope will serve as first steps in revealing the structural nature of the mechanism underlying umbral moonshine.

%---------------------------------------------------------------------------------------%
\subsection{Modules}\label{sec:conj:mod}
%---------------------------------------------------------------------------------------%

As was mentioned in \S\ref{sec:mckay}, after comparison of the character tables (cf. \S\ref{sec:chars:irr}) of the umbral groups $G^{(\ll)}$ with the Fourier coefficient tables (cf. \S\ref{sec:coeffs}) for the forms $H^{(\ll)}_g$ it becomes apparent that the low degree Fourier coefficients of $H^{(\ll)}=H^{(\ll)}_e$ may be interpreted as degrees of representations of $G^{(\ll)}$ in such a way that the corresponding coefficients of $H^{(\ll)}_g$ are recovered by substituting character values at $g$; we have tabulated evidence for this in the form of explicit combinations of irreducible representations in \S\ref{sec:decompositions}. This observation suggests the existence of bi-graded $G^{(\ll)}$-modules $K^{(\ll)}=\bigoplus K^{(\ll)}_{r,d}$ whose bi-graded dimensions are recovered via Fourier coefficients from the vector-valued mock modular forms $H^{(\ll)}=\big(H^{(\ll)}_r\big)$. 
\begin{conj}\label{conj:conj:mod:Kell}
We conjecture that for $\ll\in\LL=\{2,3,4,5,7,13\}$ there exist naturally defined $\ZZ\times\QQ$-graded ${G}^{(\ll)}$-modules 
\begin{gather}
	K^{(\ll)}=\bigoplus_{\substack{r\in\ZZ\\0<r<\ll}}K^{(\ll)}_r=\bigoplus_{\substack{r,k\in\ZZ\\0<r<\ll}}K^{(\ll)}_{r,{k-r^2/4\ll}}
\end{gather}
such that the graded dimension of $K^{(\ll)}$ is related to the vector-valued mock modular form $H^{(\ll)}=\big(H^{(\ll)}_r\big)$ by
\begin{gather}
	H^{(\ll)}_r(\tau)=-2\delta_{r,1}q^{-1/4\ll}+\sum_{\substack{k\in\ZZ\\r^2-4k\ll<0}}\dim\left({K^{(\ll)}_{r,k-r^2/4\ll}}\right)q^{k-r^2/4\ll}
\end{gather}
for $q=\ex(\t)$ and such that the vector-valued mock modular forms $H^{(\ll)}_g=\big(H^{(\ll)}_{g,r}\big)$ described in \S\ref{sec:mckay} (and partially in the tables of \S\ref{sec:coeffs}) are recovered from $K^{(\ll)}$ via graded trace functions according to
\begin{gather}\label{eqn:conj:mod:str}
	H^{(\ll)}_{g,r}(\tau)=-2\delta_{r,1}q^{-1/4\ll}+\sum_{\substack{k\in\ZZ\\r^2-4k\ll<0}}\tr_{K^{(\ll)}_{r,k-r^2/4\ll}}(g)q^{k-r^2/4\ll}.
\end{gather}
\end{conj}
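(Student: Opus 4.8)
The plan is to prove existence by showing that the conjectured graded trace functions assemble into genuine (rather than merely virtual) characters, following the strategy that Gannon \cite{Gannon:2012ck} applied successfully at $\ll=2$. Since $G^{(\ll)}$ has finitely many irreducible representations $V^{(\ll)}_1,\dots,V^{(\ll)}_s$ with characters $\chi_1,\dots,\chi_s$, and as many conjugacy classes, the content of the conjecture reduces to a statement about the class functions $g\mapsto c^{(\ll)}_{g,r}(d)$ (the coefficient of $q^{d}$ in the proposed McKay--Thompson series $H^{(\ll)}_{g,r}$, with $d=k-r^2/4\ll$). Define the putative multiplicities
\begin{gather}
m^{(\ll)}_{i,r}(d)=\frac{1}{|G^{(\ll)}|}\sum_{g\in G^{(\ll)}}\overline{\chi_i(g)}\,c^{(\ll)}_{g,r}(d).
\end{gather}
A $\ZZ\times\QQ$-graded $G^{(\ll)}$-module $K^{(\ll)}$ realising (\ref{eqn:conj:mod:str}) exists, and is unique up to isomorphism (as noted in \S\ref{sec:mckay}), precisely when every $m^{(\ll)}_{i,r}(d)$ is a non-negative integer; the module is then $K^{(\ll)}_{r,d}=\bigoplus_i (V^{(\ll)}_i)^{\oplus m^{(\ll)}_{i,r}(d)}$. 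I would therefore organise the proof into three parts: (i) confirm that the $g\mapsto c^{(\ll)}_{g,r}(d)$ really are class functions with the correct Galois behaviour, (ii) prove integrality of the $m^{(\ll)}_{i,r}(d)$, and (iii) prove their positivity.

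Part (i) is essentially bookkeeping given \S\ref{sec:mckay}: the $H^{(\ll)}_g$ depend only on $[g]$ by construction, and the Galois action must be compatible with the dual pairs of representations defined over imaginary quadratic fields predicted by the discriminant property (\S\ref{sec:conj:disc}). For part (ii), integrality, I would combine three ingredients: that the Fourier coefficients of each $H^{(\ll)}_g$ are algebraic integers (visible from the eta-quotient and weight-two expressions of \S\S\ref{subsec:m2}--\ref{subsec:remaining}), that the character values $\chi_i(g)$ are algebraic integers, and that the congruence properties imposed by the multiplier systems $\nu^{(\ll)}_{n|h}$ of (\ref{eqn:mtseries:aut:nug}), together with the orthogonality relations, clear the denominator $|G^{(\ll)}|$. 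The Galois-equivariance of $g\mapsto H^{(\ll)}_{g}$ established in part (i) is what forces each $m^{(\ll)}_{i,r}(d)$ to be rational, after which integrality follows by a standard argument bounding the primes that can appear in the denominator.

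Part (iii), positivity, is the main obstacle, exactly as in the monstrous and $M_{24}$ cases. Here I would exploit that each $H^{(\ll)}_{g,r}$ is a mock modular form of weight $1/2$ whose shadow is a known multiple of $S^{(\ll)}_r$, so that the Rademacher sum construction alluded to in \S\ref{sec:intro} yields an exact convergent expression, hence a rigorous asymptotic expansion, for each coefficient $c^{(\ll)}_{g,r}(d)$ as $d\to\infty$. The dominant exponential growth is controlled by the single polar term $-2q^{-1/4\ll}$ of $H^{(\ll)}_{e}$, whose contribution to $m^{(\ll)}_{i,r}(d)$ through the identity class is positive and of strictly larger order than the contributions of all non-identity classes (whose McKay--Thompson series have smaller or absent polar parts, or polar parts supported at other cusps). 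The plan is to make these estimates effective, in the spirit of the explicit bounds of \cite{MR2176151} used in the proof of Theorem \ref{thm:forms:wtzero:ext}, so as to produce an explicit $d_0$ beyond which every $m^{(\ll)}_{i,r}(d)>0$, and then to verify the finitely many cases $d\le d_0$ directly from the coefficient tables of \S\ref{sec:coeffs}.

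The delicate points will be the uniformity of the error bounds across the $\ll-1$ components $r$ and all conjugacy classes simultaneously, and the handling of those classes (occurring at $\ll\in\{7,13\}$) whose $H^{(\ll)}_g$ are only partially determined in \S\ref{subsec:remaining}; for the latter I would first pin down the remaining McKay--Thompson series using the automorphy constraints of \S\ref{sec:mckay:aut} before attempting the asymptotic analysis. I expect that, as in \cite{Gannon:2012ck}, the existence statement is provable by these means even in the absence of any explicit (vertex-algebraic) construction of $K^{(\ll)}$, so that the genuinely structural question --- what natural algebraic object the $K^{(\ll)}$ are --- would remain open after such a proof.
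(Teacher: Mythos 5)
A preliminary remark: the statement you were given is Conjecture \ref{conj:conj:mod:Kell} of the paper, and the paper does not prove it. Its support consists of the computed Fourier coefficients in \S\ref{sec:coeffs}, the explicit decompositions into irreducibles for small degrees in \S\ref{sec:decompositions}, and the theorem already known at $\ell=2$ \cite{Gannon:2012ck}. So there is no proof in the paper to compare against, and what you have written must be judged as a proof programme. As such, your framing is correct: by character orthogonality the conjecture is equivalent to non-negativity and integrality of the multiplicities $m^{(\ell)}_{i,r}(d)$, and your three-part division (class-function and Galois structure, integrality, positivity) is exactly the frame in which the $\ell=2$ case was settled, and the natural one in general.

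But the proposal has genuine gaps, and two of your specific steps would fail as stated. First, your positivity mechanism is misdescribed: by (\ref{moonshine_conjecture_1}) \emph{every} McKay--Thompson series $H^{(\ell)}_{g,1}$ carries the same polar term $-2q^{-1/4\ell}$ at the infinite cusp, so it is false that non-identity classes have ``smaller or absent polar parts'' there. The correct reason the identity class dominates is that $H^{(\ell)}_g$ is automorphic on $\Gamma_0(n_g)$ with multiplier $\nu^{(\ell)}_{n_g|h_g}$, so the circle-method expansion of $c^{(\ell)}_{g,r}(d)$ is supported on $c\equiv 0\pmod{n_g}$, which cuts the exponential growth rate by the level; and one must additionally control possible principal parts at the non-infinite cusps of $\Gamma_0(n_g)$, which can spoil the dominance and is where the bulk of the analytic labour lies --- this, together with the conditional convergence of weight $1/2$ vector-valued Rademacher sums, is deferred rather than addressed in your sketch. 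Second, for the classes at $\ell\in\{7,13\}$ whose series are only partially determined in \S\ref{subsec:remaining}, ``automorphy constraints'' cannot pin them down: a mock modular form of weight $1/2$ with prescribed shadow and multiplier on $\Gamma_0(n)$ is unique only modulo the space of holomorphic weight $1/2$ forms with that multiplier, which need not vanish; you would need the Rademacher-sum characterisation of Conjecture \ref{conj:conj:moon}, or growth conditions at all cusps, to fix them, and then the class-function, rationality and integrality properties that your parts (i)--(ii) presuppose must be verified anew for these series. Relatedly, your integrality step is too optimistic: orthogonality plus integrality of the $c^{(\ell)}_{g,r}(d)$ does not by itself clear the denominator $|G^{(\ell)}|$; one needs genuine congruences among coefficients, of the shape $c^{(\ell)}_{g,r}(d)\equiv c^{(\ell)}_{g^p,r}(d)\pmod{p^{\nu}}$ for suitable prime powers, extracted from the explicit modular descriptions of \S\ref{sec:mckay}. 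In short, the architecture is the right one --- it is the route that succeeded at $\ell=2$ and the plausible route in general --- but parts (ii) and (iii) each require substantial arithmetic and analytic input that the proposal names without supplying, so this is a credible research plan rather than a proof.
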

\begin{rmk}
Recall that a {\em superspace} is a $\ZZ/2\ZZ$-graded vector space and if $V=V_{\bar{0}}\oplus V_{\bar{1}}$ is such an object and $T:V\to V$ is a linear operator preserving the grading then the {\em supertrace} of $T$ is given by $\str_VT=\tr_{V_{\bar{0}}}T-\tr_{V_{\bar{1}}}T$ where $\tr_WT$ denotes the usual trace of $T$ on $W$. Since the coefficient of $q^{-1/4\ll}$ in $H^{(\ll)}_{g,1}$ is $c^{(\ll)}_{g,1}(-1/4\ll)=-2$ for all $g\in G^{(\ll)}$ for all $\ll$ it is natural to expect that this term may be interpreted as the supertrace of $g\in G^{(\ll)}$ on a trivial $G^{(\ll)}$-supermodule $K^{(\ll)}_{1,-1/4\ll}$ with vanishing even part and $2$-dimensional odd part. Thus Conjecture \ref{conj:conj:mod:Kell} implies the existence of a $G^{(\ll)}$-supermodule  $K^{(\ll)}=\bigoplus K^{(\ll)}_{r,d}$ such that $K^{(\ll)}_{r,d}$ is purely even or purely odd according as $d$ is positive or negative, and such that $H^{(\ll)}_{g,r}=\sum_k \str_{K^{(\ll)}_{r,k-r^2/4\ll}}(g)q^{k-r^2/r\ll}$ for each $0<r<\ll$ and $g\in G^{(\ll)}$.
\end{rmk}

\begin{rmk}
For $\ll\in\LL$ and $0<r,s<\ll$ we have that $r^2\equiv s^2\pmod{4\ll}$ implies $r=s$ so the first index $r$ in $K^{(\ll)}_{r,d}$ can be deduced from $d$ since it is the unique $0<r<\ll$ such that $d+r^2/4\ll$ is an integer. Thus we may dispense with the first of the two gradings on the conjectural $G^{(\ll)}$-modules $K^{(\ll)}$ and regard them as $\QQ$-graded $K^{(\ll)}=\bigoplus_d K^{(\ll)}_{d}$ by the rationals of the form $d=n-r^2/4\ll$ for $n\in\ZZ$ and $0<r<\ll$ without introducing any ambiguity.
\end{rmk}

%---------------------------------------------------------------------------------------%
\subsection{Moonshine}\label{sec:conj:moon}
%---------------------------------------------------------------------------------------%

In the case of monstrous moonshine the McKay--Thompson series $T_g$ for $g$ in the monster group have the astonishing property that they all serve as generators for the function fields of their invariance groups (cf. \cite{conway_norton}). In other words, if $\G_g$ is the subgroup of $\PSL_2(\RR)$ consisting of the isometries $\g:\HH\to\HH$ such that $T_g(\g\t)=T_g(\t)$ for all $\t\in \HH$ then $T_g$ induces an isomorphism from the compactification of $\Gamma_g\backslash\HH $ to the Riemann sphere (being the one point compactification of $\CC$). In particular, $\Gamma_g$ is a genus zero subgroup of $\PSL_2(\RR)$, and so this property is commonly referred to as the {\em genus zero property} of monstrous moonshine. 

By now there are many methods extant for constructing graded vector spaces with algebraic structure whose graded dimensions are modular functions---suitable classes of vertex algebras, for example, serve this purpose (cf. \cite{Zhu_ModInv,Dong2000})---and so one can expect to obtain analogues of the McKay--Thompson series $T_g$ by equipping such an algebraic structure with the action of a group. But there is no guarantee that such a procedure will result in functions that have the genus zero property of monstrous moonshine, and so it is this genus zero property that distinguishes the observations of McKay, Thompson, Conway and Norton regarding the Monster group from any number of more generic connections between finite groups and modular functions. In what follows we will propose a conjecture that may be regarded as the natural analogue of the Conway--Norton conjecture for umbral moonshine.

Suppose that $T:\HH\to \CC$ is a holomorphic function with invariance group $\G<\PSL_2(\RR)$ and a simple pole in $q=e(\t)$ as $\t\to i\infty$. Suppose also that $\G$ is commensurable with $\PSL_2(\ZZ)$ and that the {\em translation subgroup} $\G_{\infty}$, consisting of the elements of $\G$ with upper-triangular preimages in $\SL_2(\RR)$, is generated by $\t\mapsto \t+1$. It is shown in \cite{DunFre_RSMG} that such a function $T$ is a generator for the field of $\G$-invariant functions on $\HH$ if and only if $T(\t)$ coincides with the weight $0$ {\em Rademacher sum}
\begin{gather}\label{eqn:conj:monradsum}
	R_{\G}(\t)={\rm Reg}\left(\sum_{\g\in \G_{\infty}\backslash\G} \left.q^{-1}\right|_0{\g}\right)
\end{gather}
attached to $\G$. The sum here is over representatives for the cosets of $\G_{\infty}$ in $\G$ and $f\mapsto f|_0\g$ denotes the weight $0$ action of $\g$ on holomorphic functions (viz., $(f|_0\g)(\t)=f(\g\t)$). We write ${\rm Reg}(\cdot)$ to indicate a regularisation procedure first realised by Rademacher (for the case that $\G=\PSL_2(\ZZ)$) in \cite{Rad_FuncEqnModInv}. According then to the result of \cite{DunFre_RSMG} the genus zero property of monstrous moonshine may be reformulated in the following way: 
\begin{quote}
For each element $g$ in the monster group the McKay--Thompson series $T_g$ satisfies $T_g=R_{\G_g}$ when $\G_g$ is the invariance group of $T_g$.
\end{quote}

This may be compared with the article \cite{Cheng2011} which considers the mock modular forms $H^{(2)}_g$ attached to the largest Mathieu group $G^{(2)}\simeq M_{24}$ via the observation of Eguchi--Ooguri--Tachikawa and applies some of the philosophy of \cite{DunFre_RSMG} to the problem of finding a uniform construction for these functions. The solution developed in \cite{Cheng2011} is that the function $H^{(2)}_g$ coincides with the weight $1/2$ Rademacher sum
\begin{gather}\label{eqn:conj:m24radsum}
	R^{(2)}_{n|h}(\t)={\rm Reg}\left(\sum_{\g\in \G_{\infty}\backslash\G_0(n)} \left.-2q^{-1/8}\right|_{1/2,n|h}{\g}\right)
\end{gather}
where $n=n_g$ and $h=h_g$ are integers determined by the defining permutation representation of $M_{24}$ (cf. Table \ref{tab:chars:eul:2}) and $f\mapsto f|_{1/2,n|h}\g$ denotes a certain weight $1/2$ action of $\G_0(n)$ determined by $n$ and $h$. (We refer to \cite{Cheng2011} for full details.) By comparison with the previous paragraph we thus arrive at a direct analogue of the genus zero property of monstrous moonshine that holds for all the umbral forms with $\ll=2$:
\begin{quote}
For each element $g$ in the largest Mathieu group the McKay--Thompson series $H^{(2)}_g$ satisfies $H^{(2)}_g=R^{(2)}_{n|h}$ when $n=n_g$ and $h=h_g$.
\end{quote}
We conjecture that this genus zero property extends to all the functions of umbral moonshine.
\begin{conj}[umbral moonshine]\label{conj:conj:moon}
We conjecture that for each $\ll\in \LL$ and each $g\in G^{(\ll)}$ we have $H^{(\ll)}_g=R^{(\ll)}_{n|h}$ where $n=n_g$ and $h=h_g$ are as specified in \S\ref{sec:chars:eul} and 
\begin{gather}
	R^{(\ll)}_{n|h}
	={\rm Reg}\left(\sum_{\g\in \G_{\infty}\backslash\G_0(n)} 
	\left.\begin{pmatrix}
		-2q^{-1/4\ll}\\
		0\\
		\vdots\\
		0
	\end{pmatrix}
	\right|_{1/2,n|h}{\g}\right)
\end{gather}
is a vector-valued generalisation of the Rademacher sum $R^{(2)}_{n|h}$ adapted to the weight $1/2$ action of $\G_0(n)$ on $(\ll-1)$-vector-valued functions on $\HH$ that is defined in (\ref{eqn:mtseries:aut:wthalfaction}).
\end{conj}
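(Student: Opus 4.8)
The plan is to prove the equality $H^{(\ll)}_g=R^{(\ll)}_{n|h}$ by characterising both sides as the unique element of a one-dimensional space of vector-valued mock modular forms cut out by a prescribed weight, multiplier system, shadow, and principal part at every cusp of $\Gamma_0(n_g)$---exactly the strategy that succeeds for monstrous moonshine in \cite{DunFre_RSMG} and for the case $\ll=2$ in \cite{Cheng2011}. First I would establish analytic control of the right-hand side: using the Rademacher regularisation together with the Lipschitz summation formula and estimates on the relevant Kloosterman-type sums twisted by the multiplier $\nu^{(\ll)}_{n|h}$ of (\ref{eqn:mtseries:aut:nug}), I would show that $R^{(\ll)}_{n|h}$ converges, transforms under the weight $1/2$ action (\ref{eqn:mtseries:aut:wthalfaction}) of $\Gamma_0(n_g)$, and that its non-holomorphic completion is a harmonic Maass form whose shadow is $\big(\chi^{(\ll)}_{g,r}S^{(\ll)}_r\big)$. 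The shadow should emerge directly from the Poincar\'e-series presentation, since the seed $-2q^{-1/4\ll}$ sits in the first component and the $\Gamma_\infty$-average reproduces the theta series $S^{(\ll)}$ upon completion.

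The second and most delicate step is to pin down the behaviour of $R^{(\ll)}_{n|h}$ at every cusp. By construction the seed contributes the polar term $-2q^{-1/4\ll}$ in the first component at the infinite cusp, so I would show that the regularised sum is $O(q^{1/4\ll})$ in all other components there (matching (\ref{extremal_MMF})) and, crucially, that it is bounded at all of the remaining cusps of $\Gamma_0(n_g)$. This optimal-growth statement is the mock-modular analogue of the genus zero property, and verifying the absence of spurious polar parts at the non-infinite cusps is where the real work lies.

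With this in hand, uniqueness follows from a finiteness argument. A weight $1/2$ vector-valued mock modular form for $\Gamma_0(n_g)$ with multiplier $\nu^{(\ll)}_{n|h}$, shadow proportional to $S^{(\ll)}$, the prescribed principal part at infinity, and no poles at the other cusps is determined up to a genuine holomorphic vector-valued modular form of the same weight and multiplier with bounded growth; by a Serre-duality pairing against holomorphic forms of the dual weight $3/2$, equivalently by the vanishing of the corresponding obstruction space of cusp forms (in the spirit of the cusp-form vanishing exploited in the proof of Theorem \ref{thm:forms:wtzero:ccv}), such a correction must be zero, so the data determine the function uniquely. It then remains to check that $H^{(\ll)}_g$, as constructed in \S\ref{sec:mckay} from the weight $2$ forms $\wttwo^{(\ll)}_g$ and the meromorphic Jacobi form $\psi^{(\ll)}_g$, satisfies precisely these defining conditions: the weight, multiplier, and shadow are exactly those recorded in \S\ref{sec:mckay:aut}, and the principal part at infinity is (\ref{moonshine_conjecture_1}). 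The one nontrivial verification is that $H^{(\ll)}_g$ also has controlled growth at all the other cusps of $\Gamma_0(n_g)$, which I would extract from the explicit eta-quotient and Eisenstein-type expressions for $\wttwo^{(\ll)}_g$ together with the transformation data of \S\ref{sec:mckay:aut}.

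The hardest part, as in every incarnation of moonshine, is the genus-zero-type input of the second paragraph: establishing that neither the Rademacher sum nor $H^{(\ll)}_g$ acquires polar terms at the cusps other than infinity. A further preliminary obstacle is that, for the few classes at $\ll\in\{7,13\}$ whose McKay--Thompson series are not given in closed form in \S\ref{sec:mckay}, one would first need to determine the remaining component functions---presumably by the same weight $2$ modular form bootstrap---before the cusp analysis can even be carried out for those cases.
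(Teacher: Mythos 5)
The first thing to observe is that the statement you are addressing is Conjecture \ref{conj:conj:moon}: the paper does not prove it, and explicitly defers the analytic treatment of the vector-valued Rademacher sums to forthcoming work (cf.\ the end of \S\ref{sec:intro}). So there is no proof in the paper to compare against, and anything offered as a proof must supply the full analytic and arithmetic content from scratch. Measured against that standard, your proposal is a plausible roadmap---essentially the strategy that succeeds for the monster in \cite{DunFre_RSMG} and for $\ll=2$ in \cite{Cheng2011}---but it is not a proof: each of the load-bearing steps (convergence and modularity of $R^{(\ll)}_{n|h}$ with shadow $\big(\chi^{(\ll)}_{g,r}S^{(\ll)}_r\big)$, the optimal-growth statement at all cusps of $\G_0(n_g)$, and the verification that $H^{(\ll)}_g$ satisfies the same conditions) is stated as something you ``would show,'' and you yourself flag the cusp analysis and the undetermined series at $\ll\in\{7,13\}$ as open. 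Note in particular that weight $1/2$ Rademacher sums are only conditionally convergent, so even your first step requires genuine estimates on the Kloosterman-type sums twisted by $\nu^{(\ll)}_{n|h}$, not a routine adaptation of the weight $0$ theory.

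There is also a concrete flaw in the uniqueness step as you have formulated it. The difference of two mock modular forms with the same shadow, multiplier and principal parts is a genuinely holomorphic vector-valued modular form of weight $1/2$, bounded at every cusp. The obstruction to its vanishing is not governed by a Serre-duality pairing against weight $3/2$ cusp forms: that pairing (as exploited in the proof of Theorem \ref{thm:forms:wtzero:ccv}) controls the \emph{existence} of a form with prescribed principal part and shadow, not the \emph{uniqueness}. What uniqueness requires is a classification of holomorphic weight $1/2$ vector-valued forms with multiplier $\nu^{(\ll)}_{n|h}$: by results of Serre--Stark/Skoruppa type such forms are spanned by unary theta series, and this space is not zero in general, so one must check---case by case, for each pair $(n_g,h_g)$---that no such theta series is compatible with the multiplier system and the action (\ref{eqn:mtseries:aut:wthalfaction}). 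Without that verification the claim that ``such a correction must be zero'' is unsupported, and it is precisely one of the delicate points that any complete proof of the conjecture has to address.
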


%---------------------------------------------------------------------------------------%
\subsection{Modularity}\label{sec:conj:aut}
%---------------------------------------------------------------------------------------%

A beautiful feature of the umbral moonshine conjecture is that it implies the precise nature of the modularity of the Thompson series $H^{(\ll)}_g$. We record these implications explicitly as conjectures in this short section.

\begin{conj}\label{conj:conj:aut:shad}
We conjecture that the graded supertrace functions (\ref{eqn:conj:mod:str}) for fixed $\ll\in\LL$ and $g\in G^{(\ll)}$ and varying $0<r<\ll$ define the components of a vector-valued mock modular form $H^{(\ll)}_{g}$ of weight $1/2$ on $\Gamma_0(n_g)$ with shadow function $S^{(\ll)}_g=(S^{(\ll)}_{g,r})=(\chi^{(\ll)}_{g,r}S^{(\ll)}_r)$ where $S^{(\ll)}=(S^{(\ll)}_{r})$ is the vector-valued theta series described in \S\ref{sec:forms:mero}, the $\chi^{(\ll)}_{g,r}$ are determined from the twisted Euler characters of $G^{(\ll)}$ (cf. \S\ref{sec:chars:eul}) by $\chi^{(\ll)}_{g,r}=\bar{\chi}^{(\ll)}_g$ for $r$ odd and $\chi^{(\ll)}_{g,r}=\chi^{(\ll)}_g$ for $r$ even, and $n_g$ denotes the order of the image of $g\in G^{(\ll)}$ in the factor group $\bar{G}^{(\ll)}$ (cf. \S\ref{sec:grps:spec},\ref{sec:grps:spcnst}).
\end{conj}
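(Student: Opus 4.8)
The statement is a conjecture, and the natural route to it is as a \emph{conditional} consequence of the umbral moonshine conjecture (Conjecture \ref{conj:conj:moon}) together with the module conjecture (Conjecture \ref{conj:conj:mod:Kell}). The plan is therefore to assume that the graded supertrace generating functions (\ref{eqn:conj:mod:str}) coincide with the vector-valued Rademacher sums $R^{(\ll)}_{n|h}$ of Conjecture \ref{conj:conj:moon}, with $n=n_g$ and $h=h_g$, and then to read off the claimed weight, level, multiplier and shadow directly from the definition of the Rademacher sum. The key point is that $R^{(\ll)}_{n|h}$ is built as a regularised Poincar\'e-type series over the cosets $\G_{\infty}\backslash\G_0(n)$ for precisely the weight $1/2$ action $|_{1/2,n|h}$ of $\G_0(n)$ defined in (\ref{eqn:mtseries:aut:wthalfaction}), so that its automorphy is essentially built into its construction, once convergence and the regularisation are controlled.

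First I would check that the seed vector $(-2q^{-1/4\ll},0,\dots,0)$ is compatible with the translation subgroup $\G_{\infty}=\langle \t\mapsto\t+1\rangle$ under the action $|_{1/2,n|h}$. Indeed, for the generator one computes $\nu^{(\ll)}_{n|h}(T)$ to be diagonal with first entry $\ex(1/4\ll)$ (coming from $\overline{\sigma^{(\ll)}}$ via (\ref{eqn:mtseries:aut:sigma})), which exactly cancels the phase $\ex(-1/4\ll)$ acquired by $q^{-1/4\ll}$ under $\t\mapsto\t+1$; hence the seed is $\G_{\infty}$-invariant and the sum over $\G_{\infty}\backslash\G_0(n)$ is well defined. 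Next, the formal invariance of a Poincar\'e series under right multiplication by $\g'\in\G_0(n)$ (which merely permutes the cosets) yields $R^{(\ll)}_{n|h}|_{1/2,n|h}\g'=R^{(\ll)}_{n|h}$; the genuine analytic content is showing that the Rademacher regularisation ${\rm Reg}(\cdot)$ converges and commutes with this group action, which is the general theory of (vector-valued) Rademacher sums alluded to in \S\ref{sec:intro} and deferred there to forthcoming work. Granting this, $R^{(\ll)}_{n|h}$ is a mock modular form of weight $1/2$ on $\G_0(n_g)$ with multiplier $\nu^{(\ll)}_{n|h}$.

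It then remains to identify the shadow. Because the weight $1/2$ is strictly less than $1$ the Rademacher sum is genuinely mock, and its obstruction to modularity is a weight $3/2$ form whose coefficients are computed from the regularised polar contributions of the seed. The structural input is the factorisation $\nu^{(\ll)}_{n|h}=\rho^{(\ll)}_{n|h}\overline{\sigma^{(\ll)}}$ of (\ref{eqn:mtseries:aut:nug}): the factor $\overline{\sigma^{(\ll)}}$ forces the shadow to transform with the multiplier $\sigma^{(\ll)}$ of the unary theta series $S^{(\ll)}$, so the shadow is componentwise a multiple of $S^{(\ll)}_r$, while the matrices ${\rm J}_{\ll-1}$ and ${\rm K}_{\ll-1}$ appearing in $\rho^{(\ll)}_{n|h}$ (nontrivial exactly when $h\nmid n$, i.e. at even level) separate the even-$r$ from the odd-$r$ components and pin the proportionality constants to $\chi^{(\ll)}_g$ and $\bar{\chi}^{(\ll)}_g$, giving precisely $S^{(\ll)}_{g,r}=\chi^{(\ll)}_{g,r}S^{(\ll)}_r$. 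In parallel, for every class for which $H^{(\ll)}_g$ has been written explicitly in \S\S\ref{subsec:m2}--\ref{subsec:remaining} in terms of eta quotients and the weight $2$ forms $\wttwo^{(\ll)}_g$, one can verify the weight, level $n_g$, multiplier $\nu^{(\ll)}_{n|h}$ and shadow $S^{(\ll)}_g$ directly, giving an unconditional confirmation in those cases and a consistency check on the Rademacher-sum derivation.

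The main obstacle is the analytic theory underlying the second and third steps: establishing convergence and modular invariance of the regularised vector-valued Rademacher sum of weight $1/2$, and computing its shadow precisely, for the general multiplier $\nu^{(\ll)}_{n|h}$. This is genuinely the crux and is exactly what the paper defers to separate work; moreover the entire argument is conditional, since it presupposes both the existence of the modules $K^{(\ll)}$ (Conjecture \ref{conj:conj:mod:Kell}) and their identification with Rademacher sums (Conjecture \ref{conj:conj:moon}).
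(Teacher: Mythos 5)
Your proposal matches the paper's own treatment: since the statement is a conjecture, the paper offers no proof but frames it (in \S\ref{sec:conj:aut}) precisely as an implication of the umbral moonshine conjecture, Conjecture \ref{conj:conj:moon} --- the Rademacher sum $R^{(\ll)}_{n|h}$ carries the weight $1/2$ action $|_{1/2,n|h}$ and multiplier $\nu^{(\ll)}_{n|h}$ by construction --- while deferring the analytic theory of the vector-valued Rademacher sums to forthcoming work, exactly as you do. Your supplementary check that the seed $(-2q^{-1/4\ll},0,\dots,0)$ is $\G_\infty$-invariant, and your appeal to the explicit expressions of \S\S\ref{subsec:m2}--\ref{subsec:remaining} for unconditional verification in the determined cases, likewise mirror the paper's claims in \S\ref{sec:mckay:aut}.
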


Recall from \S\ref{sec:mckay:aut} that for $\ll\in \LL$ and a pair $(n,h)$ of positive integers we have the matrix-valued function $\nu_{n|h}^{(\ll)}$ on $\G_0(n)$ defined in (\ref{eqn:mtseries:aut:nug}). Recall also that we have attached a pair $(n_g,h_g)$ to each $g\in G^{(\ll)}$ for each $\ll\in \LL$ in \S\ref{sec:mckay:aut}.
\begin{conj}\label{conj:conj:aut:mult}
We conjecture that the multiplier system of $H^{(\ll)}_g$ is given by $\nu_{n|h}^{(\ll)}$ when $n=n_g$ and $h=h_g$ for all $g\in G^{(\ll)}$ for all $\ll\in\LL$.
\end{conj}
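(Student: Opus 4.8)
The plan is to establish Conjecture \ref{conj:conj:aut:mult} along two complementary routes: a direct, unconditional verification for the classes whose McKay--Thompson series have been pinned down explicitly, and a conditional derivation from the umbral moonshine Conjecture \ref{conj:conj:moon} that covers the remaining classes. The unifying observation is that, once the shadow of $H^{(\ll)}_g$ is fixed to be $\big(\chi^{(\ll)}_{g,r}S^{(\ll)}_r\big)$ as in Conjecture \ref{conj:conj:aut:shad}, the antiholomorphic behaviour of the completion forces the factor $\overline{\sigma^{(\ll)}}$ appearing in $\nu^{(\ll)}_{n|h}$, by the defining property (\ref{eqn:mtseries:aut:sigma}) of the theta multiplier. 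The only freedom left in the multiplier on $\G_0(n_g)$ is then a finite--order matrix character, and the content of the conjecture is precisely that this character equals $\rho^{(\ll)}_{n|h}$ as displayed in (\ref{eqn:mtseries:aut:nug}).

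First I would treat $\ll\in\{2,3,4,5\}$ together with the determined classes at $\ll\in\{7,13\}$ directly. Here each completed component $\hat H^{(\ll)}_{g,r}$ is expressed, via (\ref{weigh2_1}) and its higher--lambency analogues (\ref{relate_weight2_mock}) and (\ref{lis5_1})--(\ref{lis5_2}), as a ratio of the explicit weight--$2$ forms $\wttwo^{(\ll)}_g$ (Tables \ref{h_g}, \ref{m3table}, \ref{m5table}) and the theta coefficients $S^{(\ll)}_r$, or else as an eta quotient. Since the transformation of eta quotients under $\G_0(N)$ is governed by the standard Dedekind multiplier, and the transformations of the generators $\L_N$ and $f_N$ are classical, one computes the multiplier of $\hat H^{(\ll)}_g$ by a finite bookkeeping and compares it with $\nu^{(\ll)}_{n|h}$. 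The two nontrivial features to match are the scalar phase prescribed in (\ref{eqn:mtseries:aut:nug}), which I would read off from the value $v^{(\ll)}$ of Table \ref{tab:vells} together with the cusp data encoded in $N_g$ and $n_g|h_g$, and the matrix factor ${\rm J}_{\ll-1}^{\,c(d+1)/n}{\rm K}_{\ll-1}^{\,c/n}$ governing the permutation and sign flips of the components. The latter is explained structurally by $\th^{(\ll)}_r(\t,z+1/2)=(-1)^r\th^{(\ll)}_r(\t,z)$ and by the reflection $r\mapsto \ll-r$ relating $\wttwo^{(\ll)}_g$ and $\wttwo^{(\ll),2}_g$ (cf. (\ref{weigh2_2}), (\ref{weigh2_2_2})), which together account for ${\rm K}_{\ll-1}$ and for the sign pattern of ${\rm J}_{\ll-1}$.

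The remaining classes at $\ll\in\{7,13\}$ I would handle conditionally, via Conjecture \ref{conj:conj:moon}. By construction the Rademacher sum $R^{(\ll)}_{n|h}$ is a regularised Poincar\'e series for the weight $1/2$ action $|_{1/2,n|h}$ of $\G_0(n)$ defined in (\ref{eqn:mtseries:aut:wthalfaction}), and any such sum transforms under $\G_0(n)$ with exactly the multiplier $\nu^{(\ll)}_{n|h}$ that was used to define the action; this is the vector--valued analogue of the property exploited for $\ll=2$ in \cite{Cheng2011}. Hence the identity $H^{(\ll)}_g=R^{(\ll)}_{n|h}$ of Conjecture \ref{conj:conj:moon} yields Conjecture \ref{conj:conj:aut:mult} at once.

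The hard part will be removing the conditionality. Either one completes the analytic determination of the outstanding $H^{(\ll)}_g$ at $\ll=7,13$ so that the direct route applies to them as well, or---more satisfyingly---one establishes convergence and the claimed modular transformation of the vector--valued Rademacher sums $R^{(\ll)}_{n|h}$ themselves, which demands extending the weight--$1/2$ Rademacher machinery of \cite{DunFre_RSMG,Cheng2011} to the higher--lambency, matrix--multiplier setting. A subsidiary technical point to be verified along the way is the cocycle consistency of $\nu^{(\ll)}_{n|h}=\rho^{(\ll)}_{n|h}\overline{\sigma^{(\ll)}}$ as a genuine multiplier on all of $\G_0(n_g)$, rather than merely on $\G_0(n_gh_g)$; this is where the coincidence--of--multipliers phenomenon noted after (\ref{eqn:mtseries:aut:sigma}), and its curious extension to $\G_0(N_g/2)$ in the non--split $\ll=4$ case, must be checked to be compatible with the matrices ${\rm J}_{\ll-1}$ and ${\rm K}_{\ll-1}$.
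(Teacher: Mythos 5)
Your proposal takes essentially the same approach as the paper, which offers no proof of this statement but leaves it conjectural and supports it in exactly your two ways: by the remark opening \S\ref{sec:mckay:aut} that the multipliers $\nu^{(\ll)}_{n|h}$ can be verified explicitly from the data of \S\S\ref{subsec:m2}--\ref{subsec:remaining} for every class whose $H^{(\ll)}_g$ is fully determined, and by the observation opening \S\ref{sec:conj:aut} that Conjecture \ref{conj:conj:moon} implies the modularity conjectures, since $R^{(\ll)}_{n|h}$ is by construction a regularised Poincar\'e series for the action $|_{1/2,n|h}$ built from $\nu^{(\ll)}_{n|h}$ in (\ref{eqn:mtseries:aut:wthalfaction}). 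Your closing caveats---the undetermined classes at $\ll\in\{7,13\}$ and the unestablished convergence and (mock) modular transformation of the vector-valued Rademacher sums---identify precisely the gaps that keep the statement conjectural in the paper as well.
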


%---------------------------------------------------------------------------------------%
\subsection{Discriminants}\label{sec:conj:disc}
%---------------------------------------------------------------------------------------%

One of the most striking features of umbral moonshine is the apparently intimate relation between the number fields on which the irreducible representations of $G^{(\ll)}$ are defined and the discriminants of the vector-valued mock modular form $H^{(\ll)}$. We discuss the evidence for this relation and formulate conjectures about it in this section.

First we observe that the discriminants of the components $H^{(\ll)}_r$ of the mock modular form $H^{(\ll)}=H^{(\ll)}_e$ determine some important properties of the representations of $G^{(\ll)}$, where we say that an integer $D$ is a {\em discriminant of $H^{(\ll)}$} if there exists a term $q^d=q^{-\frac{D}{4\ll}}$ with non-vanishing Fourier coefficient in at least one of the components. The following result can be verified explicitly using the tables in \S\S\ref{sec:chars},\ref{sec:coeffs}.
\begin{prop}\label{discri1}
Let $\ll\in \L$. If $n>1$ is an integer satisfying 
\begin{enumerate}
\item{there exists an element of $G^{(\ll)}$ of order $n$}, and
\item{there exists an integer $\l$ that is co-prime to $n$ such that $D = -n \l^2$ is a discriminant of $H^{(\ll)}$,}
\end{enumerate}
then there exists at least one pair of irreducible representations $\varrho$ and $\varrho^*$ of $G^{(\ll)}$ and at least one element $g \in G^{(\ll)}$ such that $\tr_{\varrho}(g)$ is not rational but
\be\label{n_type}
{\tr}_{\varrho} (g), {\tr}_{ \varrho^*} (g) \in \QQ(\sqrt{-n})
\ee
and $n$ divides $o(g)$.
\end{prop}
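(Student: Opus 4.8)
The plan is to prove the proposition by a finite verification, organised so that the two hypotheses first cut $n$ down to a manageable finite set and then a direct comparison of the Fourier data of $H^{(\ll)}$ with the rationality structure of the character table of $G^{(\ll)}$ supplies the required representations. Condition (1) restricts $n$ to the finite set of orders of elements of $G^{(\ll)}$, which can be read off the character tables of \S\ref{sec:chars}. Condition (2) is checked against the $1A$ column of the coefficient tables of \S\ref{sec:coeffs}: the discriminants of $H^{(\ll)}=H^{(\ll)}_e$ are exactly the integers $D=r^2-4\ll k$ (with $0<r<\ll$ and $k\in\ZZ$) for which the coefficient of $q^{k-r^2/4\ll}$ in $H^{(\ll)}_r$ is non-zero, so one simply lists the negative such $D$ of the form $-n\lambda^2$ with $(\lambda,n)=1$. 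For each $\ll\in\LL$ this produces a finite list of admissible pairs $(n,\lambda)$, and hence of admissible $n$, to be treated.

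First I would isolate the general representation-theoretic principle that does most of the conceptual work and removes any reference to a specific element $g$ from the hunt. For any finite group and any irreducible $\varrho$ one has $\tr_\varrho(g)\in\QQ(\zeta_{o(g)})$; thus if $\tr_\varrho(g)$ is irrational and lies in the imaginary quadratic field $\QQ(\sqrt{-n})$ then $\QQ(\sqrt{-n})\subseteq\QQ(\zeta_{o(g)})$, and comparing conductors forces $n\mid o(g)$. Consequently it suffices to exhibit, for each admissible $n$, a single irreducible $\varrho$ of $G^{(\ll)}$ whose field of character values is \emph{exactly} $\QQ(\sqrt{-n})$: since this field is imaginary quadratic, the character of $\varrho$ is non-real, so the complex conjugate $\varrho^*=\bar\varrho$ is a distinct irreducible with the same character field, the pair $(\varrho,\varrho^*)$ has all traces in $\QQ(\sqrt{-n})$, and any $g$ at which $\tr_\varrho(g)\notin\QQ$ automatically satisfies $n\mid o(g)$ by the conductor comparison. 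This reduces the conclusion to the single assertion that the character field $\QQ(\sqrt{-n})$ is realised among the irreducibles of $G^{(\ll)}$.

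The core of the argument is therefore the claim that for every $\ll\in\LL$ and every admissible $n$ the character table of $G^{(\ll)}$ contains an irreducible with character field precisely $\QQ(\sqrt{-n})$, which I would confirm case by case across the six groups. For each admissible $n$ I would point to the relevant conjugate pair of irreducibles in the appropriate table of \S\ref{sec:chars:irr} and verify from the displayed entries (typically sums of roots of unity whose quadratic Gauss-sum part is $\sqrt{-n}$) that the field of values is the imaginary quadratic field $\QQ(\sqrt{-n})$ rather than a larger cyclotomic or a real field. A representative instance is $\ll=2$, $n=23$: here $H^{(2)}$ has the discriminant $D=-23$ (the coefficient $2\cdot 770$ of $q^{23/8}$ in $H^{(2)}_1$), condition (1) holds since $M_{24}$ has elements of order $23$, and the character table exhibits the familiar conjugate pair defined over $\QQ(\sqrt{-23})$ with irrational values exactly on the order-$23$ classes. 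The bookkeeping of which $n$ occur as discriminants (from \S\ref{sec:coeffs}) and which occur as character-field parameters (from \S\ref{sec:chars}) is then assembled and matched for all $\ll$.

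The main obstacle is not any individual deduction, each of which is elementary, but the fact that the conclusion is a genuine coincidence to be confirmed rather than forced: there is no a priori reason that every $-n\lambda^2$ admitted by the two hypotheses should be matched by an irreducible of $G^{(\ll)}$ defined over $\QQ(\sqrt{-n})$, so the real work lies in the exhaustive cross-referencing and in pinning down the \emph{exact} field in each case. Two points warrant care beyond direct inspection. First, when $n$ is not squarefree one must identify $\QQ(\sqrt{-n})$ correctly with its reduced form (for example $\QQ(\sqrt{-4})=\QQ(i)$), and check that condition (1) supplies an element whose order is divisible by the unreduced $n$. Second, since $\QQ(\sqrt{-n\lambda^2})=\QQ(\sqrt{-n})$ holds automatically, the coprimality hypothesis $(\lambda,n)=1$ is not what determines the field but rather what selects the primitive factorisations of the admissible discriminants to be tested against the element-order condition; I would note this explicitly so that no spurious instances (with $n$ not an element order) are inadvertently demanded. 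With these caveats the verification closes the proof, and the same comparison is precisely the evidence underlying the discriminant conjecture of \S\ref{sec:conj:disc}.
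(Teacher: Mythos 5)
Your proof is correct and takes essentially the paper's approach: the paper gives no argument beyond the remark that the proposition ``can be verified explicitly using the tables in \S\S\ref{sec:chars},\ref{sec:coeffs},'' which is precisely the finite cross-referencing of element orders, discriminants of $H^{(\ll)}$ and character fields that you carry out, with Table \ref{list_discriminant} recording the resulting pairs $(\varrho,\varrho^*)$. Your cyclotomic/conductor lemma is a tidy refinement the paper leaves implicit, but be aware that ``comparing conductors forces $n\mid o(g)$'' is not a general fact --- it fails, e.g., for $n=12$, since $\QQ(\sqrt{-12})=\QQ(\sqrt{-3})$ has conductor $3$ --- and is valid here only because every admissible $n$ (namely $3,4,5,7,8,11,15,20,23$) divides the conductor of $\QQ(\sqrt{-n})$, a point your case-by-case verification covers in any event.
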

The finite list of integers $n$ satisfying the two conditions of Proposition \ref{discri1} is given in Table \ref{list_discriminant}.
 
From now on we say that an irreducible representation $\varrho$ of the umbral group $G^{(\ll)}$ is of {\it type $n$} if $n$ is an integer satisfying the two conditions of Proposition \ref{discri1} and the character values of $\varrho$ generate the field $\QQ(\sqrt{-n})$. Evidently, irreducible representations of {type $n$} come in pairs $(\varrho,\varrho^*)$ with ${\tr}_{\varrho^*} (g)$ the complex conjugate of ${\tr}_{ \varrho} (g)$ for all $g\in G^{(\ll)}$. The list of all irreducible representations of type $n$ is given in Table \ref{list_discriminant}. (See \S\ref{sec:chars:irr} for the character tables of the $G^{(\ll)}$ and our notation for irreducible representations.) 
 
Recall that the {\em Frobenius--Schur indicator} of an irreducible ordinary representation of a finite group is $1$, $-1$ or $0$ according as the representation admits an invariant symmetric bilinear form, an invariant skew-symmetric bilinear form, or no invariant bilinear form, respectively. The representations admitting no invariant bilinear form are precisely those whose character values are not all real. We can now state the next observation.
\begin{prop}\label{FS_indicator}
For each $\ll\in\LL$ an irreducible representation $\varrho$ of $G^{(\ll)}$ has Frobenius--Schur indicator $0$ if and only if it is of type $n$ for some $n$. 
\end{prop}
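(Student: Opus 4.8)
\textbf{Proof proposal for Proposition \ref{FS_indicator}.}

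The plan is to prove the two implications separately, using the structure that has already been established in Proposition \ref{discri1} and its attendant Table \ref{list_discriminant}. Recall that the Frobenius--Schur indicator of an irreducible representation $\varrho$ is $0$ precisely when $\varrho$ is not self-dual, equivalently when the character $\tr_{\varrho}$ takes a non-real value at some group element. So the content of the proposition is the equivalence
\[
\text{$\varrho$ has a non-real character value}\quad\Longleftrightarrow\quad\text{$\varrho$ is of type $n$ for some $n$.}
\]
The reverse implication is essentially immediate: by definition an irreducible representation of type $n$ has character values generating $\QQ(\sqrt{-n})$, and since $\QQ(\sqrt{-n})$ is a genuinely imaginary quadratic field it contains non-real elements, so $\varrho$ cannot be self-dual and its indicator is $0$. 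Thus the real work is in the forward direction.

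For the forward direction I would argue as follows. Suppose $\varrho$ is an irreducible representation of $G^{(\ll)}$ with Frobenius--Schur indicator $0$, so that some character value $\tr_{\varrho}(g)$ is non-real. The first step is a general character-theoretic observation: the field $\QQ(\chi_{\varrho})$ generated by the character values of any ordinary irreducible representation is an abelian extension of $\QQ$ (a subfield of a cyclotomic field, by the theorem that character values are sums of roots of unity and the Galois action permutes them). Complex conjugation is a non-trivial automorphism of this field by hypothesis, so $\QQ(\chi_{\varrho})$ has even degree and admits a unique maximal totally real subfield fixed by conjugation. The decisive claim, which I would verify case by case using the character tables in \S\ref{sec:chars:irr}, is that for the six umbral groups $G^{(\ll)}$ the field $\QQ(\chi_{\varrho})$ is in fact an \emph{imaginary quadratic} field $\QQ(\sqrt{-n})$ whenever it is not totally real---that is, none of the relevant irreducibles have character fields of degree greater than two over $\QQ$ with a complex embedding. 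Granting this, one sets $n$ to be the squarefree integer with $\QQ(\chi_{\varrho})=\QQ(\sqrt{-n})$ and must then check that this $n$ satisfies the two numerical conditions of Proposition \ref{discri1}, i.e. that $G^{(\ll)}$ contains an element of order $n$ and that $-n\lambda^2$ is a discriminant of $H^{(\ll)}$ for some $\lambda$ coprime to $n$.

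The main obstacle, and the step I expect to absorb most of the effort, is precisely this last verification: namely that every imaginary-quadratic field arising as a character field $\QQ(\chi_{\varrho})$ for some irreducible $\varrho$ of some $G^{(\ll)}$ is one of the finitely many fields $\QQ(\sqrt{-n})$ recorded in Table \ref{list_discriminant}, so that the converse content of Proposition \ref{discri1} closes the loop. This is the point where the ``discriminant property'' of umbral moonshine is doing genuine work rather than being a formality: it is not automatic that the list of $n$ for which some irreducible has character field $\QQ(\sqrt{-n})$ should coincide exactly with the list of $n$ extractable from the exponents of $H^{(\ll)}$. I would handle this by direct inspection, comparing the character tables of \S\ref{sec:chars:irr} (reading off, for each $G^{(\ll)}$, the complete set of imaginary quadratic fields appearing as character fields together with the orders of elements where non-real values occur) against the discriminants read off from the tables of \S\ref{sec:coeffs}, confirming in each of the six cases that the two lists match and hence that Proposition \ref{discri1} and the present proposition are describing the same finite set of representations. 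Since the groups involved are all reasonably small, this amounts to a finite check that can be carried out explicitly (or with the assistance of \cite{GAP4}), and the proposition follows once the match is confirmed for all $\ll\in\LL$.
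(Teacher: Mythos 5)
Your overall strategy---reducing both directions to a finite inspection of the character tables of \S\ref{sec:chars:irr} against the discriminants read off from \S\ref{sec:coeffs}---is the same one the paper relies on: Proposition \ref{FS_indicator} is stated there as an observation, verified precisely by such a check (the FS column of the character tables marks the indicator-$0$ characters, and these are exactly the pairs recorded in Table \ref{list_discriminant}). Your reverse implication is correct and is indeed immediate.

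There is, however, a concrete flaw in your forward direction: you set $n$ to be the \emph{squarefree} integer with $\QQ(\chi_{\varrho})=\QQ(\sqrt{-n})$ and then propose to verify the two conditions of Proposition \ref{discri1} for that particular $n$. That verification fails in several of the six cases, because the integers occurring as types are not always squarefree: Table \ref{list_discriminant} lists $n=8$ and $n=20$ at $\ll=3$, and $n=4$ at $\ll=5$ and $\ll=13$. Concretely, at $\ll=3$ the representations $\chi_{16},\chi_{17}$ (and $\chi_{22},\chi_{23}$) have character field $\QQ(\sqrt{-2})$, so your recipe gives $n=2$; but every discriminant of $H^{(3)}$ is congruent to $r^2$ modulo $12$ with $r\in\{1,2\}$, i.e.\ to $1$ or $4\pmod{12}$, whereas $-2\lambda^2\equiv 6$ or $10\pmod{12}$ for $\lambda$ odd, so $n=2$ violates condition (2) of Proposition \ref{discri1}. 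The valid choice is $n=8$, since $-8\cdot 1^2\equiv 4\pmod{12}$ and $-8$ is a discriminant (the leading term of $H^{(3)}_2$ is $20\,q^{2/3}$), and $\QQ(\sqrt{-8})=\QQ(\sqrt{-2})$. Similarly, at $\ll\in\{5,13\}$ the relevant character fields are $\QQ(\sqrt{-1})$, so your recipe gives $n=1$, which is excluded by the hypothesis $n>1$ of Proposition \ref{discri1}; the valid type is $n=4$. As written, then, your concluding ``match the two lists'' step would come out negative and the argument would break down (or, worse, suggest the proposition is false). The repair is to let $n$ range over \emph{all} integers with $\QQ(\sqrt{-n})=\QQ(\chi_{\varrho})$---the squarefree part times an arbitrary square---and exhibit one such $n$ satisfying the two conditions; this is exactly the bookkeeping that Table \ref{list_discriminant} performs, and with that amendment your finite check goes through.
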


The {\em Schur index} of an irreducible representation $\varrho$ of a finite group $G$ is the smallest positive integer $s$ such that there exists a degree $s$ extension $k$ of the field generated by the character values ${\tr}_{\varrho}( g)$ for $g\in G$ such that $\varrho$ can be realised over $k$. Inspired by Proposition \ref{FS_indicator} we make the following conjecture.
\begin{conj}\label{conj:conj:disc:sch}
If $\varrho$ is an irreducible representation of $G^{(\ll)}$ of type $n$ then the Schur index of $\varrho$ is equal to $1$.
\end{conj}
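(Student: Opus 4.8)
The plan is to prove Conjecture \ref{conj:conj:disc:sch} by exhibiting, for each irreducible representation $\varrho$ of type $n$, an explicit realisation over the field $\QQ(\sqrt{-n})$ generated by its character values, which forces the Schur index to equal $1$. Since the list of such representations is finite (it is catalogued in Table \ref{list_discriminant}), this is in principle a finite verification, but a conceptual rather than case-by-case argument is preferable. First I would recall the general theory: for an irreducible complex representation $\varrho$ with character field $K=\QQ(\sqrt{-n})$, the Schur index $s$ over $K$ is the index of the division algebra class of $\varrho$ in the Brauer group $\mathrm{Br}(K)$, and by the Brauer--Hasse--Noether theorem $s$ is the least common multiple of the local Schur indices $s_v$ over all places $v$ of $K$. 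Thus it suffices to show that every local index $s_v$ is $1$, i.e. that $\varrho$ is realisable over every completion $K_v$.

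The key structural input is that each $G^{(\ll)}$ is a very concrete group, in most cases a (double cover of a) permutation group with a known, small collection of irreducible characters. I would organise the proof around two mechanisms. The first is the observation (Proposition \ref{FS_indicator}) that type-$n$ representations are precisely those with Frobenius--Schur indicator $0$; for such representations the local index at each infinite (complex) place is automatically $1$, since over $\CC$ there is no obstruction and $K$ is totally imaginary. So the entire content lives at the finite places, and the remaining task is to control the $p$-local Schur indices. Here I would invoke the strong rationality properties of the relevant groups: for $M_{24}\simeq G^{(2)}$ and $M_{12}$ the ordinary character theory and Schur index data are classical and tabulated (cf. \cite{ATLAS}), and all Schur indices are known to equal $1$; for the smaller groups $2.M_{12}$, $2.\AGL_3(2)$, $\GL_2(5)/2$, $\SL_2(3)$ and the cyclic $4$ the faithful irreducibles carrying $\sqrt{-n}$ values can be checked directly. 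The cleanest uniform tool is the fact that a complex representation realisable over the ring of integers of a number field, or one obtained by inducing a one-dimensional character from a subgroup whose character field is contained in $K$, has Schur index $1$; many of the type-$n$ representations here arise from (signed) permutation modules or from monomial constructions, which would immediately give integral models defined over $K$.

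The main obstacle I expect is the handling of the non-trivial double covers, where the faithful irreducibles are genuinely projective representations of the underlying permutation group and the standard integral/monomial constructions are not available off the shelf. For these, I would argue place-by-place: at each prime $p$ one can bound the local index $s_p$ by the greatest common divisor of the degrees of the $K_p$-irreducible constituents and by the order constraints coming from the fact that $s_p$ must divide both the representation degree and the exponent of the local Galois group; combined with the known small degrees appearing in Table \ref{list_discriminant} this should pin $s_p=1$ in every case. An alternative, and perhaps more robust, route is to appeal to the results of Feit and others computing Schur indices for quasi-simple groups and their covers, reducing the claim to a lookup for the finitely many groups $G^{(\ll)}$ involved. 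The decisive step, and the one requiring genuine care, is verifying triviality of the $p$-local index for the faithful representations of $2.M_{12}$ and $\GL_2(5)/2$ at the primes dividing the group order; once those are settled the theorem follows from Brauer--Hasse--Noether together with the vanishing of the infinite-place contribution guaranteed by the Frobenius--Schur indicator $0$ established in Proposition \ref{FS_indicator}.
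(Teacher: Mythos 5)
The first thing to say is that the statement you are addressing is posed in the paper as a \emph{conjecture}: the paper does not prove it, and offers only partial evidence --- Benard's theorem \cite{Ben_SchInd} that all Schur indices of $M_{24}$ and of $M_{12}$ equal $1$ (settling $\ll=2$ and the representations factoring through $\bar{G}^{(3)}$ at $\ll=3$), Margolin's explicit construction \cite{Mar_M12} of the faithful representations $\chi_{16},\chi_{17}$ of $2.M_{12}$ over $\QQ(\sqrt{-2})$, and the consistency check of Proposition \ref{FS_indicator} (all type-$n$ representations have Frobenius--Schur indicator $0$, so no obstruction of the kind forced by indicator $-1$ arises). So your attempt must be judged as a proposed proof of an open statement, not compared against a proof the paper contains.

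As a proof, your proposal has a genuine gap precisely at the step you yourself flag as decisive. The reduction via Brauer--Hasse--Noether to local indices is sound, and the archimedean places are indeed harmless since $\QQ(\sqrt{-n})$ is imaginary quadratic, so every infinite completion is $\CC$. In fact one gets more for free: by the Benard--Schacher theorem the Schur index $m$ forces $\zeta_m\in\QQ(\sqrt{-n})$, so $m\leq 2$ except possibly for $n\in\{1,3\}$; the entire content of the conjecture is thus the exclusion of local index $2$ at finite primes dividing $|G^{(\ll)}|$. Your proposal never carries out this exclusion: it asserts that degree bounds and ``order constraints'' should pin $s_p=1$, or that one can appeal to results of Feit and others as a lookup, but neither is executed, and the representations where the issue is real (the faithful representations of $2.M_{12}$, $2.\AGL_3(2)$, $\GL_2(5)/2$) are exactly those not covered by classical tables for the simple groups. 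Note also that indicator $0$ does not by itself rule out finite-place index $2$. There is a secondary error: your claim that a representation induced from a linear character $\lambda$ of a subgroup has Schur index $1$ over the character field $K$ is false in general; induction only realises the representation over $\QQ(\lambda)$, whence the index divides $[\QQ(\lambda):K]$, which need not be $1$. What would actually close the gap is what the cited evidence does in the one settled non-trivial case: an explicit model over $\QQ(\sqrt{-n})$ (as Margolin provides for $2.M_{12}$), or an honest local index computation at each prime dividing the group order, performed for each of the finitely many representations listed in Table \ref{list_discriminant}.
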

In other words, we conjecture that the irreducible $G^{(\ll)}$-representations of type $n$ can be realised over $\QQ(\sqrt{-n})$. For $\ll=2$ this speculation is in fact a theorem, since it is known \cite{Ben_SchInd} that the Schur indices for $M_{24}$ are always $1$. For $\ll=3$ it is also known \cite{Ben_SchInd} that the Schur indices for $\bar G^{(3)}= M_{12}$ are also always $1$. Moreover,  the representations of $G^{(3)}\simeq 2.\bar{G}^{(3)}$ with characters $\chi_{16}$ and $\chi_{17}$ in the notation of Table \ref{tab:chars:irr:3} have been constructed explicitly over $\QQ(\sqrt{-2})$ in \cite{Mar_M12}. Finally, Proposition \ref{FS_indicator} constitutes a non-trivial consistency check for Conjecture \ref{conj:conj:disc:sch} since the Schur index is at least $2$ for a representation with Frobenius--Schur indicator equal to $-1$.

\vspace{18pt}
\begin{table}[h!] \centering  \begin{tabular}{CCC}
\toprule
\ll & n &(\varrho,\varrho^*)\\\midrule
2& 7,15,23 & (\chi_{3},\chi_{4}),(\chi_{5},\chi_{6}),(\chi_{10},\chi_{11}),(\chi_{12},\chi_{13}),(\chi_{15},\chi_{16})\\
3& 5,8,11,20 & (\chi_{4},\chi_{5}),(\chi_{16},\chi_{17}),(\chi_{20},\chi_{21}),(\chi_{22},\chi_{23}),(\chi_{25},\chi_{26})\\
4& 3,7&(\chi_{2},\chi_{3}),(\chi_{13},\chi_{14}),(\chi_{15},\chi_{16})\\ 
5& 4&(\chi_{8},\chi_{9}),(\chi_{10},\chi_{11}),(\chi_{12},\chi_{13})\\
7&3&(\chi_{2},\chi_{3}),(\chi_{6},\chi_{7})\\
13&4&(\chi_3,\chi_4)\\
\bottomrule
\end{tabular}
\caption{\label{list_discriminant} 
	The irreducible representations of type $n$.}
\end{table}

Armed with the preceding discussion we are now ready to state our main observation for the discriminant property of umbral moonshine. For the purpose of stating this we temporarily write $K^{(\ll)}_{r,d}$ for the ordinary representation of $G^{(\ll)}$ with character $g\mapsto c^{(\ll)}_{g,r}(d)$ where the coefficients $c^{(\ll)}_{g,r}(d)$ are assumed to be those given in \S\ref{sec:coeffs}.
\begin{prop}
Let $n$ be one of the integers in Table \ref{list_discriminant} and let $\l_{n}$ be the smallest positive integer such that $D = -n \l_{n}^2$ is a discriminant of $H^{(\ll)}$. Then $K^{(\ll)}_{r,-D/4\ll} = \varrho_{n} \oplus \varrho_{n}^*$ where $\varrho_{n}$ and $ \varrho_{n}^*$ are dual irreducible representations of type $n$. Conversely, if $\varrho$ is an irreducible representation of type $n$ and $-D$ is the smallest positive integer such that $K^{(\ll)}_{r,-D/4\ll}$ has $\varrho$ as an irreducible constituent then there exists an integer $\l$ such that $D = - n \l^2$. 
\end{prop}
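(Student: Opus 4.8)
The plan is to reduce the statement to a bounded computation organised around a single field-theoretic principle. Because the McKay--Thompson series $H^{(\ll)}_g=\big(H^{(\ll)}_{g,r}\big)$ are pinned down explicitly in \S\ref{sec:mckay} (completely for $\ll\in\{2,3,4,5\}$, and in the relevant low degrees for $\ll\in\{7,13\}$) and the character tables of the $G^{(\ll)}$ are recorded in \S\ref{sec:chars}, I would first decompose the virtual character $g\mapsto c^{(\ll)}_{g,r}(d)$ of $K^{(\ll)}_{r,d}$ into irreducibles by orthogonality,
\begin{gather}
	m_{\varrho}(r,d)=\frac{1}{|G^{(\ll)}|}\sum_{g\in G^{(\ll)}}c^{(\ll)}_{g,r}(d)\,\overline{\tr_{\varrho}(g)},
\end{gather}
so that both assertions become a finite check: for each $\ll\in\LL$ and each $n$ in Table \ref{list_discriminant} one evaluates these multiplicities at $d=-D/4\ll$ and compares with the explicit decompositions in \S\ref{sec:decompositions}.

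Second, I would isolate the structural principle that explains the shape of the claim and guides the verification. A type-$n$ representation $\varrho_n$ has character values generating $\QQ(\sqrt{-n})$ and, by Proposition \ref{FS_indicator}, Frobenius--Schur indicator $0$; hence its values are genuinely non-real and it is paired with a distinct dual $\varrho_n^*$ whose character is the complex conjugate. The coefficients $c^{(\ll)}_{g,r}(d)$ are Galois-equivariant, in the sense that complex conjugation of a value agrees with evaluation at a suitable power $g\mapsto g^k$ of the class; this forces $\varrho_n$ and $\varrho_n^*$ to occur in $K^{(\ll)}_{r,d}$ with equal multiplicity, which accounts both for the dual-pair structure and for the direct sum $\varrho_n\oplus\varrho_n^*$ in the first assertion. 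More importantly, the same equivariance shows that the field generated by the Galois orbit of $c^{(\ll)}_{g,r}(d)$ is governed by the discriminant $D$ through $\QQ(\sqrt{D})$, so a type-$n$ constituent can occur only at a discriminant $D$ with $\QQ(\sqrt{D})\supseteq\QQ(\sqrt{-n})$. Since $D<0$ this says precisely that $-D/n$ is a perfect square, i.e. $D=-n\l^2$ for some integer $\l$, which would already deliver the converse assertion for \emph{every} such $D$ and not merely the smallest.

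With this principle in place the two directions close as follows. For the forward statement I would take $\l_n$ minimal with $-n\l_n^2$ a discriminant of $H^{(\ll)}$, record $\dim K^{(\ll)}_{r,-D/4\ll}=c^{(\ll)}_r(-D/4\ll)$, and check against the tables that this dimension equals $2\dim\varrho_n$ and that the only non-vanishing multiplicities are $m_{\varrho_n}=m_{\varrho_n^*}=1$; equivalently, that the rational function $g\mapsto c^{(\ll)}_{g,r}(-D/4\ll)$ coincides identically with $2\,\re\,\tr_{\varrho_n}$. For the converse I would invoke the field principle to force $D=-n\l^2$ directly. The hard part will be the conceptual engine itself---the claim that the Galois orbit of $c^{(\ll)}_{g,r}(D)$ lies in $\QQ(\sqrt{D})$---since a clean proof of it rests on the Rademacher-sum description of the $H^{(\ll)}_g$ whose full treatment is deferred (cf. \S\ref{sec:conj:moon}); pending that, the proposition has to be established at these low degrees by the explicit inspection above, with the additional obligation of confirming in each case that the decomposition of $K^{(\ll)}_{r,d}$ is a genuine non-negative integer combination of irreducibles, a positivity property that is itself part of Conjecture \ref{conj:conj:mod:Kell}.
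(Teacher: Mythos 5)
Your computational frame---temporarily defining $K^{(\ll)}_{r,d}$ by its character $g\mapsto c^{(\ll)}_{g,r}(d)$, extracting multiplicities by orthogonality against the character tables, and checking the finitely many relevant degrees against the tables of \S\ref{sec:coeffs} and \S\ref{sec:decompositions}---is exactly how the paper establishes this proposition; it offers no structural proof, and your opening and closing paragraphs reproduce its argument. Your observation that $\varrho_n$ and $\varrho_n^*$ must occur with equal multiplicity is also sound: the coefficients $c^{(\ll)}_{g,r}(d)$ are rational integers and $H^{(\ll)}_g$ depends only on the conjugacy class of $g$, with algebraically conjugate classes (e.g.\ $7A$/$7B$ and $23A$/$23B$ in $M_{24}$, related by power maps) sharing a single McKay--Thompson series, so the multiplicity of $\varrho$ equals that of $\varrho^*$.

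The genuine gap is the ``field principle'' of your middle paragraph. First, it is internally incoherent: since the $c^{(\ll)}_{g,r}(d)$ are rational integers, their Galois orbit generates $\QQ$ for every $d$; there is no mechanism by which $\QQ(\sqrt{D})$ enters, hence no a priori constraint tying the field of definition of a constituent to the discriminant. Second, and decisively, the strengthened converse you claim it ``would already deliver''---that a type-$n$ constituent can occur \emph{only} when $D=-n\l^2$---is false, and the paper's own tables refute it: at $\ll=2$ the module $K^{(2)}_{1,71/8}$ contains the type-$23$ pair $(\chi_{10},\chi_{11})$ and the type-$7$ pair $(\chi_{15},\chi_{16})$, each with multiplicity $2$, although $71$ is squarefree and lies in none of the progressions $-7\l^2$, $-15\l^2$, $-23\l^2$. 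This is precisely why the proposition's converse is phrased only for the \emph{smallest} $-D$ at which $\varrho$ occurs, and why Conjecture \ref{conj:conj:disc:doub} isolates the discriminants $D=-n\l^2$ via the doublet property rather than via the absence of type-$n$ constituents elsewhere: at generic discriminants the type-$n$ pairs do occur, just with even multiplicities. Once the false principle is removed, your proposal collapses to the explicit verification, which is correct and is the paper's proof; but the converse then genuinely requires locating, within the tabulated range, the first occurrence of each type-$n$ representation and confirming it sits at $-D=n\l_n^2$---it cannot be outsourced to any equivariance argument, Rademacher-theoretic or otherwise.
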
 
 
Extending this we make the following conjecture.
\begin{conj}\label{conj:conj:disc:dualpair}
If $D$ is a discriminant of $H^{(\ll)}$ which satisfies $D = -n \l^2$  for some integer $\l$ then the representation $K^{(\ll)}_{r,-D/4\ll}$ has at least one dual pair of irreducible representations of type $n$ arising as irreducible constituents. 
\end{conj}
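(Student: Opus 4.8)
The plan is to deduce Conjecture~\ref{conj:conj:disc:dualpair} from the automorphic and module conjectures already in play, so I would begin by assuming Conjecture~\ref{conj:conj:mod:Kell} (so that $K^{(\ll)}_{r,-D/4\ll}$ is a genuine $G^{(\ll)}$-module whose character is $g\mapsto c^{(\ll)}_{g,r}(-D/4\ll)$) together with the automorphy and Rademacher-sum statements (Conjectures~\ref{conj:conj:moon}, \ref{conj:conj:aut:shad} and \ref{conj:conj:aut:mult}). By character orthogonality the multiplicity of an irreducible $\varrho$ of type $n$ in $K^{(\ll)}_{r,-D/4\ll}$ is $\langle \chi_\varrho, c^{(\ll)}_{\cdot,r}(-D/4\ll)\rangle$, so the claim reduces to showing that for $D=-n\l^2$ this inner product is strictly positive for at least one type-$n$ pair $(\varrho,\varrho^*)$. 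The first reduction is therefore purely representation-theoretic once the arithmetic of the coefficient vector is understood.

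The engine of the argument is a Galois-equivariance for the Fourier coefficients. First I would show that for $t$ coprime to the relevant level the Galois automorphism $\sigma_t$ sending each root of unity $\zeta\mapsto\zeta^t$ acts on the multiplier system $\nu^{(\ll)}_{n|h}$ of \eqref{eqn:mtseries:aut:nug} by the same rule that carries the conjugacy class $[g]$ to $[g^t]$; combined with the known Galois action on the theta multiplier $\sigma^{(\ll)}$ of \eqref{eqn:mtseries:aut:sigma} this should yield an identity of the shape $\sigma_t\big(c^{(\ll)}_{g,r}(d)\big)=\pm\, c^{(\ll)}_{g^t,r'}(d)$ for a suitable $r'$ depending on $t$ and $r$. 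This exhibits the vector $g\mapsto c^{(\ll)}_{g,r}(d)$ as Galois-stable and intertwined with the natural Galois action on the irreducible characters of $G^{(\ll)}$; in particular, by Proposition~\ref{FS_indicator} the type-$n$ irreducibles are exactly those permuted nontrivially (in conjugate pairs) by this action, so they can only enter $K^{(\ll)}_{r,d}$ paired with equal multiplicity.

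The heart of the proof is then an arithmetic nonvanishing statement. Using Conjecture~\ref{conj:conj:moon} to write $c^{(\ll)}_{g,r}(-D/4\ll)$ as the $(-D/4\ll)$-th coefficient of the Rademacher sum $R^{(\ll)}_{n_g|h_g}$, I would expand it as an infinite sum over denominators $c$ of generalized Kloosterman sums built from the multiplier $\nu^{(\ll)}_{n|h}$. The key point is that a discriminant of the form $D=-n\l^2$ forces a quadratic Gauss sum modulo $n$ to appear in the $c\equiv0\pmod n$ part of this expansion, which is precisely the mechanism producing $\sqrt{-n}$ and hence forcing the value to lie in $\QQ(\sqrt{-n})$ rather than $\QQ$ for elements $g$ with $n\mid o(g)$ (matching Proposition~\ref{discri1} and the base-case proposition preceding Conjecture~\ref{conj:conj:disc:dualpair}). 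Finishing the argument amounts to showing that the resulting $\QQ(\sqrt{-n})$-component of the coefficient vector does not cancel when paired against the type-$n$ characters, i.e. that $\langle \chi_{\varrho_n}, c^{(\ll)}_{\cdot,r}(-D/4\ll)\rangle\neq0$.

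I expect this last nonvanishing to be the main obstacle. The Galois-equivariance guarantees that type-$n$ constituents occur in dual pairs with the correct field of definition, but it does not by itself rule out that both multiplicities vanish; controlling this requires uniform estimates (or exact evaluations) of the full Bessel-weighted Kloosterman expansion at the single degree $-D/4\ll$, together with a positivity input to separate the genuine $\sqrt{-n}$ contribution from the rational part. A more elementary but less structural alternative would be to induct on $\l$ from the base-case proposition using the Hecke-like operators $U_p$, $V_p$ of \S\ref{sec:forms:siegel} (and the operators of \cite{Dabholkar:2012nd}) to propagate a type-$n$ dual pair from the minimal discriminant $-n\l_n^2$ to $-n\l^2$; the difficulty there is that these operators need not preserve irreducibility of constituents, so extra work is needed to ensure a type-$n$ pair survives rather than being absorbed into larger rational constituents.
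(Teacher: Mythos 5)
First, a point of orientation: the statement you are proving is Conjecture~\ref{conj:conj:disc:dualpair}, and the paper offers no proof of it. Its support consists of the base-case Proposition immediately preceding it (the minimal discriminant $-n\lambda_n^2$ case, where $K^{(\ell)}_{r,-D/4\ell}=\varrho_n\oplus\varrho_n^*$ is verified by direct inspection of the coefficient tables of \S\ref{sec:coeffs} against the character tables of \S\ref{sec:chars}) together with the explicit decompositions tabulated in \S\ref{sec:decompositions} for small degrees. So there is no argument in the paper for your proposal to match; any complete proof would be new, and your own write-up concedes that its key step is open.

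As for the proposal itself, the central mechanism you describe is flawed. You claim that for $D=-n\lambda^2$ a Gauss sum in the Rademacher--Kloosterman expansion forces the coefficient $c^{(\ell)}_{g,r}(-D/4\ell)$ to lie in $\QQ(\sqrt{-n})\setminus\QQ$ when $n\mid o(g)$. But all the Fourier coefficients $c^{(\ell)}_{g,r}(d)$ are rational integers (see the tables of \S\ref{sec:coeffs}; this is forced by the eta-quotient, Lambda-series and mock theta expressions of \S\ref{sec:mckay}), so no such irrationality can appear in the coefficients. The quadratic irrationalities of Proposition~\ref{discri1} live in the \emph{character values} of $G^{(\ell)}$, and the discriminant property is a statement about which irreducibles constitute $K^{(\ell)}_{r,d}$, not about irrational coefficients. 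What rationality of the coefficients actually buys you is only the easy half of the conjecture: the virtual character $g\mapsto c^{(\ell)}_{g,r}(d)$ is Galois-stable, so type-$n$ irreducibles can occur only in conjugate pairs $(\varrho,\varrho^*)$ with equal multiplicities --- and for this you need neither the multiplier equivariance nor the Kloosterman expansion. The genuine content, that this common multiplicity is strictly positive at \emph{every} discriminant of the form $-n\lambda^2$, is exactly what your argument does not deliver: your proposed ``positivity input to separate the $\sqrt{-n}$ contribution'' is aimed at a quantity that is identically rational. A more promising route to the nonvanishing would be asymptotic dominance of the identity-class coefficients $c^{(\ell)}_{e,r}(d)$ over all twisted coefficients in the Rademacher expansion, which makes every irreducible (in particular every type-$n$ pair) appear with positive multiplicity once $|D|$ is large, reducing the conjecture to a finite check against the tables; your Hecke-operator induction faces the difficulty you already identify, namely that the operators of \cite{Dabholkar:2012nd} act on forms and carry no evident $G^{(\ell)}$-equivariance on the conjectural modules.
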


We conclude this section with conjectures arising from the observation (cf. \S\ref{sec:decompositions}) that the conjectural $G^{(\ll)}$-module $K^{(\ll)}_{r,d}$ is typically isomorphic to several copies of a single representation. Say a $G$-module $V$ is a {\em doublet} if it is isomorphic to the direct sum of two copies of a single representation of $G$.
\begin{conj}\label{conj:conj:disc:doub}
For $\ll\in\LL=\{2,3,4,5,7,13\}$ the representation $K^{(\ll)}_{r,-D/4\ll}$ is a doublet if and only if $D \neq -n\l^2$ for any integer $\l$ for any $n$ satisfying the conditions of Proposition \ref{discri1}. 
\end{conj}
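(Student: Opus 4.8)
The statement concerns the conjectural bigraded modules $K^{(\ll)}$ of Conjecture \ref{conj:conj:mod:Kell}, so any proof must proceed at the level at which these modules are accessible: through the class functions $g\mapsto c^{(\ll)}_{g,r}(d)=\tr_{K^{(\ll)}_{r,d}}(g)$ read off from the proposed McKay--Thompson series. My plan is to reformulate the doublet property entirely in terms of these character values and then treat the two implications separately, since they are of very different difficulty. Throughout I fix $\ll\in\LL$, a discriminant $D$ with $d=-D/4\ll$, and write $c(g)=c^{(\ll)}_{g,r}(d)$ for the relevant class function, which by Conjecture \ref{conj:conj:mod:Kell} is an honest (non-negative integer combination) character of $G^{(\ll)}$. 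The elementary observation I would lean on is that $K^{(\ll)}_{r,d}$ is a doublet if and only if $\tfrac12 c$ is an irreducible character, equivalently if and only if all multiplicities $\langle c,\chi_\varrho\rangle=\frac{1}{|G^{(\ll)}|}\sum_{g}c(g)\overline{\chi_\varrho(g)}$ are even and $\langle c,c\rangle=\frac{1}{|G^{(\ll)}|}\sum_{g}|c(g)|^2=4$; indeed $\sum_\varrho\langle c,\chi_\varrho\rangle^2=4$ with even summands forces a single multiplicity equal to $2$.

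First I would dispatch the ``only if'' direction, which is essentially forced by the discriminant property. Suppose $D=-n\l^2$ for some $n$ satisfying the conditions of Proposition \ref{discri1} and some integer $\l$. Then Conjecture \ref{conj:conj:disc:dualpair} asserts that $K^{(\ll)}_{r,d}$ contains a dual pair $\varrho\oplus\varrho^*$ of irreducibles of type $n$ as constituents. By Proposition \ref{FS_indicator} such $\varrho$ has Frobenius--Schur indicator $0$, hence non-real character, so $\varrho\not\simeq\varrho^*$; a module possessing two distinct irreducible constituents cannot be isomorphic to two copies of a single irreducible, and is therefore not a doublet. This yields the contrapositive: if $K^{(\ll)}_{r,d}$ is a doublet then $D\neq-n\l^2$ for every admissible $n$ and every $\l$.

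The substance of the conjecture lies in the ``if'' direction: when $D$ is not of the special form $-n\l^2$, I must show $c=2\chi_\varrho$ for a single irreducible $\varrho$, which by the reformulation above splits into (i) evenness of all multiplicities and (ii) $\langle c,c\rangle=4$. For (ii) I would seek a closed identity computing $\langle c,c\rangle$ as a function of $d$, the natural route being to express $\sum_g|c^{(\ll)}_{g,r}(d)|^2$ through the Petersson-type pairing of $H^{(\ll)}_{g,r}$ with its conjugate, using the explicit weight-$2$ and eta-quotient presentations of \S\S\ref{subsec:m2}--\ref{subsec:remaining} together with the Rademacher-sum description predicted by Conjecture \ref{conj:conj:moon}; the aim is to show that the non-special discriminants are exactly those at which this second moment equals $4$, whereas a type-$n$ pair contributes enough extra to push it strictly above $4$. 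For (i) I would try to propagate the uniform ``factor of two'' visible in the supermodule remark following Conjecture \ref{conj:conj:mod:Kell} (the leading coefficient is always $-2$) through the arithmetic of the Fourier coefficients, cross-checking against the explicit decompositions of \S\ref{sec:decompositions} at small $d$.

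The main obstacle, and the reason this remains a conjecture, is twofold. First, the modules $K^{(\ll)}$ are not yet constructed, so ``$c$ is a character'' is itself conjectural and every multiplicity argument is contingent on Conjecture \ref{conj:conj:mod:Kell}. Second, even granting the modules, the ``if'' direction requires control of the decomposition of $K^{(\ll)}_{r,d}$ for \emph{all} of the infinitely many non-special $d$, so no finite inspection of the tables of \S\ref{sec:coeffs} can suffice; one genuinely needs a uniform mechanism—most plausibly an exact second-moment formula for the $H^{(\ll)}_g$ coupled with a classification of the discriminant forms $-D=4\ll d$ according to the admissible $n$—that ties the representation-theoretic norm $\langle c,c\rangle$ to the arithmetic of $D$. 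Producing that mechanism, rather than the elementary character-theoretic packaging above, is where the real difficulty lies.
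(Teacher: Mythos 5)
The statement you are proving is presented in the paper as a conjecture, not a theorem: the paper's entire support for it is inspection of the decomposition tables of \S\ref{sec:decompositions} at the discriminants listed immediately after the conjecture. So your closing assessment---that a uniform mechanism controlling $K^{(\ll)}_{r,d}$ for all $d$ is missing and that this is why the statement remains open---is consistent with the paper. However, your character-theoretic packaging contains a genuine error that invalidates both halves of your argument. The paper defines a doublet as a module isomorphic to two copies of \emph{a single representation}, not a single \emph{irreducible} representation; equivalently, $K^{(\ll)}_{r,d}$ is a doublet if and only if every irreducible multiplicity is even. Your added normalisation condition $\langle c,c\rangle=4$ (forcing $\tfrac12 c$ irreducible) is therefore not equivalent to the doublet property, and it is refuted directly by the paper's own data: at $\ll=2$, $-D=47$ (which is not of the form $n\l^2$ for $n\in\{7,15,23\}$, hence should be and is a doublet) the table gives $K^{(2)}_{1,47/8}\simeq 2\chi_{22}\oplus 2\chi_{26}$, with norm $8$, and for larger non-special $D$ the number of distinct constituents, all with even multiplicity, only grows. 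Any strategy aimed at proving a second-moment identity $\langle c,c\rangle=4$ at non-special discriminants is therefore aimed at a false target; the correct target is a parity statement about all multiplicities, with no constraint on the norm.

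The misreading also breaks your ``only if'' direction. Under the correct definition, the existence of a dual pair $(\varrho,\varrho^*)$ of distinct type-$n$ constituents (Conjecture \ref{conj:conj:disc:dualpair} plus Proposition \ref{FS_indicator}) does \emph{not} preclude the doublet property: $\varrho$ and $\varrho^*$ could each occur with even multiplicity, and the module would still be a doublet. What the tables actually show at special discriminants $D=-n\l^2$ (e.g.\ $\ll=2$, $-D\in\{7,15,23,63,\dots\}$) is that the type-$n$ pair appears with \emph{odd} multiplicity, and Conjecture \ref{conj:conj:disc:dualpair} asserts nothing about parity. So the ``only if'' half of Conjecture \ref{conj:conj:disc:doub} is genuinely independent conjectural content, not a corollary of the other statements in \S\ref{sec:conj:disc}; your deduction works only under your stronger (and incorrect) reading of ``doublet,'' under which the ``if'' half of the conjecture would simply be false. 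A corrected partial observation would be: granting Conjecture \ref{conj:conj:mod:Kell}, the doublet property at $d=-D/4\ll$ is the assertion that the class function $g\mapsto \tfrac12 c^{(\ll)}_{g,r}(-D/4\ll)$ is a character, which is a congruence condition on the multiplicities rather than a norm condition, and neither direction of Conjecture \ref{conj:conj:disc:doub} follows from Conjectures \ref{conj:conj:disc:sch} or \ref{conj:conj:disc:dualpair}.
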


To see some evidence for Conjecture \ref{conj:conj:disc:doub} one can inspect the proposed decompositions of the representations $K^{(\ll)}_{r,d}$ in the tables in \S\ref{sec:decompositions} for the following discriminants: 
\begin{itemize}
\item{$-D=7,15,23,63,135,175,207$ for $\ll=2$,}
\item{$-D=8,11,20,32,44,80$ for $\ll=3$,}
\item{$-D=7,12,28,63,108$ for $\ll=4$,}
\item{$-D=4,16,64,144,196$ for $\ll=5$,}
\item{$-D=3\l^2$, $\l = 1,\dots,9$, $\l\neq 7$ for $\ll=7$,}
\item{$-D=4 \l^2$, $\l = 1,\dots,11$ for $\ll=13$.}
\end{itemize}

%---------------------------------------------------------------------------------------%
\subsection{Geometry and Physics}\label{sec:conj:geomphys}
%---------------------------------------------------------------------------------------%

Beyond the conjectures already mentioned above several interesting and important questions remain regarding the structural nature of umbral moonshine. In the case that $\ll=2$ there are strong indications that a deep relationship to $K3$ surfaces---extending in some way the relation \cite{Mukai,Kondo} between finite groups of $K3$ surface symplectomorphisms and subgroups of $M_{23}$---is responsible for the relationship between $G^{(2)}\simeq M_{24}$ and $H^{(2)}$. One such indication is the fact that the Jacobi form $Z^{(2)}$, from which $H^{(2)}$ may be obtained by decomposing with respect to superconformal characters, coincides with the elliptic genus of a(ny) complex $K3$ surface. It is natural then to ask if there are analogous geometric interpretations for the remaining extremal Jacobi forms $Z^{(\ll)}$ for $\ll\in \LL$, and a positive answer to this question will be a first step in determining the geometric significance of the umbral groups $G^{(\ll)}$ and the attached mock modular forms $H^{(\ll)}_g$. 

In a series of papers \cite{GriNik_K3SrfsLorKMAlgsMrrSym,GriNik_ArthMrrSymCYMfds,GriNik_AutFrmLorKMAlgs_I,GriNik_AutFrmLorKMAlgs_II,GrNik_SieAutFrmCorrLorKMAlgs} Gritsenko--Nikulin develop applications of Siegel modular forms to mirror symmetry for $K3$ surfaces (cf. \cite{Dol_MrrSymK3Srfs}). Many of the Siegel forms arising are realised as additive or exponential lifts of (weak) Jacobi forms, and amongst many examples the exponential lifts $\Phi^{(\ll)}$ of the particular forms $Z^{(\ll)}$ for $\ll\in\{2,3,4,5\}\subset\LL$ appear in connection with explicitly defined families of lattice polarised $K3$ surfaces in \cite{GriNik_AutFrmLorKMAlgs_II}. It is natural to ask if there is an analogous relationship between the umbral $Z^{(\ll)}$ and suitably defined mirror families of $K3$ surfaces for the remaining $\ll$ in $\LL$. 

As mentioned in the introduction, monstrous moonshine involves aspects of conformal field theory and string theory and there are several hints that umbral
moonshine will also play a role in string theory. The most obvious hint at $\ll=2$ is through the fact mentioned above that the weak Jacobi form
$Z^{(2)}$ coincides with the elliptic genus of a $K3$ surface and the fact that $K3$ surfaces play a prominent role in the study of superstring compactification.
The Siegel form $\Phi^{(2)}=(\Delta_5)^2$ which is the multiplicative lift of the weak Jacobi form $Z^{(2)}$ also plays a distinguished role
in type II string theory on $K3 \times E$ where $E$ is an elliptic curve and $1/\Phi^{(2)}$ occurs as the generating function which counts the number of $1/4$ BPS dyon states \cite{DVV}. BPS states in a supersymmetric theory are states that are annihilated by some of the supercharges of the theory; the $1/4$ BPS
states of interest here are annihilated by $4$ of the $16$ supercharges of a theory with $N=4$ supersymmetry in four spacetime dimensions and are termed
{\em dyons} because they necessarily carry both electric and magnetic charges.
The relation between the zeros of $\Phi^{(2)}$ and the wall-crossing phenomenon in this theory has been studied in \cite{SenJHEP0705:0392007,Cheng2007a,Cheng2008a,Dabholkar2008} and this relation leads to a connection between the counting of BPS black hole states in string theory and mock modular forms \cite{Dabholkar:2012nd}.
For a pedagogical review of this material see \cite{Dabholkar2007,Sen2007c}. 

There are several hints that umbral moonshine for $\ll>2$ will also play a role in string theory.   The first of these occurs in the study of
$N=2$ dual pairs of string theories \cite{Kachru:1995wm,Ferrara:1995yx}. An $N=2$ dual pair consists of a compactification of the heterotic string on  a product of
a $K3$ surface with an elliptic curve $E$
with a specific choice of $E_8 \times E_8$ gauge bundle on $K3 \times E$ and a conjectured dual description
of this model in terms of IIA string theory  on a  Calabi--Yau threefold $X$ which admits a $K3$ fibration. The low-energy description of such theories
involves a vector-multiplet moduli space which is a special K\"ahler manifold  of the form 
\be
{\cal M}_{vm}^{s+2,2} = \frac{SU(1,1)}{U(1)} \times {\cal N}^{s+2,2}
\ee
where ${\cal N}^{s+2,2}$ is the quotient $\Gamma\backslash {\cal H}^{s+1,1}$ of the generalised upper half-plane
\be
{\cal H}^{s+1,1}= O(s+2,2;\RR)/(O(s+2) \times O(2))
\ee
by an arithmetic subgroup $\Gamma$ in $SO(s+2,2; \RR)$, with $\Gamma$ depending on the specific model in question. One-loop string computations in the heterotic string lead to
automorphic forms on ${\cal N}^{s+2,2}$ via a generalised theta lift constructed in \cite{Harvey:1995fq,Bor_AutFmsSngGrs}, see \cite{Kon_PdtFmlsMdlrFmsAftBor} for a review.
In models with $s=1$ this leads to automorphic forms on $\Gamma \backslash O(3,2;\RR)/(O(3) \times O(2))$, that is to Siegel modular forms on the genus $2$ Siegel upper half-space $\HH_2\cong {\cal H}^{2,1}$. 

A particular example of such an $N=2$ dual pair was studied in \cite{LopesCardoso:1996zj,LopesCardoso:1996nc} with $s=1$ and involving a dual description
in terms of Type II string theory on a Calabi--Yau $3$-fold with Hodge numbers $(h^{1,1},h^{2,1})=(4,148)$. In the heterotic description the arithmetic subgroup
which appears is the paramodular group $\Gamma_2$, the Siegel modular form which occurs is the exponential lift of $Z^{(3)}$ described in \S\ref{sec:forms:siegel} and the Calabi--Yau
$3$-fold is also elliptically fibered and the elliptic fiber is of $E_7$ type. We thus see many of the ingredients of $\ll=3$ umbral moonshine appearing
in the context of a specific dual pair of string theories with $N=2$ spacetime supersymmetry. It will be very interesting to investigate whether other aspects
of $\ll=3$ umbral moonshine can be realized in these models and whether $N=2$ dual pairs exist which exhibit elements of umbral moonshine for
$\ll>3$.

A second way that elements of umbral moonshine for $\ll>2$ are likely to appear in string theory involves a generalisation of the generating function discussed
above that counts $1/4$ BPS states. Type II string theory on $K3 \times E$ preserves $N=4$ spacetime supersymmetry. There exist orbifold versions of this model
that also preserve $N=4$ spacetime supersymmetry that are known in the physics literature as CHL models. To construct such a model one chooses a $K3$
surface with a $\ZZ/m \ZZ$ hyper-K\"ahler automorphism and constructs the orbifold theory $(K3 \times E)/(\ZZ/m \ZZ)$ where $\ZZ/m \ZZ$ is a freely acting symmetry realised as the product of a hyper-K\"ahler automorphism of the $K3$ surface and an order $m$ translation along $E$. For $m=2,3,4$ it is possible to find $K3$
surfaces with $(\ZZ/m \ZZ) \times (\ZZ/m \ZZ)$ symmetry, and in this case one can construct a CHL model that utilises the first $\ZZ/m \ZZ$ factor in the orbifold
construction and has the  second $\ZZ/m \ZZ$ factor acting as  a symmetry which preserves the holomorphic $3$-form of the Calabi--Yau space
$(K3 \times E)/(\ZZ/m \ZZ)$. It was proposed in  \cite{Govindarajan2010} that the generating function which counts $1/4$ BPS states weighted by an element
of $\ZZ/m \ZZ$ is the reciprocal of the Siegel form $\Phi^{(m+1)}$ for $m \in \{2,3,4\}$.  In both this construction and in the study of $N=2$ dual pairs the appearance of
some of the umbral Siegel forms $\Phi^{(\ll)}$ can be anticipated, although many details remain to be worked out. The action of the umbral groups $G^{(\ll)}$
remains more elusive, and deeper insight into possible connections between string theory and umbral moonshine will undoubtedly require progress in understanding
the actions of these groups in terms of their action on BPS states.

We plan to elaborate further on the topics mentioned above in future work.

%---------------------------------------------------------------------------------------%
\section*{Acknowledgements}
%---------------------------------------------------------------------------------------%

We thank Chris Cummins, Atish Dabholkar, Noam Elkies, Igor Frenkel, George Glauberman, Ian Grojnowski, Kobi Kremnizer, Anthony Licata, Greg Moore, Sameer Murthy, Ken Ono, Cumrun Vafa, Erik Verlinde, Edward Witten, Shing-Tung Yau and Don Zagier for helpful comments and discussions. We thank Noam Elkies and Ken Ono in particular for communication regarding the non-vanishing of critical central values for modular $L$-functions.

We are especially grateful to Don Zagier for showing us his computation of the first few Fourier coefficients of the functions (that we have here denoted) $H^{(\ll)}$ for $\ll\in\{2,3,4,5,7\}$ at the ``Workshop on Mathieu Moonshine'' held in July 2011 at the Swiss Federal Institute of Technology (ETH), Z\"urich. This event was an important catalyst for precipitating the idea that there could be a family of groups attached to a distinguished family of vector-valued modular forms, in a way that generalises the connection between $M_{24}$ and the (scalar-valued) mock modular form $H^{(2)}$. We are also grateful to Kazuhiro Hikami for his talk at the ETH meeting on character decompositions of elliptic genera, and for kindly sharing his slides from that presentation. The analysis described therein proved useful for us in developing a computational approach to approximate Fourier coefficients of the $H^{(\ll)}$ and this furnished an important source of experimental evidence to guide our umbral explorations. We warmly thank the organisers of the ETH meeting for making these exchanges possible and for providing an excellent workshop experience more generally. 

The research of MC is partially supported by the NSF grant FRG 0854971. The work of JH was supported by the NSF grant 0855039, the hospitality of the Aspen Center for Physics, the LPTHE of the Universit\'e Pierre et Marie Curie, and the Isaac Newton Institute for Mathematical Sciences. We also thank George Armhold for instruction on the efficient implementation of Pari in OS X, and Jia-Chen Fu for the provision of computational resources at a crucial stage in our investigations.

We are grateful to George Glauberman, Sander Mack-Crane, and the referee, for pointing out errors in an earlier draft. We are grateful to the referee also for suggesting many valuable improvements to the text.

%------------------------------------------------------------------------%
\appendix
%------------------------------------------------------------------------%

%------------------------------------------------------------------------%
\section{Modular Forms}\label{sec:modforms}
%------------------------------------------------------------------------%

\subsection{Dedekind Eta Function}\label{sec:mdlrfrms:dedeta}
The {\em Dedekind eta function}, denoted $\eta(\t)$, is a holomorphic function on the upper half-plane defined by the infinite product 
$$\eta(\t)=q^{1/24}\prod_{n\geq 1}(1-q^n)$$
where $q=\ex(\t)=e^{\tpi \t}$. It is a modular form of weight $1/2$ for the modular group $\SL_2(\ZZ)$ with multiplier $\e:\SL_2(\ZZ)\to\CC^*$, which means that 
$$\e(\g)\eta(\g\t){\rm jac}(\g,\t)^{1/4}=\eta(\t)$$
for all $\g = \big(\begin{smallmatrix} a&b\\ c&d \end{smallmatrix}\big) \in\SL_2(\ZZ)$, where ${\rm jac}(\g,\t)=(c\t+d)^{-2}$. The {\em multiplier system} $\e$ may be described explicitly as 
\be\label{Dedmult}
\e\bem a&b\\ c&d\eem 
=
\begin{cases}
	\ex(-b/24),&c=0,\,d=1\\
	\ex(-(a+d)/24c+s(d,c)/2+1/8),&c>0
\end{cases}
\ee
where $s(d,c)=\sum_{m=1}^{c-1}(d/c)((md/c))$ and $((x))$ is $0$ for $x\in\ZZ$ and $x-\lfloor x\rfloor-1/2$ otherwise. We can deduce the values $\e(a,b,c,d)$ for $c<0$, or for $c=0$ and $d=-1$, by observing that $\e(-\g)=\e(\g)\ex(1/4)$ for $\g\in\SL_2(\ZZ)$.

Let $T$ denote the element of $\SL_2(\ZZ)$ such that $\tr(T)=2$ and $T\t=\t+1$ for $\t\in \HH$. Observe that
$$
\e(T^m\g)=\e(\g T^m)=\ex(-m/24)\e(\g)
$$
for $m\in\ZZ$.

\subsection{Jacobi Theta Functions}\label{sec:JacTheta}
We define the {\em Jacboi theta functions} $\th_i(\t,z)$ as follows for $q=e(\t)$ and $y=e(z)$.
$$	\th_1(\t,z)
	= -i q^{1/8} y^{1/2} \prod_{n=1}^\inf (1-q^n) (1-y q^n) (1-y^{-1} q^{n-1})$$
$$	\th_2(\t,z)
	=  q^{1/8} y^{1/2} \prod_{n=1}^\inf (1-q^n) (1+y q^n) (1+y^{-1} q^{n-1})$$ 
$$	\th_3(\t,z)
	=  \prod_{n=1}^\inf (1-q^n) (1+y \,q^{n-1/2}) (1+y^{-1} q^{n-1/2})$$
$$	\th_4(\t,z) 
	=  \prod_{n=1}^\inf (1-q^n) (1-y \,q^{n-1/2}) (1-y^{-1} q^{n-1/2})$$
Note that there are competing conventions for $\th_1(\t,z)$ in the literature and our normalisation may differ from another by a factor of $-1$ (or possibly $\pm i$).

\subsection{Higher Level Modular Forms}

The congruence subgroups of the modular group $\SL_2(\ZZ)$ that are most relevant for this paper are 
\bea\label{congruence1}
\G_0(N) &=& \bigg\{\bigg[\begin{array}{cc} a&b\\c&d\end{array}\bigg]\in \SL_2(\ZZ)  , c= 0 \text{ mod }N\;
\bigg\}.
\eea
For $N>1$ a (non-zero) modular form of weight $2$ for $\G_0(N)$ is given by
\bea\label{Eisenstein_form}
\L_N(\t)&=&N\, q\pa_q\log\left(\frac{\eta(N\tau)}{\eta(\tau)}\right)\\\notag&=&\frac{N(N-1)}{24}\left(1+\frac{24}{N-1}\sum_{k>0}\s(k) (q^k -N q^{Nk})\right)
\eea
where $\s(k)$ is the divisor function $\s(k)=\sum_{d\lvert k}d$.

Observe that a modular form on $\Gamma_0(N)$ is a modular form on $\Gamma_0(M)$ whenever $N|M$, and for some small $N$ the space of forms of weight $2$ is spanned by the $\L_d(\t)$ for $d$ a divisor of $N$. By contrast, in the case that $N=11$ we have the {\em newform} 
\be\label{11newforms}
f_{11}(\t) = \eta^{2}(\t)\eta^{2}(11\t)
\ee
which is a cusp form of weight $2$ for $\Gamma_0(11)$ that is not a multiple of $\L_{11}(\t)$. We meet the newforms
\bea
f_{14}(\t) &=& \eta(\t)\eta(2\t)\eta(7\t)\eta(14\t,)\\
f_{15}(\t) &=& \eta(\t)\eta(3\t)\eta(5\t)\eta(15\t),\\
f_{20}(\t) &=& \eta(2\t)^2 \eta(10\t)^2,
\eea
at $N=14$, $N=15$ and $N=20$, respectively, and together with $f_N$ the functions $\L_d(\t)$ for $d|N$ span the space of weight $2$ forms on $\G_0(N)$ for $N=11,14,15$.

For $N=23$ there is a two dimensional space of newforms. We may use the basis
\begin{gather}
\begin{split}
f_{23,a}(\t)&= \frac{\eta(\tau)^3\eta(23\tau)^3}{\eta(2\tau)\eta(46\tau)}
	+3\h(\t)^2\h(23\t)^2
	+4\eta(\tau)\eta(2\tau)\eta(23\tau)\eta(46\tau)
	+4\eta(2\tau)^2\eta(46\tau)^2\\
f_{23,b}(\t)&= \h(\t)^2\h(23\t)^2\label{phi232}
\end{split}
\end{gather}
satisfying $f_{23,a}=q+O(q^3)$ and $f_{23,b}=q^2+O(q^3)$. (The normalised Hecke-eigenforms of weight $2$ for $\G_0(23)$ are $f_{23,a}-\frac{1}{2}(1\pm \sqrt{5})f_{23,b}$.) 

For $N=44$ there is a unique newform up to a multiplicative constant. It satisfies 
$$
f_{44}(\t) = q+q^3-3 q^5+2 q^7-2 q^9-q^{11}-4 q^{13}-3 q^{15}+6 q^{17}+8 q^{19}+2 q^{21}-3 q^{23}+4 q^{25}-5 q^{27}+O(q^{28}). 
$$
See \cite{modi} and Chapter 4.D of \cite{from_number} for more details.
A discussion of the ring of weak Jacobi forms of higher level can be found in \cite{aoki}. 

%------------------------------------------------------------------------%
\section{Characters}\label{sec:chars}
%------------------------------------------------------------------------%

In \S\ref{sec:chars:irr} we give character tables (with power maps and Frobenius--Schur indicators) for each group $G^{(\ll)}$. These were computed with the aid of the computer algebra package GAP4 \cite{GAP4} using the explicit presentations for the $G^{(\ll)}$ that appear in \S\ref{sec:grps:spcnst}. We use the abbreviations $a_n=\sqrt{-n}$ and $b_n=(-1+\sqrt{-n})/2$ in these tables. 

The tables in \S\ref{sec:chars:eul} furnish the Frame shapes $\Pi_g^{(\ll)}$ and character values $\chi_g^{(\ll)}$ attached to the signed permutation representations (cf. \S\ref{sec:grps:sgnprm}) of the groups $G^{(\ll)}$ given in \S\ref{sec:grps:spcnst}. These Frame shapes and character values can easily be computed by hand; we detail them here explicitly since they can be used to define symbols $n_g|h_g$ which encode the automorphy of the vector-valued mock modular forms $H^{(\ll)}_g$ according to the prescription of \S\ref{sec:mckay:aut}. The symbols $n_g|h_g$ are given in the rows labelled $\G_g$ in \S\ref{sec:chars:eul}.

\begin{sidewaystable}
%---------------------------------------------------------------------------------------%
\subsection{Irreducible Characters}\label{sec:chars:irr}
%---------------------------------------------------------------------------------------%
\begin{center}
\caption{Character table of $G^{(2)}\simeq M_{24}$}\label{tab:chars:irr:2}
\smallskip
\begin{small}
\begin{tabular}{c@{ }|c|@{ }r@{ }r@{ }r@{ }r@{ }r@{ }r@{ }r@{ }r@{ }r@{ }r@{ }r@{ }r@{ }r@{ }r@{ }r@{ }r@{ }r@{ }r@{ }r@{ }r@{ }r@{ }r@{ }r@{ }r@{ }r@{ }r} \toprule
$[g]$	&{FS}&1A	&2A	&2B	&3A	&3B	&4A	&4B	&4C	&5A	&6A	&6B	&7A	&7B	&8A	&10A	&11A	&12A	&12B	&14A	&14B	&15A	&15B	&21A	&21B	&23A	&23B	\\
	\midrule
$[g^{2}]$	&&1A	&1A	&1A	&3A	&3B	&2A	&2A	&2B	&5A	&3A	&3B	&7A	&7B	&4B	&5A	&11A	&6A	&6B	&7A	&7B	&15A	&15B	&21A	&21B	&23A	&23B	\\
$[g^{3}]$	&&1A	&2A	&2B	&1A	&1A	&4A	&4B	&4C	&5A	&2A	&2B	&7B	&7A	&8A	&10A	&11A	&4A	&4C	&14B	&14A	&5A	&5A	&7B	&7A	&23A	&23B	\\
$[g^{5}]$	&&1A	&2A	&2B	&3A	&3B	&4A	&4B	&4C	&1A	&6A	&6B	&7B	&7A	&8A	&2B	&11A	&12A	&12B	&14B	&14A	&3A	&3A	&21B	&21A	&23B	&23A	\\
$[g^{7}]$	&&1A	&2A	&2B	&3A	&3B	&4A	&4B	&4C	&5A	&6A	&6B	&1A	&1A	&8A	&10A	&11A	&12A	&12B	&2A	&2A	&15B	&15A	&3B	&3B	&23B	&23A	\\
$[g^{11}]$	&&1A	&2A	&2B	&3A	&3B	&4A	&4B	&4C	&5A	&6A	&6B	&7A	&7B	&8A	&10A	&1A	&12A	&12B	&14A	&14B	&15B	&15A	&21A	&21B	&23B	&23A	\\
$[g^{23}]$	&&1A	&2A	&2B	&3A	&3B	&4A	&4B	&4C	&5A	&6A	&6B	&7A	&7B	&8A	&10A	&11A	&12A	&12B	&14A	&14B	&15A	&15B	&21A	&21B	&1A	&1A	\\
	\midrule
$\chi_{1}$	&$+$&$1$	&$1$	&$1$	&$1$	&$1$	&$1$	&$1$	&$1$	&$1$	&$1$	&$1$	&$1$	&$1$	&$1$	&$1$	&$1$	&$1$	&$1$	&$1$	&$1$	&$1$	&$1$	&$1$	&$1$	&$1$	&$1$	\\
$\chi_{2}$	&$+$&$23$	&$7$	&$-1$	&$5$	&$-1$	&$-1$	&$3$	&$-1$	&$3$	&$1$	&$-1$	&$2$	&$2$	&$1$	&$-1$	&$1$	&$-1$	&$-1$	&$0$	&$0$	&$0$	&$0$	&$-1$	&$-1$	&$0$	&$0$	\\
$\chi_{3}$	&$\circ$&$45$	&$-3$	&$5$	&$0$	&$3$	&$-3$	&$1$	&$1$	&$0$	&$0$	&$-1$	&$b_7$	&$\overline{b_7}$	&$-1$	&$0$	&$1$	&$0$	&$1$	&$-b_7$	&$-\overline{b_7}$	&$0$	&$0$	&$b_7$	&$\overline{b_7}$	&$-1$	&$-1$	\\
$\chi_{4}$	&$\circ$&$45$	&$-3$	&$5$	&$0$	&$3$	&$-3$	&$1$	&$1$	&$0$	&$0$	&$-1$	&$\overline{b_7}$	&$b_7$	&$-1$	&$0$	&$1$	&$0$	&$1$	&$-\overline{b_7}$	&$-b_7$	&$0$	&$0$	&$\overline{b_7}$	&$b_7$	&$-1$	&$-1$	\\
$\chi_{5}$	&$\circ$&$231$	&$7$	&$-9$	&$-3$	&$0$	&$-1$	&$-1$	&$3$	&$1$	&$1$	&$0$	&$0$	&$0$	&$-1$	&$1$	&$0$	&$-1$	&$0$	&$0$	&$0$	&$b_{15}$	&$\overline{b_{15}}$	&$0$	&$0$	&$1$	&$1$	\\
$\chi_{6}$	&$\circ$&$231$	&$7$	&$-9$	&$-3$	&$0$	&$-1$	&$-1$	&$3$	&$1$	&$1$	&$0$	&$0$	&$0$	&$-1$	&$1$	&$0$	&$-1$	&$0$	&$0$	&$0$	&$\overline{b_{15}}$	&$b_{15}$	&$0$	&$0$	&$1$	&$1$	\\
$\chi_{7}$	&$+$&$252$	&$28$	&$12$	&$9$	&$0$	&$4$	&$4$	&$0$	&$2$	&$1$	&$0$	&$0$	&$0$	&$0$	&$2$	&$-1$	&$1$	&$0$	&$0$	&$0$	&$-1$	&$-1$	&$0$	&$0$	&$-1$	&$-1$	\\
$\chi_{8}$	&$+$&$253$	&$13$	&$-11$	&$10$	&$1$	&$-3$	&$1$	&$1$	&$3$	&$-2$	&$1$	&$1$	&$1$	&$-1$	&$-1$	&$0$	&$0$	&$1$	&$-1$	&$-1$	&$0$	&$0$	&$1$	&$1$	&$0$	&$0$	\\
$\chi_{9}$	&$+$&$483$	&$35$	&$3$	&$6$	&$0$	&$3$	&$3$	&$3$	&$-2$	&$2$	&$0$	&$0$	&$0$	&$-1$	&$-2$	&$-1$	&$0$	&$0$	&$0$	&$0$	&$1$	&$1$	&$0$	&$0$	&$0$	&$0$	\\
$\chi_{10}$&$\circ$&$770$	&$-14$	&$10$	&$5$	&$-7$	&$2$	&$-2$	&$-2$	&$0$	&$1$	&$1$	&$0$	&$0$	&$0$	&$0$	&$0$	&$-1$	&$1$	&$0$	&$0$	&$0$	&$0$	&$0$	&$0$	&$b_{23}$	&$\overline{b_{23}}$	\\
$\chi_{11}$&$\circ$&$770$	&$-14$	&$10$	&$5$	&$-7$	&$2$	&$-2$	&$-2$	&$0$	&$1$	&$1$	&$0$	&$0$	&$0$	&$0$	&$0$	&$-1$	&$1$	&$0$	&$0$	&$0$	&$0$	&$0$	&$0$	&$\overline{b_{23}}$	&$b_{23}$	\\
$\chi_{12}$&$\circ$&$990$	&$-18$	&$-10$	&$0$	&$3$	&$6$	&$2$	&$-2$	&$0$	&$0$	&$-1$	&$b_7$	&$\overline{b_7}$	&$0$	&$0$	&$0$	&$0$	&$1$	&$b_7$	&$\overline{b_7}$	&$0$	&$0$	&$b_7$	&$\overline{b_7}$	&$1$	&$1$	\\
$\chi_{13}$&$\circ$&$990$	&$-18$	&$-10$	&$0$	&$3$	&$6$	&$2$	&$-2$	&$0$	&$0$	&$-1$	&$\overline{b_7}$	&$b_7$	&$0$	&$0$	&$0$	&$0$	&$1$	&$\overline{b_7}$	&$b_7$	&$0$	&$0$	&$\overline{b_7} $	&$b_7$	&$1$	&$1$	\\
$\chi_{14}$&$+$&$1035$	&$27$	&$35$	&$0$	&$6$	&$3$	&$-1$	&$3$	&$0$	&$0$	&$2$	&$-1$	&$-1$	&$1$	&$0$	&$1$	&$0$	&$0$	&$-1$	&$-1$	&$0$	&$0$	&$-1$	&$-1$	&$0$	&$0$	\\
$\chi_{15}$&$\circ$&$1035$	&$-21$	&$-5$	&$0$	&$-3$	&$3$	&$3$	&$-1$	&$0$	&$0$	&$1$	&$2b_7$	&$2\overline{b_7}$	&$-1$	&$0$	&$1$	&$0$	&$-1$	&$0$	&$0$	&$0$	&$0$	&$-b_7$	&$-\overline{b_7} $	&$0$	&$0$	\\
$\chi_{16}$&$\circ$&$1035$	&$-21$	&$-5$	&$0$	&$-3$	&$3$	&$3$	&$-1$	&$0$	&$0$	&$1$	&$2\overline{b_7}$	&$2b_7$	&$-1$	&$0$	&$1$	&$0$	&$-1$	&$0$	&$0$	&$0$	&$0$	&$-\overline{b_7}$	&$-b_7 $	&$0$	&$0$	\\
$\chi_{17}$&$+$&$1265$	&$49$	&$-15$	&$5$	&$8$	&$-7$	&$1$	&$-3$	&$0$	&$1$	&$0$	&$-2$	&$-2$	&$1$	&$0$	&$0$	&$-1$	&$0$	&$0$	&$0$	&$0$	&$0$	&$1$	&$1$	&$0$	&$0$	\\
$\chi_{18}$&$+$&$1771$	&$-21$	&$11$	&$16$	&$7$	&$3$	&$-5$	&$-1$	&$1$	&$0$	&$-1$	&$0$	&$0$	&$-1$	&$1$	&$0$	&$0$	&$-1$	&$0$	&$0$	&$1$	&$1$	&$0$	&$0$	&$0$	&$0$	\\
$\chi_{19}$&$+$&$2024$	&$8$	&$24$	&$-1$	&$8$	&$8$	&$0$	&$0$	&$-1$	&$-1$	&$0$	&$1$	&$1$	&$0$	&$-1$	&$0$	&$-1$	&$0$	&$1$	&$1$	&$-1$	&$-1$	&$1$	&$1$	&$0$	&$0$	\\
$\chi_{20}$&$+$&$2277$	&$21$	&$-19$	&$0$	&$6$	&$-3$	&$1$	&$-3$	&$-3$	&$0$	&$2$	&$2$	&$2$	&$-1$	&$1$	&$0$	&$0$	&$0$	&$0$	&$0$	&$0$	&$0$	&$-1$	&$-1$	&$0$	&$0$	\\
$\chi_{21}$&$+$&$3312$	&$48$	&$16$	&$0$	&$-6$	&$0$	&$0$	&$0$	&$-3$	&$0$	&$-2$	&$1$	&$1$	&$0$	&$1$	&$1$	&$0$	&$0$	&$-1$	&$-1$	&$0$	&$0$	&$1$	&$1$	&$0$	&$0$	\\
$\chi_{22}$&$+$&$3520$	&$64$	&$0$	&$10$	&$-8$	&$0$	&$0$	&$0$	&$0$	&$-2$	&$0$	&$-1$	&$-1$	&$0$	&$0$	&$0$	&$0$	&$0$	&$1$	&$1$	&$0$	&$0$	&$-1$	&$-1$	&$1$	&$1$	\\
$\chi_{23}$&$+$&$5313$	&$49$	&$9$	&$-15$	&$0$	&$1$	&$-3$	&$-3$	&$3$	&$1$	&$0$	&$0$	&$0$	&$-1$	&$-1$	&$0$	&$1$	&$0$	&$0$	&$0$	&$0$	&$0$	&$0$	&$0$	&$0$	&$0$	\\
$\chi_{24}$&$+$&$5544$	&$-56$	&$24$	&$9$	&$0$	&$-8$	&$0$	&$0$	&$-1$	&$1$	&$0$	&$0$	&$0$	&$0$	&$-1$	&$0$	&$1$	&$0$	&$0$	&$0$	&$-1$	&$-1$	&$0$	&$0$	&$1$	&$1$	\\
$\chi_{25}$&$+$&$5796$	&$-28$	&$36$	&$-9$	&$0$	&$-4$	&$4$	&$0$	&$1$	&$-1$	&$0$	&$0$	&$0$	&$0$	&$1$	&$-1$	&$-1$	&$0$	&$0$	&$0$	&$1$	&$1$	&$0$	&$0$	&$0$	&$0$	\\
$\chi_{26}$&$+$&$10395$	&$-21$	&$-45$	&$0$	&$0$	&$3$	&$-1$	&$3$	&$0$	&$0$	&$0$	&$0$	&$0$	&$1$	&$0$	&$0$	&$0$	&$0$	&$0$	&$0$	&$0$	&$0$	&$0$	&$0$	&$-1$	&$-1$	\\\bottomrule
\end{tabular}
\end{small}
\end{center}
\end{sidewaystable}

\begin{sidewaystable}

\begin{center}
\caption{Character table of $G^{(3)}\simeq 2.M_{12}$}\label{tab:chars:irr:3}

\smallskip

\begin{tabular}{c@{ }|c|@{\;}r@{ }r@{ }r@{ }r@{ }r@{ }r@{ }r@{ }r@{ }r@{ }r@{ }r@{ }r@{ }r@{ }r@{ }r@{ }r@{ }r@{ }r@{ }r@{ }r@{ }r@{ }r@{ }r@{ }r@{ }r@{ }r}\toprule
$[g]$&FS&   	1A&   	2A&   	4A&   	2B&   	2C&   	3A&   	6A&   	3B&   	6B&   	4B&   	4C&   	5A&   	10A&   	12A&   	6C&   	6D&   	8A&   	8B&   	 8C&   8D&   	20A&   	20B&   	11A&   	22A&   	11B&   	22B\\ 
	\midrule
$[g^2]$&&	1A&			1A&			2A&			1A&			1A&			3A&			3A&			3B&			3B&			2B&			2B&			5A&			5A&			6B&			3A&			3A&			4B&			4B&			4C&			4C&			10A&		10A&		11B&		11B&		11A&		11A\\		
$[g^3]$& &  1A&   		2A&   		4A&   		2B&   		2C&   		1A&   		2A&   		1A&   		2A&   		4B& 	  		4C&   		5A&   		10A&   		4A&   		2B&   		2C&   		8A&   		8B&   	 	8C&  		8D&   		20A&   		20B&   		11A&   		22A&   		11B&   		22B\\
$[g^5]$& &  1A&   		2A&   		4A&   		2B&   		2C&   		3A&   		6A&   		3B&   		6B&   		4B& 	  		4C&   		1A&   		2A&   		12A&   		6C&   		6D&   		8B&   		8A&   	 	8D&  		8C&   		4A&   		4A&   		11A&   		22A&   		11B&   		22B\\
$[g^{11}]$&&   1A&   		2A&   		4A&   		2B&   		2C&   		3A&   		6A&   		3B&   		6B&   		4B& 	  		4C&   		5A&   		10A&   		12A&   		6C&   		6D&   		8A&   		8B&   	 	8C&  		8D&   		20B&   		20A&   		1A&   		2A&   		1A&   		2A\\
	\midrule
$\chi_{1}$&$+$&   $1$&   $1$&   $1$&   $1$&   $1$&   $1$&   $1$&   $1$&   $1$&   $
1$&   $1$&   $1$&   $1$&   $1$&   $1$&   $1$&   $1$&   $1$&   $1$&   $1$&   $
1$&   $1$&   $1$&   $1$&   $1$&   $1$\\
$\chi_{2}$&$+$&   $11$&   $11$&   $-1$&   $3$&   $3$&   $2$&   $2$&   $-1$&   $
-1$&   $-1$&   $3$&   $1$&   $1$&   $-1$&   $0$&   $0$&   $-1$&   $-1$&   $
1$&   $1$&   $-1$&   $-1$&   $0$&   $0$&   $0$&   $0$\\
$\chi_{3}$&$+$&   $11$&   $11$&   $-1$&   $3$&   $3$&   $2$&   $2$&   $-1$&   $
-1$&   $3$&   $-1$&   $1$&   $1$&   $-1$&   $0$&   $0$&   $1$&   $1$&   $
-1$&   $-1$&   $-1$&   $-1$&   $0$&   $0$&   $0$&   $0$\\
$\chi_{4}$&$\circ$&   $16$&   $16$&   $4$&   $0$&   $0$&   $-2$&   $-2$&   $1$&   $
1$&   $0$&   $0$&   $1$&   $1$&   $1$&   $0$&   $0$&   $0$&   $0$&   $0$&   $
0$&   $-1$&   $-1$&   $b_{11}$&   $
b_{11}$&   $\overline{b_{11}}$&   $\overline{b_{11}}$\\
$\chi_{5}$&$\circ$&   $16$&   $16$&   $4$&   $0$&   $0$&   $-2$&   $-2$&   $1$&   $
1$&   $0$&   $0$&   $1$&   $1$&   $1$&   $0$&   $0$&   $0$&   $0$&   $0$&   $
0$&   $-1$&   $-1$&   $\overline{b_{11}}$&   $
\overline{b_{11}}$&   $b_{11}$&   $b_{11}$\\
$\chi_{6}$&$+$&   $45$&   $45$&   $5$&   $-3$&   $-3$&   $0$&   $0$&   $3$&   $
3$&   $1$&   $1$&   $0$&   $0$&   $-1$&   $0$&   $0$&   $-1$&   $-1$&   $
-1$&   $-1$&   $0$&   $0$&   $1$&   $1$&   $1$&   $1$\\
$\chi_{7}$&$+$&   $54$&   $54$&   $6$&   $6$&   $6$&   $0$&   $0$&   $0$&   $
0$&   $2$&   $2$&   $-1$&   $-1$&   $0$&   $0$&   $0$&   $0$&   $0$&   $
0$&   $0$&   $1$&   $1$&   $-1$&   $-1$&   $-1$&   $-1$\\
$\chi_{8}$&$+$&   $55$&   $55$&   $-5$&   $7$&   $7$&   $1$&   $1$&   $1$&   $
1$&   $-1$&   $-1$&   $0$&   $0$&   $1$&   $1$&   $1$&   $-1$&   $-1$&   $
-1$&   $-1$&   $0$&   $0$&   $0$&   $0$&   $0$&   $0$\\
$\chi_{9}$&$+$&   $55$&   $55$&   $-5$&   $-1$&   $-1$&   $1$&   $1$&   $1$&   $
1$&   $3$&   $-1$&   $0$&   $0$&   $1$&   $-1$&   $-1$&   $-1$&   $-1$&   $
1$&   $1$&   $0$&   $0$&   $0$&   $0$&   $0$&   $0$\\
$\chi_{10}$&$+$&   $55$&   $55$&   $-5$&   $-1$&   $-1$&   $1$&   $1$&   $1$&   $
1$&   $-1$&   $3$&   $0$&   $0$&   $1$&   $-1$&   $-1$&   $1$&   $1$&   $
-1$&   $-1$&   $0$&   $0$&   $0$&   $0$&   $0$&   $0$\\
$\chi_{11}$&$+$&   $66$&   $66$&   $6$&   $2$&   $2$&   $3$&   $3$&   $0$&   $
0$&   $-2$&   $-2$&   $1$&   $1$&   $0$&   $-1$&   $-1$&   $0$&   $0$&   $
0$&   $0$&   $1$&   $1$&   $0$&   $0$&   $0$&   $0$\\
$\chi_{12}$&$+$&   $99$&   $99$&   $-1$&   $3$&   $3$&   $0$&   $0$&   $3$&   $
3$&   $-1$&   $-1$&   $-1$&   $-1$&   $-1$&   $0$&   $0$&   $1$&   $1$&   $
1$&   $1$&   $-1$&   $-1$&   $0$&   $0$&   $0$&   $0$\\
$\chi_{13}$&$+$&   $120$&   $120$&   $0$&   $-8$&   $-8$&   $3$&   $3$&   $0$&   $
0$&   $0$&   $0$&   $0$&   $0$&   $0$&   $1$&   $1$&   $0$&   $0$&   $0$&   $
0$&   $0$&   $0$&   $-1$&   $-1$&   $-1$&   $-1$\\
$\chi_{14}$&$+$&   $144$&   $144$&   $4$&   $0$&   $0$&   $0$&   $0$&   $-3$&   $
-3$&   $0$&   $0$&   $-1$&   $-1$&   $1$&   $0$&   $0$&   $0$&   $0$&   $
0$&   $0$&   $-1$&   $-1$&   $1$&   $1$&   $1$&   $1$\\
$\chi_{15}$&$+$&   $176$&   $176$&   $-4$&   $0$&   $0$&   $-4$&   $-4$&   $
-1$&   $-1$&   $0$&   $0$&   $1$&   $1$&   $-1$&   $0$&   $0$&   $0$&   $
0$&   $0$&   $0$&   $1$&   $1$&   $0$&   $0$&   $0$&   $0$\\
$\chi_{16}$&$\circ$&   $10$&   $-10$&   $0$&   $-2$&   $2$&   $1$&   $-1$&   $-2$&   $
2$&   $0$&   $0$&   $0$&   $0$&   $0$&   $1$&   $-1$&   $a_{2}$&   $
\overline{a_{2}}$&   $a_{2}$&   $\overline{a_{2}}$&   $0$&   $0$&   $-1$&   $
1$&   $-1$&   $1$\\
$\chi_{17}$&$\circ$&   $10$&   $-10$&   $0$&   $-2$&   $2$&   $1$&   $-1$&   $-2$&   $
2$&   $0$&   $0$&   $0$&   $0$&   $0$&   $1$&   $-1$&   $\overline{a_{2}}$&   $
a_{2}$&   $\overline{a_{2}}$&   $a_{2}$&   $0$&   $0$&   $-1$&   $
1$&   $-1$&   $1$\\
$\chi_{18}$&$+$&   $12$&   $-12$&   $0$&   $4$&   $-4$&   $3$&   $-3$&   $0$&   $
0$&   $0$&   $0$&   $2$&   $-2$&   $0$&   $1$&   $-1$&   $0$&   $0$&   $
0$&   $0$&   $0$&   $0$&   $1$&   $-1$&   $1$&   $-1$\\
$\chi_{19}$&$-$&   $32$&   $-32$&   $0$&   $0$&   $0$&   $-4$&   $4$&   $2$&   $
-2$&   $0$&   $0$&   $2$&   $-2$&   $0$&   $0$&   $0$&   $0$&   $0$&   $
0$&   $0$&   $0$&   $0$&   $-1$&   $1$&   $-1$&   $1$\\
$\chi_{20}$&$\circ$&   $44$&   $-44$&   $0$&   $4$&   $-4$&   $-1$&   $1$&   $2$&   $
-2$&   $0$&   $0$&   $-1$&   $1$&   $0$&   $1$&   $-1$&   $0$&   $0$&   $
0$&   $0$&   $a_{5}$&   $\overline{a_{5}}$&   $0$&   $0$&   $0$&   $0$\\
$\chi_{21}$&$\circ$&   $44$&   $-44$&   $0$&   $4$&   $-4$&   $-1$&   $1$&   $2$&   $
-2$&   $0$&   $0$&   $-1$&   $1$&   $0$&   $1$&   $-1$&   $0$&   $0$&   $
0$&   $0$&   $\overline{a_{5}}$&   $a_{5}$&   $0$&   $0$&   $0$&   $0$\\
$\chi_{22}$&$\circ$&   $110$&   $-110$&   $0$&   $-6$&   $6$&   $2$&   $-2$&   $
2$&   $-2$&   $0$&   $0$&   $0$&   $0$&   $0$&   $0$&   $0$&   $
a_{2}$&   $\overline{a_{2}}$&   $\overline{a_{2}}$&   $a_{2}$&   $0$&   $
0$&   $0$&   $0$&   $0$&   $0$\\
$\chi_{23}$&$\circ$&   $110$&   $-110$&   $0$&   $-6$&   $6$&   $2$&   $-2$&   $
2$&   $-2$&   $0$&   $0$&   $0$&   $0$&   $0$&   $0$&   $0$&   $
\overline{a_{2}}$&   $a_{2}$&   $a_{2}$&   $\overline{a_{2}}$&   $0$&   $
0$&   $0$&   $0$&   $0$&   $0$\\
$\chi_{24}$&$+$&   $120$&   $-120$&   $0$&   $8$&   $-8$&   $3$&   $-3$&   $
0$&   $0$&   $0$&   $0$&   $0$&   $0$&   $0$&   $-1$&   $1$&   $0$&   $0$&   $
0$&   $0$&   $0$&   $0$&   $-1$&   $1$&   $-1$&   $1$\\
$\chi_{25}$&$\circ$&   $160$&   $-160$&   $0$&   $0$&   $0$&   $-2$&   $2$&   $
-2$&   $2$&   $0$&   $0$&   $0$&   $0$&   $0$&   $0$&   $0$&   $0$&   $0$&   $
0$&   $0$&   $0$&   $0$&   $-b_{11}$&   $
b_{11}$&   $-\overline{b_{11}}$&   $\overline{b_{11}}$\\
$\chi_{26}$&$\circ$&   $160$&   $-160$&   $0$&   $0$&   $0$&   $-2$&   $2$&   $
-2$&   $2$&   $0$&   $0$&   $0$&   $0$&   $0$&   $0$&   $0$&   $0$&   $0$&   $
0$&   $0$&   $0$&   $0$&   $-\overline{b_{11}}$&   $
\overline{b_{11}}$&   $-b_{11}$&   $b_{11}$\\ \bottomrule
\end{tabular}
\end{center}

\end{sidewaystable}

\begin{table}
\begin{center}
\caption{Character table of ${G}^{(4)}\simeq 2.\AGL_3(2)$}\label{tab:chars:irr:4}

\smallskip
\begin{small}
\begin{tabular}{c|c|rrrrrrrrrrrrrrrrrrrrrrrrrrr} \toprule
$[g]$&FS&	   1A&   2A&   2B&   4A&   4B&   2C&   3A&   6A&   6B&   6C&   8A&   4C&   7A&   14A&   7B&   14B
	\\ \midrule
$[g^2]$&&   	1A&   1A&   	1A&   	2A&			2B&			1A&   	3A&   	3A&   		3A&   	3A&   	4A&   	2C&   	7A&		7A&   7B&   7B\\ 
$[g^3]$&&   	1A&   2A&   	2B&   	4A&			4B&			2C&   	1A&   	2A&   		2B&   	2B&   	8A&   	4C&   	7B&   	14B&   7A&   14A\\ 
$[g^7]$&&   	1A&   2A&   	2B&   	4A& 			4B&  		2C&   	3A&   	6A&   		6B&   	6C&   	8A&   	4C&   	1A&   	2A&   1A&   2A\\ 
	\midrule	
$\chi_{1}$&$+$&   $1$&   $1$&   $1$&   $1$&   $1$&   $1$&   $1$&   $1$&   $1$&   $
1$&   $1$&   $1$&   $1$&   $1$&   $1$&   $1$\\
$\chi_{2}$&$\circ$&   $3$&   $3$&   $3$&   $-1$&   $-1$&   $-1$&   $0$&   $0$&   $
0$&   $0$&   $1$&   $1$&   $b_{7}$&   $b_{7}$&   $\overline{b_{7}}$&   $\overline{b_{7}}$\\
$\chi_{3}$&$\circ$&   $3$&   $3$&   $3$&   $-1$&   $-1$&   $-1$&   $0$&   $0$&   $
0$&   $0$&   $1$&   $1$&   $\overline{b_{7}}$&   $\overline{b_{7}}$&   $b_{7}$&   $b_{7}$\\
$\chi_{4}$&$+$&   $6$&   $6$&   $6$&   $2$&   $2$&   $2$&   $0$&   $0$&   $0$&   $
0$&   $0$&   $0$&   $-1$&   $-1$&   $-1$&   $-1$\\
$\chi_{5}$&$+$&   $7$&   $7$&   $7$&   $-1$&   $-1$&   $-1$&   $1$&   $1$&   $
1$&   $1$&   $-1$&   $-1$&   $0$&   $0$&   $0$&   $0$\\
$\chi_{6}$&$+$&   $8$&   $8$&   $8$&   $0$&   $0$&   $0$&   $-1$&   $-1$&   $
-1$&   $-1$&   $0$&   $0$&   $1$&   $1$&   $1$&   $1$\\
$\chi_{7}$&$+$&   $7$&   $7$&   $-1$&   $3$&   $-1$&   $-1$&   $1$&   $1$&   $
-1$&   $-1$&   $1$&   $-1$&   $0$&   $0$&   $0$&   $0$\\
$\chi_{8}$&$+$&   $7$&   $7$&   $-1$&   $-1$&   $-1$&   $3$&   $1$&   $1$&   $
-1$&   $-1$&   $-1$&   $1$&   $0$&   $0$&   $0$&   $0$\\
$\chi_{9}$&$+$&   $14$&   $14$&   $-2$&   $2$&   $-2$&   $2$&   $-1$&   $-1$&   $
1$&   $1$&   $0$&   $0$&   $0$&   $0$&   $0$&   $0$\\
$\chi_{10}$&$+$&   $21$&   $21$&   $-3$&   $1$&   $1$&   $-3$&   $0$&   $0$&   $
0$&   $0$&   $-1$&   $1$&   $0$&   $0$&   $0$&   $0$\\
$\chi_{11}$&$+$&   $21$&   $21$&   $-3$&   $-3$&   $1$&   $1$&   $0$&   $0$&   $
0$&   $0$&   $1$&   $-1$&   $0$&   $0$&   $0$&   $0$\\
$\chi_{12}$&$+$&   $8$&   $-8$&   $0$&   $0$&   $0$&   $0$&   $2$&   $-2$&   $
0$&   $0$&   $0$&   $0$&   $1$&   $-1$&   $1$&   $-1$\\
$\chi_{13}$&$\circ$&   $8$&   $-8$&   $0$&   $0$&   $0$&   $0$&   $-1$&   $1$&   $a_{3}$&   $\overline{a_{3}}$&   $0$&   $0$&   $1$&   $-1$&   $1$&   $-1$\\
$\chi_{14}$&$\circ$&   $8$&   $-8$&   $0$&   $0$&   $0$&   $0$&   $-1$&   $1$&   $\overline{a_{3}}$&   $a_{3}$&   $0$&   $0$&   $1$&   $-1$&   $1$&   $-1$\\
$\chi_{15}$&$\circ$&   $24$&   $-24$&   $0$&   $0$&   $0$&   $0$&   $0$&   $0$&   $0$&   $0$&   $0$&   $0$&   $\overline{b_{7}}$&   $-\overline{b_{7}}$&   $b_{7}$&   $-b_{7}$\\
$\chi_{16}$&$\circ$&   $24$&   $-24$&   $0$&   $0$&   $0$&   $0$&   $0$&   $0$&   $0$&   $0$&   $0$&   $0$&   $b_{7}$&   $-b_{7}$&   $\overline{b_{7}}$&   $-\overline{b_{7}}$\\\bottomrule
\end{tabular}
\end{small}
\end{center}

\begin{center}
\caption{Character table of ${G}^{(5)}\simeq \GL_2(5)/2$}\label{tab:chars:irr:5}

\smallskip
\begin{small}
\begin{tabular}{c|c|rrrrrrrrrrrrrrrrrrrrrrrrrrr}\toprule
$[g]$&FS&   1A&   2A&   2B&   2C&   3A&   6A&   5A&   10A&   4A&   4B&   4C&   4D&   12A&   12B\\
	\midrule
$[g^2]$&&	1A&	1A&	1A&	1A&	3A&	3A&	5A&	5A&		2A&	2A&	2C&	2C&	6A&	6A\\
$[g^3]$&&	1A&	2A&	2B&	2C&	1A&	2A&	5A&	10A&	4B&	4A&	4D&	4C&	4B&	4A\\
$[g^5]$&&	1A&	2A&	2B&	2C&	3A&	6A&	1A&	2A&		4A&	4B&	4C&	4D&	12A&	12B\\
	\midrule
${\chi}_{1}$&$+$&   $1$&   $1$&   $1$&   $1$&   $1$&   $1$&   $1$&   $1$&   $1$&   $1$&   $1$&   $1$&   $1$&   $1$\\
${\chi}_{2}$&$+$&   $1$&   $1$&   $1$&   $1$&   $1$&   $1$&   $1$&   $1$&   $
-1$&   $-1$&   $-1$&   $-1$&   $-1$&   $-1$\\
${\chi}_{3}$&$+$&   $4$&   $4$&   $0$&   $0$&   $1$&   $1$&   $-1$&   $-1$&   $
2$&   $2$&   $0$&   $0$&   $-1$&   $-1$\\
${\chi}_{4}$&$+$&   $4$&   $4$&   $0$&   $0$&   $1$&   $1$&   $-1$&   $-1$&   $
-2$&   $-2$&   $0$&   $0$&   $1$&   $1$\\
${\chi}_{5}$&$+$&   $5$&   $5$&   $1$&   $1$&   $-1$&   $-1$&   $0$&   $0$&   $
1$&   $1$&   $-1$&   $-1$&   $1$&   $1$\\
${\chi}_{6}$&$+$&   $5$&   $5$&   $1$&   $1$&   $-1$&   $-1$&   $0$&   $0$&   $-1$&   $-1$&   $1$&   $1$&   $-1$&   $-1$\\
${\chi}_{7}$&$+$&   $6$&   $6$&   $-2$&   $-2$&   $0$&   $0$&   $1$&   $1$&   $
0$&   $0$&   $0$&   $0$&   $0$&   $0$\\
${\chi}_{8}$&$\circ$&   $1$&   $-1$&   $1$&   $-1$&   $1$&   $-1$&   $1$&   $-1$&   $
a_1$&   $-a_1$&   $a_1$&   $-a_1$&   $a_1$&   $-a_1$\\
${\chi}_{9}$&$\circ$&   $1$&   $-1$&   $1$&   $-1$&   $1$&   $-1$&   $1$&   $-1$&   $
-a_1$&   $a_1$&   $-a_1$&   $a_1$&   $-a_1$&   $a_1$\\
${\chi}_{10}$&$\circ$&   $4$&   $-4$&   $0$&   $0$&   $1$&   $-1$&   $-1$&   $1$&   $
2a_1$&   $-2a_1$&   $0$&   $0$&   $-a_1$&   $a_1$\\
${\chi}_{11}$&$\circ$&   $4$&   $-4$&   $0$&   $0$&   $1$&   $-1$&   $-1$&   $1$&   $
-2a_1$&   $2a_1$&   $0$&   $0$&   $a_1$&   $-a_1$\\
${\chi}_{12}$&$\circ$&   $5$&   $-5$&   $1$&   $-1$&   $-1$&   $1$&   $0$&   $0$&   $
a_1$&   $-a_1$&   $-a_1$&   $a_1$&   $a_1$&   $-a_1$\\
${\chi}_{13}$&$\circ$&   $5$&   $-5$&   $1$&   $-1$&   $-1$&   $1$&   $0$&   $0$&   $
-a_1$&   $a_1$&   $a_1$&   $-a_1$&   $-a_1$&   $a_1$\\
${\chi}_{14}$&$+$&   $6$&   $-6$&   $-2$&   $2$&   $0$&   $0$&   $1$&   $-1$&   $
0$&   $0$&   $0$&   $0$&   $0$&   $0$\\\bottomrule
\end{tabular}
\end{small}
\end{center}
\end{table}

\begin{table}
\begin{center}
\caption{Character table of ${G}^{(7)}\simeq\SL_2(3)$}\label{tab:chars:irr:7}
\smallskip
\begin{tabular}{c|c|rrrrrrr}\toprule
$[g]$	&FS&   1A&   2A&   4A&   3A&   6A&   3B&   6B\\
	\midrule
$[g^2]$ 	&&1A	&	1A&		2A&		3B&		3A&		3A&		3B\\
$[g^3]$	&&1A	&	2A&		4A&		1A&		2A&		1A&		2A\\
	\midrule
$\chi_1$&$+$&   $1$&   $1$&   $1$&   $1$&   $1$&   $1$&   $1$\\
$\chi_2$&$\circ$&   $1$&   $1$&   $1$&   ${b_3}$&   $\overline{b_3}$&   $\overline{b_3}$&   ${b_3}$\\
$\chi_3$&$\circ$&   $1$&   $1$&   $1$&   $\overline{b_3}$&   ${b_3}$&   ${b_3}$&   $\overline{b_3}$\\
$\chi_4$&$+$&   $3$&   $3$&   $-1$&   $0$&   $0$&   $0$&   $0$\\
$\chi_5$&$-$&   $2$&   $-2$&   $0$&   $-1$&   $1$&   $-1$&   $1$\\
$\chi_6$&$\circ$&   $2$&   $-2$&   $0$&   $-\overline{b_3}$&   ${b_3}$&   $-{b_3}$&   $\overline{b_3}$\\
$\chi_7$&$\circ$&   $2$&   $-2$&   $0$&   $-{b_3}$&   $\overline{b_3}$&   $-\overline{b_3}$&   ${b_3}$\\\bottomrule
\end{tabular}
\end{center}
\end{table}

\begin{table}
\begin{center}
\caption{Character table of ${G}^{(13)}\simeq 4$}\label{tab:chars:irr:13}
\smallskip
\begin{tabular}{c|c|rrrr}\toprule
$[g]$&FS&   1A&   2A&   4A&   4B\\
	\midrule
$[g^2]$&&	1A&	1A&	2A&	2A\\
	\midrule
$\chi_1$&$+$&   $1$&   $1$&   $1$&   $1$\\
$\chi_2$&$+$&   $1$&   $1$&   $-1$&   $-1$\\
$\chi_3$&$\circ$&   $1$&   $-1$&   $a_1$&   $\overline{a_1}$\\
$\chi_4$&$\circ$&   $1$&   $-1$&   $\overline{a_1}$&   $a_1$\\\bottomrule
\end{tabular}
\end{center}
\end{table}

\clearpage

\begin{sidewaystable}
%---------------------------------------------------------------------------------------%
\subsection{Euler Characters}\label{sec:chars:eul}
%---------------------------------------------------------------------------------------%

The tables in this section describe the Frame shapes $\Pi^{(\ll)}_{g}$ and twisted Euler characters $\chi^{(\ll)}_{g}$ attached to each group $G^{(\ll)}$ via the signed permutation representations given in \S\ref{sec:grps:spcnst}. The rows labelled $\bar{\Pi}^{(\ll)}_{g}$ and $\bar{\chi}^{(\ll)}_g$ describe the corresponding data for the (unsigned) permutation representations. According to the discussion of \S\ref{sec:mckay:aut} the Frame shapes $\Pi^{(\ll)}_{g}$ and $\bar{\Pi}^{(\ll)}_{g}$ (or even just the $\Pi^{(\ll)}_g$) can be used to define symbols $n_g|h_g$ which encode the automorphy of the vector-valued mock modular form $H^{(\ll)}_{g}$; these symbols are given in the rows labelled $\G_g$. We write $n_g$ here as a shorthand for $n_g|1$.

\begin{center}
\caption{Twisted Euler characters and Frame shapes at $\ll=2$}\label{tab:chars:eul:2}
\smallskip
\begin{tabular}{c@{ }|@{ }c@{ }c@{ }c@{ }c@{ }c@{ }c@{ }c@{ }c@{ }c@{ }c@{ }c@{ }c@{ }c@{ }c@{ }c}
\toprule
$[g]$	&1A	&2A	&2B	&3A	&3B	&4A	&4B	&4C	&5A	&6A	&6B	\\
	\midrule
$\G_g$&$1$&$2$&${2|2}$&$3$&$3|3$&$4|2$&$4$&${4|4}$&$5$&$6$&$6|6$&\\
	\midrule
$\chi^{(2)}_{g}$&   $24$&   $8$&   		$0$&   		$6$&   		$0$&   		$0$&   		$4$&  		$0$&   		$4$&   		$2$&   		$0$&  \\
	\midrule
$\Pi^{(2)}_{g}$&$1^{24}$&	$1^{8}2^8$&	$2^{12}$&		$1^63^6$&	$3^8$&	$2^44^4$&	$1^42^24^4$&	$4^6$&		$1^45^4$&		$1^22^23^26^2$&	$6^4$\\\midrule 
\midrule
$[g]$	& 7AB	&8A	&10A	&11A&12A	&12B	&14AB	&15AB	&21AB	&23AB	\\
	\midrule
$\G_g$&$7$&$8$&$10|2$&$11$&$12|2$&$12|12$&$14$&$15$&$21|3$&$23$\\
	\midrule
$\chi^{(2)}_{g}$&   		$3$&   		$2$&   		$0$&   		$2$&   		$0$&   		$0$& 		$1$&   		$1$&   		$0$&   		$1$\\
	\midrule
$\Pi^{(2)}_{g}$&	$1^37^3$&	$1^22^14^18^2$&	$2^210^2$&		$1^211^2$&$2^14^16^112^1$&$12^2$&	$1^12^17^114^1$&	$1^13^15^115^1$&$3^121^1$&	$1^123^1$\\\bottomrule
\end{tabular}
\smallskip
\end{center}

We have $\chi^{(2)}_g=\chi_1(g)+\chi_2(g)$ in the notation of Table \ref{tab:chars:irr:2}.
\end{sidewaystable}

\begin{sidewaystable}
\begin{center}
\caption{Twisted Euler characters and Frame shapes at $\ll=3$}\label{tab:chars:eul:3}
\smallskip
\begin{tabular}{c@{ }|@{\;}c@{\,}c@{\,}c@{\,}c@{\,}c@{\,}c@{\,}c@{\,}c@{\,}c@{\,}c@{\,}c@{\,}c@{\,}c@{\,}c@{\,}c@{\,}c@{\,}c@{\,}c@{\,}c@{\,}c@{\,}c@{\,}c@{\,}c@{\,}c@{\,}c@{\,}c}\toprule
$[g]$&   	1A&   		2A&   		4A&   		2B&   		2C&   		3A&   		6A&   		3B&   		6B&   		4B& 	  		4C&   		5A&   		10A&   		12A&   		6C&   		6D&   		8AB&   	 	8CD&   		20AB&   		11AB&   		22AB\\ 
	\midrule
$\G_g$&$1$&$1|4$&${2|8}$&$2$&$2|2$&$3$&$3|4$&${3|3}$&${3|12}$&$4|2$&$4$&$5$&$5|4$&$6|24$&$6$&$6|2$&$8|4$&$8$&${10|8}$&$11$&$11|4$\\
	\midrule
$\bar{\chi}^{(3)}_{g}$&   $12$&   $12$&   		$0$&   		$4$&   		$4$&   		$3$&   		$3$&  		$0$&   		$0$&   		$0$&   		$4$&   		$2$&   		$2$&   		$0$&   		$1$&   		$1$&   		$0$&   		$2$&   		$0$&   		$1$&   		$1$\\
$\chi^{(3)}_{g}$&   $12$&   $-12$&   		$0$&   		$4$&   		$-4$&   		$3$&   		$-3$&   		$0$&   		$0$&   		$0$&  		$0$&   		$2$&   		$-2$&   		$0$&   		$1$&   		$-1$&   		$0$&   		$0$&   		$0$&   		$1$&   		$-1$\\
	\midrule
$\bar{\Pi}^{(3)}_{g}$&$1^{12}$&	$1^{12}$&	$2^6$&		$1^42^4$&	$1^42^4$&	$1^33^3$&	$1^33^3$&	$3^4$&		$3^4$&		$2^24^2$&	$1^44^2$&	$1^25^2$&	$1^25^2$&	$6^2$&		$1^12^13^16^1$&$1^12^13^16^1$&$4^18^1$&	$1^22^18^1$&	$2^110^1$&$1^111^1$&	$1^111^1$\\
$\Pi^{(3)}_{g}$&$1^{12}$&	$\tfrac{2^{12}}{1^{12}}$&$\tfrac{4^6}{2^6}$&	$1^42^4$&	$\frac{2^8}{1^4}$&	$1^33^3$&	$\tfrac{2^36^3}{1^33^3}$&$3^4$&	$\tfrac{6^4}{3^4}$&	$2^24^2$&	$2^24^2$&	$1^25^2$&$\tfrac{2^210^2}{1^25^2}$&	$\tfrac{12^2}{6^2}$&		$1^12^13^16^1$&$\frac{2^26^2}{1^13^1}$&$4^18^1$&$4^18^1$&$\frac{4^120^1}{2^110^1}$&$1^111^1$&$\tfrac{2^122^1}{1^111^1}$\smallskip\\\bottomrule
\end{tabular}
\smallskip
\end{center}

We have $\chi^{(3)}_{g}=\chi_{18}(g)$ and $\bar{\chi}^{(3)}_{g}=\chi_{1}(g)+\chi_2(g)$ in the notation of Table \ref{tab:chars:irr:3}.

\begin{center}
\caption{Twisted Euler characters and Frame shapes at $\ll=4$}\label{tab:chars:eul:4}
\smallskip
\begin{tabular}{c@{\, }|@{\;}c@{\, }c@{\, }c@{\, }c@{\, }c@{\, }c@{\, }c@{\, }c@{\, }c@{\, }c@{\, }c@{\, }c@{\, }c}\toprule
$[g]$&   		1A&   2A&   	2B&   	4A&			4B&			2C&   	3A&   	6A&   		6BC&   	8A&   	4C&   	7AB&   	14AB\\ 
	\midrule
$\G_g$&$1$& $1|2$&	$2|2$&	$2|4$&			${4|{4}}$&	$2$& 	$3$& 	$3|2$&		$6|2$&	${4|{8}}$&		$4$&  	$7$&	$7|2$\\	
	\midrule
$\bar{\chi}^{(4)}_g$&   $8$&$8$&	$0$& 	$0$& 		$0$&		$4$&  	$2$& 	$2$&  		$0$& 	$0$& 	$2$& 	$1$& 	$1$\\
$\chi^{(4)}_g$&   $8$&$-8$&	$0$&	$0$& 		$0$&		$0$&  	$2$& 	$-2$& 		$0$& 	$0$& 	$0$& 	$1$& 	$-1$\\
	\midrule
$\bar{\Pi}^{(4)}_g$&	$1^8$&$1^8$&$2^4$&	$2^4$&		$4^2$&		$1^42^2$&$1^23^2$&$1^23^2$&	$2^16^1$&$4^2$&	$1^22^14^1$&$1^17^1$&$1^17^1$\\
$\Pi^{(4)}_g$&	$1^8$&$\tfrac{2^8}{1^8}$&$2^4$&$\tfrac{4^4}{2^4}$&	$4^2$&		$2^4$&	$1^23^2$&$\tfrac{2^26^2}{1^23^2}$&	$2^16^1$&$\tfrac{8^2}{4^2}$&$4^2$	&$1^17^1$&$\frac{2^114^1}{1^17^1}$\smallskip\\\bottomrule
\end{tabular}
\smallskip
\end{center}

\setstretch{1.4}

We have $\chi^{(4)}_{g}=\chi_{12}$ and $\bar{\chi}^{(4)}_{g}=\chi_1(g)+\chi_8(g)$ in the notation of Table \ref{tab:chars:irr:4}.

\end{sidewaystable}

\begin{table}
\begin{center}
\caption{Twisted Euler characters and Frame shapes at $\ll=5$}\label{tab:chars:eul:5}
\smallskip
\begin{tabular}{c@{\, }|@{\;}c@{\, }c@{\, }c@{\, }c@{\, }c@{\, }c@{\, }c@{\, }c@{\, }c@{\, }c@{\, }c@{\, }c@{\, }c}\toprule
$[g]$&   		1A&		2A&   	2B&   	2C&			3A&			6A&   	5A&   	10A&   		4AB&   	4CD&	12AB\\ 
	\midrule
$\G_g$&		$1$&	$1|4$&	$2|2$&	$2$&		$3|3$&		$3|12$&	$5$&	$5|4$&		$2|8$&		$4$&	$6|24$	\\	
	\midrule
$\bar{\chi}^{(5)}_{g}$&   $6$&	$6$&	$2$& 	$2$& 		$0$&		$0$&  	$1$& 	$1$&  		$0$& 	$2$& 	$0$ 	\\
$\chi^{(5)}_{g}$&   $6$&	$-6$&	$-2$&	$2$& 		$0$&		$0$&  	$1$& 	$-1$& 		$0$&	$0$& 	$0$ 	\\
	\midrule
$\bar{\Pi}^{(5)}_{g}$&	$1^6$&	$1^6$&	$1^22^2$&$1^22^2$&	$3^2$&		$3^2$&	$1^15^1$&$1^15^1$&	$2^3$&	$1^24^1$&$6^1$\\
$\Pi^{(5)}_{g}$&	$1^6$&	$\frac{2^6}{1^6}$&$\frac{2^4}{1^2}$&$1^22^2$&$3^2$&$\frac{6^2}{3^2}$&$1^15^1$&$\frac{2^110^1}{1^15^1}$&$\frac{4^3}{2^3}$&$2^14^1$&$\frac{12^1}{6^1}$\smallskip\\\bottomrule
\end{tabular}
\smallskip
\end{center}

We have $\chi^{(5)}_{g}=\chi_{14}(g)$ and $\bar{\chi}^{(5)}_{g}=\chi_1(g)+\chi_6(g)$ in the notation of Table \ref{tab:chars:irr:5}.
\end{table}

\begin{table}
\begin{center}
\caption{\label{tab:FrmG7fp}
Twisted Euler characters and Frame shapes at $\ll=7$}\label{tab:chars:eul:7}\smallskip
\begin{tabular}{c|ccccc}\toprule
$[g]$&   1A&   2A&   4A&   3AB&   6AB\\ 
	\midrule
$\G_g$&		$1$&	$1|4$&	$2|8$&	$3$&	$3|4$\\
	\midrule
$\bar{\chi}^{(7)}_{g}$&	4&	4&	0&	1&	1\\
$\chi^{(7)}_{g}$&	4&	-4&	0&	1&	-1\\
	\midrule
$\bar{\Pi}^{(7)}_{g}$&	$1^4$&	$1^4$&	$2^2$&	$1^13^1$&	$1^13^1$\\
$\Pi^{(7)}_{g}$&	$1^4$&	$\tfrac{2^4}{1^4}$&	$\tfrac{4^2}{2^2}$&	$1^13^1$&	$\tfrac{2^16^1}{1^13^1}$\smallskip\\
\bottomrule
\end{tabular}
\end{center}

We have $\chi^{(7)}_{g}=\chi_6(g)+\chi_7(g)$ in the notation of Table \ref{tab:chars:irr:7}.
\end{table}

\begin{table}
\begin{center}
\caption{Twisted Euler characters and Frame shapes at $\ll=13$}\label{tab:chars:eul:13}\smallskip
\begin{tabular}{r|rrr}\toprule
	$[g]$&	1A&	2A&	4AB\\
		\midrule
$\G_g$&		$1$&$1|4$&${2|8}$\\	
		\midrule
	$\bar{\chi}^{(13)}_{g}$	&2&2&0\\
	$\chi^{(13)}_{g}$	&2&-2&0\\
		\midrule
	$\bar{\Pi}^{(13)}_{g}$	&$1^2$&$1^2$&$2^1$\\
	$\Pi^{(13)}_{g}$&$1^2$&$\tfrac{2^2}{1^2}$&$\frac{4^1}{2^1}$\smallskip\\\bottomrule
\end{tabular}
\end{center}

We have $\chi^{(13)}_g=\chi_3(g)+\chi_4(g)$ in the notation of Table \ref{tab:chars:irr:13}.
\end{table}

\clearpage

%---------------------------------------------------------------------------------------%
\section{Coefficients}\label{sec:coeffs}
%---------------------------------------------------------------------------------------%

In this section we furnish tables of Fourier coefficients of small degree for the vector-valued mock modular forms $H^{(\ll)}_{g}$ that we attach to the conjugacy classes of the groups $G^{(\ll)}$ for $\ll\in \LL$. For each $\ll$ and $0<r<\ll$ we give a table that displays the coefficients of $H^{(\ll)}_{g,r}$ for ($g$ ranging over a set of representatives for) each conjugacy class $[g]$ in $G^{(\ll)}$. The first row of each table labels the conjugacy classes, and the first column labels exponents of $q$ (or rather $q^{1/4\ll}$), so that for the table captioned $H^{(\ll)}_{g,r}$ (for some $\ll\in\LL$ and $0<r<\ll$) the entry in the row labelled $d$ and the column labelled $nZ$ is the coefficient of $q^{d/4\ll}$ in the Fourier expansion of $H^{(\ll)}_{g,r}$ for $[g]= nZ$. Occasionally the functions $H^{(\ll)}_{g}$ and $H^{(\ll)}_{g'}$ coincide for non-conjugate $g$ and $g'$ and when this happens we condense information into a single column, writing $7AB$ in Table \ref{tab:coeffs:2_1}, for example, to indicate that the entries in that column are Fourier coefficients for both $H^{(2)}_{7A}$ and $H^{(2)}_{7B}$. 

\clearpage

\begin{sidewaystable}
\subsection{Lambency Two}
\begin{small}
\centering
\caption{McKay--Thompson series $H^{(2)}_{g,1}$}\label{tab:coeffs:2_1}\smallskip
\begin{tabular}{c@{ }|@{\;}r@{ }r@{ }r@{ }r@{ }r@{ }r@{ }r@{ }r@{ }r@{ }r@{ }r@{ }r@{ }r@{ }r@{ }r@{ }r@{ }r@{ }r@{ }r@{ }r@{ }r@{ }r@{ }r@{ }r@{ }r@{ }r}\toprule
$[g]$	&1A	&2A	&2B	&3A	&3B	&4A	&4B	&4C	&5A	&6A	&6B	&7AB	&8A	&10A	&11A	&12A	&12B	&14AB	&15AB	&21AB	&23AB	\\
\midrule
$\G_g$&$1$	&$2$	&$2|2$	&$3$	&$3|3$	&$4|2$	&$4$	&$4|4$	&$5$	&$6$	&$6|6$	&$7$	&$8$	&$10|2$	&$11$	&$12|2$	&$12|12$	&$14$	&$15$	&$21|3$	&$23$	\\
\midrule
-1	&-2	&-2	&-2	&-2	&-2	&-2	&-2	&-2	&-2	&-2	&-2	&-2	&-2	&-2	&-2	&-2	&-2	&-2	&-2	&-2	&-2	\\
7	&90	&-6	&10	&0	&6	&-6	&2	&2	&0	&0	&-2	&-1	&-2	&0	&2	&0	&2	&1	&0	&-1	&-2	\\
15	&462	&14	&-18	&-6	&0	&-2	&-2	&6	&2	&2	&0	&0	&-2	&2	&0	&-2	&0	&0	&-1	&0	&2	\\
23	&1540	&-28	&20	&10	&-14	&4	&-4	&-4	&0	&2	&2	&0	&0	&0	&0	&-2	&2	&0	&0	&0	&-1	\\
31	&4554	&42	&-38	&0	&12	&-6	&2	&-6	&-6	&0	&4	&4	&-2	&2	&0	&0	&0	&0	&0	&-2	&0	\\
39	&11592	&-56	&72	&-18	&0	&-8	&8	&0	&2	&-2	&0	&0	&0	&2	&-2	&-2	&0	&0	&2	&0	&0	\\
47	&27830	&86	&-90	&20	&-16	&6	&-2	&6	&0	&-4	&0	&-2	&2	&0	&0	&0	&0	&2	&0	&-2	&0	\\
55	&61686	&-138	&118	&0	&30	&6	&-10	&-2	&6	&0	&-2	&2	&-2	&-2	&-2	&0	&-2	&2	&0	&2	&0	\\
63	&131100	&188	&-180	&-30	&0	&-4	&4	&-12	&0	&2	&0	&-3	&0	&0	&2	&2	&0	&-1	&0	&0	&0	\\
71	&265650	&-238	&258	&42	&-42	&-14	&10	&10	&-10	&2	&6	&0	&-2	&-2	&0	&-2	&-2	&0	&2	&0	&0	\\
79	&521136	&336	&-352	&0	&42	&0	&-8	&16	&6	&0	&2	&0	&-4	&-2	&0	&0	&-2	&0	&0	&0	&2	\\
87	&988770	&-478	&450	&-60	&0	&18	&-14	&-6	&0	&-4	&0	&6	&2	&0	&2	&0	&0	&-2	&0	&0	&0	\\
95	&1830248	&616	&-600	&62	&-70	&-8	&8	&-16	&8	&-2	&-6	&0	&0	&0	&2	&-2	&2	&0	&2	&0	&0	\\
103	&3303630	&-786	&830	&0	&84	&-18	&22	&6	&0	&0	&-4	&-6	&2	&0	&0	&0	&0	&-2	&0	&0	&2	\\
111	&5844762	&1050	&-1062	&-90	&0	&10	&-6	&18	&-18	&6	&0	&0	&2	&-2	&0	&-2	&0	&0	&0	&0	&2	\\
119	&10139734	&-1386	&1334	&118	&-110	&22	&-26	&-10	&4	&6	&2	&-4	&-2	&4	&0	&-2	&2	&0	&-2	&2	&0	\\
127	&17301060	&1764	&-1740	&0	&126	&-12	&12	&-28	&0	&0	&6	&0	&0	&0	&-4	&0	&2	&0	&0	&0	&0	\\
135	&29051484	&-2212	&2268	&-156	&0	&-36	&28	&12	&14	&-4	&0	&0	&-4	&-2	&0	&0	&0	&0	&-1	&0	&0	\\
143	&48106430	&2814	&-2850	&170	&-166	&14	&-18	&38	&0	&-6	&-6	&8	&-2	&0	&-2	&2	&2	&0	&0	&2	&-2	\\
151	&78599556	&-3612	&3540	&0	&210	&36	&-36	&-20	&-24	&0	&-6	&0	&0	&0	&2	&0	&-2	&0	&0	&0	&0	\\
159	&126894174	&4510	&-4482	&-228	&0	&-18	&14	&-42	&14	&4	&0	&-6	&-2	&-2	&0	&0	&0	&2	&2	&0	&0	\\
167	&202537080	&-5544	&5640	&270	&-282	&-40	&48	&16	&0	&6	&6	&4	&4	&0	&-2	&2	&-2	&0	&0	&-2	&0	\\
175	&319927608	&6936	&-6968	&0	&300	&24	&-16	&48	&18	&0	&4	&-7	&4	&2	&0	&0	&0	&-1	&0	&-1	&0	\\
183	&500376870	&-8666	&8550	&-360	&0	&54	&-58	&-18	&0	&-8	&0	&0	&-2	&0	&4	&0	&0	&0	&0	&0	&2	\\
191	&775492564	&10612	&-10556	&400	&-392	&-28	&28	&-60	&-36	&-8	&-8	&0	&0	&4	&0	&-4	&0	&0	&0	&0	&0	\\
199	&1191453912	&-12936	&13064	&0	&462	&-72	&64	&32	&12	&0	&-10	&12	&-4	&4	&0	&0	&2	&0	&0	&0	&0	\\
207	&1815754710	&15862	&-15930	&-510	&0	&22	&-34	&78	&0	&10	&0	&0	&-6	&0	&0	&-2	&0	&0	&0	&0	&-1	\\
215	&2745870180	&-19420	&19268	&600	&-600	&84	&-76	&-36	&30	&8	&8	&-10	&4	&-2	&-2	&0	&0	&-2	&0	&2	&0	\\
223	&4122417420	&23532	&-23460	&0	&660	&-36	&36	&-84	&0	&0	&12	&2	&0	&0	&0	&0	&0	&-2	&0	&2	&0	\\
231	&6146311620	&-28348	&28548	&-762	&0	&-92	&100	&36	&-50	&-10	&0	&-6	&4	&-2	&-2	&-2	&0	&2	&-2	&0	&0	\\
239	&9104078592	&34272	&-34352	&828	&-840	&48	&-40	&96	&22	&-12	&-8	&0	&4	&-2	&4	&0	&0	&0	&-2	&0	&0	\\
247	&13401053820	&-41412	&41180	&0	&966	&108	&-116	&-44	&0	&0	&-10	&0	&-4	&0	&0	&0	&-2	&0	&0	&0	&-2	\\\bottomrule
\end{tabular}
\end{small}
\end{sidewaystable}

\clearpage

\begin{sidewaystable}
\subsection{Lambency Three}
\vspace{-16pt}
\centering
\caption{McKay--Thompson series $H^{(3)}_{g,1}$}\smallskip
\begin{tabular}{c@{ }|@{\;}r@{ }r@{ }r@{ }r@{ }r@{ }r@{ }r@{ }r@{ }r@{ }r@{ }r@{ }r@{ }r@{ }r@{ }r@{ }r@{ }r@{ }r@{ }r@{ }r@{ }r@{ }r@{ }r@{ }r@{ }r@{ }r}\toprule
$[g]$&1A	&2A	&4A	&2B	&2C	&3A	&6A	&3B	&6B	&4B	&4C	&5A	&10A	&12A	&6C	&6D	&8AB	&8CD	&20AB	&11AB	&22AB	\\
	\midrule
$\G_g$&$1$&$1|4$&${2|8}$&$2$&$2|2$&$3$&$3|4$&${3|3}$&${3|12}$&$4|2$&$4$&$5$&$5|4$&$6|24$&$6$&$6|2$&$8|4$&$8$&${10|8}$&$11$&$11|4$\\
	\midrule
-1	&-2	&-2	&-2	&-2	&-2	&-2	&-2	&-2	&-2	&-2	&-2	&-2	&-2	&-2	&-2	&-2	&-2	&-2	&-2	&-2	&-2	\\
11	&32	&32	&8	&0	&0	&-4	&-4	&2	&2	&0	&0	&2	&2	&2	&0	&0	&0	&0	&-2	&-1	&-1	\\
23	&110	&110	&-10	&-2	&-2	&2	&2	&2	&2	&6	&-2	&0	&0	&2	&-2	&-2	&-2	&2	&0	&0	&0	\\
35	&288	&288	&8	&0	&0	&0	&0	&-6	&-6	&0	&0	&-2	&-2	&2	&0	&0	&0	&0	&-2	&2	&2	\\
47	&660	&660	&-20	&4	&4	&-6	&-6	&6	&6	&-4	&4	&0	&0	&-2	&-2	&-2	&4	&0	&0	&0	&0	\\
59	&1408	&1408	&32	&0	&0	&4	&4	&4	&4	&0	&0	&-2	&-2	&-4	&0	&0	&0	&0	&2	&0	&0	\\
71	&2794	&2794	&-30	&-6	&-6	&4	&4	&-8	&-8	&2	&-6	&4	&4	&0	&0	&0	&2	&-2	&0	&0	&0	\\
83	&5280	&5280	&40	&0	&0	&-12	&-12	&6	&6	&0	&0	&0	&0	&-2	&0	&0	&0	&0	&0	&0	&0	\\
95	&9638	&9638	&-58	&6	&6	&8	&8	&2	&2	&-10	&6	&-2	&-2	&2	&0	&0	&-2	&-2	&2	&2	&2	\\
107	&16960	&16960	&80	&0	&0	&4	&4	&-14	&-14	&0	&0	&0	&0	&2	&0	&0	&0	&0	&0	&-2	&-2	\\
119	&29018	&29018	&-102	&-6	&-6	&-16	&-16	&8	&8	&10	&-6	&-2	&-2	&0	&0	&0	&2	&2	&-2	&0	&0	\\
131	&48576	&48576	&112	&0	&0	&12	&12	&6	&6	&0	&0	&6	&6	&-2	&0	&0	&0	&0	&2	&0	&0	\\
143	&79530	&79530	&-150	&10	&10	&6	&6	&-24	&-24	&-6	&10	&0	&0	&0	&-2	&-2	&2	&2	&0	&0	&0	\\
155	&127776	&127776	&200	&0	&0	&-24	&-24	&18	&18	&0	&0	&-4	&-4	&2	&0	&0	&0	&0	&0	&0	&0	\\
167	&202050	&202050	&-230	&-14	&-14	&18	&18	&12	&12	&10	&-14	&0	&0	&4	&-2	&-2	&2	&-2	&0	&2	&2	\\
179	&314688	&314688	&272	&0	&0	&12	&12	&-30	&-30	&0	&0	&-2	&-2	&2	&0	&0	&0	&0	&2	&0	&0	\\
191	&483516	&483516	&-348	&12	&12	&-36	&-36	&24	&24	&-12	&12	&6	&6	&0	&0	&0	&-4	&0	&2	&0	&0	\\
203	&733920	&733920	&440	&0	&0	&24	&24	&12	&12	&0	&0	&0	&0	&-4	&0	&0	&0	&0	&0	&0	&0	\\
215	&1101364	&1101364	&-508	&-12	&-12	&16	&16	&-44	&-44	&20	&-12	&-6	&-6	&-4	&0	&0	&-4	&4	&2	&0	&0	\\
227	&1635680	&1635680	&600	&0	&0	&-52	&-52	&32	&32	&0	&0	&0	&0	&0	&0	&0	&0	&0	&0	&2	&2	\\
239	&2406116	&2406116	&-740	&20	&20	&38	&38	&20	&20	&-20	&20	&-4	&-4	&4	&2	&2	&4	&0	&0	&-2	&-2	\\
251	&3507680	&3507680	&888	&0	&0	&20	&20	&-64	&-64	&0	&0	&10	&10	&0	&0	&0	&0	&0	&-2	&0	&0	\\
263	&5071000	&5071000	&-1040	&-24	&-24	&-68	&-68	&46	&46	&16	&-24	&0	&0	&-2	&0	&0	&0	&-4	&0	&0	&0	\\
275	&7274464	&7274464	&1208	&0	&0	&52	&52	&28	&28	&0	&0	&-6	&-6	&-4	&0	&0	&0	&0	&-2	&-1	&-1	\\
287	&10359030	&10359030	&-1450	&22	&22	&30	&30	&-84	&-84	&-26	&22	&0	&0	&-4	&-2	&-2	&-2	&-2	&0	&0	&0	\\
299	&14650176	&14650176	&1744	&0	&0	&-96	&-96	&60	&60	&0	&0	&-4	&-4	&4	&0	&0	&0	&0	&4	&2	&2	\\
311	&20585334	&20585334	&-2018	&-26	&-26	&66	&66	&36	&36	&30	&-26	&14	&14	&4	&-2	&-2	&-2	&2	&2	&0	&0	\\
323	&28747840	&28747840	&2320	&0	&0	&40	&40	&-116	&-116	&0	&0	&0	&0	&4	&0	&0	&0	&0	&0	&0	&0	\\
335	&39914402	&39914402	&-2750	&34	&34	&-130	&-130	&86	&86	&-30	&34	&-8	&-8	&-2	&-2	&-2	&2	&2	&0	&0	&0	\\
347	&55114400	&55114400	&3240	&0	&0	&92	&92	&50	&50	&0	&0	&0	&0	&-6	&0	&0	&0	&0	&0	&0	&0	\\
359	&75704904	&75704904	&-3712	&-40	&-40	&54	&54	&-156	&-156	&32	&-40	&-6	&-6	&-4	&2	&2	&0	&-4	&-2	&0	&0	\\\bottomrule
\end{tabular}
\end{sidewaystable}

\begin{sidewaystable}
\centering
\caption{McKay--Thompson series $H^{(3)}_{g,2}$}\smallskip
\begin{tabular}{c@{ }|@{\;}r@{ }r@{ }r@{ }r@{ }r@{ }r@{ }r@{ }r@{ }r@{ }r@{ }r@{ }r@{ }r@{ }r@{ }r@{ }r@{ }r@{ }r@{ }r@{ }r@{ }r@{ }r@{ }r@{ }r@{ }r@{ }r}\toprule
$[g]$	&1A	&2A	&4A	&2B	&2C	&3A	&6A	&3B	&6B	&4B	&4C	&5A	&10A	&12A	&6C	&6D	&8AB	&8CD	&20AB	&11AB	&22AB	\\
	\midrule
$\G_g$&$1$&$1|4$&${2|8}$&$2$&$2|2$&$3$&$3|4$&${3|3}$&${3|12}$&$4|2$&$4$&$5$&$5|4$&$6|24$&$6$&$6|2$&$8|4$&$8$&${10|8}$&$11$&$11|4$\\
	\midrule
8	&20	&-20	&0	&-4	&4	&2	&-2	&-4	&4	&0	&0	&0	&0	&0	&2	&-2	&0	&0	&0	&-2	&2	\\
20	&88	&-88	&0	&8	&-8	&-2	&2	&4	&-4	&0	&0	&-2	&2	&0	&2	&-2	&0	&0	&0	&0	&0	\\
32	&220	&-220	&0	&-12	&12	&4	&-4	&4	&-4	&0	&0	&0	&0	&0	&0	&0	&0	&0	&0	&0	&0	\\
44	&560	&-560	&0	&16	&-16	&2	&-2	&-4	&4	&0	&0	&0	&0	&0	&-2	&2	&0	&0	&0	&-1	&1	\\
56	&1144	&-1144	&0	&-24	&24	&-8	&8	&4	&-4	&0	&0	&4	&-4	&0	&0	&0	&0	&0	&0	&0	&0	\\
68	&2400	&-2400	&0	&32	&-32	&6	&-6	&0	&0	&0	&0	&0	&0	&0	&2	&-2	&0	&0	&0	&2	&-2	\\
80	&4488	&-4488	&0	&-40	&40	&6	&-6	&-12	&12	&0	&0	&-2	&2	&0	&2	&-2	&0	&0	&0	&0	&0	\\
92	&8360	&-8360	&0	&56	&-56	&-10	&10	&8	&-8	&0	&0	&0	&0	&0	&2	&-2	&0	&0	&0	&0	&0	\\
104	&14696	&-14696	&0	&-72	&72	&8	&-8	&8	&-8	&0	&0	&-4	&4	&0	&0	&0	&0	&0	&0	&0	&0	\\
116	&25544	&-25544	&0	&88	&-88	&2	&-2	&-16	&16	&0	&0	&4	&-4	&0	&-2	&2	&0	&0	&0	&2	&-2	\\
128	&42660	&-42660	&0	&-116	&116	&-18	&18	&12	&-12	&0	&0	&0	&0	&0	&-2	&2	&0	&0	&0	&2	&-2	\\
140	&70576	&-70576	&0	&144	&-144	&16	&-16	&4	&-4	&0	&0	&-4	&4	&0	&0	&0	&0	&0	&0	&0	&0	\\
152	&113520	&-113520	&0	&-176	&176	&12	&-12	&-24	&24	&0	&0	&0	&0	&0	&4	&-4	&0	&0	&0	&0	&0	\\
164	&180640	&-180640	&0	&224	&-224	&-26	&26	&16	&-16	&0	&0	&0	&0	&0	&2	&-2	&0	&0	&0	&-2	&2	\\
176	&281808	&-281808	&0	&-272	&272	&18	&-18	&12	&-12	&0	&0	&8	&-8	&0	&-2	&2	&0	&0	&0	&-1	&1	\\
188	&435160	&-435160	&0	&328	&-328	&10	&-10	&-32	&32	&0	&0	&0	&0	&0	&-2	&2	&0	&0	&0	&0	&0	\\
200	&661476	&-661476	&0	&-404	&404	&-42	&42	&24	&-24	&0	&0	&-4	&4	&0	&-2	&2	&0	&0	&0	&2	&-2	\\
212	&996600	&-996600	&0	&488	&-488	&30	&-30	&12	&-12	&0	&0	&0	&0	&0	&2	&-2	&0	&0	&0	&0	&0	\\
224	&1482536	&-1482536	&0	&-584	&584	&20	&-20	&-52	&52	&0	&0	&-4	&4	&0	&4	&-4	&0	&0	&0	&0	&0	\\
236	&2187328	&-2187328	&0	&704	&-704	&-50	&50	&40	&-40	&0	&0	&8	&-8	&0	&2	&-2	&0	&0	&0	&0	&0	\\
248	&3193960	&-3193960	&0	&-840	&840	&40	&-40	&28	&-28	&0	&0	&0	&0	&0	&0	&0	&0	&0	&0	&0	&0	\\
260	&4629152	&-4629152	&0	&992	&-992	&20	&-20	&-64	&64	&0	&0	&-8	&8	&0	&-4	&4	&0	&0	&0	&0	&0	\\
272	&6650400	&-6650400	&0	&-1184	&1184	&-78	&78	&48	&-48	&0	&0	&0	&0	&0	&-2	&2	&0	&0	&0	&-2	&2	\\
284	&9490536	&-9490536	&0	&1400	&-1400	&54	&-54	&24	&-24	&0	&0	&-4	&4	&0	&2	&-2	&0	&0	&0	&0	&0	\\
296	&13441032	&-13441032	&0	&-1640	&1640	&36	&-36	&-96	&96	&0	&0	&12	&-12	&0	&4	&-4	&0	&0	&0	&0	&0	\\
308	&18920240	&-18920240	&0	&1936	&-1936	&-100	&100	&68	&-68	&0	&0	&0	&0	&0	&4	&-4	&0	&0	&0	&-2	&2	\\
320	&26457464	&-26457464	&0	&-2264	&2264	&74	&-74	&44	&-44	&0	&0	&-6	&6	&0	&-2	&2	&0	&0	&0	&0	&0	\\
332	&36792560	&-36792560	&0	&2640	&-2640	&38	&-38	&-124	&124	&0	&0	&0	&0	&0	&-6	&6	&0	&0	&0	&2	&-2	\\
344	&50865232	&-50865232	&0	&-3088	&3088	&-140	&140	&88	&-88	&0	&0	&-8	&8	&0	&-4	&4	&0	&0	&0	&0	&0	\\
356	&69966336	&-69966336	&0	&3584	&-3584	&102	&-102	&48	&-48	&0	&0	&16	&-16	&0	&2	&-2	&0	&0	&0	&0	&0		\\\bottomrule
\end{tabular}
\end{sidewaystable}

\begin{table}
\vspace{-16pt}
\subsection{Lambency Four}
\vspace{-16pt}
\centering
\caption{McKay--Thompson series $H^{(4)}_{g,1}$}\smallskip
\begin{tabular}{c@{ }|@{\;}r@{ }r@{ }r@{ }r@{ }r@{ }r@{ }r@{ }r@{ }r@{ }r@{ }r@{ }r@{ }r}\toprule
$[g]$	&1A	&2A	&2B	&4A	&4B	&2C	&3A	&6A	&6BC	&8A	&4C	&7AB	&14AB	\\
\midrule
$\G_g$&$1$& $1|2$&	$2|2$&	$2|4$&			${4|{4}}$&	$2$& 	$3$& 	$3|2$&		$6|2$&	${4|{8}}$&		$4$&  	$7$&	$7|2$\\	
	\midrule
-1	&-2		&-2		&-2	&-2	&-2	&-2	&-2	&-2	&-2	&-2&-2	&-2	&-2\\
15	&14	&14	&-2	&6	&-2	&-2	&2	&2	&-2	&2	&-2	&0	&0	\\
31	&42	&42	&-6	&-6	&2	&2	&0	&0	&0	&2	&-2	&0	&0	\\
47	&86	&86	&6	&6	&-2	&-2	&-4	&-4	&0	&-2	&2	&2	&2	\\
63	&188	&188	&-4	&-12	&-4	&4	&2	&2	&2	&0	&0	&-1	&-1	\\
79	&336	&336	&0	&16	&0	&-8	&0	&0	&0	&0	&-4	&0	&0	\\
95	&616	&616	&-8	&-16	&0	&8	&-2	&-2	&-2	&4	&0	&0	&0	\\
111	&1050	&1050	&10	&18	&2	&-6	&6	&6	&-2	&-2	&2	&0	&0	\\
127	&1764	&1764	&-12	&-28	&-4	&12	&0	&0	&0	&-4	&0	&0	&0	\\
143	&2814	&2814	&14	&38	&-2	&-18	&-6	&-6	&2	&2	&-2	&0	&0	\\
159	&4510	&4510	&-18	&-42	&6	&14	&4	&4	&0	&2	&-2	&2	&2	\\
175	&6936	&6936	&24	&48	&0	&-16	&0	&0	&0	&-4	&4	&-1	&-1	\\
191	&10612	&10612	&-28	&-60	&-4	&28	&-8	&-8	&-4	&-4	&0	&0	&0	\\
207	&15862	&15862	&22	&78	&-2	&-34	&10	&10	&-2	&2	&-6	&0	&0	\\
223	&23532	&23532	&-36	&-84	&4	&36	&0	&0	&0	&4	&0	&-2	&-2	\\
239	&34272	&34272	&48	&96	&0	&-40	&-12	&-12	&0	&0	&4	&0	&0	\\
255	&49618	&49618	&-46	&-126	&-6	&50	&10	&10	&2	&-6	&2	&2	&2	\\
271	&70758	&70758	&54	&150	&-2	&-66	&0	&0	&0	&6	&-6	&2	&2	\\
287	&100310	&100310	&-74	&-170	&6	&70	&-10	&-10	&-2	&6	&-2	&0	&0	\\
303	&140616	&140616	&88	&192	&0	&-72	&18	&18	&-2	&-4	&8	&0	&0	\\
319	&195888	&195888	&-96	&-232	&-8	&96	&0	&0	&0	&-4	&0	&0	&0	\\
335	&270296	&270296	&104	&272	&0	&-120	&-22	&-22	&2	&4	&-8	&-2	&-2	\\
351	&371070	&371070	&-130	&-306	&6	&126	&18	&18	&2	&6	&-2	&0	&0	\\
367	&505260	&505260	&156	&348	&4	&-140	&0	&0	&0	&-4	&8	&0	&0	\\
383	&684518	&684518	&-170	&-410	&-10	&174	&-22	&-22	&-2	&-10	&2	&2	&2	\\
399	&921142	&921142	&182	&486	&-2	&-202	&28	&28	&-4	&6	&-10	&-2	&-2	\\
415	&1233708	&1233708	&-228	&-540	&12	&220	&0	&0	&0	&8	&-4	&0	&0	\\
431	&1642592	&1642592	&272	&608	&0	&-248	&-34	&-34	&2	&-8	&12	&0	&0	\\
447	&2177684	&2177684	&-284	&-708	&-12	&292	&32	&32	&4	&-8	&4	&-2	&-2	\\
463	&2871918	&2871918	&318	&814	&-2	&-346	&0	&0	&0	&6	&-14	&0	&0	\\
479	&3772468	&3772468	&-380	&-908	&12	&380	&-38	&-38	&-2	&12	&0	&0	&0	\\
495	&4932580	&4932580	&436	&1020	&4	&-412	&46	&46	&-2	&-8	&12	&2	&2	\\
511	&6425466	&6425466	&-486	&-1174	&-14	&490	&0	&0	&0	&-14	&2	&-2	&-2	\\
527	&8335418	&8335418	&538	&1338	&-6	&-566	&-52	&-52	&4	&10	&-14	&0	&0	\\
543	&10776290	&10776290	&-622	&-1494	&18	&610	&50	&50	&2	&14	&-6	&0	&0	\\
559	&13879290	&13879290	&714	&1666	&2	&-678	&0	&0	&0	&-10	&18	&-2	&-2	\\
575	&17818766	&17818766	&-786	&-1898	&-18	&790	&-58	&-58	&-6	&-14	&2	&0	&0	\\
591	&22798188	&22798188	&860	&2148	&-4	&-900	&72	&72	&-4	&8	&-20	&0	&0		\\\bottomrule
\end{tabular}
\end{table}

\begin{table}
\centering
\caption{McKay--Thompson series $H^{(4)}_{g,2}$}\smallskip
\begin{tabular}{c@{ }|@{\;}r@{ }r@{ }r@{ }r@{ }r@{ }r@{ }r@{ }r@{ }r@{ }r@{ }r@{ }r@{ }r}\toprule
$[g]$	&1A	&2A	&2B	&4A	&4B	&2C	&3A	&6A	&6BC	&8A	&4C	&7AB	&14AB	\\
\midrule
$\G_g$&$1$& $1|2$&	$2|2$&	$2|4$&			${4|{4}}$&	$2$& 	$3$& 	$3|2$&		$6|2$&	${4|{8}}$&		$4$&  	$7$&	$7|2$\\	
\midrule
12	&16	&-16	&0	&0	&0	&0	&-2	&2	&0	&0	&0	&2	&-2	\\
28	&48	&-48	&0	&0	&0	&0	&0	&0	&0	&0	&0	&-1	&1	\\
44	&112	&-112	&0	&0	&0	&0	&4	&-4	&0	&0	&0	&0	&0	\\
60	&224	&-224	&0	&0	&0	&0	&-4	&4	&0	&0	&0	&0	&0	\\
76	&432	&-432	&0	&0	&0	&0	&0	&0	&0	&0	&0	&-2	&2	\\
92	&784	&-784	&0	&0	&0	&0	&4	&-4	&0	&0	&0	&0	&0	\\
108	&1344	&-1344	&0	&0	&0	&0	&-6	&6	&0	&0	&0	&0	&0	\\
124	&2256	&-2256	&0	&0	&0	&0	&0	&0	&0	&0	&0	&2	&-2	\\
140	&3680	&-3680	&0	&0	&0	&0	&8	&-8	&0	&0	&0	&-2	&2	\\
156	&5824	&-5824	&0	&0	&0	&0	&-8	&8	&0	&0	&0	&0	&0	\\
172	&9072	&-9072	&0	&0	&0	&0	&0	&0	&0	&0	&0	&0	&0	\\
188	&13872	&-13872	&0	&0	&0	&0	&12	&-12	&0	&0	&0	&-2	&2	\\
204	&20832	&-20832	&0	&0	&0	&0	&-12	&12	&0	&0	&0	&0	&0	\\
220	&30912	&-30912	&0	&0	&0	&0	&0	&0	&0	&0	&0	&0	&0	\\
236	&45264	&-45264	&0	&0	&0	&0	&12	&-12	&0	&0	&0	&2	&-2	\\
252	&65456	&-65456	&0	&0	&0	&0	&-16	&16	&0	&0	&0	&-1	&1	\\
268	&93744	&-93744	&0	&0	&0	&0	&0	&0	&0	&0	&0	&0	&0	\\
284	&132944	&-132944	&0	&0	&0	&0	&20	&-20	&0	&0	&0	&0	&0	\\
300	&186800	&-186800	&0	&0	&0	&0	&-22	&22	&0	&0	&0	&-2	&2	\\
316	&260400	&-260400	&0	&0	&0	&0	&0	&0	&0	&0	&0	&0	&0	\\
332	&360208	&-360208	&0	&0	&0	&0	&28	&-28	&0	&0	&0	&2	&-2	\\
348	&494624	&-494624	&0	&0	&0	&0	&-28	&28	&0	&0	&0	&4	&-4	\\
364	&674784	&-674784	&0	&0	&0	&0	&0	&0	&0	&0	&0	&-2	&2	\\
380	&914816	&-914816	&0	&0	&0	&0	&32	&-32	&0	&0	&0	&0	&0	\\
396	&1232784	&-1232784	&0	&0	&0	&0	&-36	&36	&0	&0	&0	&0	&0	\\
412	&1652208	&-1652208	&0	&0	&0	&0	&0	&0	&0	&0	&0	&-2	&2	\\
428	&2202704	&-2202704	&0	&0	&0	&0	&44	&-44	&0	&0	&0	&0	&0	\\
444	&2921856	&-2921856	&0	&0	&0	&0	&-48	&48	&0	&0	&0	&0	&0	\\
460	&3857760	&-3857760	&0	&0	&0	&0	&0	&0	&0	&0	&0	&4	&-4	\\
476	&5070560	&-5070560	&0	&0	&0	&0	&56	&-56	&0	&0	&0	&-2	&2	\\
492	&6636000	&-6636000	&0	&0	&0	&0	&-60	&60	&0	&0	&0	&0	&0	\\
508	&8649648	&-8649648	&0	&0	&0	&0	&0	&0	&0	&0	&0	&0	&0	\\
524	&11230448	&-11230448	&0	&0	&0	&0	&68	&-68	&0	&0	&0	&-2	&2	\\
540	&14526848	&-14526848	&0	&0	&0	&0	&-76	&76	&0	&0	&0	&0	&0	\\
556	&18724176	&-18724176	&0	&0	&0	&0	&0	&0	&0	&0	&0	&2	&-2	\\
572	&24051808	&-24051808	&0	&0	&0	&0	&88	&-88	&0	&0	&0	&4	&-4	\\
588	&30793712	&-30793712	&0	&0	&0	&0	&-94	&94	&0	&0	&0	&-2	&2	\\
604	&39301584	&-39301584	&0	&0	&0	&0	&0	&0	&0	&0	&0	&0	&0		\\\bottomrule
\end{tabular}
\end{table}

\begin{table}
\centering
\caption{McKay--Thompson series $H^{(4)}_{g,3}$}\smallskip
\begin{tabular}{c@{ }|@{\;}r@{ }r@{ }r@{ }r@{ }r@{ }r@{ }r@{ }r@{ }r@{ }r@{ }r@{ }r@{ }r}\toprule
$[g]$	&1A	&2A	&2B	&4A	&4B	&2C	&3A	&6A	&6BC	&8A	&4C	&7AB	&14AB	\\
\midrule
$\G_g$&$1$& $1|2$&	$2|2$&	$2|4$&			${4|{4}}$&	$2$& 	$3$& 	$3|2$&		$6|2$&	${4|{8}}$&		$4$&  	$7$&	$7|2$\\	
\midrule
7	&6	&6	&6	&-2	&-2	&-2	&0	&0	&0	&2	&2	&-1	&-1	\\
23	&28	&28	&-4	&4	&-4	&4	&-2	&-2	&2	&0	&0	&0	&0	\\
39	&56	&56	&8	&0	&0	&-8	&2	&2	&2	&-4	&0	&0	&0	\\
55	&138	&138	&-6	&2	&2	&10	&0	&0	&0	&-2	&2	&-2	&-2	\\
71	&238	&238	&14	&-10	&-2	&-10	&-2	&-2	&2	&2	&2	&0	&0	\\
87	&478	&478	&-18	&6	&-2	&14	&4	&4	&0	&2	&-2	&2	&2	\\
103	&786	&786	&18	&-6	&2	&-22	&0	&0	&0	&-2	&-2	&2	&2	\\
119	&1386	&1386	&-22	&10	&2	&26	&-6	&-6	&2	&2	&2	&0	&0	\\
135	&2212	&2212	&36	&-12	&-4	&-28	&4	&4	&0	&4	&4	&0	&0	\\
151	&3612	&3612	&-36	&20	&-4	&36	&0	&0	&0	&0	&0	&0	&0	\\
167	&5544	&5544	&40	&-16	&0	&-48	&-6	&-6	&-2	&-4	&-4	&0	&0	\\
183	&8666	&8666	&-54	&18	&2	&58	&8	&8	&0	&-2	&2	&0	&0	\\
199	&12936	&12936	&72	&-32	&0	&-64	&0	&0	&0	&4	&4	&0	&0	\\
215	&19420	&19420	&-84	&36	&-4	&76	&-8	&-8	&0	&0	&-4	&2	&2	\\
231	&28348	&28348	&92	&-36	&4	&-100	&10	&10	&2	&-4	&-4	&-2	&-2	\\
247	&41412	&41412	&-108	&44	&4	&116	&0	&0	&0	&0	&4	&0	&0	\\
263	&59178	&59178	&138	&-62	&-6	&-126	&-12	&-12	&0	&6	&6	&0	&0	\\
279	&84530	&84530	&-158	&66	&-6	&154	&14	&14	&-2	&2	&-2	&-2	&-2	\\
295	&118692	&118692	&180	&-68	&4	&-188	&0	&0	&0	&-8	&-4	&0	&0	\\
311	&166320	&166320	&-208	&88	&8	&216	&-18	&-18	&2	&-4	&4	&0	&0	\\
327	&230092	&230092	&252	&-108	&-4	&-244	&16	&16	&0	&8	&4	&2	&2	\\
343	&317274	&317274	&-294	&122	&-6	&282	&0	&0	&0	&2	&-6	&-1	&-1	\\
359	&432964	&432964	&324	&-132	&4	&-340	&-20	&-20	&0	&-8	&-8	&0	&0	\\
375	&588966	&588966	&-378	&150	&6	&390	&24	&24	&0	&-2	&6	&0	&0	\\
391	&794178	&794178	&450	&-190	&-6	&-430	&0	&0	&0	&10	&10	&0	&0	\\
407	&1067220	&1067220	&-508	&220	&-12	&500	&-30	&-30	&2	&0	&-4	&0	&0	\\
423	&1423884	&1423884	&572	&-228	&4	&-588	&30	&30	&2	&-12	&-8	&0	&0	\\
439	&1893138	&1893138	&-654	&266	&10	&666	&0	&0	&0	&-2	&6	&2	&2	\\
455	&2501434	&2501434	&762	&-326	&-6	&-742	&-32	&-32	&0	&10	&10	&-2	&-2	\\
471	&3294256	&3294256	&-864	&360	&-8	&848	&40	&40	&0	&4	&-8	&0	&0	\\
487	&4314912	&4314912	&960	&-392	&8	&-984	&0	&0	&0	&-12	&-12	&0	&0	\\
503	&5633596	&5633596	&-1092	&452	&12	&1108	&-44	&-44	&0	&0	&8	&-4	&-4	\\
519	&7320670	&7320670	&1262	&-522	&-10	&-1234	&46	&46	&2	&18	&14	&0	&0	\\
535	&9483336	&9483336	&-1416	&592	&-16	&1400	&0	&0	&0	&4	&-8	&2	&2	\\
551	&12233330	&12233330	&1570	&-646	&10	&-1598	&-58	&-58	&-2	&-18	&-14	&4	&4	\\
567	&15734606	&15734606	&-1778	&726	&14	&1798	&62	&62	&-2	&-6	&10	&-1	&-1	\\
583	&20161302	&20161302	&2022	&-850	&-10	&-1994	&0	&0	&0	&18	&14	&0	&0	\\
599	&25761288	&25761288	&-2264	&944	&-16	&2240	&-72	&-72	&3	&4	&-12	&0	&0		\\\bottomrule
\end{tabular}
\end{table}

\begin{table}
\vspace{-16pt}
\subsection{Lambency Five}
\vspace{-16pt}
\centering
\caption{\label{tab:coeffs:5_1}McKay--Thompson series $H^{(5)}_{g,1}$}\smallskip
\begin{tabular}{r|rrrrrrrrrrr}\toprule
$[g]$	&1A	&2A	&2B	&2C	&3A	&6A	&5A	&10A	&4AB	&4CD	&12AB	\\
	\midrule
$\G_g$&		$1$&	$1|4$&	$2|2$&	$2$&		$3|3$&		$3|12$&	$5$&	$5|4$&		$2|8$&		$4$&	$6|24$	\\	
	\midrule
-1	&-2	&-2	&-2	&-2	&-2	&-2	&-2	&-2	&-2	&-2	&-2	\\
19	&8	&8	&0	&0	&2	&2	&-2	&-2	&4	&0	&-2	\\
39	&18	&18	&2	&2	&0	&0	&-2	&-2	&-6	&2	&0	\\
59	&40	&40	&0	&0	&-2	&-2	&0	&0	&4	&0	&-2	\\
79	&70	&70	&-2	&-2	&4	&4	&0	&0	&-6	&-2	&0	\\
99	&120	&120	&0	&0	&0	&0	&0	&0	&12	&0	&0	\\
119	&208	&208	&0	&0	&-2	&-2	&-2	&-2	&-8	&0	&-2	\\
139	&328	&328	&0	&0	&4	&4	&-2	&-2	&12	&0	&0	\\
159	&510	&510	&-2	&-2	&0	&0	&0	&0	&-18	&-2	&0	\\
179	&792	&792	&0	&0	&-6	&-6	&2	&2	&20	&0	&2	\\
199	&1180	&1180	&4	&4	&4	&4	&0	&0	&-24	&4	&0	\\
219	&1728	&1728	&0	&0	&0	&0	&-2	&-2	&24	&0	&0	\\
239	&2518	&2518	&-2	&-2	&-8	&-8	&-2	&-2	&-30	&-2	&0	\\
259	&3600	&3600	&0	&0	&6	&6	&0	&0	&40	&0	&-2	\\
279	&5082	&5082	&2	&2	&0	&0	&2	&2	&-42	&2	&0	\\
299	&7120	&7120	&0	&0	&-8	&-8	&0	&0	&48	&0	&0	\\
319	&9838	&9838	&-2	&-2	&10	&10	&-2	&-2	&-58	&-2	&2	\\
339	&13488	&13488	&0	&0	&0	&0	&-2	&-2	&72	&0	&0	\\
359	&18380	&18380	&4	&4	&-10	&-10	&0	&0	&-80	&4	&-2	\\
379	&24792	&24792	&0	&0	&12	&12	&2	&2	&84	&0	&0	\\
399	&33210	&33210	&-6	&-6	&0	&0	&0	&0	&-102	&-6	&0	\\
419	&44248	&44248	&0	&0	&-14	&-14	&-2	&-2	&116	&0	&2	\\
439	&58538	&58538	&2	&2	&14	&14	&-2	&-2	&-130	&2	&2	\\
459	&76992	&76992	&0	&0	&0	&0	&2	&2	&144	&0	&0	\\
479	&100772	&100772	&-4	&-4	&-16	&-16	&2	&2	&-168	&-4	&0	\\
499	&131160	&131160	&0	&0	&18	&18	&0	&0	&196	&0	&-2	\\
519	&169896	&169896	&8	&8	&0	&0	&-4	&-4	&-216	&8	&0	\\
539	&219128	&219128	&0	&0	&-22	&-22	&-2	&-2	&236	&0	&2	\\
559	&281322	&281322	&-6	&-6	&24	&24	&2	&2	&-270	&-6	&0	\\
579	&359712	&359712	&0	&0	&0	&0	&2	&2	&312	&0	&0	\\
599	&458220	&458220	&4	&4	&-24	&-24	&0	&0	&-336	&4	&0	\\
619	&581416	&581416	&0	&0	&28	&28	&-4	&-4	&372	&0	&0	\\
639	&735138	&735138	&-6	&-6	&0	&0	&-2	&-2	&-426	&-6	&0	\\
659	&926472	&926472	&0	&0	&-30	&-30	&2	&2	&476	&0	&2	\\
679	&1163674	&1163674	&10	&10	&34	&34	&4	&4	&-526	&10	&2	\\
699	&1457040	&1457040	&0	&0	&0	&0	&0	&0	&576	&0	&0	\\
719	&1819056	&1819056	&-8	&-8	&-42	&-42	&-4	&-4	&-644	&-8	&-2	\\
739	&2264376	&2264376	&0	&0	&42	&42	&-4	&-4	&724	&0	&-2		\\\bottomrule
\end{tabular}
\end{table}

\begin{table}
\centering
\caption{\label{tab:coeffs:5_2}McKay--Thompson series $H^{(5)}_{g,2}$}\smallskip
\begin{tabular}{r|rrrrrrrrrrr}\toprule
$[g]$	&1A	&2A	&2B	&2C	&3A	&6A	&5A	&10A	&4AB	&4CD	&12AB	\\
	\midrule
$\G_g$&		$1$&	$1|4$&	$2|2$&	$2$&		$3|3$&		$3|12$&	$5$&	$5|4$&		$2|8$&		$4$&	$6|24$	\\	
	\midrule
16	&10	&-10	&2	&-2	&-2	&2	&0	&0	&0	&0	&0	\\
36	&30	&-30	&-2	&2	&0	&0	&0	&0	&0	&0	&0	\\
56	&52	&-52	&4	&-4	&4	&-4	&2	&-2	&0	&0	&0	\\
76	&108	&-108	&-4	&4	&0	&0	&-2	&2	&0	&0	&0	\\
96	&180	&-180	&4	&-4	&0	&0	&0	&0	&0	&0	&0	\\
116	&312	&-312	&-8	&8	&0	&0	&2	&-2	&0	&0	&0	\\
136	&488	&-488	&8	&-8	&-4	&4	&-2	&2	&0	&0	&0	\\
156	&792	&-792	&-8	&8	&0	&0	&2	&-2	&0	&0	&0	\\
176	&1180	&-1180	&12	&-12	&4	&-4	&0	&0	&0	&0	&0	\\
196	&1810	&-1810	&-14	&14	&-2	&2	&0	&0	&0	&0	&0	\\
216	&2640	&-2640	&16	&-16	&0	&0	&0	&0	&0	&0	&0	\\
236	&3868	&-3868	&-20	&20	&4	&-4	&-2	&2	&0	&0	&0	\\
256	&5502	&-5502	&22	&-22	&-6	&6	&2	&-2	&0	&0	&0	\\
276	&7848	&-7848	&-24	&24	&0	&0	&-2	&2	&0	&0	&0	\\
296	&10912	&-10912	&32	&-32	&4	&-4	&2	&-2	&0	&0	&0	\\
316	&15212	&-15212	&-36	&36	&-4	&4	&2	&-2	&0	&0	&0	\\
336	&20808	&-20808	&40	&-40	&0	&0	&-2	&2	&0	&0	&0	\\
356	&28432	&-28432	&-48	&48	&4	&-4	&2	&-2	&0	&0	&0	\\
376	&38308	&-38308	&52	&-52	&-8	&8	&-2	&2	&0	&0	&0	\\
396	&51540	&-51540	&-60	&60	&0	&0	&0	&0	&0	&0	&0	\\
416	&68520	&-68520	&72	&-72	&12	&-12	&0	&0	&0	&0	&0	\\
436	&90928	&-90928	&-80	&80	&-8	&8	&-2	&2	&0	&0	&0	\\
456	&119544	&-119544	&88	&-88	&0	&0	&4	&-4	&0	&0	&0	\\
476	&156728	&-156728	&-104	&104	&8	&-8	&-2	&2	&0	&0	&0	\\
496	&203940	&-203940	&116	&-116	&-12	&12	&0	&0	&0	&0	&0	\\
516	&264672	&-264672	&-128	&128	&0	&0	&2	&-2	&0	&0	&0	\\
536	&341188	&-341188	&148	&-148	&16	&-16	&-2	&2	&0	&0	&0	\\
556	&438732	&-438732	&-164	&164	&-12	&12	&2	&-2	&0	&0	&0	\\
576	&560958	&-560958	&182	&-182	&0	&0	&-2	&2	&0	&0	&0	\\
596	&715312	&-715312	&-208	&208	&16	&-16	&2	&-2	&0	&0	&0	\\
616	&907720	&-907720	&232	&-232	&-20	&20	&0	&0	&0	&0	&0	\\
636	&1148928	&-1148928	&-256	&256	&0	&0	&-2	&2	&0	&0	&0	\\
656	&1447904	&-1447904	&288	&-288	&20	&-20	&4	&-4	&0	&0	&0	\\
676	&1820226	&-1820226	&-318	&318	&-18	&18	&-4	&4	&0	&0	&0	\\
696	&2279520	&-2279520	&352	&-352	&0	&0	&0	&0	&0	&0	&0	\\
716	&2847812	&-2847812	&-396	&396	&20	&-20	&2	&-2	&0	&0	&0	\\
736	&3545636	&-3545636	&436	&-436	&-28	&28	&-4	&4	&0	&0	&0	\\
756	&4404384	&-4404384	&-480	&480	&0	&0	&4	&-4	&0	&0	&0		\\\bottomrule
\end{tabular}
\end{table}

\begin{table}
\centering
\caption{\label{tab:coeffs:5_3}McKay--Thompson series $H^{(5)}_{g,3}$}\smallskip
\begin{tabular}{r|rrrrrrrrrrr}\toprule
$[g]$	&1A	&2A	&2B	&2C	&3A	&6A	&5A	&10A	&4AB	&4CD	&12AB	\\
	\midrule
$\G_g$&		$1$&	$1|4$&	$2|2$&	$2$&		$3|3$&		$3|12$&	$5$&	$5|4$&		$2|8$&		$4$&	$6|24$	\\	
	\midrule
11	&8	&8	&0	&0	&2	&2	&-2	&-2	&-4	&0	&2	\\
31	&22	&22	&-2	&-2	&-2	&-2	&2	&2	&2	&-2	&2	\\
51	&48	&48	&0	&0	&0	&0	&-2	&-2	&0	&0	&0	\\
71	&90	&90	&2	&2	&0	&0	&0	&0	&6	&2	&0	\\
91	&160	&160	&0	&0	&-2	&-2	&0	&0	&-8	&0	&-2	\\
111	&270	&270	&-2	&-2	&0	&0	&0	&0	&6	&-2	&0	\\
131	&440	&440	&0	&0	&2	&2	&0	&0	&-4	&0	&2	\\
151	&700	&700	&4	&4	&-2	&-2	&0	&0	&8	&4	&2	\\
171	&1080	&1080	&0	&0	&0	&0	&0	&0	&-12	&0	&0	\\
191	&1620	&1620	&-4	&-4	&6	&6	&0	&0	&16	&-4	&-2	\\
211	&2408	&2408	&0	&0	&-4	&-4	&-2	&-2	&-12	&0	&0	\\
231	&3522	&3522	&2	&2	&0	&0	&2	&2	&18	&2	&0	\\
251	&5048	&5048	&0	&0	&2	&2	&-2	&-2	&-28	&0	&2	\\
271	&7172	&7172	&-4	&-4	&-4	&-4	&2	&2	&24	&-4	&0	\\
291	&10080	&10080	&0	&0	&0	&0	&0	&0	&-24	&0	&0	\\
311	&13998	&13998	&6	&6	&6	&6	&-2	&-2	&34	&6	&-2	\\
331	&19272	&19272	&0	&0	&-6	&-6	&2	&2	&-44	&0	&-2	\\
351	&26298	&26298	&-6	&-6	&0	&0	&-2	&-2	&42	&-6	&0	\\
371	&35600	&35600	&0	&0	&8	&8	&0	&0	&-48	&0	&0	\\
391	&47862	&47862	&6	&6	&-6	&-6	&2	&2	&62	&6	&2	\\
411	&63888	&63888	&0	&0	&0	&0	&-2	&-2	&-72	&0	&0	\\
431	&84722	&84722	&-6	&-6	&8	&8	&2	&2	&78	&-6	&0	\\
451	&111728	&111728	&0	&0	&-10	&-10	&-2	&-2	&-80	&0	&-2	\\
471	&146520	&146520	&8	&8	&0	&0	&0	&0	&96	&8	&0	\\
491	&191080	&191080	&0	&0	&10	&10	&0	&0	&-124	&0	&2	\\
511	&248008	&248008	&-8	&-8	&-14	&-14	&-2	&-2	&128	&-8	&2	\\
531	&320424	&320424	&0	&0	&0	&0	&4	&4	&-132	&0	&0	\\
551	&412088	&412088	&8	&8	&14	&14	&-2	&-2	&160	&8	&-2	\\
571	&527800	&527800	&0	&0	&-14	&-14	&0	&0	&-188	&0	&-2	\\
591	&673302	&673302	&-10	&-10	&0	&0	&2	&2	&198	&-10	&0	\\
611	&855616	&855616	&0	&0	&16	&16	&-4	&-4	&-216	&0	&0	\\
631	&1083444	&1083444	&12	&12	&-18	&-18	&4	&4	&248	&12	&2	\\
651	&1367136	&1367136	&0	&0	&0	&0	&-4	&-4	&-288	&0	&0	\\
671	&1719362	&1719362	&-14	&-14	&20	&20	&2	&2	&314	&-14	&-4	\\
691	&2155592	&2155592	&0	&0	&-22	&-22	&2	&2	&-332	&0	&-2	\\
711	&2694276	&2694276	&12	&12	&0	&0	&-4	&-4	&384	&12	&0	\\
731	&3357664	&3357664	&0	&0	&28	&28	&4	&4	&-440	&0	&4	\\
751	&4172746	&4172746	&-14	&-14	&-26	&-26	&-4	&-4	&470	&-14	&2		\\\bottomrule
\end{tabular}
\end{table}

\begin{table}
\centering
\caption{\label{tab:coeffs:5_4}McKay--Thompson series $H^{(5)}_{g,4}$}\smallskip
\begin{tabular}{r|rrrrrrrrrrr}\toprule
$[g]$	&1A	&2A	&2B	&2C	&3A	&6A	&5A	&10A	&4AB	&4CD	&12AB	\\
	\midrule
$\G_g$&		$1$&	$1|4$&	$2|2$&	$2$&		$3|3$&		$3|12$&	$5$&	$5|4$&		$2|8$&		$4$&	$6|24$	\\	
	\midrule
4	&2	&-2	&2	&-2	&2	&-2	&2	&-2	&0	&0	&0	\\
24	&12	&-12	&-4	&4	&0	&0	&2	&-2	&0	&0	&0	\\
44	&20	&-20	&4	&-4	&-4	&4	&0	&0	&0	&0	&0	\\
64	&50	&-50	&-6	&6	&2	&-2	&0	&0	&0	&0	&0	\\
84	&72	&-72	&8	&-8	&0	&0	&2	&-2	&0	&0	&0	\\
104	&152	&-152	&-8	&8	&-4	&4	&2	&-2	&0	&0	&0	\\
124	&220	&-220	&12	&-12	&4	&-4	&0	&0	&0	&0	&0	\\
144	&378	&-378	&-14	&14	&0	&0	&-2	&2	&0	&0	&0	\\
164	&560	&-560	&16	&-16	&-4	&4	&0	&0	&0	&0	&0	\\
184	&892	&-892	&-20	&20	&4	&-4	&2	&-2	&0	&0	&0	\\
204	&1272	&-1272	&24	&-24	&0	&0	&2	&-2	&0	&0	&0	\\
224	&1940	&-1940	&-28	&28	&-4	&4	&0	&0	&0	&0	&0	\\
244	&2720	&-2720	&32	&-32	&8	&-8	&0	&0	&0	&0	&0	\\
264	&3960	&-3960	&-40	&40	&0	&0	&0	&0	&0	&0	&0	\\
284	&5500	&-5500	&44	&-44	&-8	&8	&0	&0	&0	&0	&0	\\
304	&7772	&-7772	&-52	&52	&8	&-8	&2	&-2	&0	&0	&0	\\
324	&10590	&-10590	&62	&-62	&0	&0	&0	&0	&0	&0	&0	\\
344	&14668	&-14668	&-68	&68	&-8	&8	&-2	&2	&0	&0	&0	\\
364	&19728	&-19728	&80	&-80	&12	&-12	&-2	&2	&0	&0	&0	\\
384	&26772	&-26772	&-92	&92	&0	&0	&2	&-2	&0	&0	&0	\\
404	&35624	&-35624	&104	&-104	&-16	&16	&4	&-4	&0	&0	&0	\\
424	&47592	&-47592	&-120	&120	&12	&-12	&2	&-2	&0	&0	&0	\\
444	&62568	&-62568	&136	&-136	&0	&0	&-2	&2	&0	&0	&0	\\
464	&82568	&-82568	&-152	&152	&-16	&16	&-2	&2	&0	&0	&0	\\
484	&107502	&-107502	&174	&-174	&18	&-18	&2	&-2	&0	&0	&0	\\
504	&140172	&-140172	&-196	&196	&0	&0	&2	&-2	&0	&0	&0	\\
524	&180940	&-180940	&220	&-220	&-20	&20	&0	&0	&0	&0	&0	\\
544	&233576	&-233576	&-248	&248	&20	&-20	&-4	&4	&0	&0	&0	\\
564	&298968	&-298968	&280	&-280	&0	&0	&-2	&2	&0	&0	&0	\\
584	&382632	&-382632	&-312	&312	&-24	&24	&2	&-2	&0	&0	&0	\\
604	&486124	&-486124	&348	&-348	&28	&-28	&4	&-4	&0	&0	&0	\\
624	&617112	&-617112	&-392	&392	&0	&0	&2	&-2	&0	&0	&0	\\
644	&778768	&-778768	&432	&-432	&-32	&32	&-2	&2	&0	&0	&0	\\
664	&981548	&-981548	&-484	&484	&32	&-32	&-2	&2	&0	&0	&0	\\
684	&1230732	&-1230732	&540	&-540	&0	&0	&2	&-2	&0	&0	&0	\\
704	&1541244	&-1541244	&-596	&596	&-36	&36	&4	&-4	&0	&0	&0	\\
724	&1921240	&-1921240	&664	&-664	&40	&-40	&0	&0	&0	&0	&0	\\
744	&2391456	&-2391456	&-736	&736	&0	&0	&-4	&4	&0	&0	&0	\\
\bottomrule
\end{tabular}
\end{table}

\clearpage

\begin{table}
\vspace{-16pt}
\subsection{Lambency Seven}
\vspace{-22pt}
\begin{minipage}[t]{0.49\linewidth}
\centering
\caption{\label{tab:coeffs:7_1}$H^{(7)}_{g,1}$}\smallskip
\begin{tabular}{r|rrrrrrrrrrr}\toprule
$[g]$	&1A	&2A	&4A	&3AB	&6AB	\\
\midrule
$\G_g$&		$1$&	$1|4$&	$2|8$&	$3$&	$3|4$\\
\midrule
-1	&-2	&-2	&-2	&-2	&-2	\\
27	&4	&4	&4	&1	&1	\\
55	&6	&6	&-2	&0	&0	\\
83	&10	&10	&2	&-2	&-2	\\
111	&20	&20	&-4	&2	&2	\\
139	&30	&30	&6	&0	&0	\\
167	&42	&42	&-6	&0	&0	\\
195	&68	&68	&4	&2	&2	\\
223	&96	&96	&-8	&0	&0	\\
251	&130	&130	&10	&-2	&-2	\\
279	&188	&188	&-12	&2	&2	\\
307	&258	&258	&10	&0	&0	\\
335	&350	&350	&-10	&-4	&-4	\\
363	&474	&474	&18	&3	&3	\\
391	&624	&624	&-16	&0	&0	\\
419	&826	&826	&18	&-2	&-2	\\
447	&1090	&1090	&-22	&4	&4	\\
475	&1410	&1410	&26	&0	&0	\\
503	&1814	&1814	&-26	&-4	&-4	\\
531	&2338	&2338	&26	&4	&4	\\
559	&2982	&2982	&-34	&0	&0	\\
587	&3774	&3774	&38	&-6	&-6	\\
615	&4774	&4774	&-42	&4	&4	\\
643	&5994	&5994	&42	&0	&0	\\
671	&7494	&7494	&-50	&-6	&-6	\\
699	&9348	&9348	&60	&6	&6	\\
727	&11586	&11586	&-62	&0	&0	\\
755	&14320	&14320	&64	&-8	&-8	\\
783	&17654	&17654	&-74	&8	&8	\\
811	&21654	&21654	&86	&0	&0	\\
839	&26488	&26488	&-88	&-8	&-8	\\
867	&32334	&32334	&94	&9	&9	\\
895	&39324	&39324	&-108	&0	&0	\\
923	&47680	&47680	&120	&-8	&-8	\\
951	&57688	&57688	&-128	&10	&10	\\
979	&69600	&69600	&136	&0	&0	\\
1007	&83760	&83760	&-152	&-12	&-12	\\
1035	&100596	&100596	&172	&12	&12	\\
\bottomrule
\end{tabular}
\end{minipage}
\begin{minipage}[t]{0.49\linewidth}
\centering
\caption{\label{tab:coeffs:7_2}$H^{(7)}_{g,2}$}\smallskip
\begin{tabular}{r|rrrrrrrrrrr}
\toprule
$[g]$	&1A	&2A	&4A	&3AB	&6AB	\\
\midrule
$\G_g$&		$1$&	$1|4$&	$2|8$&	$3$&	$3|4$\\
\midrule
24	&4	&-4	&0	&-2	&2	\\
52	&12	&-12	&0	&0	&0	\\
80	&20	&-20	&0	&2	&-2	\\
108	&32	&-32	&0	&-1	&1	\\
136	&48	&-48	&0	&0	&0	\\
164	&80	&-80	&0	&2	&-2	\\
192	&108	&-108	&0	&-3	&3	\\
220	&168	&-168	&0	&0	&0	\\
248	&232	&-232	&0	&4	&-4	\\
276	&328	&-328	&0	&-2	&2	\\
304	&444	&-444	&0	&0	&0	\\
332	&620	&-620	&0	&2	&-2	\\
360	&812	&-812	&0	&-4	&4	\\
388	&1104	&-1104	&0	&0	&0	\\
416	&1444	&-1444	&0	&4	&-4	\\
444	&1904	&-1904	&0	&-4	&4	\\
472	&2460	&-2460	&0	&0	&0	\\
500	&3208	&-3208	&0	&4	&-4	\\
528	&4080	&-4080	&0	&-6	&6	\\
556	&5244	&-5244	&0	&0	&0	\\
584	&6632	&-6632	&0	&8	&-8	\\
612	&8400	&-8400	&0	&-6	&6	\\
640	&10524	&-10524	&0	&0	&0	\\
668	&13224	&-13224	&0	&6	&-6	\\
696	&16408	&-16408	&0	&-8	&8	\\
724	&20436	&-20436	&0	&0	&0	\\
752	&25216	&-25216	&0	&10	&-10	\\
780	&31120	&-31120	&0	&-8	&8	\\
808	&38148	&-38148	&0	&0	&0	\\
836	&46784	&-46784	&0	&8	&-8	\\
864	&56976	&-56976	&0	&-12	&12	\\
892	&69432	&-69432	&0	&0	&0	\\
920	&84144	&-84144	&0	&12	&-12	\\
948	&101904	&-101904	&0	&-12	&12	\\
976	&122868	&-122868	&0	&0	&0	\\
1004	&148076	&-148076	&0	&14	&-14	\\
1032	&177656	&-177656	&0	&-16	&16	\\
1060	&213072	&-213072	&0	&0	&0	\\
\bottomrule
\end{tabular}
\end{minipage}
\end{table}

\begin{table}
\begin{minipage}[t]{0.49\linewidth}
\centering
\caption{\label{tab:coeffs:7_3}$H^{(7)}_{g,3}$}\smallskip
\begin{tabular}{r|rrrrrrrrrrr}\toprule
$[g]$	&1A	&2A	&4A	&3AB	&6AB	\\
\midrule
$\G_g$&		$1$&	$1|4$&	$2|8$&	$3$&	$3|4$\\
\midrule
19	&6	&6	&-2	&0	&0	\\
47	&12	&12	&4	&0	&0	\\
75	&22	&22	&-2	&1	&1	\\
103	&36	&36	&4	&0	&0	\\
131	&58	&58	&-6	&-2	&-2	\\
159	&90	&90	&2	&0	&0	\\
187	&132	&132	&-4	&0	&0	\\
215	&190	&190	&6	&-2	&-2	\\
243	&274	&274	&-6	&1	&1	\\
271	&384	&384	&8	&0	&0	\\
299	&528	&528	&-8	&0	&0	\\
327	&722	&722	&10	&2	&2	\\
355	&972	&972	&-12	&0	&0	\\
383	&1300	&1300	&12	&-2	&-2	\\
411	&1724	&1724	&-12	&2	&2	\\
439	&2256	&2256	&16	&0	&0	\\
467	&2938	&2938	&-22	&-2	&-2	\\
495	&3806	&3806	&22	&2	&2	\\
523	&4890	&4890	&-22	&0	&0	\\
551	&6244	&6244	&28	&-2	&-2	\\
579	&7940	&7940	&-28	&2	&2	\\
607	&10038	&10038	&30	&0	&0	\\
635	&12620	&12620	&-36	&-4	&-4	\\
663	&15814	&15814	&38	&4	&4	\\
691	&19722	&19722	&-46	&0	&0	\\
719	&24490	&24490	&50	&-2	&-2	\\
747	&30310	&30310	&-50	&4	&4	\\
775	&37362	&37362	&58	&0	&0	\\
803	&45908	&45908	&-68	&-4	&-4	\\
831	&56236	&56236	&68	&4	&4	\\
859	&68646	&68646	&-74	&0	&0	\\
887	&83556	&83556	&84	&-6	&-6	\\
915	&101436	&101436	&-92	&6	&6	\\
943	&122790	&122790	&102	&0	&0	\\
971	&148254	&148254	&-106	&-6	&-6	\\
999	&178566	&178566	&118	&6	&6	\\
1027	&214548	&214548	&-132	&0	&0	\\
1055	&257190	&257190	&142	&-6	&-6	\\
\bottomrule
\end{tabular}
\end{minipage}
\begin{minipage}[t]{0.49\linewidth}
\centering
\caption{\label{tab:coeffs:7_4}$H^{(7)}_{g,4}$}\smallskip
\begin{tabular}{r|rrrrrrrrrrr}\toprule
$[g]$	&1A	&2A	&4A	&3AB	&6AB	\\
\midrule
$\G_g$&		$1$&	$1|4$&	$2|8$&	$3$&	$3|4$\\
\midrule
12	&4	&-4	&0	&1	&-1	\\
40	&12	&-12	&0	&0	&0	\\
68	&16	&-16	&0	&-2	&2	\\
96	&36	&-36	&0	&0	&0	\\
124	&48	&-48	&0	&0	&0	\\
152	&84	&-84	&0	&0	&0	\\
180	&116	&-116	&0	&2	&-2	\\
208	&180	&-180	&0	&0	&0	\\
236	&244	&-244	&0	&-2	&2	\\
264	&360	&-360	&0	&0	&0	\\
292	&480	&-480	&0	&0	&0	\\
320	&676	&-676	&0	&-2	&2	\\
348	&896	&-896	&0	&2	&-2	\\
376	&1224	&-1224	&0	&0	&0	\\
404	&1588	&-1588	&0	&-2	&2	\\
432	&2128	&-2128	&0	&1	&-1	\\
460	&2736	&-2736	&0	&0	&0	\\
488	&3588	&-3588	&0	&0	&0	\\
516	&4576	&-4576	&0	&4	&-4	\\
544	&5904	&-5904	&0	&0	&0	\\
572	&7448	&-7448	&0	&-4	&4	\\
600	&9500	&-9500	&0	&2	&-2	\\
628	&11892	&-11892	&0	&0	&0	\\
656	&14992	&-14992	&0	&-2	&2	\\
684	&18628	&-18628	&0	&4	&-4	\\
712	&23256	&-23256	&0	&0	&0	\\
740	&28688	&-28688	&0	&-4	&4	\\
768	&35532	&-35532	&0	&3	&-3	\\
796	&43560	&-43560	&0	&0	&0	\\
824	&53528	&-53528	&0	&-4	&4	\\
852	&65256	&-65256	&0	&6	&-6	\\
880	&79656	&-79656	&0	&0	&0	\\
908	&96564	&-96564	&0	&-6	&6	\\
936	&117196	&-117196	&0	&4	&-4	\\
964	&141360	&-141360	&0	&0	&0	\\
992	&170600	&-170600	&0	&-4	&4	\\
1020	&204848	&-204848	&0	&8	&-8	\\
1048	&245988	&-245988	&0	&0	&0	\\
\bottomrule
\end{tabular}
\end{minipage}
\end{table}

\begin{table}
\begin{minipage}[t]{0.49\linewidth}
\centering
\caption{\label{tab:coeffs:7_5}$H^{(7)}_{g,5}$}\smallskip
\begin{tabular}{r|rrrrrrrrrrr}
\toprule
$[g]$	&1A	&2A	&4A	&3AB	&6AB	\\
\midrule
$\G_g$&		$1$&	$1|4$&	$2|8$&	$3$&	$3|4$\\
\midrule
3	&2	&2	&2	&-1	&-1	\\
31	&6	&6	&-2	&0	&0	\\
59	&14	&14	&-2	&2	&2	\\
87	&22	&22	&-2	&-2	&-2	\\
115	&36	&36	&4	&0	&0	\\
143	&56	&56	&0	&2	&2	\\
171	&82	&82	&2	&-2	&-2	\\
199	&126	&126	&-2	&0	&0	\\
227	&182	&182	&6	&2	&2	\\
255	&250	&250	&-6	&-2	&-2	\\
283	&354	&354	&2	&0	&0	\\
311	&490	&490	&-6	&4	&4	\\
339	&656	&656	&8	&-4	&-4	\\
367	&882	&882	&-6	&0	&0	\\
395	&1180	&1180	&4	&4	&4	\\
423	&1550	&1550	&-10	&-4	&-4	\\
451	&2028	&2028	&12	&0	&0	\\
479	&2638	&2638	&-10	&4	&4	\\
507	&3394	&3394	&10	&-5	&-5	\\
535	&4362	&4362	&-14	&0	&0	\\
563	&5562	&5562	&18	&6	&6	\\
591	&7032	&7032	&-16	&-6	&-6	\\
619	&8886	&8886	&14	&0	&0	\\
647	&11166	&11166	&-18	&6	&6	\\
675	&13940	&13940	&28	&-7	&-7	\\
703	&17358	&17358	&-26	&0	&0	\\
731	&21536	&21536	&24	&8	&8	\\
759	&26594	&26594	&-30	&-10	&-10	\\
787	&32742	&32742	&38	&0	&0	\\
815	&40180	&40180	&-36	&10	&10	\\
843	&49124	&49124	&36	&-10	&-10	\\
871	&59916	&59916	&-44	&0	&0	\\
899	&72852	&72852	&52	&12	&12	\\
927	&88296	&88296	&-56	&-12	&-12	\\
955	&106788	&106788	&52	&0	&0	\\
983	&128816	&128816	&-64	&14	&14	\\
1011	&154948	&154948	&76	&-14	&-14	\\
\bottomrule
\end{tabular}
\end{minipage}
\begin{minipage}[t]{0.49\linewidth}
\centering
\caption{\label{tab:coeffs:7_6}$H^{(7)}_{g,6}$}\smallskip
\begin{tabular}{r|rrrrrrrrrrr}
\toprule
$[g]$	&1A	&2A	&4A	&3AB	&6AB	\\
\midrule
$\G_g$&		$1$&	$1|4$&	$2|8$&	$3$&	$3|4$\\
\midrule
20	&4	&-4	&0	&-2	&2	\\
48	&4	&-4	&0	&1	&-1	\\
76	&12	&-12	&0	&0	&0	\\
104	&12	&-12	&0	&0	&0	\\
132	&32	&-32	&0	&2	&-2	\\
160	&36	&-36	&0	&0	&0	\\
188	&64	&-64	&0	&-2	&2	\\
216	&80	&-80	&0	&2	&-2	\\
244	&132	&-132	&0	&0	&0	\\
272	&160	&-160	&0	&-2	&2	\\
300	&252	&-252	&0	&3	&-3	\\
328	&312	&-312	&0	&0	&0	\\
356	&448	&-448	&0	&-2	&2	\\
384	&572	&-572	&0	&2	&-2	\\
412	&792	&-792	&0	&0	&0	\\
440	&992	&-992	&0	&-4	&4	\\
468	&1348	&-1348	&0	&4	&-4	\\
496	&1680	&-1680	&0	&0	&0	\\
524	&2220	&-2220	&0	&-6	&6	\\
552	&2776	&-2776	&0	&4	&-4	\\
580	&3600	&-3600	&0	&0	&0	\\
608	&4460	&-4460	&0	&-4	&4	\\
636	&5712	&-5712	&0	&6	&-6	\\
664	&7044	&-7044	&0	&0	&0	\\
692	&8892	&-8892	&0	&-6	&6	\\
720	&10932	&-10932	&0	&6	&-6	\\
748	&13656	&-13656	&0	&0	&0	\\
776	&16672	&-16672	&0	&-8	&8	\\
804	&20672	&-20672	&0	&8	&-8	\\
832	&25116	&-25116	&0	&0	&0	\\
860	&30856	&-30856	&0	&-8	&8	\\
888	&37352	&-37352	&0	&8	&-8	\\
916	&45564	&-45564	&0	&0	&0	\\
944	&54884	&-54884	&0	&-10	&10	\\
972	&66572	&-66572	&0	&11	&-11	\\
1000	&79848	&-79848	&0	&0	&0	\\
1028	&96256	&-96256	&0	&-14	&14	\\
\bottomrule
\end{tabular}
\end{minipage}
\end{table}

\clearpage

\begin{table}
\vspace{-12pt}
\subsection{Lambency Thirteen}
\vspace{-12pt}
\begin{minipage}[t]{0.33\linewidth}
\centering
\caption{\label{tab:coeffs:13_1}$H^{(13)}_{g,1}$}\smallskip
\begin{tabular}{r|rrr}
\toprule
$[g]$	&1A	&2A	&4AB	\\
	\midrule
$\G_g$&		$1$&$1|4$&${2|8}$\\	
	\midrule
-1	&-2	&-2	&-2	\\
51	&2	&2	&2	\\
103	&2	&2	&-2	\\
155	&0	&0	&0	\\
207	&2	&2	&-2	\\
259	&2	&2	&2	\\
311	&4	&4	&0	\\
363	&6	&6	&2	\\
415	&6	&6	&-2	\\
467	&8	&8	&4	\\
519	&12	&12	&-4	\\
571	&14	&14	&2	\\
623	&14	&14	&-2	\\
675	&20	&20	&4	\\
727	&24	&24	&-4	\\
779	&28	&28	&4	\\
831	&36	&36	&-4	\\
883	&42	&42	&6	\\
935	&50	&50	&-6	\\
987	&62	&62	&6	\\
1039	&70	&70	&-6	\\
1091	&84	&84	&8	\\
1143	&102	&102	&-6	\\
1195	&118	&118	&6	\\
1247	&136	&136	&-8	\\
1299	&162	&162	&10	\\
1351	&190	&190	&-10	\\
1403	&216	&216	&8	\\
1455	&254	&254	&-10	\\
1507	&292	&292	&12	\\
1559	&336	&336	&-12	\\
1611	&392	&392	&12	\\
1663	&446	&446	&-14	\\
1715	&510	&510	&14	\\
1767	&592	&592	&-16	\\
1819	&672	&672	&16	\\
1871	&764	&764	&-16	\\
1923	&876	&876	&20	\\
\bottomrule
\end{tabular}
\end{minipage}
\begin{minipage}[t]{0.33\linewidth}
\centering
\caption{\label{tab:coeffs:13_2}$H^{(13)}_{g,2}$}\smallskip
\begin{tabular}{r|rrr}
\toprule
$[g]$	&1A	&2A	&4AB	\\
	\midrule
$\G_g$&		$1$&$1|4$&${2|8}$\\	
	\midrule
48	&0	&0	&0	\\
100	&2	&-2	&0	\\
152	&4	&-4	&0	\\
204	&4	&-4	&0	\\
256	&6	&-6	&0	\\
308	&8	&-8	&0	\\
360	&8	&-8	&0	\\
412	&12	&-12	&0	\\
464	&16	&-16	&0	\\
516	&20	&-20	&0	\\
568	&24	&-24	&0	\\
620	&32	&-32	&0	\\
672	&36	&-36	&0	\\
724	&48	&-48	&0	\\
776	&56	&-56	&0	\\
828	&68	&-68	&0	\\
880	&80	&-80	&0	\\
932	&100	&-100	&0	\\
984	&112	&-112	&0	\\
1036	&140	&-140	&0	\\
1088	&164	&-164	&0	\\
1140	&192	&-192	&0	\\
1192	&224	&-224	&0	\\
1244	&268	&-268	&0	\\
1296	&306	&-306	&0	\\
1348	&364	&-364	&0	\\
1400	&420	&-420	&0	\\
1452	&488	&-488	&0	\\
1504	&560	&-560	&0	\\
1556	&656	&-656	&0	\\
1608	&744	&-744	&0	\\
1660	&864	&-864	&0	\\
1712	&988	&-988	&0	\\
1764	&1134	&-1134	&0	\\
1816	&1292	&-1292	&0	\\
1868	&1484	&-1484	&0	\\
1920	&1676	&-1676	&0	\\
1972	&1920	&-1920	&0	\\
\bottomrule
\end{tabular}
\end{minipage}
\begin{minipage}[t]{0.33\linewidth}
\centering
\caption{\label{tab:coeffs:13_3}$H^{(13)}_{g,3}$}\smallskip
\begin{tabular}{r|rrr}
\toprule
$[g]$	&1A	&2A	&4AB	\\
	\midrule
$\G_g$&		$1$&$1|4$&${2|8}$\\	
	\midrule
43	&2	&2	&-2	\\
95	&2	&2	&2	\\
147	&4	&4	&0	\\
199	&6	&6	&2	\\
251	&8	&8	&-4	\\
303	&10	&10	&2	\\
355	&14	&14	&-2	\\
407	&18	&18	&2	\\
459	&22	&22	&-2	\\
511	&26	&26	&2	\\
563	&34	&34	&-2	\\
615	&44	&44	&4	\\
667	&52	&52	&-4	\\
719	&64	&64	&4	\\
771	&78	&78	&-2	\\
823	&96	&96	&4	\\
875	&114	&114	&-6	\\
927	&136	&136	&4	\\
979	&164	&164	&-4	\\
1031	&194	&194	&6	\\
1083	&230	&230	&-6	\\
1135	&270	&270	&6	\\
1187	&318	&318	&-6	\\
1239	&374	&374	&6	\\
1291	&434	&434	&-10	\\
1343	&506	&506	&10	\\
1395	&592	&592	&-8	\\
1447	&686	&686	&10	\\
1499	&792	&792	&-12	\\
1551	&914	&914	&10	\\
1603	&1054	&1054	&-10	\\
1655	&1214	&1214	&14	\\
1707	&1394	&1394	&-14	\\
1759	&1594	&1594	&14	\\
1811	&1822	&1822	&-14	\\
1863	&2084	&2084	&16	\\
1915	&2374	&2374	&-18	\\
1967	&2698	&2698	&18	\\
\bottomrule
\end{tabular}
\end{minipage}
\end{table}

\begin{table}
\begin{minipage}[t]{0.33\linewidth}
\centering
\caption{\label{tab:coeffs:13_4}$H^{(13)}_{g,4}$}\smallskip
\begin{tabular}{r|rrr}
\toprule
$[g]$	&1A	&2A	&4AB	\\
	\midrule
$\G_g$&		$1$&$1|4$&${2|8}$\\	
	\midrule
36	&2	&-2	&0	\\
88	&4	&-4	&0	\\
140	&4	&-4	&0	\\
192	&8	&-8	&0	\\
244	&8	&-8	&0	\\
296	&12	&-12	&0	\\
348	&16	&-16	&0	\\
400	&22	&-22	&0	\\
452	&24	&-24	&0	\\
504	&36	&-36	&0	\\
556	&40	&-40	&0	\\
608	&52	&-52	&0	\\
660	&64	&-64	&0	\\
712	&80	&-80	&0	\\
764	&92	&-92	&0	\\
816	&116	&-116	&0	\\
868	&136	&-136	&0	\\
920	&168	&-168	&0	\\
972	&196	&-196	&0	\\
1024	&238	&-238	&0	\\
1076	&272	&-272	&0	\\
1128	&332	&-332	&0	\\
1180	&384	&-384	&0	\\
1232	&456	&-456	&0	\\
1284	&528	&-528	&0	\\
1336	&620	&-620	&0	\\
1388	&712	&-712	&0	\\
1440	&840	&-840	&0	\\
1492	&960	&-960	&0	\\
1544	&1120	&-1120	&0	\\
1596	&1280	&-1280	&0	\\
1648	&1484	&-1484	&0	\\
1700	&1688	&-1688	&0	\\
1752	&1952	&-1952	&0	\\
1804	&2216	&-2216	&0	\\
1856	&2544	&-2544	&0	\\
1908	&2888	&-2888	&0	\\
1960	&3304	&-3304	&0	\\
\bottomrule
\end{tabular}
\end{minipage}
\begin{minipage}[t]{0.33\linewidth}
\centering
\caption{\label{tab:coeffs:13_5}$H^{(13)}_{g,5}$}\smallskip
\begin{tabular}{r|rrr}
\toprule
$[g]$	&1A	&2A	&4AB	\\
	\midrule
$\G_g$&		$1$&$1|4$&${2|8}$\\	
	\midrule
27	&2	&2	&2	\\
79	&4	&4	&0	\\
131	&6	&6	&2	\\
183	&6	&6	&-2	\\
235	&10	&10	&2	\\
287	&14	&14	&-2	\\
339	&16	&16	&0	\\
391	&22	&22	&-2	\\
443	&30	&30	&2	\\
495	&36	&36	&-4	\\
547	&46	&46	&2	\\
599	&58	&58	&-2	\\
651	&68	&68	&4	\\
703	&86	&86	&-2	\\
755	&106	&106	&2	\\
807	&124	&124	&-4	\\
859	&152	&152	&4	\\
911	&184	&184	&-4	\\
963	&216	&216	&4	\\
1015	&258	&258	&-6	\\
1067	&308	&308	&4	\\
1119	&362	&362	&-6	\\
1171	&426	&426	&6	\\
1223	&502	&502	&-6	\\
1275	&584	&584	&8	\\
1327	&684	&684	&-8	\\
1379	&798	&798	&6	\\
1431	&920	&920	&-8	\\
1483	&1070	&1070	&10	\\
1535	&1238	&1238	&-10	\\
1587	&1422	&1422	&10	\\
1639	&1638	&1638	&-10	\\
1691	&1884	&1884	&12	\\
1743	&2156	&2156	&-12	\\
1795	&2468	&2468	&12	\\
1847	&2822	&2822	&-14	\\
1899	&3212	&3212	&16	\\
1951	&3660	&3660	&-16	\\
\bottomrule
\end{tabular}
\end{minipage}
\begin{minipage}[t]{0.33\linewidth}
\centering
\caption{\label{tab:coeffs:13_6}$H^{(13)}_{g,6}$}\smallskip
\begin{tabular}{r|rrr}
\toprule
$[g]$	&1A	&2A	&4AB	\\
	\midrule
$\G_g$&		$1$&$1|4$&${2|8}$\\	
	\midrule
16	&2	&-2	&0	\\
68	&4	&-4	&0	\\
120	&4	&-4	&0	\\
172	&8	&-8	&0	\\
224	&8	&-8	&0	\\
276	&16	&-16	&0	\\
328	&16	&-16	&0	\\
380	&24	&-24	&0	\\
432	&28	&-28	&0	\\
484	&38	&-38	&0	\\
536	&44	&-44	&0	\\
588	&60	&-60	&0	\\
640	&68	&-68	&0	\\
692	&88	&-88	&0	\\
744	&104	&-104	&0	\\
796	&132	&-132	&0	\\
848	&152	&-152	&0	\\
900	&190	&-190	&0	\\
952	&220	&-220	&0	\\
1004	&268	&-268	&0	\\
1056	&312	&-312	&0	\\
1108	&376	&-376	&0	\\
1160	&432	&-432	&0	\\
1212	&520	&-520	&0	\\
1264	&596	&-596	&0	\\
1316	&708	&-708	&0	\\
1368	&812	&-812	&0	\\
1420	&956	&-956	&0	\\
1472	&1092	&-1092	&0	\\
1524	&1280	&-1280	&0	\\
1576	&1460	&-1460	&0	\\
1628	&1696	&-1696	&0	\\
1680	&1932	&-1932	&0	\\
1732	&2236	&-2236	&0	\\
1784	&2536	&-2536	&0	\\
1836	&2924	&-2924	&0	\\
1888	&3308	&-3308	&0	\\
1940	&3792	&-3792	&0	\\
\bottomrule
\end{tabular}
\end{minipage}
\end{table}

\begin{table}
\begin{minipage}[t]{0.33\linewidth}
\centering
\caption{\label{tab:coeffs:13_7}$H^{(13)}_{g,7}$}\smallskip
\begin{tabular}{r|rrr}
\toprule
$[g]$	&1A	&2A	&4AB	\\
	\midrule
$\G_g$&		$1$&$1|4$&${2|8}$\\	
	\midrule
3	&2	&2	&-2	\\
55	&2	&2	&2	\\
107	&4	&4	&0	\\
159	&8	&8	&0	\\
211	&10	&10	&-2	\\
263	&12	&12	&0	\\
315	&16	&16	&0	\\
367	&22	&22	&2	\\
419	&26	&26	&-2	\\
471	&34	&34	&2	\\
523	&44	&44	&0	\\
575	&54	&54	&2	\\
627	&68	&68	&-4	\\
679	&82	&82	&2	\\
731	&102	&102	&-2	\\
783	&124	&124	&4	\\
835	&148	&148	&-4	\\
887	&176	&176	&4	\\
939	&214	&214	&-2	\\
991	&256	&256	&4	\\
1043	&300	&300	&-4	\\
1095	&356	&356	&4	\\
1147	&420	&420	&-4	\\
1199	&494	&494	&6	\\
1251	&580	&580	&-8	\\
1303	&674	&674	&6	\\
1355	&786	&786	&-6	\\
1407	&918	&918	&6	\\
1459	&1060	&1060	&-8	\\
1511	&1226	&1226	&6	\\
1563	&1418	&1418	&-6	\\
1615	&1632	&1632	&8	\\
1667	&1874	&1874	&-10	\\
1719	&2150	&2150	&10	\\
1771	&2464	&2464	&-8	\\
1823	&2816	&2816	&12	\\
1875	&3214	&3214	&-14	\\
\bottomrule
\end{tabular}
\end{minipage}
\begin{minipage}[t]{0.33\linewidth}
\centering
\caption{\label{tab:coeffs:13_8}$H^{(13)}_{g,8}$}\smallskip
\begin{tabular}{r|rrr}
\toprule
$[g]$	&1A	&2A	&4AB	\\
	\midrule
$\G_g$&		$1$&$1|4$&${2|8}$\\	
	\midrule
40	&4	&-4	&0	\\
92	&4	&-4	&0	\\
144	&6	&-6	&0	\\
196	&6	&-6	&0	\\
248	&12	&-12	&0	\\
300	&12	&-12	&0	\\
352	&20	&-20	&0	\\
404	&24	&-24	&0	\\
456	&32	&-32	&0	\\
508	&36	&-36	&0	\\
560	&52	&-52	&0	\\
612	&56	&-56	&0	\\
664	&76	&-76	&0	\\
716	&88	&-88	&0	\\
768	&112	&-112	&0	\\
820	&128	&-128	&0	\\
872	&164	&-164	&0	\\
924	&184	&-184	&0	\\
976	&232	&-232	&0	\\
1028	&268	&-268	&0	\\
1080	&324	&-324	&0	\\
1132	&372	&-372	&0	\\
1184	&452	&-452	&0	\\
1236	&512	&-512	&0	\\
1288	&616	&-616	&0	\\
1340	&704	&-704	&0	\\
1392	&832	&-832	&0	\\
1444	&950	&-950	&0	\\
1496	&1120	&-1120	&0	\\
1548	&1268	&-1268	&0	\\
1600	&1486	&-1486	&0	\\
1652	&1688	&-1688	&0	\\
1704	&1956	&-1956	&0	\\
1756	&2220	&-2220	&0	\\
1808	&2568	&-2568	&0	\\
1860	&2896	&-2896	&0	\\
1912	&3336	&-3336	&0	\\
\bottomrule
\end{tabular}
\end{minipage}
\begin{minipage}[t]{0.33\linewidth}
\centering
\caption{\label{tab:coeffs:13_9}$H^{(13)}_{g,9}$}\smallskip
\begin{tabular}{r|rrr}
\toprule
$[g]$	&1A	&2A	&4AB	\\
	\midrule
$\G_g$&		$1$&$1|4$&${2|8}$\\	
	\midrule
23	&2	&2	&-2	\\
75	&4	&4	&0	\\
127	&4	&4	&0	\\
179	&6	&6	&2	\\
231	&8	&8	&0	\\
283	&12	&12	&0	\\
335	&14	&14	&-2	\\
387	&20	&20	&0	\\
439	&26	&26	&-2	\\
491	&30	&30	&2	\\
543	&40	&40	&0	\\
595	&50	&50	&2	\\
647	&60	&60	&0	\\
699	&74	&74	&2	\\
751	&90	&90	&-2	\\
803	&108	&108	&4	\\
855	&134	&134	&-2	\\
907	&158	&158	&2	\\
959	&188	&188	&-4	\\
1011	&226	&226	&2	\\
1063	&266	&266	&-2	\\
1115	&314	&314	&2	\\
1167	&372	&372	&-4	\\
1219	&436	&436	&4	\\
1271	&508	&508	&-4	\\
1323	&596	&596	&4	\\
1375	&692	&692	&-4	\\
1427	&802	&802	&6	\\
1479	&932	&932	&-4	\\
1531	&1074	&1074	&6	\\
1583	&1238	&1238	&-6	\\
1635	&1430	&1430	&6	\\
1687	&1640	&1640	&-8	\\
1739	&1878	&1878	&6	\\
1791	&2150	&2150	&-6	\\
1843	&2456	&2456	&8	\\
1895	&2800	&2800	&-8	\\
\bottomrule
\end{tabular}
\end{minipage}
\end{table}

\begin{table}
\begin{minipage}[t]{0.33\linewidth}
\centering
\caption{\label{tab:coeffs:13_10}$H^{(13)}_{g,10}$}\smallskip
\begin{tabular}{r|rrr}
\toprule
$[g]$	&1A	&2A	&4AB	\\
	\midrule
$\G_g$&		$1$&$1|4$&${2|8}$\\	
	\midrule
4	&2	&-2	&0	\\
56	&0	&0	&0	\\
108	&4	&-4	&0	\\
160	&4	&-4	&0	\\
212	&8	&-8	&0	\\
264	&8	&-8	&0	\\
316	&12	&-12	&0	\\
368	&12	&-12	&0	\\
420	&20	&-20	&0	\\
472	&20	&-20	&0	\\
524	&32	&-32	&0	\\
576	&34	&-34	&0	\\
628	&48	&-48	&0	\\
680	&52	&-52	&0	\\
732	&72	&-72	&0	\\
784	&78	&-78	&0	\\
836	&104	&-104	&0	\\
888	&116	&-116	&0	\\
940	&148	&-148	&0	\\
992	&164	&-164	&0	\\
1044	&208	&-208	&0	\\
1096	&232	&-232	&0	\\
1148	&288	&-288	&0	\\
1200	&324	&-324	&0	\\
1252	&396	&-396	&0	\\
1304	&444	&-444	&0	\\
1356	&536	&-536	&0	\\
1408	&604	&-604	&0	\\
1460	&720	&-720	&0	\\
1512	&812	&-812	&0	\\
1564	&960	&-960	&0	\\
1616	&1080	&-1080	&0	\\
1668	&1268	&-1268	&0	\\
1720	&1428	&-1428	&0	\\
1772	&1664	&-1664	&0	\\
1824	&1872	&-1872	&0	\\
\bottomrule
\end{tabular}
\end{minipage}
\begin{minipage}[t]{0.33\linewidth}
\centering
\caption{\label{tab:coeffs:13_11}$H^{(13)}_{g,11}$}\smallskip
\begin{tabular}{r|rrr}
\toprule
$[g]$	&1A	&2A	&4AB	\\
	\midrule
$\G_g$&		$1$&$1|4$&${2|8}$\\	
	\midrule
35	&2	&2	&2	\\
87	&2	&2	&2	\\
139	&2	&2	&-2	\\
191	&4	&4	&0	\\
243	&4	&4	&0	\\
295	&8	&8	&0	\\
347	&10	&10	&-2	\\
399	&10	&10	&2	\\
451	&16	&16	&0	\\
503	&20	&20	&0	\\
555	&22	&22	&-2	\\
607	&28	&28	&0	\\
659	&36	&36	&0	\\
711	&44	&44	&0	\\
763	&54	&54	&-2	\\
815	&64	&64	&0	\\
867	&76	&76	&0	\\
919	&94	&94	&2	\\
971	&114	&114	&-2	\\
1023	&130	&130	&2	\\
1075	&156	&156	&0	\\
1127	&188	&188	&0	\\
1179	&216	&216	&-4	\\
1231	&254	&254	&2	\\
1283	&300	&300	&0	\\
1335	&346	&346	&2	\\
1387	&404	&404	&-4	\\
1439	&470	&470	&2	\\
1491	&542	&542	&-2	\\
1543	&630	&630	&2	\\
1595	&724	&724	&-4	\\
1647	&828	&828	&4	\\
1699	&954	&954	&-2	\\
1751	&1100	&1100	&4	\\
1803	&1250	&1250	&-6	\\
1855	&1428	&1428	&4	\\
\bottomrule
\end{tabular}
\end{minipage}
\begin{minipage}[t]{0.33\linewidth}
\centering
\caption{\label{tab:coeffs:13_12}$H^{(13)}_{g,12}$}\smallskip
\begin{tabular}{r|rrr}
\toprule
$[g]$	&1A	&2A	&4AB	\\
	\midrule
$\G_g$&		$1$&$1|4$&${2|8}$\\	
	\midrule
12	&0	&0	&0	\\
64	&2	&-2	&0	\\
116	&0	&0	&0	\\
168	&4	&-4	&0	\\
220	&0	&0	&0	\\
272	&4	&-4	&0	\\
324	&2	&-2	&0	\\
376	&8	&-8	&0	\\
428	&4	&-4	&0	\\
480	&12	&-12	&0	\\
532	&8	&-8	&0	\\
584	&16	&-16	&0	\\
636	&16	&-16	&0	\\
688	&24	&-24	&0	\\
740	&20	&-20	&0	\\
792	&36	&-36	&0	\\
844	&32	&-32	&0	\\
896	&48	&-48	&0	\\
948	&48	&-48	&0	\\
1000	&68	&-68	&0	\\
1052	&68	&-68	&0	\\
1104	&96	&-96	&0	\\
1156	&98	&-98	&0	\\
1208	&128	&-128	&0	\\
1260	&136	&-136	&0	\\
1312	&176	&-176	&0	\\
1364	&184	&-184	&0	\\
1416	&240	&-240	&0	\\
1468	&252	&-252	&0	\\
1520	&312	&-312	&0	\\
1572	&340	&-340	&0	\\
1624	&416	&-416	&0	\\
1676	&448	&-448	&0	\\
1728	&548	&-548	&0	\\
1780	&592	&-592	&0	\\
1832	&708	&-708	&0	\\
\bottomrule
\end{tabular}
\end{minipage}
\end{table}

\clearpage

%------------------------------------------------------------------------%
\section{Decompositions}\label{sec:decompositions}
%------------------------------------------------------------------------%

As explained in \S\ref{sec:mckay} (see also \S\ref{sec:conj:mod}) our conjectural proposals for the umbral McKay--Thompson series $H^{(\ll)}_{g,r}(\t)=\sum_dc^{(\ll)}_{g,r}(d)q^{d}$ (cf. \S\S\ref{sec:mckay},\ref{sec:coeffs}) determine the $G^{(\ll)}$-modules $K^{(\ll)}_{r,d}$ up to isomorphism for $d>0$, at least for those values of $d$ for which we can identify all the Fourier coefficients $c^{(\ll)}_{g,r}(d)$. In this section we furnish tables of explicit decompositions into irreducible representations of $G^{(\ll)}$ for $K^{(\ll)}_{r,d}$, for the first few values of $d$. The coefficient $c^{(\ll)}_r(d)$ of $H^{(\ll)}_r=H^{(\ll)}_{e,r}$ is non-zero only when $d=n-r^2/4\ll$ for some integer $n\geq 0$. For each of the tables in this section the rows are labelled by the values $4\ll d$, so that the entry in row $m$ and column $\chi_i$ indicates the multiplicity of the irreducible representation of $G^{(\ll)}$ with character $\chi_i$ (in the notation of the character tables of \S\ref{sec:chars:irr}) appearing in the $G^{(\ll)}$-module $K^{(\ll)}_{r,m/4\ll}$. One can observe that these tables support Conjectures \ref{conj:conj:mod:Kell}, \ref{conj:conj:disc:dualpair} and \ref{conj:conj:disc:doub}, and also give evidence in support of the hypothesis that $K^{(\ll)}_{r,d}$ has a decomposition into irreducible representations that factor through $\bar{G}^{(\ll)}$ when $r$ is odd, and has a decomposition into faithful irreducible representations of $G^{(\ll)}$ when $r$ is even. 
   
\clearpage

 \begin{sidewaystable}

\subsection{Lambency Two}

\caption{Decomposition of $K^{(2)}_1$}
 \begin{center}
 \begin{small}
 \begin{tabular}{c@{ }|@{ }R@{ }R@{ }R@{ }R@{ }R@{ }R@{ }R@{ }R@{ }R@{ }R@{ }R@{ }R@{ }R@{ }R@{ }R@{ }R@{ }R@{ }R@{ }R@{ }R@{ }R@{ }R@{ }R@{ }R@{ }R@{ }R}
 \toprule  & \chi_{1}& \chi_{2}& \chi_{3}& \chi_{4}& \chi_{5}& \chi_{6}& \chi_{7}& \chi_{8}& \chi_{9}&\chi_{10}& \chi_{11}& \chi_{12}& \chi_{13}&\chi_{14}& \chi_{15}& \chi_{16}& \chi_{17}& \chi_{18}& \chi_{19}& \chi_{20}& \chi_{21}& \chi_{22}& \chi_{23}& \chi_{24}& \chi_{25}& \chi_{26} \\\midrule
 -1&-2&0&0&0&0&0&0&0&0&0&0&0&0&0&0&0&0&0&0&0&0&0&0&0&0&0\\ 
7&0&0&1&1&0&0&0&0&0&0&0&0&0&0&0&0&0&0&0&0&0&0&0&0&0&0\\ 
15&0&0&0&0&1&1&0&0&0&0&0&0&0&0&0&0&0&0&0&0&0&0&0&0&0&0\\ 
23&0&0&0&0&0&0&0&0&0&1&1&0&0&0&0&0&0&0&0&0&0&0&0&0&0&0\\ 
31&0&0&0&0&0&0&0&0&0&0&0&0&0&0&0&0&0&0&0&2&0&0&0&0&0&0\\ 
39&0&0&0&0&0&0&0&0&0&0&0&0&0&0&0&0&0&0&0&0&0&0&0&0&2&0\\ 
47&0&0&0&0&0&0&0&0&0&0&0&0&0&0&0&0&0&0&0&0&0&2&0&0&0&2\\ 
55&0&0&0&0&0&0&0&0&0&0&0&0&0&0&0&0&0&2&2&0&0&0&2&2&2&2\\
63&0&0&0&0&0&0&0&0&0&0&0&1&1&0&1&1&2&0&0&2&2&2&4&2&2&6\\ 
71&0&0&0&0&0&0&0&0&2&2&2&0&0&2&2&2&0&2&2&2&4&4&4&8&8&10\\
79& 0& 0& 0& 0& 2& 2& 0& 2& 2& 0& 0& 2& 2& 2& 2& 2& 4& 4& 4& 6& 6& 8& 12& 10& 10& 24\\
 87& 0& 0& 0& 0& 0& 0& 0&  0& 0& 4& 4& 4& 4& 6& 4& 4& 2& 8&   10& 8& 14& 12& 22& 24& 26& 40\\
 95& 0& 2& 0& 0& 2& 2& 2& 4& 4& 6& 6& 8& 8& 4& 8& 8& 12& 12& 12& 18& 26& 30& 40&   38& 40& 80\\
 103& 0& 0&  2& 2& 2& 2& 4& 2& 6& 10& 10& 14& 14& 18& 14& 14& 16& 26& 30& 28& 44& 44& 70& 80& 84&   136\\
 111& 0& 0& 0& 0& 8& 8& 4& 6& 14& 16&  16& 24& 24& 22& 24& 24& 34& 38& 46& 58& 80& 86& 128& 126& 132& 254\\
 119& 0& 0& 2& 2& 8& 8& 12& 8& 18& 38& 38& 40& 40&   46& 44& 44& 46& 78& 86& 88& 138& 144& 218& 238& 246& 424\\
 127& 0& 2&  2& 2& 18& 18& 18& 22& 36& 50& 50& 72& 72& 68& 72& 72& 100& 122& 140& 170& 232& 252& 378& 382& 400& 742\\
 135& 0& 2& 8& 8& 25& 25& 30&  26& 54& 94& 94& 116& 116& 130& 124& 124& 140& 212& 246& 262& 392& 410& 630& 670& 704& 1222\\
 143& 0& 6& 6& 6& 50& 50& 50& 58& 100& 148& 148&  194& 194& 192& 202& 202&   256& 342& 388& 454& 654& 704& 1044& 1074& 1120& 2058\\
 151& 0& 4& 18& 18& 68& 68& 80& 72& 150& 252& 252& 318&  318& 346& 332& 332&   394& 582& 664& 722& 1062& 1116& 1702& 1800& 1880& 3320\\
 159& 0& 14& 20& 20& 126& 126& 128& 138& 254& 390& 390& 516&  516& 520& 536&   536& 676& 904& 1036& 1196& 1716& 1836& 2764& 2846& 2980& 5408\\
 167& 2& 20& 40& 40& 182& 182& 214& 200& 396& 652& 652& 814&  814& 872&  860& 860& 1020& 1476& 1684& 1862& 2742& 2902& 4384& 4622& 4828& 8572\\
 175& 2& 32& 55& 55& 314& 314& 328& 346& 640& 988& 988& 1298&  1298&1336& 1348& 1348& 1686& 2302& 2630& 3000& 4324& 4616& 6950& 7204& 7532& 13620\\
 183& 2& 40& 98& 98& 460& 460& 512& 496& 972& 1590& 1590& 2020&  2020& 2144& 2118& 2118& 2546& 3638& 4162& 4624& 6768& 7166& 10856& 11376& 11898& 21204\\
% 191& 0& 80& 132& 132& 744& 744& 798& 824& 1544& 2426& 2426& 3140&  3140& 3236& 3278& 3278& 4050& 5584& 6376& 7248& 10500& 11192& 16834& 17504& 18294& 32976\\
% 199& 8& 108& 234& 234& 1106& 1106& 1232& 1208& 2336& 3764& 3764&  4814& 4814& 5084& 5038& 5038& 6108& 8654& 9892& 11042& 16112& 17084& 25840& 27056& 28288& 50524\\
% 207& 6& 174& 322& 322& 1742& 1742& 1860& 1904& 3602& 5677& 5677& 7348& 7348& 7626& 7670& 7670& 9444& 13090& 14968& 16940& 24566& 26148& 39428& 41022& 42894& 77176\\
\bottomrule
\end{tabular}
\end{small}
\end{center}
\end{sidewaystable}

 \begin{table} \vspace{-2pt}  \subsection{Lambency Three}    \vspace{-2pt}
 \begin{center}
     \caption{Decomposition of $K^{(3)}_1$}
\vspace{.2cm}
 \begin{tabular}{c|RRRRRRRRRRRRRRRRR}
 \toprule  & \chi_{1}& \chi_{2}& \chi_{3}& \chi_{4}& \chi_{5}& \chi_{6}& \chi_{7}& \chi_{8}& \chi_{9}&\chi_{10}& \chi_{11}& \chi_{12}& \chi_{13} &\chi_{14}& \chi_{15} \\\midrule
-1 & -2 &0& 0& 0& 0& 0& 0& 0& 0& 0& 0& 0& 0& 0& 0\\
11&0&0&0& 1& 1& 0& 0& 0& 0& 0& 0& 0& 0& 0& 0\\
23&0& 0& 0& 0& 0& 0& 0& 0& 2&0&0& 0& 0& 0& 0\\
35&0& 0& 0& 0& 0&0& 0& 0& 0& 0&0& 0& 0& 2& 0 \\
47&0& 0& 0& 0& 0& 0& 0& 0& 0&2& 0& 2& 0& 0& 2\\
59&0& 0& 0& 0& 0& 2& 2& 0& 0& 0& 2& 2& 2& 2& 2\\
71&0& 0& 2& 0& 0& 0& 0& 2& 2& 2& 2& 2& 4& 4& 6\\
83&0& 0& 0& 2& 2& 4& 4& 2& 2& 2& 4& 6& 6& 8& 10\\
95&0& 0& 2& 0& 0& 4& 4& 8& 8& 6& 6& 10& 12& 14& 18\\
107&0& 2& 2& 4& 4& 8& 12& 8& 8& 8& 14& 16& 22& 28& 30\\
\bottomrule
\end{tabular}\end{center}
 
 \begin{center}                 
 \caption{Decomposition of $K^{(3)}_2$}    
\vspace{.2cm}
 \begin{tabular}{c|RRRRRRRRRRRRR}\toprule
& \chi_{16}& \chi_{17}& \chi_{18}& \chi_{19}& \chi_{20}& \chi_{21}& \chi_{22}& \chi_{23}& \chi_{24}& \chi_{25}& \chi_{26}\\\midrule                  
 8&1&1 &0& 0& 0& 0&0& 0& 0& 0& 0\\
 20&0&0 &0& 0&  1&1&0& 0& 0& 0& 0\\
   32&0&0 &0& 0& 0& 0& 1& 1&0& 0& 0\\
     44& 0& 0& 0& 0& 0& 0& 0& 0& 2& 1& 1\\
     56& 0& 0& 0& 2& 0& 0& 2& 2& 0& 2& 2\\
     68&0& 0& 2& 0& 2& 2& 2& 2& 4& 4& 4\\
     80& 2& 2& 0& 0& 1& 1& 6& 6& 4& 8& 8\\
     92& 0& 0& 2& 4& 6& 6& 8& 8& 12& 14&14\\ 
     104& 2& 2& 0& 4& 6& 6& 20& 20& 16& 24&24 \\
     116& 2& 2& 6& 8& 12& 12& 26& 26& 36& 44& 44\\                 
                  \bottomrule
                  \end{tabular}
                  \end{center}
   \end{table}

\begin{table}\vspace{-2pt}  \subsection{Lambency Four}    \vspace{-16pt} 
 \begin{center}
     \caption{Decomposition of $K^{(4)}_1$}
    \vspace{0.2cm}

 \begin{tabular}{cc|RRRRRRRRRRRR}
 \toprule  && \chi_{1}& \chi_{2}& \chi_{3}& \chi_{4}& \chi_{5}& \chi_{6}& \chi_{7}& \chi_{8}& \chi_{9}&\chi_{10}& \chi_{11} \\\midrule
&-1 & -2 &0& 0& 0& 0& 0& 0& 0& 0& 0& 0\\
      &15&0& 0& 0& 0& 0& 0& 2& 0& 0& 0& 0\\
      &31&0& 0& 0& 0& 0& 0& 0& 0& 0& 0& 2\\
      &47&0& 0& 0& 0& 0& 2& 0& 0& 2& 2& 0& \\
      &63&0& 1& 1& 0& 2& 0& 0& 2& 2& 2& 4& \\
      &79	&0& 0& 0& 2& 2& 2& 4& 0& 4& 6& 4\\
      &95	&0& 2& 2& 2& 2& 4& 2& 4& 6& 8& 12\\
      &111	&2& 2& 2& 6& 6& 6& 8& 6& 10& 18& 14	\\
      &127	&0& 4& 4& 6&10&10&6&12&18& 26& 30\\
&143	&2& 6& 6& 14& 14& 18& 18& 10& 32& 46& 40\\
&159	&4& 10& 10& 18& 24& 26& 20& 26& 44& 68& 76\\
\bottomrule
\end{tabular}\end{center}
 \begin{center}
     \caption{Decomposition of $K^{(4)}_3$}
    \vspace{0.2cm}

 \begin{tabular}{cc|RRRRRRRRRRRR}
 \toprule  && \chi_{1}& \chi_{2}& \chi_{3}& \chi_{4}& \chi_{5}& \chi_{6}& \chi_{7}& \chi_{8}& \chi_{9}&\chi_{10}& \chi_{11} \\\midrule
 & 7	&0& 1& 1& 0& 0& 0& 0& 0& 0& 0& 0\\
&23	&0& 0& 0& 0& 0& 0& 0& 0& 2& 0& 0\\
&39	&0& 0& 0& 0& 2& 0& 0& 0& 0& 2& 0\\
&55	&0& 0& 0& 2& 0& 0& 0& 2& 2& 2& 2\\
&71	&0& 2& 2& 0& 2& 2& 0& 0& 2& 4& 4\\
&87	&2& 0& 0& 2& 2& 2& 4& 4& 6& 6& 8\\
&103	& 0& 2& 2& 2& 6& 6& 4& 2& 6& 14& 12\\
&119	& 2& 2& 2& 8& 4& 8& 6& 8& 18& 20& 22\\
&135	&2& 8& 8& 8& 14& 14& 12& 10& 20& 36& 34\\
&151	&4& 6& 6& 18& 16& 20& 20& 22& 42& 54& 56\\\bottomrule
\end{tabular}\end{center}

 \begin{center}
  \caption{Decomposition of $K^{(4)}_2$}
    \vspace{0.2cm}

 \begin{tabular}{c|RRRRR}
\toprule & \chi_{12}& \chi_{13}& \chi_{14}& \chi_{15}& \chi_{16} \\ \midrule
12 &0& 1& 1& 0& 0\\
28& 0& 0& 0& 1& 1\\
44& 2& 0& 0& 2& 2\\
60& 0& 2& 2& 4& 4\\
76&2& 2& 2& 8& 8\\
92&6& 4& 4& 14& 14\\
108&6& 9& 9& 24& 24\\
124&14& 14& 14& 40& 40\\
140&24& 20& 20& 66& 66\\
156&32& 36& 36& 104& 104
\\\bottomrule\end{tabular}\end{center}
\end{table}

\begin{table} \vspace{-2pt}  \subsection{Lambency Five}    \vspace{-16pt}\begin{minipage}[t]{0.5\linewidth}
 \begin{center}
     \caption{Decomposition of $K^{(5)}_1$}
    \vspace{0.2cm}

 \begin{tabular}{cc|RRRRRRRRRRRR}
 \toprule  && \chi_{1}& \chi_{2}& \chi_{3}& \chi_{4}& \chi_{5}& \chi_{6}& \chi_{7} \\\midrule
 &-1& -2& 0& 0& 0& 0& 0& 0\\ 
     &19& 0& 0& 2& 0& 0& 0& 0\\ 
     &39& 0& 0& 0& 2& 0& 2& 0\\ 
     &59& 0& 0& 2& 0& 2& 2& 2\\ 
     &79& 0& 2& 2& 4& 2& 2& 4\\ 
     &99& 2& 0& 6& 2& 6& 4& 6\\ 
     &119& 0& 2& 6& 8& 8& 10& 10\\ 
     &139& 4& 2& 14& 10& 14& 12& 16\\ 
     &159& 2& 6& 14& 20& 20& 22& 26\\ 
     &179& 8&4& 28& 22& 36& 32& 40\\ 
\bottomrule
\end{tabular}\end{center}
 \begin{center}
     \caption{Decomposition of $K^{(5)}_3$}
    \vspace{0.2cm}

 \begin{tabular}{cc|RRRRRRRRRRRR}
 \toprule  && \chi_{1}& \chi_{2}& \chi_{3}& \chi_{4}& \chi_{5}& \chi_{6}& \chi_{7} \\\midrule
 &11& 0& 0& 0& 2& 0& 0& 0\\ 
  &31& 0& 0& 0& 0& 2& 0& 2\\ 
  &51& 0& 0& 2& 2& 2& 2& 2\\ 
  &71& 2& 0& 4& 2& 4& 4& 4\\ 
  &91& 0& 2& 4& 6& 6& 8& 8\\ 
  &111& 2& 2& 10& 8& 12& 10& 14\\ 
  &131& 4& 4& 14& 16& 18& 18& 22\\ 
  &151& 8& 4& 24& 22& 30& 30& 34\\ 
  &171& 8& 10& 34& 38& 44& 46& 54\\ 
  &191& 14& 14& 58& 52& 68& 64& 82\\\bottomrule
\end{tabular}\end{center}
\end{minipage}
\begin{minipage}[t]{0.5\linewidth}
 \begin{center}
  \caption{Decomposition of $K^{(5)}_2$}
    \vspace{0.2cm}

 \begin{tabular}{c|RRRRRRR}
\toprule & \chi_{8}& \chi_{9}&\chi_{10}& \chi_{11}& \chi_{12}& \chi_{13}& \chi_{14} \\ \midrule
16& 0& 0& 0& 0& 1& 1& 0\\ 
36& 0& 0& 1& 1& 1& 1& 2\\ 
 56&  2& 2& 2& 2& 2& 2& 2\\
76&  0& 0& 4& 4& 4& 4& 6\\ 
 96& 2& 2& 6& 6& 8&8& 8\\ 
 116& 2& 2& 10& 10& 12& 12& 18\\ 
 136& 4& 4& 16& 16& 22& 22& 22\\ 
 156& 6& 6& 26& 26& 32& 32& 42\\ 
 176&12& 12& 40& 40& 50& 50& 56\\
196& 13& 13& 60& 60& 74& 74& 94
\\\bottomrule\end{tabular}\end{center}
 \begin{center}
  \caption{Decomposition of $K^{(5)}_4$}
    \vspace{0.2cm}

 \begin{tabular}{c|RRRRRRR}
\toprule & \chi_{8}& \chi_{9}&\chi_{10}& \chi_{11}& \chi_{12}& \chi_{13}& \chi_{14} \\ \midrule
4& 1& 1& 0& 0& 0& 0& 0\\ 
  24& 0& 0& 0& 0& 0& 0& 2\\ 
44& 0& 0& 0& 0& 2& 2& 0\\ 
64& 0& 0& 2& 2& 1& 1& 4\\ 
84& 2& 2& 2& 2& 4& 4& 2\\ 
104& 0& 0& 4& 4& 6& 6& 10\\ 
124& 4& 4& 8& 8& 10& 10& 8\\ 
144&  1& 1& 13& 13& 14& 14& 22\\ 
164&  6& 6& 18& 18& 26& 26& 24\\ 
184&6& 6& 30& 30& 34& 34& 50
\\\bottomrule\end{tabular}\end{center}
\end{minipage}
\end{table}

\begin{table} \vspace{-2pt}  \subsection{Lambency Seven}    \vspace{-16pt}\begin{minipage}[t]{0.5\linewidth}
 \begin{center}
     \caption{Decomposition of $K^{(7)}_1$}
    \vspace{0.2cm}

 \begin{tabular}{cc|RRRR}
 \toprule  && \chi_{1}& \chi_{2}& \chi_{3}& \chi_{4} \\\midrule
 &-1& -2& 0& 0& 0\\
 &27& 2& 1& 1& 0\\ 
&55& 0& 0& 0& 2\\ 
&83& 0& 2& 2& 2\\ 
&111&  2& 0& 0& 6
\\ &139& 4& 4& 4& 6
\\ &167& 2& 2& 2& 12\\ 
&195& 8& 6& 6& 16\\
 &223& 6& 6& 6& 26\\ 
&251&12& 14& 14& 30\\\bottomrule
\end{tabular}\end{center}
 \begin{center}
     \caption{Decomposition of $K^{(7)}_3$}
    \vspace{0.2cm}

 \begin{tabular}{cc|RRRR}
 \toprule  && \chi_{1}& \chi_{2}& \chi_{3}& \chi_{4} \\\midrule
&19& 0& 0& 0& 2\\ 
&47& 2& 2& 2& 2\\ 
&75& 2& 1& 1& 6\\ 
&103& 4& 4& 4& 8\\ 
&131& 2& 4& 4& 16\\ 
&159& 8& 8& 8& 22\\ 
&187& 10& 10& 10& 34\\ 
&215& 16& 18& 18& 46\\ 
&243& 22& 21& 21& 70\\ 
&271& 34& 34& 34& 94\\ \bottomrule
\end{tabular}\end{center}
 \begin{center}
     \caption{Decomposition of $K^{(7)}_5$}
    \vspace{0.2cm}

 \begin{tabular}{c|RRRR}
 \toprule  & \chi_{1}& \chi_{2}& \chi_{3}& \chi_{4} \\\midrule
 3& 0& 1& 1& 0\\ 
 31& 0& 0& 0& 2\\ 
 59& 2& 0& 0& 4\\ 
87& 0& 2& 2& 6\\ 
115& 4& 4& 4& 8\\ 
 143& 6& 4& 4& 14\\ 
171& 6& 8& 8& 20\\ 
 199& 10& 10& 10& 32\\
227& 18& 16& 16& 44\\ 
 255& 18& 20& 20& 64\\
 \bottomrule
\end{tabular}\end{center}
\end{minipage}
\begin{minipage}[t]{0.5\linewidth}
 \begin{center}
     \caption{Decomposition of $K^{(7)}_2$}
    \vspace{0.2cm}

 \begin{tabular}{cc|RRR}
 \toprule  && \chi_{5}& \chi_{6}& \chi_{7} \\\midrule
   &24& 2& 0& 0\\ 
&52& 2& 2& 2\\ 
&80& 2& 4& 4\\
&108& 6& 5& 5\\
&136& 8& 8& 8\\
&164& 12& 14& 14\\
&192&20& 17& 17\\
&220& 28& 28& 28\\
&248&  36& 40& 40\\
&276&  56& 54& 54   \\\bottomrule
\end{tabular}\end{center}
 \begin{center}
     \caption{Decomposition of $K^{(7)}_4$}
    \vspace{0.2cm}

  \begin{tabular}{c|RRR}
 \toprule  & \chi_{5}& \chi_{6}& \chi_{7} \\\midrule
12& 0& 1& 1\\
 40& 2& 2& 2\\ 
 68& 4& 2& 2\\ 
 96& 6& 6& 6\\ 
124& 8& 8& 8\\ 
152& 14& 14& 14\\
 180& 18& 20& 20\\ 
208 &30&  30& 30\\
 236& 42& 40& 40\\ 
 264&  60& 60& 60  \\\bottomrule
\end{tabular}\end{center}
 \begin{center}
     \caption{Decomposition of $K^{(7)}_6$}
    \vspace{0.2cm}

 \begin{tabular}{c|RRR}
 \toprule  & \chi_{5}& \chi_{6}& \chi_{7} \\\midrule
 20& 2& 0& 0\\ 
 48&  0& 1& 1\\ 
76&  2& 2& 2\\ 
 104&  2& 2& 2\\ 
 132& 4& 6& 6\\ 
 160& 6& 6& 6\\ 
188&12& 10& 10\\ 
216& 12& 14& 14\\ 
 244& 22& 22& 22\\ 
 272&  28& 26& 26\\ \bottomrule \end{tabular}\end{center}
\end{minipage}\end{table}

\begin{table}\vspace{-2pt}  \subsection{Lambency Thirteen}    \vspace{-16pt}
\begin{minipage}[t]{0.5\linewidth}
 \begin{center}
     \caption{Decomposition of $K^{(13)}_1$}
    \vspace{0.2cm}
 \begin{tabular}{cc|RR}
 \toprule  && \chi_{1}& \chi_{2} \\\midrule
& -1& -2& 0\\
&51& 2& 0\\ 
&103& 0& 2\\\ 
&155& 0& 0\\ 
&207& 0& 2\\ 
&259& 2& 0\\ 
&311& 2& 2\\ 
&363& 4& 2\\ 
&415& 2& 4\\ 
&467& 6& 2 \\\bottomrule
\end{tabular}\end{center}
 \begin{center}
     \caption{Decomposition of $K^{(13)}_3$}
    \vspace{0.2cm}

 \begin{tabular}{cc|RR}
 \toprule  && \chi_{1}& \chi_{2} \\\midrule
 &43& 0& 2\\
&95&2&0\\
&147& 2& 2\\ 
&199& 4& 2\\ 
&251& 2& 6\\ 
&303& 6& 4\\ 
&355& 6& 8\\ 
&407& 10& 8\\ 
&459& 10& 12\\ 
&511& 14& 12 \\\bottomrule
\end{tabular}\end{center}
 \begin{center}
     \caption{Decomposition of $K^{(13)}_5$}
    \vspace{0.2cm}
 \begin{tabular}{c|RR}
 \toprule  & \chi_{1}& \chi_{2} \\\midrule
 27& 2& 0\\79& 2& 2\\131& 4& 2\\183& 2& 4\\235& 6& 4\\287& 6& 8\\339& 8& 8\\391& 10& 12\\443& 16& 14\\495& 16& 20\\
 \bottomrule
\end{tabular}\end{center}
\end{minipage}
\begin{minipage}[t]{0.5\linewidth}
 \begin{center}
     \caption{Decomposition of $K^{(13)}_2$}
    \vspace{0.2cm}
 \begin{tabular}{c|RRR}
 \toprule  & \chi_{3}& \chi_{4} \\\midrule
48&  0& 0\\
 100& 1& 1\\
 152& 2& 2\\
 204& 2& 2\\
 256& 3& 3\\
 308& 4& 4\\
 360& 4& 4\\
 412& 6& 6\\
 464& 8& 8\\
 516& 10& 10
  \\\bottomrule
\end{tabular}\end{center}
 \begin{center}
     \caption{Decomposition of $K^{(13)}_4$}
    \vspace{0.2cm}

  \begin{tabular}{c|RR}
   \toprule & \chi_{3}& \chi_{4} \\\midrule
    36& 1& 1\\
   88& 2& 2\\
   140& 2& 2\\
   192& 4& 4\\
   244& 4& 4\\
   296& 6& 6\\
   348& 8& 8\\
   400& 11& 11\\
   452& 12& 12\\
   504& 18& 18
 \\\bottomrule
\end{tabular}\end{center}
 \begin{center}
     \caption{Decomposition of $K^{(13)}_6$}
    \vspace{0.2cm}

 \begin{tabular}{c|RR}
 \toprule  & \chi_{3}& \chi_{4}  \\\midrule
16& 1& 1\\68& 2& 2\\120& 2& 2\\172& 4& 4\\224& 4& 4\\276& 8& 8\\328& 8& 8\\380& 12& 12\\
432& 14& 14\\484& 19& 19
\\\bottomrule   \end{tabular}\end{center}
\end{minipage}\end{table}

\begin{table}
\begin{minipage}[t]{0.5\linewidth}
 \begin{center}
     \caption{Decomposition of $K^{(13)}_7$}
    \vspace{0.2cm}
 \begin{tabular}{c|RR} \toprule  & \chi_{1}& \chi_{2} \\\midrule
 3& 0& 2\\55& 2& 0\\107& 2& 2\\159& 4& 4\\211& 4& 6\\263& 6& 6\\315& 8& 8\\367& 12& 10\\419& 12& 14\\471& 18& 16 
 \\\bottomrule
\end{tabular}\end{center}
 \begin{center}
     \caption{Decomposition of $K^{(13)}_9$}
    \vspace{0.2cm}

 \begin{tabular}{c|RR}
 \toprule  & \chi_{1}& \chi_{2} \\\midrule
 23& 0& 2\\ 75& 2& 2\\127& 2& 2\\179& 4& 2\\231& 4& 4\\283& 6& 6\\335& 6& 8\\387& 10& 10\\439& 12& 14\\491& 16& 14
\\\bottomrule
\end{tabular}\end{center}
 \begin{center}
     \caption{Decomposition of $K^{(13)}_{11}$}
    \vspace{0.2cm}
 \begin{tabular}{c|RR}
 \toprule  & \chi_{1}& \chi_{2} \\\midrule
 35& 2& 0\\ 87& 2& 0\\139& 0& 2\\191& 2& 2\\243& 2& 2\\295& 4& 4\\347& 4& 6\\399& 6& 4\\451& 8& 8\\503& 10& 10\\
 \bottomrule
\end{tabular}\end{center}
\end{minipage}
\begin{minipage}[t]{0.5\linewidth}
 \begin{center}
     \caption{Decomposition of $K^{(13)}_8$}
    \vspace{0.2cm}
 \begin{tabular}{c|RRR}
 \toprule  & \chi_{3}& \chi_{4} \\\midrule
  40& 2& 2\\ 92& 2& 2\\144& 3& 3\\196& 3& 3\\248& 6& 6\\300& 6& 6\\352& 10& 10\\ 
 404& 12& 12\\456& 16& 16\\508& 18& 18
   \\\bottomrule
\end{tabular}\end{center}
 \begin{center}
     \caption{Decomposition of $K^{(13)}_{10}$}
    \vspace{0.2cm}

  \begin{tabular}{c|RR}
   \toprule & \chi_{3}& \chi_{4} \\\midrule
   4& 1&   1\\ 56&0&0\\ 108&  2& 2\\ 160&  2& 2\\ 212& 4& 4\\   264& 4& 4\\ 316& 6& 6\\ 368& 6& 6\\ 420& 10& 10\\ 472& 10& 10\\
   \bottomrule
   \end{tabular}\end{center}
 \begin{center}
     \caption{Decomposition of $K^{(13)}_{12}$}
    \vspace{0.2cm}

 \begin{tabular}{c|RR}
 \toprule  & \chi_{3}& \chi_{4}  \\\midrule
12& 0&0\\
 64& 1& 1\\
116& 0&0\\ 
 168& 2& 2\\ 
 220& 0&0\\ 
 272& 2& 2\\ 
324& 1& 1\\ 
 376& 4& 4\\ 
 428& 2& 2\\ 
 480& 6& 6\\\bottomrule
   \end{tabular}\end{center}
\end{minipage}\end{table}

\clearpage

%------------------------------------------------------------------%
\addcontentsline{toc}{section}{References}
%\bibliographystyle{../../../bib/utphys}
%\bibliography{../../../bib/master}
\providecommand{\href}[2]{#2}\begingroup\raggedright\endgroup

%------------------------------------------------------------------%
\end{document}